\newcommand{\googlebooks}[1]{(preview at \href{https://books.google.com/books?id=#1}{google books})}
\newcommand{\numdam}[1]{}
\DeclareMathAlphabet{\mathpzc}{OT1}{pzc}{m}{it}
\def\semicolon{;}
\def\applytolist#1{
    \expandafter\def\csname multi#1\endcsname##1{
        \def\multiack{##1}\ifx\multiack\semicolon
            \def\next{\relax}
        \else
            \csname #1\endcsname{##1}
            \def\next{\csname multi#1\endcsname}
        \fi
        \next}
    \csname multi#1\endcsname}
\def\calc#1{\expandafter\def\csname c#1\endcsname{{\mathcal #1}}}
\def\bbc#1{\expandafter\def\csname bb#1\endcsname{{\mathbb #1}}}
\def\bfc#1{\expandafter\def\csname bf#1\endcsname{{\mathbf #1}}}
\def\sfc#1{\expandafter\def\csname s#1\endcsname{{\sf #1}}}
\def\fc#1{\expandafter\def\csname f#1\endcsname{{\mathfrak #1}}}
\def\fixtikzforbreqn#1#2{%
  \protected\edef#1{\noexpand\ifmmode\mathchar\the\mathcode`#2 \noexpand\else#2\noexpand\fi}%
}
\tikzset{vertex/.style = {shape=circle,draw,fill=black,inner sep=0pt,minimum size=5pt}}
\tikzset{edge/.style = {->,> = latex', bend right}}
\tikzset{
	super thick/.style={line width=3pt}
}
\tikzset{
    quadruple/.style args={[#1] in [#2] in [#3] in [#4]}{
        #1,preaction={preaction={preaction={draw,#4},draw,#3}, draw,#2}
    }
}
\tikzstyle{shaded}=[fill=red!10!blue!20!gray!30!white]
\tikzstyle{unshaded}=[fill=white]
\tikzstyle{empty box}=[circle, draw, thick, fill=white, opaque, inner sep=2mm]
\tikzstyle{annular}=[scale=.7, inner sep=1mm, baseline]
\tikzstyle{rectangular}=[scale=.75, inner sep=1mm, baseline=-.1cm]
\tikzstyle{mid>}=[decoration={markings, mark=at position 0.5 with {\arrow{>}}}, postaction={decorate}]
\tikzstyle{mid<}=[decoration={markings, mark=at position 0.5 with {\arrow{<}}}, postaction={decorate}]
\tikzstyle{over}=[double, draw=white, super thick, double=]
\tikzstyle{snake}=[decorate, decoration={snake, segment length=1mm, amplitude=.3mm}]
\tikzstyle{saw}=[decorate, decoration={saw, segment length=.7mm, amplitude=.25mm}]
\tikzstyle{knot}=[preaction={super thick, white, draw}]
\tikzstyle{coupon}=[draw, very thick, rectangle, rounded corners=5pt]
\tikzset{Rightarrow/.style={double equal sign distance,>={Implies},->},
triplecd/.style={-,preaction={draw,Rightarrow}},
quadruplecd/.style={preaction={draw,Rightarrow,
shorten >=0pt
},
shorten >=1pt,
-,double,double
distance=0.2pt}}
\tikzset{
    tripleline/.style args={[#1] in [#2] in [#3]}{
        #1,preaction={preaction={draw,#3},draw,#2}
    }
}
\tikzstyle{triple}=[tripleline={[line width=.15mm,black] in
\tikzset{
    quadrupleline/.style args={[#1] in [#2] in [#3] in [#4]}{
        #1,preaction={preaction={preaction={draw,#4},draw,#3}, draw,#2}
    }
}
\tikzstyle{quadruple}=[quadrupleline={[line width=.3mm,white] in
\newcommand{\roundNbox}[6]{
	\draw[rounded corners=5pt, very thick, #1] ($#2+(-#3,-#3)+(-#4,0)$) rectangle ($#2+(#3,#3)+(#5,0)$);
	\coordinate (ZZa) at ($#2+(-#4,0)$);
	\coordinate (ZZb) at ($#2+(#5,0)$);
	\node at ($1/2*(ZZa)+1/2*(ZZb)$) {#6};
}
\newcommand{\tikzmath}[2][]
     {\vcenter{\hbox{\begin{tikzpicture}[#1]#2
                     \end{tikzpicture}}}
     }
\theoremstyle{plain}
\newtheorem{thm}{Theorem}[section]
\newtheorem*{thm*}{Theorem}
\newtheorem{thmalpha}{Theorem}
\newtheorem{cor}[thm]{Corollary}
\newtheorem{coralpha}[thmalpha]{Corollary}
\newtheorem*{cor*}{Corollary}
\newtheorem*{conj*}{Conjecture}
\newtheorem{lem}[thm]{Lemma}
\newtheorem*{lem*}{Lemma}
\newtheorem{prop}[thm]{Proposition}
\newtheorem{quest}[thm]{Question}
\newtheorem*{quest*}{Question}
\newtheorem*{claim*}{Claim}
\theoremstyle{definition}
\newtheorem{defn}[thm]{Definition}
\newtheorem{facts}[thm]{Facts}
\newtheorem{construction}[thm]{Construction}
\newtheorem{assumption}[thm]{Assumption}
\newtheorem{nota}[thm]{Notation}
\newtheorem{ex}[thm]{Example}
\newtheorem{sub-ex}[thm]{Sub-Example}
\newtheorem{counter-ex}[thm]{Counter-Example}
\newtheorem{rem}[thm]{Remark}
\newtheorem*{rem*}{Remark}
\newtheorem{warn}[thm]{Warning}  
\definecolor{dark-red}{rgb}{0.7,0.25,0.25}
\definecolor{dark-blue}{rgb}{0.15,0.15,0.55}
\definecolor{medium-blue}{rgb}{0,0,.8}
\definecolor{DarkGreen}{RGB}{0,150,0}
\definecolor{rho}{named}{red}
\newcommand{\NColor}{orange}
\newcommand{\XColor}{red} 
\newcommand{\YColor}{blue}
\newcommand{\ZColor}{violet}
\newcommand{\QsColor}{black}
\newcommand{\zColor}{blue} 
\newcommand{\wColor}{red}
\newcommand{\HColor}{gray!75} 
\newcommand{\rColor}{gray!20} 
\newcommand{\QrColor}{gray!50} 
\newcommand{\alt}{\operatorname{alt}}
\newcommand{\id}{\operatorname{id}}
\newcommand{\Inv}{\operatorname{Inv}}
\newcommand{\Irr}{\operatorname{Irr}}
\newcommand{\ONB}{\operatorname{ONB}}
\newcommand{\Forget}{\operatorname{Forget}}
\newcommand{\op}{\operatorname{op}}
\newcommand{\tr}{\operatorname{tr}}
\newcommand{\Tr}{\operatorname{Tr}}
\newcommand{\Hom}{\operatorname{Hom}}
\newcommand{\End}{\operatorname{End}}
\newcommand{\Out}{\operatorname{Out}}
\newcommand{\loc}{\operatorname{loc}}
\newcommand{\ai}{\operatorname{ai}}
\newcommand{\ct}{\operatorname{ct}}
\newcommand{\Aut}{\operatorname{Aut}}
\newcommand{\Ad}{\operatorname{Ad}}
\newcommand{\co}{\overline{\operatorname{co}}}
\newcommand{\Chi}{\tilde{\chi}}
\newcommand{\Add}{\mathsf{Add}}
\newcommand{\Proj}{\mathsf{Proj}}
\newcommand{\fgpMod}{\mathsf{Mod_{fgp}}}
\newcommand{\Bim}{\mathsf{Bim}}
\newcommand{\Vect}{{\mathsf{Vect}}}
\newcommand{\Fun}{\mathsf{Fun}}
\newcommand{\fdHilb}{\mathsf{Hilb_{fd}}}
\newcommand{\fgpBim}{\mathsf{Bim_{fgp}}}
\newcommand{\WStarRCorr}{\mathsf{W^*Alg}}
\newcommand{\fgpWStarRCorr}{\mathsf{W^*Alg_{fgp}}}
\newcommand{\locEnd}{{\End}_{\loc}}
\newcommand{\ctEnd}{{\End}_{\ct}}
\newcommand{\aiEnd}{{\End}_{\ai}}
\newcommand{\set}[2]{\left\{#1 \middle| #2\right\}}
\def\altdb{\vadjust{\vbox to 0pt{\vss\hbox{\kern \hsize
\quad{\dbend}}\kern\baselineskip\kern-10pt}}}
\newcommand{\noshow}[1]{}
\renewcommand{\MR}[1]{}
\title{A categorical Connes' $\chi(M)$}
\author{Quan Chen, Corey Jones, and David Penneys}
\begin{document}

\begin{abstract}
Popa introduced the tensor category $\tilde{\chi}(M)$ of approximately inner, centrally trivial bimodules of a $\rm{II}_{1}$ factor $M$, generalizing Connes' $\chi(M)$. 
We extend Popa's notions to define the $\rm W^*$-tensor category $\operatorname{End}_{\rm loc}(\mathcal{C})$ of local endofunctors on a $\rm W^*$-category $\mathcal{C}$.
We construct a unitary braiding on $\operatorname{End}_{\rm loc}(\mathcal{C})$, giving a new construction of a braided tensor category associated to an arbitrary $\rm W^*$-category.
For the $\rm W^*$-category of finite modules over a $\rm{II}_{1}$ factor, this yields a unitary braiding on Popa's $\tilde{\chi}(M)$, which extends Jones' $\kappa$ invariant for $\chi(M)$. 

Given a finite depth inclusion $M_{0}\subseteq M_{1}$ of non-Gamma $\rm{II}_1$ factors, we show that the braided unitary tensor category $\tilde{\chi}(M_{\infty})$ is equivalent to the Drinfeld center of the standard invariant, where $M_{\infty}$ is the inductive limit of the associated Jones tower. 
This implies that for any pair of finite depth non-Gamma subfactors $N_{0}\subseteq N_{1}$ and $M_{0}\subseteq M_{1}$, if the standard invariants are not Morita equivalent, then the inductive limit factors $N_{\infty}$ and $M_{\infty}$ are not stably isomorphic.

\end{abstract}

\maketitle
\tableofcontents

\section{Introduction}

Tensor categories have come to play an important role in noncommutative analysis, arising as categories of bimodules of $\rm C^*$ and von Neumann algebras and as representation categories of compact quantum groups. 
In subfactor theory, the standard invariant of a finite index $\rm II_1$ subfactor \cite{MR1334479,math.QA/9909027} can be described by a unitary tensor category (a.k.a.~a semisimple rigid $\rm C^*$-tensor category), together with a chosen unitary Frobenius algebra object internal to this category \cite{MR1966524,MR1027496,MR2909758,MR3948170}. 
In operator algebraic approaches to quantum field theories (AQFT) and topologically ordered spin systems, braided tensor categories arise in the DHR theory of superselection sectors of nets of von Neumann algebras \cite{MR0297259, MR1405610, MR1838752, MR3426207, arXiv:2106.15741}. 
Here, the presence of a braiding yields an incredibly rich structure theory which does not have an obvious analog in the purely `noncommutative' world of ordinary subfactors.

There are, however, less widely recognized instances of braided tensor categories arising in the theory of $\rm{II}_{1}$ factors. 
In \cite{MR377534}, Connes introduced an invariant $\chi(M)$ of a $\rm{II}_{1}$ factor $M$, which is the abelian subgroup of $\Out(M)$ consisting of the image of approximately inner and centrally trivial automorphisms.
In \cite{MR585235}, V.~Jones defines a quadratic form $\kappa$ on the group $\chi(M)$. 
Eilenberg and MacLane have shown that an abelian group together with a quadratic form defines (uniquely up to braided equivalence) a braided 2-group \cite{MR65163}, which linearizes to a braided unitary tensor category. 
Generalizations of these constructions in the relative context were studied by Kawahigashi \cite{MR1230287, MR1321700} and utilized in the study of orbit equivalence of group actions by Ioana \cite{MR2342972}.

In his groundbreaking work \cite{MR1317367, MR1339767}, Popa introduced notions of approximately inner and centrally free for finite index subfactors, 
which played a key role in his remarkable classification result for subfactors in terms of their standard invariants. 
In \cite[Def.~2.5]{MR1317367}, 
Popa also considers a definition of a centrally trivial subfactor (as an `opposite' to his notion of centrally free), and in \cite[Rem.~2.7]{MR1317367}, he discusses how this definition and his definition of approximately inner subfactor have a natural generalization to bimodules. 
He introduces the unitary tensor category $\Chi(M)$ of dualizable approximately inner and centrally trivial bimodules of a $\rm{II}_{1}$ factor, generalizing Connes' $\chi(M)$. 
Popa asks whether this $\Chi(M)$ is a `commutative' tensor category, i.e., does it admit a braiding?
In this article, we answer Popa's question positively. 

\begin{thmalpha}
\label{thm:ExistsBraiding}
Let $M$ be a $\rm{II}_{1}$ factor. 
Then $\Chi(M)$ admits a unitary braiding (see Equation \eqref{eq:uXY}). 
Furthermore, if 
$N$ is another $\rm{II}_{1}$ factor stably isomorphic to $M$, then $\Chi(M)\cong \Chi(N)$ as braided unitary tensor categories. 
\end{thmalpha}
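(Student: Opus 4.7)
The plan is to deduce Theorem~\ref{thm:ExistsBraiding} as a special case of the general theory of local endofunctors on a $\rm W^*$-category, which was announced in the introduction as the paper's main categorical construction. The strategy is:

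\textbf{Step 1 (Identification as an endofunctor category).} First I would set $\cC := \fgpMod\text{-}M$, the $\rm W^*$-category of finite right $M$-modules. Every dualizable $M$-$M$ bimodule ${}_MH_M$ induces an endofunctor $-\otimes_M H$ on $\cC$, and this assignment is fully faithful and monoidal onto a full tensor subcategory of $\End(\cC)$. The crucial step is to prove that, under this identification, Popa's approximately inner and centrally trivial bimodules correspond exactly to the objects of $\End_{\loc}(\cC)$. This amounts to unpacking the definitions: approximate innerness (approximation of bimodule vectors by sums of the form $\sum a_i \otimes b_i$) should translate into the local/approximation condition in the definition of $\End_{\loc}$, and central triviality (commuting asymptotically with the center-sequence algebra) should translate into the trivialization condition that makes the natural transformations well-defined in the ambient W*-category. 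I expect this translation to use Popa's intertwining technology and the standard correspondence between bimodules and completely positive maps.

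\textbf{Step 2 (Transport the braiding).} Once the identification $\Chi(M) \simeq \End_{\loc}(\fgpMod\text{-}M)$ is in place, the unitary braiding constructed in the body of the paper on $\End_{\loc}(\cC)$ (Equation \eqref{eq:uXY}) transports to a unitary braiding on $\Chi(M)$. Only naturality and the hexagon axioms need to be checked in the bimodule language, but these are automatic from Step 1.

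\textbf{Step 3 (Stable isomorphism invariance).} If $N$ is stably isomorphic to $M$, then there is an invertible $N$-$M$ bimodule (Morita context) which induces a unitary equivalence of $\rm W^*$-categories $\fgpMod\text{-}M \simeq \fgpMod\text{-}N$. The assignment $\cC \mapsto \End_{\loc}(\cC)$ is manifestly functorial under unitary equivalences of $\rm W^*$-categories (since the defining local/approximation conditions are intrinsic to the enriched structure of $\cC$), and moreover the braiding \eqref{eq:uXY} is constructed intrinsically from this structure, so equivalences of input categories induce \emph{braided} equivalences of output categories. Combining with Step 1 applied to both $M$ and $N$ yields the braided equivalence $\Chi(M) \simeq \Chi(N)$.

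\textbf{Main obstacle.} The substantive work is in Step 1: verifying that Popa's analytic conditions (approximate innerness and central triviality), which are formulated for sequences of vectors in Hilbert bimodules, match the categorical definition of local endofunctor, which is formulated in terms of natural transformations satisfying approximation conditions against every object of $\cC$. This requires rephrasing the asymptotic bimodule conditions in terms of the action of endofunctors on arbitrary finite modules and their morphisms, probably via the ultrapower/ultraproduct framework of $\cC$. Steps 2 and 3 are then formal consequences of this identification together with the general braiding theorem proved earlier.
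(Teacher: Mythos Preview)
Your overall strategy matches the paper exactly: the braiding on $\Chi(M)$ is obtained by identifying it with the dualizable part of $\locEnd(\fgpMod(M))$ and transporting the general braiding from \S\ref{sec:RelativeBraiding}, and stable-isomorphism invariance follows because stably isomorphic $\rm II_1$ factors have unitarily equivalent $\fgpMod$ categories and $\cC\mapsto \locEnd(\cC)$ is intrinsic.

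The only place your proposal drifts from the paper is in the mechanics of Step~1. The translation does not use Popa's intertwining-by-bimodules technology or the bimodule/CP-map correspondence, and ultraproducts appear only peripherally (to reconcile with Popa's original subfactor formulation in Proposition~\ref{PopaEq}). The actual dictionary is more elementary and concrete: central triviality of $-\boxtimes_M X$ is equivalent to $\|a_n x - x a_n\|_2\to 0$ for all central sequences $(a_n)\subset M$ and all $x\in X$ (Proposition~\ref{prop:BimodualCT&CS}), and approximate innerness is equivalent to the existence of an \emph{approximately inner $X_M$-basis}, i.e., an approximate Pimsner--Popa basis $\{b_i^{(n)}\}$ that asymptotically commutes with $M$ (Proposition~\ref{prop:AI&APPB}). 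One minor point you omit: $\End(\fgpMod(M))\simeq \fgpBim(M)^{\rm mp}$, not $\fgpBim(M)$, so there is an ``opposite'' issue; the paper resolves this via the braiding itself (Remark~\ref{rem:OppositeWithBraiding}).
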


Notably, $\Chi(M)$ recovers Connes' $\chi(M)$ as the equivalence classes of invertible bimodules in $\Chi(M)$ whose left and right von Neumann dimension are equal (to one), and our braiding on $\Chi(M)$ recovers Jones' $\kappa$ (see Example \ref{ex:ConnesChi} for more details). 
We may thus think of $\Chi(M)$ as a unitary braided categorical extension of the braided 2-group $(\chi(M), \kappa)$.

The existence of a braiding on $\Chi(M)$ is surprising from a categorical viewpoint.
Indeed, von Neumann algebras form a 2-category whose 1-morphisms are bimodules and whose 2-morphisms are intertwiners.
The unitary tensor category $\Chi(M)$
is a full subcategory of $\End(M)\cong \Bim(M)$ in this 2-category.
Braidings arise formally in the context of 3-categories by looking at endomorphisms of some identity 1-morphism.
This is the algebraic structure underlying conformal nets, which produces unitary modular tensor categories in the rational setting \cite{MR1838752,MR3439097,MR3773743}.
The presence of a braiding, together with its behavior under local extensions in Theorem \ref{thmalpha:LocalExtension} below, suggests that von Neumann algebras may be objects in a yet to be discovered 3-category.

As expected from experience with Connes' $\chi$, using \cite{MR377534} and \cite{MR1317367,MR2661553},
it is straightforward to show that
$\Chi(R)$, $\Chi(N)$ and $\Chi(R\otimes N)$ are trivial where $R$ denotes the hyperfinite $\rm{II}_{1}$ factor and $N$ is any non-Gamma $\rm{II}_{1}$ factor. 
In order to leverage these facts to compute some non-trivial examples, we prove the following theorem, which is similar in spirit to Connes' short exact sequence \cite{MR377534} (c.f.~\cite[Prop.~6.4]{MR3424476} for a parallel result in the conformal net context). 

\begin{thmalpha}
\label{thmalpha:LocalExtension}
Let $N\subseteq M$ be a finite index $\rm{II}_{1}$ subfactor such that $Q:={}_{N} L^{2}M_{N}\in\Chi(N)$ is a commutative Q-system. 
Then $\Chi(M)\cong \Chi(N)_Q^\loc$ as braided unitary tensor categories.
\end{thmalpha}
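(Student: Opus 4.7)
The plan is to reduce Theorem \ref{thmalpha:LocalExtension} to a general categorical statement about local modules over a commutative Q-system in a braided W$^*$-tensor category, combined with the standard Morita-theoretic identification of $\Mod(M)$ with $Q$-modules in $\Mod(N)$.

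The first step is to use the finite-index subfactor structure to identify the W$^*$-module category $\Mod(M)$ (finite right modules) with $\Mod(N)_Q$, the category of $Q$-modules in $\Mod(N)$, where $Q={}_NL^2(M)_N$ is viewed as a Q-system in the W$^*$-tensor category $\Bim(N)=\End(\Mod(N))$ via induction/restriction along $N\subseteq M$. Passing to endofunctors on both sides yields a unitary tensor equivalence between $\Bim(M)=\End(\Mod(M))$ and the category of $Q$\textendash$Q$ bimodules in $\Bim(N)$. This is by now well known; see the subfactor Morita-equivalence results in \cite{MR1966524,MR2909758,MR3948170}, suitably adapted to the W$^*$ setting.

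The second step, which is the heart of the argument, is to refine this equivalence to the subcategories of \emph{local} endofunctors, and to verify that the resulting subcategories on the $\Bim(N)$ side are exactly $\Chi(N)_Q^\loc$. There are two distinct conditions to track: (i) the approximately inner/centrally trivial condition defining $\Chi$, and (ii) the locality condition for a $Q$\textendash$Q$ bimodule with respect to the braiding on $\Chi(N)$. For (i), the hypothesis $Q\in\Chi(N)$ is used twice: it ensures that restriction of an approximately inner $M$-bimodule stays approximately inner over $N$ (since tensoring with $Q$ preserves the relevant asymptotic sequences), and the same for central triviality; conversely, a local $Q$-module whose underlying $N$-bimodule lies in $\Chi(N)$ induces up to an $M$-bimodule in $\Chi(M)$. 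For (ii), one uses the general categorical fact that for a commutative Q-system $Q$ in a braided tensor category, the braiding identifies the category of local $Q$\textendash$Q$ bimodules with the category of local $Q$-modules (the left and right $Q$-actions on a local bimodule are interchanged by the half-braiding). Combining these gives an equivalence of unitary tensor categories $\Chi(M)\cong\Chi(N)_Q^\loc$.

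The third step is to match the braidings. Both sides carry braidings constructed from the general $\End_\loc$ formula (Equation \eqref{eq:uXY} in Theorem \ref{thm:ExistsBraiding}). Since the Morita equivalence above is induced by the bimodule $Q$, the half-braiding defining locality on the $\Bim(N)$ side factors through the half-braiding defining locality on the $\Bim(M)$ side, and a direct diagram chase using the compatibility of the braiding on $\Chi(N)_Q^\loc$ with the braiding on $\Chi(N)$ shows the two braidings agree.

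The main obstacle will be step two: verifying that approximate innerness and central triviality, which are genuinely analytic conditions on bounded sequences in ultrapowers, transfer correctly through induction/restriction along $N\subseteq M$. The hypothesis $Q\in\Chi(N)$ is precisely what allows approximating and central sequences to be pushed back and forth between $N$ and $M$, and carefully setting up this bookkeeping—rather than the more formal steps of Morita theory or the commutative-algebra/local-module equivalence—is where the substantive work lies.
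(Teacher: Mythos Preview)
Your outline follows the same high-level strategy as the paper: identify $\Bim(M)$ with $Q$--$Q$ bimodules in $\Bim(N)$ via Q-system realization, then show this equivalence restricts to $\Chi(M)\cong\Chi(N)_Q^{\loc}$ and is braided. The Morita-theoretic step and the transfer of approximate innerness and central triviality between $M$ and $N$ are handled essentially as you describe.

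However, there is a genuine gap in your treatment of locality. You argue that a bimodule $X\in\Chi(M)$, restricted to $N$, lands in $\Chi(N)$; this is fine. But you never explain why the resulting $Q$--$Q$ bimodule is \emph{local}, i.e., why the monodromy built from the braiding on $\Chi(N)$ is trivial. Your item (ii) only invokes the purely categorical equivalence between local $Q$-modules and local $Q$--$Q$ bimodules once locality is already established; it does not manufacture locality from the hypothesis $X\in\Chi(M)$. This is precisely the nontrivial direction.

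The paper closes this gap with an analytic characterization (Proposition~\ref{prop:APPBofQisCSinQ}): $Q$ is commutative in $\Chi(N)$ if and only if every approximately inner $Q_N$-basis is a central sequence in $M=|Q|$. This is the bridge between the categorical commutativity hypothesis and the central-sequence machinery. With it, Proposition~\ref{Chi(Q)subsetChi(M)locQ} writes out both the over- and under-braidings of $Q$ past $X$ via the explicit formula \eqref{eq:uXY}, and then uses central triviality of $X$ over $M$ together with the central-sequence property of the approximate $Q$-basis to collapse each to the right $Q$-action on $X$, yielding locality. The same ingredient drives the braiding comparison in Theorem~\ref{thm:LocalExtension}, which is not a formal diagram chase but another explicit graphical computation with approximate bases. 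Your plan is correct in spirit, but without this analytic--categorical bridge the argument does not close.
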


In \cite{MR2661553}, Popa studied Connes' $\chi$ in the context of inductive limits of Jones towers of finite depth finite index non-Gamma $\rm II_1$ subfactors $N\subseteq M$. 
He shows $\chi(M_{\infty})=1$ for a large class of non-Gamma inclusions $N\subseteq M$ for which $M_{\infty}$ is McDuff but not isomorphic to $R\otimes N$ for $N$ non-Gamma, resolving a question of Connes. Building off Popa's techniques, in this paper we will directly compute $\Chi(M_{\infty})$ as a unitary braided tensor category.

To state our results, recall the standard invariant of a finite depth finite index $\rm II_1$ subfactor $N\subseteq M$ consists of the indecomposable $2\times 2$ multifusion category $\cC(N\subseteq M)$ 
of $N-N$, $N-M$, $M-M$, and $M-N$ bimodules generated by $L^2M$, together with the choice of generating object ${}_NL^2M_M$. 
We define Morita equivalence of two standard invariants $\cC(N_{1}\subseteq N_2)$ and $\cC(M_1\subseteq M_{2})$ as Morita equivalence of the underlying multifusion categories \cite[\S7.12]{MR3242743}. 
The Drinfeld center $\cZ(\cC(N\subseteq M))$ is a braided unitary fusion category, and indecomposable multifusion categories are Morita equivalent if and only if their Drinfeld centers are equivalent as braided fusion categories \cite[\S8.5]{MR3242743}.  We have the following theorem.

\begin{thmalpha}
\label{thmalpha:InductiveLimit}
Let $N\subseteq M$ be a finite depth finite index inclusion of non-Gamma $\rm{II}_{1}$ factors, and let $M_{\infty}$ denote the inductive limit factor of the Jones tower. 
Then $\Chi(M_{\infty})\cong \cZ(\cC(N\subseteq M))$ as braided unitary tensor categories.
\end{thmalpha}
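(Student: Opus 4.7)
The plan is to construct a braided unitary equivalence $\Phi \colon \cZ(\cC) \to \Chi(M_\infty)$ directly, where $\cC := \cC(N \subseteq M)$, by using the half-braiding data on an object of $\cZ(\cC)$ to extend $M$-bimodules along the Jones tower to $M_\infty$-bimodules, and conversely using non-Gamma rigidity to descend any object of $\Chi(M_\infty)$ back to the standard invariant.

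\textbf{Construction of $\Phi$.}
An object $(X,\sigma) \in \cZ(\cC)$ is a bimodule $X$ lying in $\cC$ together with a coherent family of half-braidings against all objects of $\cC$, and in particular against each Jones tower bimodule ${}_{M_k}L^2 M_{k+1}{}_{M_k}$. These half-braidings let us extend the $M$-bimodule structure on $X$ compatibly to $M_k$-bimodule structures for every $k$; passing to the inductive limit of the tower produces an $M_\infty$-$M_\infty$ bimodule $\Phi(X,\sigma)$. Dualizability follows from dualizability of $X$ together with finite depth; approximate innerness is immediate from the description as an inductive limit of bimodules visible at finite stages of the tower; and central triviality should follow from the non-Gamma hypothesis on $N$ combined with Popa's central sequence analysis from \cite{MR2661553}, which forces central sequences of $M_\infty$ to concentrate on the hyperfinite pieces of the tower, where the half-braiding $\sigma$ guarantees asymptotic commutation with $\Phi(X,\sigma)$.

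\textbf{Full faithfulness and essential surjectivity.}
Full faithfulness follows from density of $\bigcup_k M_k$ in $M_\infty$: any intertwiner $\Phi(X,\sigma) \to \Phi(Y,\tau)$ of $M_\infty$-bimodules restricts to compatible $M_k$-bimodule intertwiners that must commute with the half-braidings, which is exactly the data of a $\cZ(\cC)$-morphism. For essential surjectivity, given $Y \in \Chi(M_\infty)$, approximate innerness and dualizability should, via a Popa-style intertwining argument, realize $Y$ as induced from an $M_k$-bimodule lying in $\cC$ for some $k$; central triviality together with non-Gamma then promotes the compatibility of this descent under the tower inclusions to a full half-braiding of $Y$ against $\cC$, exhibiting $Y$ as an object in the image of $\Phi$. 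The monoidal structure is preserved by construction since relative tensor product over $M_\infty$ reduces to relative tensor product over each $M_k$ via the half-braiding, and the braiding $u_{X,Y}$ of Equation~\eqref{eq:uXY} coincides with the canonical braiding of $\cZ(\cC)$ on the image of $\Phi$ because both encode the interchange of two bimodules by means of a compatibility datum with the standard invariant.

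\textbf{Main obstacle.}
The crux is essential surjectivity: one must show that every dualizable, approximately inner, centrally trivial $M_\infty$-bimodule descends to an object of $\cC$ equipped with a half-braiding. Theorem~\ref{thmalpha:LocalExtension} does not apply directly, since $N \subseteq M_\infty$ has infinite index and since each intermediate $M_k$ is non-Gamma (so $\Chi(M_k) = \Vec$ is trivial and cannot carry the relevant Q-system). Instead one must iterate the finite-index content along the tower and combine this with Popa's non-Gamma rigidity techniques from \cite{MR1317367,MR2661553}, ruling out ``asymptotic-only'' bimodules that do not arise from the standard invariant. This is the step where finite depth and non-Gamma of $N$ are essential; without either, one expects $\Chi(M_\infty)$ to contain strictly more than $\cZ(\cC)$.
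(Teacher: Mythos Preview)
Your construction of $\Phi$ and the argument for approximate innerness and central triviality are in the right spirit, and the paper does something conceptually similar (though it works over $R\otimes N$ rather than over $M$: it identifies $M_\infty$ with the realization of the symmetric enveloping Q-system $S\in\cC^{\op}\boxtimes\cC$ via tensor functors $F:\cC^{\op}\to\fgpBim(R)$ and $G:\cC\to\fgpBim(N)$, then gets $\Phi$ from Q-system realization and the Morita equivalence $\Bim_{\cC^{\op}\boxtimes\cC}(S)\cong\cZ(\cC)$). Full faithfulness and braidedness come out of this machinery much as you sketch.

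The genuine gap is in essential surjectivity, and in fact your diagnosis of Theorem~\ref{thmalpha:LocalExtension} is backwards: the paper's proof \emph{does} use the local extension theorem, and this is precisely the missing idea. The trick is not to apply it to any inclusion in the Jones tower, but to the image under $\Phi$ of the canonical Lagrangian algebra $L=I(1_\cC)\in\cZ(\cC)$. One checks that $|\Phi(L)|$ is the basic construction of $R\otimes N\subseteq M_\infty$, hence Morita equivalent to $R\otimes N$, so $\Chi(|\Phi(L)|)\cong\Chi(R\otimes N)$ is trivial by the earlier computation in Example~\ref{ex:Chi_fus(RboxtimesN)}. Theorem~\ref{thmalpha:LocalExtension} then says $\Chi(M_\infty)^{\loc}_{\Phi(L)}$ is trivial, which forces the centralizer of $\Phi(\cZ(\cC))$ inside $\Chi(M_\infty)$ to be trivial (because the free-module functor $x\mapsto x\otimes\Phi(L)$ is fully faithful on the centralizer). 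Finally, a short categorical argument (Proposition~\ref{EXISTCENTRAL}) shows that a non-degenerately braided fusion subcategory with trivial centralizer must be everything, so $\Phi$ is essentially surjective.

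Your proposed ``Popa-style intertwining'' descent is not developed enough to do this job: you would need to show that an arbitrary approximately inner, centrally trivial $M_\infty$-bimodule is visible at a finite stage of the tower \emph{and} acquires a coherent half-braiding, and there is no evident mechanism for either step. The paper sidesteps descent entirely by the centralizer argument above.
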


We get the following immediate corollary.

\begin{coralpha}
If $N_{1}\subseteq N_{2}$ and $M_{1}\subseteq M_{2}$ are finite depth inclusions of non-Gamma $\rm{II}_{1}$ factors with $\cC(N_{1}\subseteq N_{2})$ not Morita equivalent to $\cC(M_{1}\subseteq M_{2})$, then the $\rm II_1$ factors $N_{\infty}$ and $M_{\infty}$ are not stably isomorphic.
\end{coralpha}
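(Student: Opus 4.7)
The plan is to argue by contrapositive: assume $N_\infty$ and $M_\infty$ are stably isomorphic and deduce Morita equivalence of the standard invariants. The proof is essentially a three-step chain, combining Theorems A and C with the well-known characterization of Morita equivalence of multifusion categories via their Drinfeld centers.

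First, I would invoke Theorem \ref{thm:ExistsBraiding}. Since $N_\infty$ and $M_\infty$ are $\rm II_1$ factors that are stably isomorphic, the second assertion of that theorem yields an equivalence
\[
\Chi(N_\infty) \cong \Chi(M_\infty)
\]
of braided unitary tensor categories. Note that both $N\subseteq M$ and their respective inductive limits $N_\infty$, $M_\infty$ satisfy the hypotheses of Theorem \ref{thmalpha:InductiveLimit}: finite depth, finite index, non-Gamma.

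Next, I would apply Theorem \ref{thmalpha:InductiveLimit} separately to each side to rewrite both sides in terms of Drinfeld centers of standard invariants. This yields
\[
\cZ(\cC(N_1 \subseteq N_2)) \cong \Chi(N_\infty) \cong \Chi(M_\infty) \cong \cZ(\cC(M_1 \subseteq M_2))
\]
as braided unitary fusion categories.

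Finally, I would invoke the characterization of Morita equivalence of indecomposable multifusion categories via their Drinfeld centers recalled in the paragraph preceding Theorem \ref{thmalpha:InductiveLimit} (see \cite[\S8.5]{MR3242743}): two indecomposable multifusion categories are Morita equivalent if and only if their Drinfeld centers are equivalent as braided fusion categories. Thus the equivalence of centers above forces $\cC(N_1 \subseteq N_2)$ and $\cC(M_1 \subseteq M_2)$ to be Morita equivalent, contradicting the hypothesis. Since each implication in this chain is a direct application of a stated result, the corollary is genuinely immediate and there is no substantive obstacle; the only thing to double-check is that the notion of ``stably isomorphic'' used in Theorem \ref{thm:ExistsBraiding} is the same one intended in the statement of the corollary, which I would confirm by a brief pointer to the earlier definition in the paper.
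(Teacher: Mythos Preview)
Your proposal is correct and matches the paper's intended argument exactly: the paper presents this as an immediate corollary of Theorem~\ref{thmalpha:InductiveLimit}, using precisely the chain Theorem~\ref{thm:ExistsBraiding} $\Rightarrow$ $\Chi(N_\infty)\cong\Chi(M_\infty)$, Theorem~\ref{thmalpha:InductiveLimit} $\Rightarrow$ equivalence of Drinfeld centers, and the cited fact from \cite[\S8.5]{MR3242743} that braided equivalence of centers characterizes Morita equivalence of indecomposable multifusion categories.
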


This corollary shows that remarkably, the stable isomorphism class of the inductive limit $\rm{II}_{1}$ factor $M_\infty$ remembers the standard invariant of the initial finite depth subfactor $N\subseteq M$ up to Morita equivalence. 
In fact, our result complements the rigidity result of Popa, which states that the $\rm{II}_{1}$ factor $M_{\infty}$ remembers the inclusion $N\subseteq M$ up to weak equivalence \cite[Def.~3.1.4 and Cor.~3.6]{MR2888236}. As another consequence, our computation for $\Chi(M_{\infty})$ also computes the ordinary $\chi(M_{\infty})$ and $\kappa$ invariants as the braided subcategory of invertible objects in $\cZ(\cC(N\subseteq M))$. 
By Popa's result \cite[Thm.~4.2]{MR2661553}, this is isomorphic to the relative $\chi(M'\cap M_{\infty} \subseteq N'\cap M_{\infty} )$ studied by Kawahigashi \cite{MR1230287}. 
The latter inclusion is a finite index hyperfinite $\rm II_1$ subfactor with standard invariant equivalent to $\cC(N\subseteq M)$.

There are many examples of finite depth non-Gamma inclusions. Popa and Shlyakhtenko showed that every subfactor standard invariant can be realized as an inclusion of $\rm II_1$ factors isomorphic to $L\bbF_{\infty}$ \cite{MR2051399}. 
In \cite{MR2732052}, Guionnet-Jones-Shlyakhtenko provide an alternative realization of finite depth standard invariants as inclusions of interpolated free group factors \cite{MR2807103}
in their diagrammatic reproof of Popa's celebrated subfactor reconstruction theorem \cite{MR1278111}.
As every indecomposable unitary multifusion category is Morita equivalent to any of its unitary fusion category diagonal summands, we obtain the following corollary.

\begin{coralpha}
For $\cC$ a unitary fusion category, its Drinfeld center $\cZ(\cC)$ is realized as $\Chi(M)$ for some McDuff $\rm{II}_{1}$ factor $M$.
\end{coralpha}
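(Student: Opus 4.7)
The plan is to manufacture a finite depth, finite index inclusion $N\subseteq M$ of non-Gamma $\rm{II}_1$ factors whose standard invariant $\cC(N\subseteq M)$ has $\cC$ as a diagonal fusion summand, and then feed it into Theorem \ref{thmalpha:InductiveLimit} and use Morita invariance of the Drinfeld center.

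For the first step, one produces a finite depth subfactor planar algebra whose even part recovers $\cC$. Concretely, pick any tensor-generator $X\in\cC$ (for instance, a direct sum of representatives of $\Irr(\cC)$) together with a choice of module category giving a $\cC$-$\cD$ bimodule structure, and form the associated subfactor planar algebra. Finiteness of $\Irr(\cC)$ forces finite depth. By Popa--Shlyakhtenko \cite{MR2051399}, or more directly by the Guionnet--Jones--Shlyakhtenko construction \cite{MR2732052} in the finite depth setting, this planar algebra is realized as the standard invariant of a finite index subfactor $N\subseteq M$ with both $N$ and $M$ interpolated free group factors. In particular both factors are non-Gamma, so the hypotheses of Theorem \ref{thmalpha:InductiveLimit} are met and $\Chi(M_\infty)\cong \cZ(\cC(N\subseteq M))$ as braided unitary tensor categories.

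Second, $\cC$ sits inside $\cC(N\subseteq M)$ as one of the two diagonal unitary fusion summands of this indecomposable $2\times 2$ multifusion category. By the cited fact that any indecomposable multifusion category is Morita equivalent to each of its diagonal fusion summands, $\cC(N\subseteq M)$ is Morita equivalent to $\cC$, and Morita equivalent multifusion categories have braided equivalent Drinfeld centers \cite[\S8.5]{MR3242743}. Chaining equivalences gives
\[
\Chi(M_\infty)\;\cong\;\cZ(\cC(N\subseteq M))\;\cong\;\cZ(\cC)
\]
as braided unitary fusion categories. Finally, $M_\infty$ is McDuff by Popa's analysis of inductive limits of Jones towers of finite depth non-Gamma subfactors (the same input that makes $\chi(M_\infty)=1$ accessible in \cite{MR2661553}), so taking $M=M_\infty$ yields the corollary.

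The only genuine step of substance is the first: one must realize an arbitrary unitary fusion category $\cC$ as a diagonal summand of a \emph{finite depth} subfactor standard invariant in a concretely non-Gamma environment. Once the PS/GJS realization is invoked, the remainder is a formal assembly of Theorem \ref{thmalpha:InductiveLimit} with the Drinfeld-center-preserves-Morita-equivalence theorem and Popa's McDuff statement for Jones tower limits.
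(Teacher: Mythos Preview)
Your proposal is correct and follows essentially the same route as the paper: realize a finite depth standard invariant having $\cC$ as a diagonal summand via the Popa--Shlyakhtenko or Guionnet--Jones--Shlyakhtenko construction (yielding a non-Gamma inclusion), apply Theorem \ref{thmalpha:InductiveLimit}, and then invoke Morita invariance of the Drinfeld center together with Popa's result that $M_\infty$ is McDuff. The paper presents this as an immediate corollary with the argument contained in the paragraph preceding the statement, so your write-up is slightly more detailed but substantively identical.
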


While we have stated our results above for $\Chi(M)$, our analysis actually occurs in a much more general categorical setting.  
We define the notions of approximately inner and centrally trivial for endofunctors on an arbitrary $\rm W^*$-category with separable preduals (see \S\ref{ApproxInnerCentTrival}). Functors which are both are said to be \textit{local}. 
We construct a unitary braiding on this category (without any dualizability assumptions), and prove all the axioms are satisfied here. 
Thus we get a new construction of a canonical braided $\rm W^*$-tensor category $\locEnd(\cC)$ from an arbitrary $\rm W^*$-category $\cC$.

When $\cC=\fgpMod(M)$, the finitely generated projective modules of a separable finite von Neumann algebra, under the well known equivalence between $\End(\fgpMod(M))$ and $\fgpBim(M)$,
our definitions of approximately inner and centrally trivial agree with Popa's from \cite{MR1317367}. 
We use this equivalence to express the braiding as a bimodule intertwiner in Equation \eqref{eq:uXY}.
Aside from the greater generality, one of the reasons we use the functor language for our constructions and proofs is that the category of endofunctors is strict, making some commuting diagrams significantly simpler. 
In addition, in the categorically oriented functor approach, the concept of approximately commuting diagram significantly reduces the complexity of proofs and their verification.

\subsection*{Outline}

In \S\ref{sec:Preliminaries}, we recall the notion of $\rm W^*$-category, and we pay special attention to the $\rm W^*$ 2-category of von Neumann algebras, bimodules, and intertwiners.
In \S\ref{sec:CMC*Cat}, we discuss the canonical $\sigma$-strong* topology on the hom spaces of a $\rm W^*$-category, which is essential to our construction.

In \S\ref{sec:W*ApproxNatTrans}, we introduce the notion of an approximate natural transformation between endofunctors of a $\rm W^*$-category, which we use to define the notions of approximately inner and centrally trivial for endofunctors.
We show that the local endofunctors which are both approximately inner and centrally trivial admit a canonical unitary braiding in \S\ref{sec:RelativeBraiding}.

In \S\ref{sec:Translation}, we translate our construction into the language of dualizable bimodules over a $\rm II_1$ factor $M$, and we calculate many examples of $\Chi(M)$ in \S\ref{sec:Examples}.
In \S\ref{sec:LocalExtensionChapter}, we prove Theorem \ref{thmalpha:LocalExtension}, and in \S\ref{sec:Calculation}, we prove Theorem \ref{thmalpha:InductiveLimit}.
To prove these theorems, we make heavy use of the Q-system realization machinery developed in \cite{MR3948170,2105.12010} and the graphical calculus for unitary tensor categories.

\subsection*{Acknowledgements}

This project began through conversations with Vaughan Jones. 
This work would not have been possible without his generous support and encouragement.
We dedicate this article to his memory.
The authors would like to thank 
Dietmar Bisch, 
Andr\'e Henriques, 
Yasuyuki Kawahigashi,
Sorin Popa, and
Stefaan Vaes
for helpful comments and conversations.
Quan Chen and David Penneys were supported by NSF grant DMS 1654159.
Corey Jones was supported by NSF grant DMS 1901082/2100531.

\section{Preliminaries}
\label{sec:Preliminaries}

We assume the reader is relatively familiar with the basics of von Neumann algebras, in particular $\rm II_1$ factors, where our main references include \cite{MR1873025,MR2188261,JonesVNA,ClaireSorinII_1}.
Most von Neumann algebras that appear in this article are assumed to be \emph{separable} (their preduals are separable), with the exception of ultraproducts in \S\ref{sec:Examples} below.

We also assume the reader is relatively familiar with the basics of tensor categories and 2-categories, where our main references include \cite{MR3242743,MR3971584,2002.06055}.
Of particular importance is the graphical string diagrammatic calculus for 2-categories and tensor categories \cite[\S1.1.1 and 8.1.2]{MR3971584}.
For a 2-category $\cC$, objects are represented by 2D shaded regions, 1-morphisms are represented by labelled 1D strands read from left-to-right, and 2-morphisms are represented by labelled 0D coupons which are read bottom-to-top.
1-composition is read left-to-right similar to the relative tensor product of bimodules, and 2-composition is read bottom-to-top.

These string diagrams are formally dual to pasting diagrams, and typically associators and unitors are suppressed whenever possible.
As a tensor category is equivalent to a 2-category with one object, the graphical calculus for tensor categories has no shadings for regions;
objects are represented by labelled 1D strings, and morphisms are represented by labelled 0D coupons read bottom-to-top.
Tensor product is read left-to-right, and composition of morphisms is read bottom-to-top.
Our 2-categories and tensor categories are $\rm C^*/W^*$ (see \S\ref{sec:W*Cat} and \ref{sec:Modules} below), and we represent the $\dag$-operation by vertical reflection of diagrams.

\subsection{\texorpdfstring{$\rm C^*/W^*$}{C*/W*}-categories}
\label{sec:W*Cat}

We begin with the basics of $\rm C^*$ and $\rm W^*$-categories.
The latter were first introduced in \cite{MR808930}.

\begin{defn} 
A \textit{$\rm C^*$-category} is a $\bbC$-linear category $\cC$ such that:
\begin{itemize}
    \item 
    for each pair of objects $a,b\in \cC$, there is a conjugate linear involution $\dag:\cC(a\to b)\rightarrow \cC(b\to a)$, satisfying $(f\cdot g)^\dag=g^\dag\cdot f^\dag$,
    \item
    for each pair of objects $a,b\in \cC$, there is a Banach norm on $\cC(a\to b)$ satisfying $\|f\|^{2}=\|f^\dag\cdot f\|=\|f\cdot f^\dag\|$ for all $f\in \cC(a\to b)$, and
    \item
    for all $f\in \cC(a\to b)$, $f^\dag\cdot f$ is a positive element in the $\rm C^*$-algebra $\cC(a\to a)$.
    That is, there is a $g\in \cC(a\to a)$ such that $f^\dag \cdot f = g^\dag \cdot g$.
\end{itemize}
A \emph{$\rm W^*$-category} is a $\rm C^*$-category such that every hom space $\cC(a\to b)$ admits a predual Banach space.
We call a $\rm W^*$-category \emph{separable} if all such preduals are separable Banach spaces.
\end{defn}

\begin{assumption}
In this article, we assume all $\rm C^*/W^*$-categories are \emph{unitarily Cauchy complete}, meaning they admit all finite orthogonal direct sums and are orthogonal projection complete.
There is a formal construction to complete any $\rm C^*/W^*$-category which satisfies a universal property; we refer the reader to \cite[\S3.1.1]{1810.06076} for more details.
\end{assumption}

\begin{rem}
Every unitarily Cauchy complete $\rm C^*/W^*$-category admits a canonical left $\fdHilb$-module category structure.
That is, for each $c\in \cC$ and finite dimensional Hilbert space $H$, there is an object $H\rhd c\in \cC$, unique up to canonical unitary isomorphism.
Moreover, there is a canonical unitary associator $H\rhd (K\rhd a)\cong (H\otimes K)\rhd a$.
In the sequel, we will assume our $\fdHilb$-module category structure is \emph{strictly unital}, i.e., $\bbC\rhd c = c$ for all $c\in \cC$.
\end{rem}

\begin{defn}
A \emph{$\dag$-functor} between between $\rm C^*$-categories is a functor $F: \cC\to \cD$ such that $F(f^\dag) = F(f)^\dag$ for all morphisms $f$ in $\cC$.
Two $\rm C^*$-categories are \emph{(unitarily) equivalent} if there are $\dag$-functors each way whose appropriate composites are unitarily naturally isomorphic to the appropriate identity $\dag$-functors.

For $\rm W^*$-categories, one restricts to the \emph{normal} $\dag$-functors which are weak*-continuous on hom spaces.
Equivalence is defined similarly as before, but restricting to normal $\dag$-functors.
\end{defn}

\begin{ex}
\label{ex:RightFGPMod}
The most important example of a $\rm W^*$-category for our article is the finitely generated projective right modules for a $\rm II_1$ factor $M$.
There are two dagger equivalent such categories that one can work with:
\begin{itemize}
    \item 
    Hilbert spaces $H$ equipped with a normal right $M$-action such that the von Neumann dimension $\dim(H_M)$ is finite, or
    \item
    finitely generated projective right Hilbert $\rm W^*$-modules (see \S\ref{sec:Modules} below for more details).
\end{itemize}
To see the equivalence, the map from the first to the second is taking bounded vectors (the $\xi \in H$ such that $\widehat{m} \mapsto \xi m$ extends to a bounded map $L^2M \to H$), and the map from the second to the first is $-\otimes_M L^2M$ (the inner product is given by $\langle \eta\otimes \widehat{m} , \xi\otimes \widehat{n} \rangle:=\langle \langle \xi | \eta\rangle_M \widehat{m}, \widehat{n}\rangle_{L^2M}$).

We will use the second definition above for the convenience that we may state many results for all $\xi \in X_M$ rather than for all bounded vectors.
However, one can work with the first definition provided that one restricts to bounded vectors when appropriate.
\end{ex}

\begin{ex}
For a separable $\rm C^*$-algebra $A$, $\mathsf{Rep}(A)$ is the $\rm W^*$-category of (non-degenerate) $*$-representations of $A$ on separable Hilbert spaces. This category is relevant in the operator algebraic study of quantum  statistical mechanics.
\end{ex}

\begin{defn}
\label{defn:sigmaFH}
Given a $\fdHilb$-module $\rm C^*$ category $\cC$, a finite dimensional Hilbert space $H$, and any $\dag$-functor $F\in  \End(\cC)$, we have a canonical \emph{braid-like} unitary natural isomorphism 
$$
\sigma_{F,H}: F(H\rhd -)\rightarrow H\rhd F(-)
$$
defined as follows. 
For an orthonormal basis $\{e_{i}\}$, we may identify its elements as bounded operators $e_i\in B(\bbC, H)$ defined by $1\mapsto e_{i}$. 
Then $e^{\dag}_{i}\in B(H, \bbC)$ is given by $e^{\dag}_{j}e_{i}=\delta_{i,j}$.
We then define $\sigma_{F,H}$ in components by 
$$
\sigma^{a}_{F,H}:=\sum_{i} (e_{i}\rhd F(1_{a}))\cdot F(e^{\dag}_{i}\rhd 1_{a}),
$$
which does not depend on the choice of orthonormal basis of $H$.
 
Unitarity is straightforward to verify. 
To show naturality, let $f\in \cC(a\to b)$. 
Then 
\begin{align*}
(1_{H}\rhd F(f))\cdot \sigma^{a}_{F,H} 
& = \sum_{i} (e_{i}\rhd F(f))\cdot F(e^{\dag}_{i}\rhd 1_{a}) = \sum_{i} (e_{i}\rhd 1_{b})\cdot F(f)\cdot F(e^{\dag}_{i}\rhd 1_{a}) 
\\
& = \sum_{i} (e_{i}\rhd 1_{b})\cdot F(e^{\dag}_{i}\rhd f)=\sigma^{b}_{F,H}\cdot F(1_{H}\rhd f). \qedhere
\end{align*}
\end{defn}

The family $\sigma$ also satisfies the following monoidality conditions (where we have suppressed the module category associator).

\begin{prop}
\label{prop:sigmabraid}
\mbox{}
\begin{enumerate}
    \item For any $F\in  \End(\cC)$, $\sigma^{a}_{F,H\otimes K}=(1_{H}\rhd \sigma^{a}_{F,K})\cdot \sigma^{K\rhd a}_{F,H}$.
    \item For any $G\in  \End(\cC)$ we have $\sigma^{a}_{F\circ G, H}=\sigma^{G(a)}_{F,H}\cdot F(\sigma^{a}_{G,H})$.
\end{enumerate} 
\end{prop}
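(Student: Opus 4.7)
The plan is to prove both monoidality identities by direct computation from the explicit formula
$$\sigma^{a}_{F,H}=\sum_{i} (e_{i}\rhd F(1_{a}))\cdot F(e^{\dag}_{i}\rhd 1_{a})$$
given in Definition \ref{defn:sigmaFH}, choosing convenient orthonormal bases and using functoriality of $F$ (and $G$) together with the fact that the $\{e_i\}$ are an ONB (i.e., $e_i^\dag e_j = \delta_{ij}$ and $\sum_i e_i e_i^\dag = 1_H$ as bounded operators). Because the formula is basis-independent, any choice of ONB is permitted.

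For part (1), I would pick an ONB $\{e_i\}\subseteq B(\bbC,H)$ and an ONB $\{f_j\}\subseteq B(\bbC,K)$, so $\{e_i\otimes f_j\}$ is an ONB for $H\otimes K$. Then I would expand both sides. The left hand side becomes
$$\sigma^{a}_{F,H\otimes K}=\sum_{i,j}\bigl((e_i\otimes f_j)\rhd F(1_a)\bigr)\cdot F\bigl((e_i^\dag\otimes f_j^\dag)\rhd 1_a\bigr),$$
while for the right hand side I would first rewrite $1_H\rhd \sigma^a_{F,K}$ and $\sigma^{K\rhd a}_{F,H}$ using the same ONBs and the (suppressed) module associator $H\rhd(K\rhd a)\cong (H\otimes K)\rhd a$. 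Factoring $(e_i\otimes f_j)\rhd 1_{F(a)}$ as $(e_i\rhd 1_{K\rhd F(a)})\cdot (1_H\rhd (f_j\rhd 1_{F(a)}))$ (and similarly for the daggered version) then matches the two expressions term by term.

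For part (2), with a single ONB $\{e_i\}$ for $H$, I would expand
$$\sigma^{G(a)}_{F,H}\cdot F(\sigma^{a}_{G,H})=\sum_{i,j}\bigl(e_i\rhd F(1_{G(a)})\bigr)\cdot F\bigl(e_i^\dag\rhd 1_{G(a)}\bigr)\cdot F\bigl((e_j\rhd G(1_a))\cdot G(e_j^\dag\rhd 1_a)\bigr).$$
Pulling the inner $F$ inside and using naturality of the action of morphisms like $e_i^\dag$ on $F$ applied to $e_j \rhd G(1_a)$, the orthogonality relation $e_i^\dag e_j=\delta_{ij}1_\bbC$ collapses the double sum to a single sum over $i$, giving
$$\sum_{i}\bigl(e_i\rhd F(G(1_a))\bigr)\cdot F\bigl(G(e_i^\dag\rhd 1_a)\bigr)=\sigma^{a}_{F\circ G,H},$$
using functoriality of $F$ on $G(e_i^\dag\rhd 1_a)$ and the obvious functoriality of $e_i \rhd -$ with respect to $F$ applied to morphisms in $G(a)$.

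The main obstacle is purely bookkeeping: one must keep track of the suppressed module-category associator in part (1) and carefully commute the operators $e_i^\dag\rhd 1_{(-)}$ past $F$ applied to morphisms involving $e_j\rhd 1_{(-)}$ in part (2). Neither step requires any deep input beyond the $\bbC$-linearity and $\dag$-functor properties; once the ONB sums are set up against the right module-associator normalization, both identities drop out by orthogonality and functoriality.
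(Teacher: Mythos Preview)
Your proposal is correct and follows essentially the same approach as the paper's proof: both parts are direct computations from the defining formula using well-chosen orthonormal bases, with part (1) reducing to factoring $(e_i\otimes f_j)\rhd(-)$ through the module associator and part (2) collapsing via $e_i^\dag e_j=\delta_{ij}$. If anything, your treatment of part (2) is slightly more explicit than the paper's, which writes the expansion as a single sum over $i$ from the outset rather than displaying the double sum and its collapse.
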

\begin{proof}
Let $\{f_j\}$ be an orthonormal basis for $K$, which we identify with bounded operators $f_j\in B(\bbC,K)$.
Then
\begin{align*}
(1_H\rhd\sigma^a_{F,K})\cdot \sigma^{K\rhd a}_{F,H} 
& = \sum_{i,j} (1_H\rhd f_j\rhd F(1_a))\cdot (1_H\rhd F(f_j^\dag\rhd 1_a))\cdot (e_i\rhd F(1_{K\rhd a})\cdot F(e_i^\dag\rhd 1_{K\rhd a}) \\
& = \sum_{i,j} (1_H\rhd f_j\rhd F(1_a))\cdot (e_i\rhd F(1_a))\cdot F(f_j^\dag\rhd 1_a) \cdot F(e_i^\dag\rhd 1_{K\rhd a})
= \sigma^a_{F,H\otimes K} \\
\intertext{and}
\sigma^{G(a)}_{F,H}\cdot F(\sigma^{a}_{G,H}) 
& = \sum_i (e_i\rhd FG(1_a))\cdot F(e_i^\dag\rhd G(1_a))\cdot F(e_i\rhd G(1_a))\cdot FG(e_i^\dag\rhd 1_a) \\
& = \sum_i (e_i\rhd FG(1_a))\cdot FG(e_i^\dag\rhd 1_a) 
= \sigma^{a}_{F\circ G, H}.
\qedhere
\end{align*}
\end{proof}

\subsection{Modules and correspondences of \texorpdfstring{$\rm W^*$}{W*}-algebras}
\label{sec:Modules}

We now recall the definition of the $\rm W^*$ 2-category $\fgpWStarRCorr$ of finitely generated projective right $\rm W^*$-correspondences, after which we formally define the finitely generated projective right modules $\fgpMod(M)$.
Our exposition follows \cite[\S2.2]{2105.12010}, which was adapted from \cite[\S8]{MR2111973}.
Other references include \cite{MR355613,MR0367670}.

\begin{defn}
The $\rm W^*$ 2-category $\fgpWStarRCorr$ is given as follows.
\begin{itemize}
\item 
objects are von Neumann algebras
\item
1-morphisms are finitely generated projective right $\rm W^*$-corresponendences.
In more detail, given von Neumann algebras $A,B$, a 1-morphism ${}_AX_B$ is a Banach space equipped with a right $B$-action and a right $B$-valued inner product satisfying
\begin{itemize}
    \item $\langle \eta| \xi_1 +\xi_2 b\rangle_B = \langle \eta | \xi_1\rangle_B + \langle \eta | \xi_2\rangle_B b$,
    \item $\langle \eta_1 + \eta_2 b| \xi\rangle_B = \langle \eta_1 | \xi\rangle_B+ b^*\langle \eta_2 | \xi\rangle_B$,
    \item $\langle \eta|\xi\rangle_B^* = \langle \xi | \eta\rangle_B$, and
    \item $\langle \xi | \xi \rangle_B \geq 0$ with equality if and only if $\xi = 0$.
\end{itemize}
By the Cauchy-Schwarz inequality, $\|\langle \xi|\xi\rangle_B\|_B$ defines a norm on $X$, which is required to be complete.
Moreover, we require the left $A$-action to be by adjointable operators.

The finitely generated projective condition says that as a right $B$-module, $X_B$ is unitarily isomorphic to $pB^n$ for some (adjointable) orthogonal projection $p\in \End_{-B}(B^n)$.

The $\rm W^*$ condition amounts to requiring that:
\begin{itemize}
    \item ${}_AX_B$ has a predual,
    \item the $B$-valued inner product $\langle \,\cdot\,|\,\cdot\,\rangle_B$ is separately weak*-continuous, and
    \item the left $A$-action $A\to \End(X_B)$ is normal.
\end{itemize}
Composition of 1-morphisms is the relative tensor product.
\item
2-morphisms ${}_AX_B\Rightarrow {}_AY_B$ are the adjointable right $B$-linear operators that commute with the left $A$-action.
\end{itemize}
\end{defn}

\begin{defn}
For a von Neumann algebra $M$, we define $\fgpMod(M):=\fgpWStarRCorr(\bbC \to M)$.

Given $X_M \in \fgpMod(M)$, a finite $X_M$-basis 
is a finite subset $\{\beta\}\subseteq X$ such that $\sum_{\beta} \beta\langle \beta|\xi\rangle_M = \xi$ for all $\xi\in X$ \cite[\S1.1.3]{MR1278111},
\cite[\S3.1.1]{1111.1362}.
As we only work with finitely generated projective modules in this article, all $X_M$-bases will be finite, so we omit the word `finite' without confusion.
We call such a basis \emph{orthogonal} if $\langle \beta|\beta'\rangle_M$ is equal to $\delta_{\beta=\beta'}$ times an orthogonal projection in $M$.
Given an $X_M$-basis, one can always obtain an orthogonal $X_M$-basis using the Gram-Schmidt orthogonalization procedure \cite[Lem.~8.5.2]{ClaireSorinII_1}.
\end{defn}

\begin{rem}
\label{rem:OppositeIssue}
It is well known that the $\rm W^*$-tensor category $\End(\fgpMod(M))$ of normal $\dag$-endofunctors is dagger equivalent to $\fgpBim(M)^{\rm mp}$, the monoidal opposite of $\fgpBim(M)$.
That is, every normal $\dag$-endofunctor of $\fgpMod(M)$ is of the form $-\boxtimes_M X$ for some $X\in \fgpMod(M)$.
For example, this equivalence is explained in \cite[\S3.2]{2004.08271} for infinite von Neumann algebras using the fact that $\End(\fgpMod(M))$ is unitarily equivalent to the orthogonal projection completion of the $\rm W^*$-tensor category $\End(M)$.
\end{rem}

\begin{rem}
There is another way that the category of $\End(\fgpMod(N))$ is used in practice, particularly among the $\rm{II}_{1}$ factor community. 
This stems from the fact that $\fgpMod(N)$ is equivalent to the unitary Cauchy completion of $N$ thought of as a $\rm W^*$-category with one object. 
Objects in the completion are pairs $(n,p)$, where $n\in \bbN$ and $p\in M_{n}(N)$ is a projection. 
Morphisms $(n,p)\to (m,q)$ are elements of $qM_{m\times n}(N)p$.
By the universal property of Cauchy completion, an endofunctor is determined by where it sends $(1,1_{N})$ together with its action on $\End((1, 1_{N}))\cong N$. 
In other words, an endofunctor in $\End(\fgpMod(N))$ is completely determined up to unitary natural isomorphism by a (unital) homomorphism $N\to pM_{n}(N)p$ for some projection $p\in M_{n}(N)$, called a \emph{cofinite morphism} of $N$ in \cite{MR1049618}. 
Furthermore, a natural transformation is uniquely determined by its $(1,1_{N})$-component.
\end{rem}

\subsection{The \texorpdfstring{$\sigma$}{sigma}-strong* topology on a \texorpdfstring{$\rm W^*$}{W*}-category}
\label{sec:CMC*Cat}

Let $\cC$ be a separable $\rm W^*$-category, which has a canonical weak* topology on each hom space.

\begin{defn}
For each $a,b\in \cC$,
the $\sigma$-\emph{strong* topology} $\tau$ on $\cC(a\to b)$
is defined as follows:
$f_i \to 0$ $\sigma$-strong* if and only if
$f_i^\dag f_i \to 0$
and
$f_if_i^\dag \to 0$
weak* ($\sigma$-weakly).
\end{defn}

\begin{facts}
The $\sigma$-strong* topology $\tau$ on the hom spaces of $\cC$ satisfies the following properties:
\begin{enumerate}[label=($\tau$\arabic*)]
\item
\label{tau:composition}
composition is jointly $\tau$-continuous on norm bounded subsets (if $\{a_{n}\}, \{b_n\}$ are uniformly norm bounded, and $a_{n}\rightarrow a$ and $b_{n}\rightarrow b$ then $a_{n}b_{n}\rightarrow ab$).
\item
\label{tau:dagcontinuity}
$\dag$ is $\tau$-continuous on norm bounded subsets of morphism spaces.
\item
\label{tau:metrizable}
$\tau$ restricted to the unit ball of any morphism space is completely metrizable.
\end{enumerate}
\end{facts}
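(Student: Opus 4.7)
The plan is to reduce each of the three properties to the corresponding well-known fact for separable $\rm W^*$-algebras via a corner embedding. Fix objects $a,b \in \cC$ and set $c := a \oplus b$ with associated $\rm W^*$-algebra $A := \End_\cC(c)$, which is separable since each $\cC(x\to y)$ for $x,y \in \{a,b\}$ has separable predual. Let $p_a, p_b \in A$ be the orthogonal projections onto the two summands; then $\cC(a\to b)$ is isometrically identified with the corner $p_b A p_a$. The first step is to verify that the $\sigma$-strong* topology on $\cC(a\to b)$ defined in the paper coincides with the subspace topology inherited from the $\sigma$-strong* topology of $A$. This holds because $f^\dag f$ and $f f^\dag$ for $f \in p_b A p_a$ live in the diagonal corners $p_a A p_a = \cC(a \to a)$ and $p_b A p_b = \cC(b \to b)$, and the predual of $A$ decomposes compatibly with this $2 \times 2$ matrix block structure, so that weak* (equivalently $\sigma$-weak) convergence agrees under the identification.

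Having reduced to the $\rm W^*$-algebra setting, each claim is standard. For \ref{tau:composition}, joint $\sigma$-strong* continuity of multiplication on norm bounded subsets of any $\rm W^*$-algebra applies after enlarging to $A' := \End_\cC(a \oplus b \oplus d)$ when composing $\cC(a \to b) \times \cC(b \to d) \to \cC(a \to d)$. For \ref{tau:dagcontinuity}, the adjoint on any $\rm W^*$-algebra is $\sigma$-strong* continuous on the whole algebra (this is the point of symmetrizing by adding the star), which restricts to the corner $p_b A p_a$, sending it to $p_a A p_b$. For \ref{tau:metrizable}, the unit ball of a separable $\rm W^*$-algebra is completely metrizable in $\sigma$-strong*—a compatible metric is built from the seminorms $x \mapsto \varphi_n(x^\dag x)^{1/2} + \varphi_n(x x^\dag)^{1/2}$, where $\{\varphi_n\}$ is a countable dense subset of the normal state space—and the unit ball of the corner $p_b A p_a$ is a $\sigma$-strong* closed subset of the unit ball of $A$, hence inherits a complete metric.

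The main obstacle, such as it is, is not in any single verification but in setting up the corner embedding carefully enough to justify the transfer of topologies, in particular the claim that the abstract $\sigma$-weak topology on each hom space $\cC(x \to y)$ agrees with the topology it inherits as a corner of $A$. This amounts to showing that the canonical restriction maps $A_* \to \cC(x \to y)_*$ induced by the matrix block decomposition of elements of $A$ are isometric isomorphisms onto the corresponding blocks of $A_*$, the categorical analog of the standard predual block decomposition for a $2 \times 2$ matrix algebra over a $\rm W^*$-algebra. Once this identification is in hand, all three properties are immediate consequences of their $\rm W^*$-algebraic counterparts.
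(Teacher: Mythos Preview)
The paper states these as \emph{Facts} without proof, so there is no argument to compare against. Your approach via the corner embedding into $A=\End_\cC(a\oplus b)$ (or $a\oplus b\oplus d$ for \ref{tau:composition}) is correct and is the natural way to reduce to the classical $\rm W^*$-algebra statements. The only point worth a small additional remark is the identification of topologies: you correctly note that for $f\in p_bAp_a$, the elements $f^\dag f$ and $ff^\dag$ lie in the diagonal corners, and since the predual of $A$ decomposes as a direct sum of the preduals of the four corners, $\sigma$-weak convergence in a corner agrees with $\sigma$-weak convergence in $A$; hence the paper's definition of $\sigma$-strong* on $\cC(a\to b)$ matches the subspace topology from $A$. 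With that in hand, \ref{tau:composition}--\ref{tau:metrizable} are indeed immediate from the standard facts for separable $\rm W^*$-algebras. Your observation that $\dag$ is in fact $\sigma$-strong* continuous everywhere (not just on bounded sets) is also correct and slightly sharper than what the paper records.
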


The following proposition basically follows from \cite[III.2.2.2]{MR2188261}.

\begin{prop}
Suppose $M,N$ are von Neumann algebras and $\Phi: M \to N$ is a unital $*$-homomorphism.
Then $\Phi$ is normal if and only if it is $\sigma$-strong* continuous on the unit ball of $M$.
\end{prop}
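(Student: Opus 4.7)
The plan is to reduce both directions to the classical characterization of normal linear functionals on a von Neumann algebra, namely: a linear functional $\phi$ on a von Neumann algebra $A$ is normal (i.e., lies in $A_*$) if and only if its restriction to the unit ball is $\sigma$-strong$*$ continuous. This is precisely the content of the result in Blackadar cited in the hint, so I would invoke it and then use the multiplicativity of $\Phi$ to lift it from the scalar-valued case to the map $\Phi$.

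For the only-if direction, assume $\Phi$ is normal. Let $(x_i)$ be a net in the unit ball of $M$ with $x_i \to 0$ in the $\sigma$-strong$*$ topology, so $x_i^\dag x_i \to 0$ and $x_i x_i^\dag \to 0$ weak$*$. Since $\Phi$ is a unital $*$-homomorphism, it is contractive, so the net $\Phi(x_i)$ lies in the unit ball of $N$, and moreover
\[
\Phi(x_i)^\dag \Phi(x_i) = \Phi(x_i^\dag x_i) \qquad \text{and} \qquad \Phi(x_i)\Phi(x_i)^\dag = \Phi(x_i x_i^\dag).
\]
Both of these converge weak$*$ to $0$ by normality of $\Phi$, so $\Phi(x_i) \to 0$ in the $\sigma$-strong$*$ topology. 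This is exactly $\tau$-continuity of $\Phi$ on the unit ball.

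For the if direction, assume $\Phi$ is $\sigma$-strong$*$ continuous on the unit ball of $M$. To show $\Phi$ is normal, it suffices to show that for every normal state $\phi$ on $N$, the composition $\phi\circ \Phi$ is normal on $M$ (since the normal states separate points of $N_*$). Fix such $\phi$. By the classical characterization, $\phi$ is $\sigma$-strong$*$ continuous on the unit ball of $N$. Because $\Phi$ is contractive and $\sigma$-strong$*$ continuous on the unit ball of $M$, the composition $\phi \circ \Phi$ is $\sigma$-strong$*$ continuous on the unit ball of $M$. Applying the classical characterization again (in the other direction), $\phi \circ \Phi$ is normal, completing the argument.

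There is no real obstacle here: the proposition is a formal consequence of the classical characterization of normal functionals together with the multiplicativity of $\Phi$. The only slightly subtle point is remembering that a $*$-homomorphism is automatically contractive, so it maps unit balls to unit balls, which is needed in both directions to stay inside the regime where $\tau$-continuity is being tested.
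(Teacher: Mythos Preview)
Your proof is correct. The only-if direction is essentially identical to the paper's argument. For the if direction, however, you and the paper take different routes. The paper argues directly from the order-theoretic definition of normality: given a bounded increasing net $x_i \nearrow x$, one has $x_i \to x$ $\sigma$-strong*, hence $\Phi(x_i) \to \Phi(x)$ $\sigma$-strong* by hypothesis, and since $\Phi$ preserves order this gives $\Phi(x_i) \nearrow \Phi(x)$. You instead reduce to the scalar-valued case, composing with normal states and invoking the characterization of normal linear functionals (Blackadar III.2.2.2) twice. Your approach is more modular and makes clear exactly which classical fact is doing the work; the paper's is slightly more self-contained. One small quibble: your parenthetical justification ``since the normal states separate points of $N_*$'' is not quite the right statement---what you need is that $N_*$ is the linear span of normal states, which follows from the polar decomposition of normal functionals.
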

\begin{proof}
If $\Phi$ is $\sigma$-strong* continuous on bounded sets, then for any increasing bounded net $(x_i)$ in $M$ with $x_i \nearrow x$, $x_i \to x$ in the $\sigma$-strong* topology.
Hence $\Phi(x_i)\nearrow \Phi(x)$, and $\Phi$ is normal.

The converse argument is similar to \cite[III.2.2.2]{MR2188261}.
If $x_i \to 0$ $\sigma$-strong*, then $x_ix_i^*\to 0$ and $x_i^*x_i \to 0$ $\sigma$-weakly.
Hence $\Phi(x_i)^*\Phi(x_i) \to 0$ and $\Phi(x_i)\Phi(x_i)^* \to 0$ $\sigma$-weakly, which implies $\Phi(x_i) \to 0$ $\sigma$-strong*.
\end{proof}

As all $\rm W^*$-categories were assumed to admit finite orthogonal direct sums, we have the following immediate corollary.

\begin{cor}
Suppose $\cC,\cD$ are $\rm W^*$-categories and $F: \cC\to \cD$ is a $\dag$-functor.
Then $F$ is normal if and only if $F$ is $\sigma$-strong* continuous on norm bounded subsets of hom spaces in $\cC$.
\end{cor}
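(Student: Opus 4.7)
The plan is to reduce the corollary to the preceding proposition about unital $*$-homomorphisms of von Neumann algebras by using the standing hypothesis that every $\rm W^*$-category admits finite orthogonal direct sums. This lets us embed every hom space into some endomorphism algebra as a ``corner,'' and then transport continuity back and forth through that embedding.

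First I would fix objects $a,b\in\cC$ and form a direct sum $c = a\oplus b$ with canonical isometries $v: a\to c$ and $w: b\to c$ satisfying $v^\dag v = 1_a$, $w^\dag w = 1_b$, and $vv^\dag + ww^\dag = 1_c$. The map $\iota: \cC(a\to b)\to \cC(c\to c)$ defined by $\iota(f)=wfv^\dag$ is a weak*-continuous linear injection onto the corner $ww^\dag\cdot\cC(c\to c)\cdot vv^\dag$, with two-sided inverse given on the image by $g\mapsto w^\dag g v$. Because composition is jointly $\tau$-continuous on norm bounded sets by \ref{tau:composition} and $\dag$ is $\tau$-continuous on such sets by \ref{tau:dagcontinuity}, both $\iota$ and its inverse are also $\sigma$-strong* continuous on norm bounded subsets. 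Since $F$ is a $\dag$-functor, $F(v)$ and $F(w)$ satisfy the analogous isometry relations in $\cD$, giving $F(c)\cong F(a)\oplus F(b)$ canonically, and one computes $F(\iota(f))=F(w)F(f)F(v)^\dag$ and $F(f)=F(w)^\dag F(\iota(f))F(v)$.

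For the ``only if'' direction, assume $F$ is normal. Then the restriction $F_c:\cC(c\to c)\to\cD(F(c)\to F(c))$ is a normal unital $*$-homomorphism of von Neumann algebras, so by the preceding proposition it is $\sigma$-strong* continuous on the unit ball of $\cC(c\to c)$. Given a bounded net $f_i\to 0$ in the $\sigma$-strong* topology on $\cC(a\to b)$, the net $\iota(f_i)$ is bounded and converges $\sigma$-strong* to $0$ in $\cC(c\to c)$; applying $F_c$ gives $F(w)F(f_i)F(v)^\dag\to 0$ $\sigma$-strong*, and then composing with $F(w)^\dag$ and $F(v)$ recovers $F(f_i)\to 0$ $\sigma$-strong* via \ref{tau:composition} and \ref{tau:dagcontinuity}.

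For the ``if'' direction, assume $F$ is $\sigma$-strong* continuous on norm bounded subsets of every hom space; then in particular this holds for $\cC(c\to c)$, so by the proposition $F_c$ is normal. To conclude normality of $F:\cC(a\to b)\to\cD(F(a)\to F(b))$ it suffices to check weak*-continuity on bounded sets (since weak*-continuity for linear maps between preduals-of-Banach-spaces is equivalent to weak*-continuity on bounded subsets by Krein--Smulian); but if $f_i\to f$ weak* in a bounded subset of $\cC(a\to b)$, then $\iota(f_i)\to\iota(f)$ weak* boundedly in $\cC(c\to c)$, so $F(\iota(f_i))\to F(\iota(f))$ weak*, and a further application of bounded composition yields $F(f_i)\to F(f)$ weak*. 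The only step that requires any care is the corner/direct-sum identification, which is the main conceptual content; everything else is an application of the proposition combined with \ref{tau:composition} and \ref{tau:dagcontinuity}.
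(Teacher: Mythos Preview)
Your proof is correct and follows exactly the reduction the paper intends: the paper's own argument is the single line ``As all $\rm W^*$-categories were assumed to admit finite orthogonal direct sums, we have the following immediate corollary,'' and you have simply spelled out the corner-of-a-direct-sum argument that this sentence encodes. One minor simplification: in the ``if'' direction you don't actually need Krein--Smulian, since $F(f)=F(w)^\dag F_c(\iota(f))F(v)$ exhibits $F|_{\cC(a\to b)}$ as a composite of maps you have already shown to be weak*-continuous (not merely on bounded sets), so normality follows directly.
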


\begin{ex}
Let $(M,\tr_M)$ be a finite von Neumann algebra equipped with a faithful tracial state $\tau$.
In this case, the $\sigma$-strong* topology on the unit ball of $M$ is exactly the $\|\cdot\|_2$-topology, where $\|x\|_2^2 := \tr_M(x^*x)$ \cite[Prop.~9.1.1]{JonesVNA}.
In fact, we may also describe the entire $\sigma$-strong* topology $\tau$ on $\fgpMod(M)$ on norm bounded sets as coming from a $\|\cdot\|_2$-norm induced by canonical commutant traces. 

In more detail, for $X_M\in \fgpMod(M)$, the canonical \emph{commutant trace} 
\cite[\S1.1.3(c)]{MR1278111},
\cite[Def.~3.1.4]{1111.1362} 
on the finite von Neumann algebra $\End(X_M)$ is given by
$$
\Tr_X(f)
:=
\sum_{b} \tr_M(\langle b| xb\rangle^X_M)
$$
where $\{b\}$ is any finite $X_M$-basis.
Observe that $\Tr_X$ is independent of the choice of basis.
When $M$ is a $\rm II_1$ factor, $\Tr(\id_X)$ equals the right von Neumann dimension of $X\otimes_M L^2M$.

Observe that the maps $(\Tr_X)_{X\in \fgpMod(M)}$ endow $\fgpMod(M)$ with a \emph{unitary categorical trace} in the spirit of \cite[Def.~3.7]{MR3019263} (see also \cite[Def.~3.59]{1810.06076}).
Indeed, for $f\in \Hom(X_M \to Y_M)$, 
we choose a finite $X_M$-basis $\{b\}$ and a finite $Y_M$-basis $\{c\}$,
and we calculate
\begin{align*}
\Tr_X(f^\dag f)
&=
\sum_{b} \tr_M(\langle b| f^\dag fb\rangle^X_M)
=
\sum_{b} \tr_M(\langle fb| fb\rangle^Y_M)
=
\sum_{b,c} \tr_M(\langle fb| c \langle c|fb\rangle^Y_M \rangle^Y_M )
\\&
=
\sum_{b,c} \tr_M(\langle fb| c\rangle^Y_M  \langle c|fb\rangle^Y_M )
=
\sum_{b,c} \tr_M( \langle f^\dag c|b\rangle^X_M \langle b| f^\dag c\rangle^X_M )
=
\sum_{b,c} \tr_M(  \langle b\langle b|f^\dag c\rangle^X_M| f^\dag c\rangle^X_M )
\\&=
\sum_{c} \tr_M(  f^\dag c| f^\dag c\rangle^X_M )
=
\sum_{c} \tr_M(  c| ff^\dag c\rangle^X_M )
=
\Tr_Y(ff^\dag).
\end{align*}

Using this categorical trace, for $f\in \Hom(X_M \to Y_M)$, we define $\|f\|_2 := \Tr_X(f^\dag\circ f)^{1/2}$.
Observe that $\|\cdot\|_2$ on $\Hom(X_M \to Y_M)$ is exactly 
the restriction of 
$\|\cdot\|_2$ on $\End(X_M \oplus Y_M)$, which is again defined using the canonical commutant trace $\Tr_{X\oplus Y}$.
Thus the $\sigma$-strong* topology $\tau$ on $\fgpMod(M)$ exactly corresponds to the $\|\cdot\|_2$-topology on norm-bounded sets.
\end{ex}

The following remark will be used later in \S\ref{sec:LocalExtension}.

\begin{rem}
Suppose $(M,\tr_M)$ is a finite von Neumann algebra equipped with a normal faithful trace $\tr_M$.
Suppose $X_M\in \fgpMod(M)$ and $N\subseteq (M,\tr_M)$ is strongly Markov inclusion \cite[Def.~2.8]{MR2812459}, i.e., there is a finite $M_N$-basis $\{c\}$ which satisfies $[M:N]:=\sum_c cc^* \in [1,\infty)$.
(This definition was based on \cite{MR945550} and \cite[\S1.1.3 and 1.1.4]{MR1278111}.)
Here, the right $N$-valued inner product on $M_N$ is given by $\langle a|b\rangle_N = E_N(a^*b)$ where $E_N: M\to N$ is the unique trace-preserving conditional expectation.

We now compare $\Tr_{X_M}$ and $\Tr_{X\boxtimes_M M_N}$.
If $\{b\}$ is a basis for $X_M$ and $\{c\}$ is a basis for $M_N$, then $\{b\boxtimes c\}$ is a basis for $X\boxtimes_M M_N$.
Thus for $f\in \End(X_M)$,
\begin{align*}
\Tr_{X\boxtimes_M M_N}(f\boxtimes \id_{M_N})
&=
\sum_{b,c} \tr_N(\langle b\boxtimes c | fb\boxtimes c\rangle^{X\boxtimes_M M_N}_N)
=
\sum_{b,c} \tr_N(E_N(c^* \langle b |fb\rangle^X_M c))
\\&=
\sum_{b,c} \tr_M(c^* \langle b |fb\rangle^X_M c)
=
\sum_{b,c} \tr_M(\langle b |fb\rangle^X_M cc^*  )
\\&=
[M:N] 
\sum_{b} \tr_M(\langle b |fb\rangle^X_M)
=
[M:N] 
\Tr_X(f).
\end{align*}
This says the (faithful) restriction functor $-\boxtimes M_N : \fgpMod(M) \to \fgpMod(N)$ is a  continuous embedding (a homeomorphism onto its image) of hom spaces with respect to the $\tau_M-\tau_N$ topologies.
This means that $f_n \to 0$ in $\tau_M$ if and only if $f_n \boxtimes \id_{M_N} \to 0$ in $\tau_N$.
\end{rem}

\section{Approximate natural transformations and local endofunctors}
\label{sec:W*ApproxNatTrans}

In this section, given a $\rm W^*$-category, we define its canonical braided $\rm W^*$-tensor category of local endofunctors  which are both approximately inner and centrally trivial.
To define these notions, we first introduce the concept of an approximate natural transformation.

\subsection{Approximate natural transformations}

Suppose $\cC$ is a separable $\rm W^*$-category, and recall $\End(\cC)$ denotes the normal $\dag$-endofunctors of $\cC$.
We define $\ell^{\infty}(\bbN,\cC)$ as the $\rm W^*$-category with the same objects as $\cC$ and whose morphisms are uniformly norm-bounded sequences of morphisms in $\cC$.
The composition and $\dag$ in $\ell^{\infty}(\bbN,\cC)$ are defined pointwise. 

\begin{defn}
For each $a,b\in \cC$, we define 
\begin{align*}
\cI(a\to b)
&:=
\set{
f=(f_{n})\in \ell^{\infty}(\bbN,\cC(a\to b))
\,}{\, 
f_{n}\rightarrow_{\tau} 0
}
\\
\cC^{\infty}(a\to b)
&:=
\set{f\in \ell^{\infty}(\bbN,\cC(a\to b))
\,}{\,
\parbox{6.2cm}{
$\forall\, g\in \cI(b\to c), g\cdot f\in \cI(a\to c)$
and
$\forall\, h\in \cI(d\to a), f\cdot h\in \cI(d\to b)$
}
}.
\end{align*}
We view $\cC^\infty$ as the \emph{idealizer} of $\cI$ in $\ell^{\infty}(\bbN,\cC)$.
We call $f\in \cC^{\infty}(a\to b)$ an \emph{approximate morphism}, and we say two morphisms $f,g\in \cC^{\infty}(a\to b)$ are \emph{equivalent} or \textit{approximately equal} if $f-g\in \cI(a\to b)$.
Observe that $\cC^\infty$ is a category under pointwise composition of approximate morphisms.
By \ref{tau:composition} and \ref{tau:dagcontinuity}, $\cI(a\to b)\subseteq \cC^\infty(a\to b)$ for all $a,b$, and $\cI$ defines a $\dag$-closed ideal in $\cC^\infty$.

We define a $\dag$-category $\widetilde{\cC}$ 
with the same objects as $\cC$, and hom spaces
$\widetilde{\cC}(a\to b):=\cC^\infty(a\to b)/\cI(a\to b)$.
For $f\in \cC^\infty(a\to b)$, we write $\widetilde{f}$ for its image in $\widetilde{\cC}(a\to b)$.
Observe we can view $\cC$ as a $\dag$-subcategory of $\widetilde{\cC}$ by mapping $f\in \cC(a\to b)$ 
to the image of the constant sequence $\widetilde{(f)} \in \widetilde{\cC}(a\to b)$.
In what follows,
we identify $f\in\cC(a\to b)$ with $(f)_{n\in\bbN}\in \cC^\infty(a\to b)$ and $\widetilde{(f)}_{n\in\bbN}\in \widetilde{\cC}(a\to b)$.
\end{defn}

\begin{defn}
A pasting diagram in $\cC^\infty$ is said to \emph{approximately commute} if the corresponding pasting diagram in the quotient $\widetilde{\cC}=\cC^\infty/\cI$ actually commutes.
That is, given $a,b,c,d\in\cC$
and morphisms $f\in \cC^\infty(a\to b)$, $g\in \cC^\infty(b\to d)$, $h\in \cC^\infty(a\to c)$, and $k\in \cC^\infty(c\to d)$,
the diagram
\[
\begin{tikzcd}
a \arrow{r}{f_n} \arrow[swap]{d}{h_n} & b \arrow{d}{g_n}
\\
c \arrow{r}{k_n} & d
\end{tikzcd}
\]
approximately commutes if 
$g_n\cdot f_n-k_n\cdot h_n\to_\tau 0$.
\end{defn}

For a normal $\dag$-endofunctor $F\in\End(\cC)$, for any $f=(f_n)\in\cC^\infty(a\to b)$,
$(F(f_n))\in \cC^\infty(F(a)\to F(b))$, 
so $F$ descends to a $\dag$-endofunctor on $\widetilde{\cC}$.

\begin{defn}
Given two functors $F,G\in \End(\cC)$ an \textit{approximate natural transformation} is a family $\{\eta^{a}\in \cC^{\infty}(F(a)\rightarrow G(a) \}_{a\in \cC}$
such that for every $f\in \cC(a\to b)$, $\eta^{b}\cdot f=f\cdot \eta^{a}$ in $\widetilde{\cC}$. 
In other words, the following diagram approximately commutes:
$$
\begin{tikzcd}
F(a) 
\arrow[d,"F(f)"']
\arrow[r,"\eta^a_n"]
&
G(a)
\arrow[d,"G(f)"]
\\
F(b) 
\arrow[r,"\eta^b_n"]
&
G(b).
\end{tikzcd}
$$
Clearly every natural transformation $\eta: F\Rightarrow G$ gives an approximate natural transformation.
\end{defn}

\begin{warn}
The collection of normal $\dag$-endofunctors of $\cC$ and approximate natural transformations between them  (up to $\cI$) clearly forms a $\dag$-category 
containing $\End(\cC)$ as a (non-full) subcategory. 
It is tempting to think that this should also form a tensor category, 
with tensor product being 
composition of endofunctors as usual. 
However, the horizontal composition of two approximate natural transformations is not well-defined in general. 
This is a fundamental point: the category of endofunctors and approximately natural transformations is \emph{not} a tensor category in general, as endomorphisms of the `unit' in this category (see Definition \ref{defn:CentralSequenceAsApproximateNT}) is not a commutative algebra.
\end{warn}

\begin{defn}
We call $v\in\cC^\infty(a\to b)$ an \textit{approximate isometry} if its image $\widetilde{v}\in\widetilde{\cC}(a\to b)$ is an isometry. 
For $\dag$-endofunctors $F,G\in\End(\cC)$, an approximate natural transformation $v:F\Rightarrow G$ is called an \textit{approximately isometric natural transformation} if $v^a\in \cC^\infty(F(a)\to G(a))$ is an approximate isometry for each $a\in\cC$.
\end{defn}

\begin{rem}(Arrow flipping)\label{rem:ArrowFlipping} 
Suppose
$f\in \cC^\infty(a\to c)$, $v\in \cC^\infty(a\to b)$, and $w\in \cC^\infty(b\to c)$ such that the diagram
$$
\begin{tikzcd}
a\arrow{rr}{f_n} \arrow[swap]{dr}{v_n} & & c\\
 & b \arrow[swap]{ur}{w_n} & 
\end{tikzcd}
$$ 
approximately commutes.
\begin{itemize}
\item 
If $w$ is an approximate isometry, then
$
\begin{tikzcd}
a\arrow{rr}{f_n} \arrow[swap]{dr}{v_n} & & c \arrow{dl}{w^{\dag}_n}\\
 & b  & 
\end{tikzcd}
$
approximately commutes. 
\item 
If $v$ is an approximate coisometry, then
$
\begin{tikzcd}
a\arrow{rr}{f_n} & & c \\
 & b \arrow{ul}{v_n^{\dag}} \arrow[swap]{ur}{w_n} & 
\end{tikzcd}
$ 
approximately commutes. 
\end{itemize}
Indeed, these remarks follow since $\widetilde{v}$, respectively $\widetilde{w}$, is an actual isometry, respectively coisometry, in $\widetilde{\cC}$.
\end{rem}

\begin{defn} 
\label{defn:CentralSequenceAsApproximateNT}
An approximately natural transformation from the identity functor to itself is called a \textit{central sequence}.
\end{defn}

\begin{lem}
\label{lem:CauchyCompletionPreservesCentralSequences}
Suppose $\cC$ is a separable $\rm W^*$-category which is not necessarily unitarily Cauchy complete.
Every central sequence of $\cC$ has a canonical extension to the unitary Cauchy completion of $\cC$.
Moreover, this extension gives a bijective correspondence between
the equivalence classes of central sequences of $\cC$ and 
the equivalence classes of central sequences of the unitary Cauchy completion of $\cC$.
\end{lem}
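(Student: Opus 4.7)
The plan is to use the explicit description of the unitary Cauchy completion $\overline{\cC}$: every object is isomorphic to a finite orthogonal direct sum of projection summands $(a,p)$ with $a\in\cC$ and $p=p^\dag=p^2\in\End_\cC(a)$, and under the standard identification $\End_{\overline{\cC}}((a,p))=p\End_\cC(a)p$ (so that the structure isometry $v:(a,p)\hookrightarrow a$ satisfies $v^\dag v=1_{(a,p)}$ and $vv^\dag=p$). Given a central sequence $\eta=(\eta^a)_{a\in\cC}$ on $\cC$, I define its extension by
$$
\eta^{(a,p)}_n:=p\,\eta^a_n\, p,
\qquad
\eta^{\bigoplus_i c_i}_n:=\sum_i w_i\,\eta^{c_i}_n\,w_i^\dag,
$$
where the $w_i$ are the direct-sum structure isometries. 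Uniform norm-boundedness and the idealizer property $\eta^c\in\overline{\cC}^\infty$ follow immediately from \ref{tau:composition} and \ref{tau:dagcontinuity}.

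The main verification is approximate naturality in $\overline{\cC}$. For a morphism $f=qfp:(a,p)\to(b,q)$, viewed inside $\cC(a\to b)$, I would compute
$$
\eta^{(b,q)}_n f - f\eta^{(a,p)}_n
= q\eta^b_n f - f\eta^a_n p,
$$
and use approximate naturality of $\eta$ in $\cC$ twice: applied to $f\in\cC(a\to b)$ it gives $\eta^b_n f\approx f\eta^a_n$, and applied to $(1-p)\in\End_\cC(a)$ it gives $\eta^a_n(1-p)\approx(1-p)\eta^a_n$, which combined with $f(1-p)=0$ yields $f\eta^a_n\approx f\eta^a_n p$. An analogous (approximate) calculation on $q$ finishes the check, and matrix-entry expansion reduces the direct-sum case to the same naturality squares in $\cC$. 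Independence of the chosen decomposition of a given object follows by applying the just-verified approximate naturality to the canonical unitary in $\overline{\cC}$ intertwining two different decompositions, and equivalence of central sequences is preserved by the extension because all operations used are $\tau$-continuous on bounded sets.

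For the bijection between equivalence classes, the inverse of extension is just restriction to $\cC$. Given a central sequence $\tilde\eta$ on $\overline{\cC}$, the approximate naturality square of $\tilde\eta$ against the structure isometry $v:(a,p)\to a$, combined with the arrow-flipping of Remark \ref{rem:ArrowFlipping}, yields $\tilde\eta^{(a,p)}\approx v^\dag\tilde\eta^a v=p\tilde\eta^a p$; the same argument with $w_i$ recovers the direct-sum formula. Hence extension followed by restriction is manifestly the identity, and restriction followed by extension returns the original class. I expect the main obstacle to be bookkeeping rather than analysis: one must consistently track the identifications between morphisms of $\overline{\cC}$ and their sandwiched realizations in $\cC$ so that Remark \ref{rem:ArrowFlipping} is applied to the correct legs of the approximate triangles, and one must be careful that $\eta^a_n p$ and $p\eta^a_n$ can be freely interchanged modulo $\cI$ every time they appear. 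Once these identifications are pinned down, all content is absorbed into repeated $\tau$-continuous manipulations governed by \ref{tau:composition}--\ref{tau:metrizable}.
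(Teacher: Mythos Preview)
Your proposal is correct and follows essentially the same approach as the paper: the extension formulas $\eta^{(a,p)}=p\eta^a p$ and $\eta^{\bigoplus_i c_i}=\sum_i w_i\eta^{c_i}w_i^\dag$ are exactly those used there, and your argument for the bijection via approximate naturality against the structure isometries is the same as the paper's use of $p$ viewed as a morphism $(c,p)\to(c,\id_c)$. The only organizational difference is that the paper splits the verification into two sequential steps (first the additive completion $\Add(\cC)$, then the projection completion $\Proj(\cC)$), whereas you treat general objects of $\overline{\cC}$ at once; your naturality check for the projection part is in fact spelled out in more detail than in the paper.
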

\begin{proof}
We proceed in 2 steps;
first, we show the result for the orthogonal direct sum completion $\Add(\cC)$, and second, we show the result for the orthogonal projection completion $\Proj(\cC)$.
\medskip

\item[\underline{Step 1:}]
We claim a central sequence $\eta$ of $\cC$ gives a central sequence of $\Add(\cC)$ by $\Add(\eta)_{ii}:=\pi_i^\dag \eta^{c_i} \pi_i$ for $\bigoplus_i c_i\in \Add(\cC)$, where $\pi_j : \bigoplus c_i \to c_j$ denotes the canonical projections satisfying $\sum_i \pi_i^\dag\pi_i = \id_{\bigoplus_i c_i}$ and $\pi_i \circ \pi_j^\dag = \delta_{i=j}\id_{c_i}$. 
Let $f^{n}=(f^{n}_{ji})\in \Add(\cC)^\infty(\bigoplus_{i} c_{i}\rightarrow \bigoplus_{j} d_{j})$, with each $(f^{n}_{ji})\in \cC^\infty(c_{i}\rightarrow d_{j})$ a bounded sequence of morphisms. 
Then clearly $(f^{n})\in \cI(\bigoplus_i c_{i}\rightarrow \bigoplus_j d_{j}) $ if and only if each $(f^{n}_{ji})\in \cI(c_i \rightarrow d_{j})$. 
Therefore since $\eta\in \cC^{\infty}$, we have $\Add(\eta)\in\Add(\cC)^\infty$. 

To see that $\Add(\eta)$ defines a natural transformation of the identity functor, let $f=(f_{ji})\in \Add(\cC)(\bigoplus c_{i}\rightarrow \bigoplus d_{j})$ as above. 
Then in $\widetilde{\cC}$, $\Add(\eta)f=(\eta^{d_j}_{n}f_{ji})= (f_{ji}\eta^{c_i}_{n})=f\Add(\eta)$. 
Moreover, the assignment $\eta\mapsto \Add(\eta)$ clearly preserves equivalence of central sequences. 

Conversely, given a central sequence $\eta$ of $\Add(\cC)$, we automatically get a central sequence of $\cC$ by considering the canonical embedding $\cC \hookrightarrow \Add(\cC)$.
Moreover, for $\bigoplus_i c_i \in \Add(\cC)$, the off-diagonal terms of $\eta^{\bigoplus_i c_i}$ go to zero $\sigma$-strong* as $\eta^{\bigoplus_i c_i}$ approximately commutes with the $\pi_j$, and the diagonal term corresponding to $c_i$ must be approximately equivalent to $\eta^{c_i}$.

\medskip
\item[\underline{Step 2:}]
A central sequence $\eta$ of $\cC$ gives a central sequence of $ \Proj(\cC)$ by defining
$\Proj(\eta)^{(c,p)}:= p\eta^c p$ for $(c,p)\in \Proj(\cC)$.
Given $(a,p),(b,q)\in \Proj(\cC)$,
$\cI((a,p) \to (b,q)) = q\cI(a\to b)p$, so $\Proj(\eta)\in \Proj(\cC)^\infty$. Moreover, this construction preserves equivalence of central sequences.

Conversely, given a central sequence $\eta$ of $\Proj(\cC)$, we automatically get a central sequence of $\cC$ by considering the canonical embedding $\cC \hookrightarrow \Proj(\cC)$. 
Clearly starting with a central sequence in $\cC$, extending to $\Proj(\cC)$ as above, and restricting back to $\cC$ yields the same central sequence. 
In the other direction, let $\eta$ be a central sequence in $\Proj(\cC)$.
We need to show that in $\widetilde{\Proj(\cC)}$, $p\eta^{(c,\id_{c})}p= \eta^{(c,p)}$. 
But note we can view an orthogonal projection $p\in \End_\cC(c)$ as a morphism in $\Proj(\cC)((c,p)\to (c,\id_c))$.
Then in $\widetilde{\Proj(\cC)}$, $\eta^{(c,p)}=\eta^{(c,p)}p=(\eta^{(c,p)}p)p=p\eta^{(c,\id_c)}p$ as desired.
\end{proof}

\begin{ex}
\label{ex:MCentralSequencesAgree}
For a separable von Neumann algebra $M$, central sequences in the sense of Definition \ref{defn:CentralSequenceAsApproximateNT} for the $\rm W^*$-category $\fgpMod(M)$ exactly agree with the usual notion of central sequences of $M$. 
Indeed, consider the category with one object $M_M$ whose endomorphisms is $M$ acting by left multiplication.
An approximate natural transformation of the identiy functor is exactly a sequence $(x_n)$ such that $x_n m - mx_n \to 0$ in the $\sigma$-strong* topology for all $m\in M$.
Now since $\fgpMod(M)$ is the unitary Cauchcy completion of this one object category, the claim follows by Lemma \ref{lem:CauchyCompletionPreservesCentralSequences}.
\end{ex}

The next lemma will be important in the next subsection.

\begin{lem}
\label{Lem:ApproNT&CentralSq}
Suppose $H,K$ are finite dimensional Hilbert spaces with orthonormal bases $\{e_i\},\{f_j\}$ respectively.
For a collection of maps $(\eta^a \in \cC^\infty (H\rhd a \to K\rhd a))_{a\in \cC}$,
we define $\eta^a_{i,j}:=(f_j^\dag\rhd 1_a)\eta^a(e_i\rhd 1_a)\in \cC^\infty(a\to a)$.
Then
$\eta:H\rhd- \Rightarrow K\rhd-$ defines an approximate natural transformation if and only if $\eta_{i,j}$ is a central sequence for each $i,j$.
\end{lem}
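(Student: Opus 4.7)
The plan is to use the orthonormal bases $\{e_i\}$ and $\{f_j\}$ to set up an explicit bijection between approximate natural transformations $\eta : H \rhd - \Rightarrow K \rhd -$ and tuples of central sequences indexed by $(i,j)$. The key identity to exploit is
\[
\eta^a \;=\; \sum_{i,j} (f_j \rhd 1_a)\cdot \eta^a_{i,j}\cdot (e_i^\dag \rhd 1_a),
\]
which follows from $\sum_i (e_i\rhd 1_a)(e_i^\dag\rhd 1_a)=1_{H\rhd a}$ and the analogous identity for $K$. The forward direction requires checking that each $\eta_{i,j}^a\in\cC^\infty(a\to a)$ lies in the idealizer and approximately commutes with every $f\in\cC(a\to b)$; the reverse direction requires checking that the formula above assembles an approximate natural transformation out of central sequences.

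For the ``only if'' direction, I would fix $f\in\cC(a\to b)$ and compute
\[
\eta_{i,j}^b\cdot f \;=\; (f_j^\dag\rhd 1_b)\,\eta^b\,(e_i\rhd 1_b)\cdot f \;=\; (f_j^\dag\rhd 1_b)\,\eta^b\,(1_H\rhd f)\cdot (e_i\rhd 1_a),
\]
where the last equality uses the interchange $(e_i\rhd 1_b)\cdot f=(1_H\rhd f)\cdot(e_i\rhd 1_a)$ in the $\fdHilb$-module structure. Then approximate naturality of $\eta$ replaces $\eta^b(1_H\rhd f)$ by $(1_K\rhd f)\eta^a$ up to $\cI$, and another interchange puts $f$ to the left, giving $\eta_{i,j}^b\cdot f \approx f\cdot \eta_{i,j}^a$. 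That $\eta_{i,j}^a$ lies in $\cC^\infty(a\to a)$ is immediate since $(f_j^\dag\rhd 1_a)$ and $(e_i\rhd 1_a)$ are honest morphisms, so composing them with $\eta^a\in\cC^\infty(H\rhd a\to K\rhd a)$ preserves the idealizer condition.

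For the ``if'' direction, I would first argue by the same sandwiching that, given central sequences $(\eta_{i,j})$, the formula $\eta^a:=\sum_{i,j}(f_j\rhd 1_a)\eta^a_{i,j}(e_i^\dag\rhd 1_a)$ lies in $\cC^\infty(H\rhd a\to K\rhd a)$: any $g\in\cI$ multiplied into this sum stays in $\cI$ because each $\eta_{i,j}$ is in the idealizer. Then for $f\in\cC(a\to b)$,
\[
(1_K\rhd f)\cdot \eta^a \;=\; \sum_{i,j}(f_j\rhd 1_b)\cdot f\cdot\eta_{i,j}^a\cdot (e_i^\dag\rhd 1_a)
\;\approx\; \sum_{i,j}(f_j\rhd 1_b)\cdot\eta_{i,j}^b\cdot f\cdot(e_i^\dag\rhd 1_a)
\;=\; \eta^b\cdot (1_H\rhd f),
\]
using the central sequence property $f\cdot \eta_{i,j}^a\approx \eta_{i,j}^b\cdot f$ and an interchange of $f$ with $e_i^\dag\rhd 1$.

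I do not expect a serious obstacle: the proof is essentially a bookkeeping exercise in the $\fdHilb$-module structure together with the observations \ref{tau:composition} and \ref{tau:dagcontinuity} guaranteeing that sums and compositions with fixed morphisms preserve the $\tau$-null ideal $\cI$. The subtlest point is simply verifying that $\eta^a$ assembled from the $\eta_{i,j}^a$ honestly lies in $\cC^\infty$ and not merely in $\ell^\infty(\bbN,\cC)$; this is immediate because the idealizer is closed under composition with ordinary morphisms and under finite sums.
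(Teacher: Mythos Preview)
Your proposal is correct and follows essentially the same approach as the paper: both hinge on the reconstruction identity $\eta^a=\sum_{i,j}(f_j\rhd 1_a)\eta^a_{i,j}(e_i^\dag\rhd 1_a)$ and the interchange $(e_i\rhd 1_b)\cdot f=(1_H\rhd f)\cdot(e_i\rhd 1_a)$. The paper compresses your two directions into one step by directly writing the two differences
\[
g\eta^a_{i,j}-\eta^b_{i,j}g=(f_j^\dag\rhd 1_b)\big((1_K\rhd g)\eta^a-\eta^b(1_H\rhd g)\big)(e_i\rhd 1_a)
\]
and its inverse expansion, so that vanishing of one side in $\widetilde{\cC}$ is equivalent to vanishing of the other; but this is just a repackaging of your argument. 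One small remark: in the ``if'' direction you need not argue that $\eta^a\in\cC^\infty$, since this is already part of the hypothesis.
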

\begin{proof}
Note that $\eta^a = \sum_{i,j} (f_j\rhd 1_a)\eta^a_{i,j}(e_i^\dag\rhd 1_a)$.
Then for $g\in\cC(a\to b)$,
\begin{align*}
g\eta^a_{i,j} - \eta^b_{i,j} g
& = g (f_j^\dag\rhd 1_a) \eta^a (e_i\rhd 1_a) - (f_j^\dag\rhd 1_b) \eta^b  (e_i\rhd 1_b) g \\
& = (f_j^\dag\rhd 1_a) (1_K\rhd g)\eta^a (e_i\rhd 1_a) - (f_j^\dag\rhd 1_b) \eta^b   (1_H\rhd g)(e_i\rhd 1_b) \\
& = (f_j^\dag\rhd 1_b) \left((1_K\rhd g) \eta^a -\eta^b  (1_H\rhd g)\right)  (e_i\rhd 1_a) ,
\\
\intertext{and}
(1_K\rhd g) \eta^a - \eta^b (1_H\rhd g) 
&=
\sum_{i,j} (f_j\rhd 1_b)\left(g \eta^a_{i,j} - \eta^b_{i,j} g \right) (e_i^\dag\rhd 1_a).
\end{align*}
Thus 
$(1_K\rhd g) \eta^a - \eta^b (1_H\rhd g) =0$ in $\widetilde{\cC}$
if and only if
$g\eta^a_{i,j} - \eta^b_{i,j} g =0$ in $\widetilde{\cC}$ for all $i,j$.
\end{proof}

\subsection{Centrally trivial and approximately inner endofunctors}\label{ApproxInnerCentTrival}

For the rest of this section, $\cC$ is a fixed separable $\rm W^*$-category. 

\begin{defn}
\label{Defn:CentrallyTrivial}
A functor $F\in\End(\cC)$ is called \textit{centrally trivial} if for all finite dimensional Hilbert spaces and all approximate natural transformations $\eta:H\rhd- \Rightarrow K\rhd-$, the following diagram approximately commutes. 
$$
\begin{tikzcd}
F(H\rhd a) 
\arrow{r}{F(\eta^{a}_n)} 
\arrow[swap]{d}{\sigma^{a}_{F,H}} 
& F(K\rhd a) 
\arrow{d}{\sigma^{a}_{F,K}} 
\\
H\rhd F(a) 
\arrow{r}{\eta^{F(a)}_n} 
& K\rhd F(a)
\end{tikzcd}
$$
\end{defn}

\begin{prop}
\label{Prop:CentralTrivial&CentralSq}
A functor $F$ is centrally trivial if and only if for all central sequences $\eta$, 
$F(\eta^a)=\eta^{F(a)}$ in $\widetilde{\cC}$.
\end{prop}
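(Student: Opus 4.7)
The plan is to prove both directions by connecting the general definition of central triviality with the special case of a central sequence, using Lemma \ref{Lem:ApproNT&CentralSq} as the bridge.

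For the forward direction, I would simply specialize Definition \ref{Defn:CentrallyTrivial} to $H = K = \bbC$. Since the $\fdHilb$-module structure is strictly unital, $\bbC \rhd a = a$, and using the single-element basis $\{1\}\subset\bbC$ the definition of $\sigma$ collapses to $\sigma^a_{F,\bbC} = F(1_a) = 1_{F(a)}$. The defining approximately-commuting square then reduces to the statement that $F(\eta^a_n) \equiv \eta^{F(a)}_n$ in $\widetilde{\cC}$ for every central sequence $\eta$.

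For the backward direction, given an approximate natural transformation $\eta: H\rhd - \Rightarrow K \rhd -$, choose orthonormal bases $\{e_i\}\subset H$ and $\{f_j\}\subset K$, and write $\eta^a_{i,j}:=(f_j^\dag\rhd 1_a)\eta^a(e_i\rhd 1_a)$, which are central sequences by Lemma \ref{Lem:ApproNT&CentralSq}. Using $\sum_i e_ie_i^\dag = 1_H$ and $\sum_j f_jf_j^\dag = 1_K$, one has $\eta^a = \sum_{i,j}(f_j\rhd 1_a)\,\eta^a_{i,j}\,(e_i^\dag\rhd 1_a)$. Next I would verify the two auxiliary identities
\[
\sigma^a_{F,K}\cdot F(f_j\rhd 1_a) = f_j\rhd 1_{F(a)}, \qquad F(e_i^\dag\rhd 1_a) = (e_i^\dag\rhd 1_{F(a)})\cdot\sigma^a_{F,H},
\]
both of which are direct computations from Definition \ref{defn:sigmaFH} using $e_k^\dag e_i = \delta_{k,i}$ and $f_k^\dag f_j = \delta_{k,j}$. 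Substituting into the expression for $\sigma^a_{F,K}\cdot F(\eta^a)$ yields
\[
\sigma^a_{F,K}\cdot F(\eta^a) = \sum_{i,j}(f_j\rhd 1_{F(a)})\cdot F(\eta^a_{i,j})\cdot(e_i^\dag\rhd 1_{F(a)})\cdot\sigma^a_{F,H}.
\]
By the hypothesis applied to each central sequence $\eta_{i,j}$, we have $F(\eta^a_{i,j}) \equiv \eta^{F(a)}_{i,j}$ in $\widetilde{\cC}$; combined with \ref{tau:composition} this lets me replace each $F(\eta^a_{i,j})$ by $\eta^{F(a)}_{i,j}$ modulo $\cI$, giving $\eta^{F(a)}\cdot\sigma^a_{F,H}$ in $\widetilde{\cC}$, which is precisely the content of Definition \ref{Defn:CentrallyTrivial}.

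There is no real obstacle here; the proof is essentially bookkeeping with $\sigma$ and the canonical basis decomposition. The only point deserving care is that after applying $F$ to the decomposition $\eta^a = \sum_{i,j}(f_j\rhd 1_a)\eta^a_{i,j}(e_i^\dag\rhd 1_a)$, the sum remains well-defined modulo $\cI$ because $F$ is a normal $\dag$-functor and hence continuous on norm-bounded sets, so that it descends to $\widetilde{\cC}$ and commutes with finite sums; this ensures the passage from the pointwise identity to the identity in $\widetilde{\cC}$ is justified.
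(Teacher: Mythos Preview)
Your proposal is correct and follows essentially the same approach as the paper. The paper states the forward direction in one sentence and, for the converse, directly expands $\sigma^a_{F,K}\,F(\eta^a)\,(\sigma^a_{F,H})^\dag$ using the definition of $\sigma$ to obtain $\sum_{i,j}(f_j\rhd F(1_a))\,F(\eta^a_{i,j})\,(e_i^\dag\rhd F(1_a))$ and then replaces $F(\eta^a_{i,j})$ by $\eta^{F(a)}_{i,j}$; your version isolates the two auxiliary identities for $\sigma$ against single basis vectors first, but the computation is the same (and your final remark about continuity is unnecessary since the sums are finite).
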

\begin{proof}
It is clear that $F(\eta^a)=\eta^{F(a)}$ in $\widetilde{\cC}$ for all central sequences $\eta$ if $F$ is centrally trivial.
Conversely, 
for each approximate natural transformation $\eta:H\rhd -\Rightarrow K\rhd -$,
$\eta_{i,j}$ defined in Lemma \ref{Lem:ApproNT&CentralSq} is a central sequence for each $i,j$.
Then by Definition \ref{defn:sigmaFH},
\begin{align*}
\sigma^a_{F,K} F(\eta^a) (\sigma^a_{F,H})^\dag 
& = \sum_j (f_j\rhd F(1_a)) F(f_j^\dag\rhd 1_a) F(\eta^a) \sum_i F(e_i\rhd 1_a) (e_i^\dag\rhd F(1_a)) \\
& = \sum_{i,j} (f_j\rhd F(1_a)) F(\eta_{i,j}^a)  (e_i^\dag\rhd F(1_a)) \\
& = \sum_{i,j} (f_j\rhd F(1_a)) \eta_{i,j}^{F(a)}  (e_i^\dag\rhd F(1_a)) 
= \eta^{F(a)},
\end{align*}
which implies that $F$ is centrally trivial.
\end{proof}

\begin{defn}
We denote the full subcategory of $\End(\cC)$ of centrally trivial endofunctors by $\ctEnd(\cC)$.
\end{defn}

\begin{prop}
\label{prop:ctEnd(C)FullReplete}
$\ctEnd(\cC)$ is a replete unitarily Cauchy complete $\rm W^*$-tensor subcategory of $\End(\cC)$. 
\end{prop}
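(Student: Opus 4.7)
The plan is to check each required closure property by invoking the reformulation in Proposition \ref{Prop:CentralTrivial&CentralSq}: a functor $F$ is centrally trivial iff $F(\eta^{a})=\eta^{F(a)}$ in $\widetilde{\cC}$ for every central sequence $\eta$ of $\cC$ and every $a\in\cC$. Throughout, I use that a normal $\dag$-functor is $\sigma$-strong*-continuous on norm-bounded subsets, so applying any $H\in\End(\cC)$ to a sequence in $\cI$ lands in $\cI$; consequently, $H$ descends to $\widetilde{\cC}$ and respects $=$ there. The identity functor is visibly centrally trivial.

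First I verify closure under composition (needed to be a tensor subcategory). Given $F,G\in\ctEnd(\cC)$, $\eta$ a central sequence, and $a\in\cC$, I compute in $\widetilde{\cC}$:
\[
(F\circ G)(\eta^{a}) \;=\; F\bigl(G(\eta^{a})\bigr) \;=\; F\bigl(\eta^{G(a)}\bigr) \;=\; \eta^{FG(a)},
\]
the middle equality by central triviality of $G$ plus the $\sigma$-strong* continuity of $F$, and the last by central triviality of $F$. Second, I verify repleteness. If $F\in\ctEnd(\cC)$ and $u: F\Rightarrow G$ is a unitary natural isomorphism (in particular $u^{a}\in\cC(F(a)\to G(a))$ is an honest morphism), then $G(\eta^{a})=u^{a}F(\eta^{a})(u^{a})^{\dag}$; applying central triviality of $F$ and then the naturality square of $\eta$ against the morphism $u^{a}$ gives $G(\eta^{a})=u^{a}\eta^{F(a)}(u^{a})^{\dag}=\eta^{G(a)}u^{a}(u^{a})^{\dag}=\eta^{G(a)}$ in $\widetilde{\cC}$.

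Third I check unitary Cauchy completeness inside $\End(\cC)$. For direct sums, suppose $F_{1},F_{2}\in\ctEnd(\cC)$. On the object $F_{1}(a)\oplus F_{2}(a)$, the proof of Lemma \ref{lem:CauchyCompletionPreservesCentralSequences} (Step~1) shows that, in $\widetilde{\cC}$, $\eta^{F_{1}(a)\oplus F_{2}(a)}$ equals its block-diagonal, namely $\eta^{F_{1}(a)}\oplus\eta^{F_{2}(a)}$: the off-diagonals approximately commute with the inclusion/projection morphisms of the direct sum and hence vanish. Since $(F_{1}\oplus F_{2})(\eta^{a})=F_{1}(\eta^{a})\oplus F_{2}(\eta^{a})$, central triviality of each summand gives the required equality. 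For orthogonal projections, let $F\in\ctEnd(\cC)$ and let $G\in\End(\cC)$ be the subfunctor associated to an orthogonal projection $p\in\End(F)$, with isometric natural transformation $v:G\Rightarrow F$ satisfying $v v^{\dag}=p$ and $G(f)=(v^{b})^{\dag}F(f)v^{a}$ for $f\in\cC(a\to b)$. Then
\[
G(\eta^{a}) \;=\; (v^{a})^{\dag}F(\eta^{a})v^{a} \;=\; (v^{a})^{\dag}\eta^{F(a)}v^{a} \;=\; (v^{a})^{\dag}v^{a}\,\eta^{G(a)} \;=\; \eta^{G(a)}
\]
in $\widetilde{\cC}$, using central triviality of $F$ and then naturality of $\eta$ against the honest morphism $v^{a}\in\cC(G(a)\to F(a))$.

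Finally, $\ctEnd(\cC)$ is a full subcategory of $\End(\cC)$, so its hom spaces coincide with those of $\End(\cC)$ and automatically inherit the $\rm W^{*}$-structure and all preduals; combined with the above, this makes $\ctEnd(\cC)$ a $\rm W^{*}$-tensor subcategory. The main subtlety I expect is the case of orthogonal projections, which rests on correctly identifying the subfunctor $G$ with the compression $v^{\dag}F(-)v$ and then using naturality of the central sequence against $v^{a}$; after that observation, the other cases are essentially bookkeeping with Proposition \ref{Prop:CentralTrivial&CentralSq}.
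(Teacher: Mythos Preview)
Your proof is correct and follows essentially the same approach as the paper's. The paper organizes things slightly differently by handling repleteness and projection-splitting together via a single isometry argument (if $v:F\Rightarrow G$ is an isometric natural transformation with $G$ centrally trivial, then $F$ is centrally trivial, displayed as a single approximately commuting diagram), whereas you treat the unitary and isometric cases separately; but the underlying computations are identical.
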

\begin{proof}
Suppose $G,G'$ are centrally trivial.
For each central sequence $\eta$,
by Proposition \ref{Prop:CentralTrivial&CentralSq}, we have
$$G(G'(\eta^a))=G(\eta^{G'(a)})=\eta^{G(G'(a))},$$
which implies $G\circ G'$ is centrally trivial.

Now suppose $G$ is centrally trivial and $v:F\Rightarrow G$ is an isometric natural transformation for some other endofunctor $F\in\End(\cC)$.
That $F(\eta^a)=\eta^{F(a)}$ for all central sequences $\eta$ follows from the following approximately commuting diagram
\begin{equation}
\label{eq:IsometryCT}
\begin{tikzcd}
F(a)
\arrow[rrr, bend left=40, "F(\eta_n^{a})"]
\arrow{r}{v^a}
\arrow[rrr, bend right=40, "\eta_n^{F(a)}"] 
& 
G(a)\arrow[r, bend left= 30, "G(\eta_n^{a})"] 
\arrow[r, bend right=30, "\eta_n^{G(a)}"]
& 
G(a)
\arrow{r}{(v^{a})^\dag}
&
F(a)
\end{tikzcd},
\end{equation}
where we have used Remark \ref{rem:ArrowFlipping} to flip the arrow on the right.
Considering the case when $v$ is unitary shows that $\ctEnd(\cC)$ is replete.

As $\cC$ admits orthogonal direct sums, so does $\End(\cC)$.
Suppose $G,G'$ are centrally trivial, and let $\eta$ be a central sequence. 
Then
$$
(G\oplus G')(\eta^a) = G(\eta^a)\oplus G'(\eta^a) = \eta^{G(a)}\oplus \eta^{G'(a)}.
$$
By the proof of Step 1 of Lemma \ref{lem:CauchyCompletionPreservesCentralSequences}, $\eta^{G(a)}\oplus \eta^{G'(a)}=\eta^{(G\oplus G')(a)}$ in $\widetilde{\cC}$, and thus $G\oplus G'$ is centrally trivial.

As $\cC$ is orthogonal projection complete, so is $\End(\cC)$.
Suppose $G$ is centrally trivial and $\pi : G\Rightarrow G$ is an orthogonal projection natural transformation.
Then $\pi$ orthogonally splits in $\End(\cC)$, so there is an $F\in \End(\cC)$ and an isometry natural transformation $v: F\Rightarrow G$.
But then $F\in \ctEnd(\cC)$ by \eqref{eq:IsometryCT} above.
\end{proof}

\begin{ex}
For a finite dimensional Hilbert space $H$, the functor $H\rhd -$ is centrally trivial.
Note that the identity functor $\id_\cC$ is centrally trivial, and thus so is $\bigoplus_i \id_\cC$. 
Since the functor $H\rhd -$ is equivalent to $\bigoplus_i \id_\cC$,
by Proposition \ref{prop:ctEnd(C)FullReplete}, $H\rhd -$ is centrally trivial.
\end{ex}

\begin{defn}
\label{Defn:ApproximatelyInner}
A functor $F\in\End(\cC)$ is called \textit{approximately inner} if there exists a finite dimensional Hilbert space $H$ and an approximate isometry natural transformation $v: F\Rightarrow H\rhd -$ in $\End(\widetilde{\cC})$, i.e.,
$$
\begin{tikzcd}
F(a) \arrow{r}{v^{a}_n} \arrow[swap]{d}{F(f)} & H\rhd a \arrow{d}{1_{H}\rhd f} \\
F(b) \arrow{r}{v^{b}_n} & H\rhd b
\end{tikzcd}
$$
approximately commutes.
The pair $(v,H)$ is called an \textit{approximating sequence} for $F$.
\end{defn}

\begin{defn}
We denote the full subcategory of $\End(\cC)$ of approximately inner endofunctors by $\aiEnd(\cC)$.
\end{defn}

\begin{prop}
\label{prop:aiRepleteComplete}
$\aiEnd(\cC)$ is a replete unitarily Cauchy complete $\rm W^*$-tensor subcategory of $\End(\cC)$.
\end{prop}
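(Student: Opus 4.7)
The plan is to mirror the structure of Proposition~\ref{prop:ctEnd(C)FullReplete}, verifying four closure properties in turn: containing the unit, closure under composition, repleteness together with closure under orthogonal projection splittings, and closure under orthogonal direct sums. The identity functor is trivially approximately inner via the pair $(\id_{-}, \bbC)$.

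The key step is closure under composition. Suppose $F$ and $G$ are approximately inner with approximating sequences $(v, H)$ and $(w, K)$ respectively. I would propose
\[
u^a := (1_H \rhd w^a) \cdot v^{G(a)} : F(G(a)) \longrightarrow H\rhd (K\rhd a) \cong (H\otimes K)\rhd a
\]
as an approximating sequence for $F\circ G$ with Hilbert space $H\otimes K$. Uniform boundedness follows from uniform boundedness of $v$ and $w$, placing $u$ in $\cC^\infty$. The approximate isometry condition follows by computing
\[
(u^a_n)^\dag u^a_n = (v^{G(a)}_n)^\dag \bigl(1_H \rhd (w^a_n)^\dag w^a_n\bigr)\, v^{G(a)}_n,
\]
which converges $\sigma$-strong* to $\id_{F(G(a))}$ by two successive applications of the approximate-isometry hypotheses together with joint $\tau$-continuity of composition on bounded subsets (property \ref{tau:composition}). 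Approximate naturality in $a$ I would verify by stacking two approximate-naturality squares: first push $f\in \cC(a\to b)$ across $w$ (applied pointwise inside $1_H\rhd-$), then push $G(f)$ across $v$.

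For the remaining properties, the unifying idea is to compose a given approximating sequence with an honest intertwiner. If $u: F\Rightarrow G$ is an isometric natural transformation and $(w, H)$ is an approximating sequence for $G$, then $w^a\cdot u^a$ defines an approximating sequence for $F$: naturality is automatic since $u$ is strictly natural, and the approximate isometry condition is immediate from $(u^a)^\dag u^a = \id_{F(a)}$ together with joint $\tau$-continuity. This single construction covers both repleteness (take $u$ unitary) and closure under orthogonal projection splittings (use that $\End(\cC)$ is orthogonal projection complete, so every projection $\pi: G\Rightarrow G$ splits through some $F$ via an isometry $u$). For direct sums, given approximating sequences $(v, H)$ for $F$ and $(w, K)$ for $F'$, the block-diagonal assembly $v^a \oplus w^a : (F\oplus F')(a) \to (H\oplus K)\rhd a$ furnishes an approximating sequence for $F\oplus F'$.

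The main technical step will be the composition case, since it is the only place where two distinct approximate-naturality conditions must be chained and joint $\tau$-continuity of composition genuinely invoked. However, because $\cI$ is a two-sided $\dag$-closed ideal inside $\cC^\infty$ and composition is jointly $\tau$-continuous on bounded subsets, I expect the argument to proceed routinely without essential obstacles.
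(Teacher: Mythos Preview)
Your proposal is correct and follows essentially the same architecture as the paper's proof: verify closure under composition, under precomposition with isometric natural transformations (giving both repleteness and projection completeness), and under direct sums, in each case by writing down an explicit approximating sequence. The only cosmetic difference is that for $F\circ G$ the paper uses $v^{K\rhd a}\cdot F(w^a)$ (applying $F$ to $w$ first, which invokes $\tau$-continuity of $F$), whereas you use $(1_H\rhd w^a)\cdot v^{G(a)}$ (applying $v$ first, then $1_H\rhd w$, which only needs $\tau$-continuity of $H\rhd-$); both work and the verification is the same two-square argument.
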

\begin{proof}
Suppose $F$ and $F'$ are approximately inner, with approximating sequences $(v,H)$ and $(w,K)$ respectively. 
We claim $(\alpha^{-1}_{H,K,-}\cdot v^{K\rhd -}\cdot F(w),H\otimes K)$ is an approximating sequence for $F\circ F'$,
where the unitary natural transformation $\alpha_{H,K,-}:(H\otimes K)\rhd -\Rightarrow H\rhd (K\rhd -)$ is the left module associator.
For $f\in \cC(a\to b)$, consider the diagram
$$
\begin{tikzcd}[column sep=3em]
{F(F'(a))}
\arrow["F(w^a_n)", rightarrow]{r} 
\arrow["F(F'(f))", rightarrow]{d} 
&
{F(K\rhd a)} 
\arrow["v_n^{K\rhd a}", rightarrow]{r} 
\arrow["F(1_K\rhd f)", rightarrow]{d} 
&
{H\rhd (K\rhd a)}
\arrow["\alpha^\dag_{H,K,a}", rightarrow]{r} 
\arrow["1_H\rhd(1_K\rhd f)", rightarrow]{d} 
&
{(H\otimes K)\rhd a}
\arrow["1_{H\otimes K}\rhd f", rightarrow]{d} 
\\
{F(F'(b))} 
\arrow["F(w_n^b)"', rightarrow]{r} 
&
{F(K\rhd b)}
\arrow["v_n^{K\rhd b}"', rightarrow]{r} 
&
{H\rhd (K\rhd b)}
\arrow["\alpha^\dag_{H,K,b}"', rightarrow]{r} 
&
{(H\otimes K)\rhd b}
\end{tikzcd}.
$$
The left square commutes because $F'$ is approximately inner and $F$ is $\tau$-continuous on bounded subsets.
The middle square commutes because $F$ is approximately inner.
The right square commutes because $\alpha^\dag_{H,K,-}$ is natural.

Now suppose $F$ is approximately inner with approximating sequence $(v,H)$
and $u:G\Rightarrow F$ is a isometric natural transformation for some other endofunctor $G\in \End(\cC)$.
It is easy to see $(v\cdot u,H)$ is an approximating sequence for $G$ from the fact that $v\cdot u$ is again approximately isometric.
The case when $u$ is unitary shows that $\aiEnd(\cC)$ is replete.
The case when $u$ is an isometry shows that since $\cC$ is orthogonal projection complete, then so is $\aiEnd(\cC)$.

Finally, as $\cC$ is orthogonal direct sum complete, the orthogonal direct sum of functors is defined. 
If $F$ and $F'$ are approximately inner with approximating sequence $(v,H)$ and $(w,K)$ respectively, then $(v\oplus w, H\oplus K)$ is an approximating sequence for $F\oplus F'$, so $F\oplus F' \in \aiEnd(\cC)$.
\end{proof}

\subsection{Relative braiding between centrally trivial and approximately inner endofunctors}
\label{sec:RelativeBraiding}
The goal of this section is to show that the subcategories $\ctEnd(\cC)$ and $\aiEnd(\cC)$ of $\End(\cC)$ `commute' with each other in the sense of Definition \ref{Defn:centralizing} below. 

\begin{prop}
\label{prop:vwdagCommuteG}
Suppose $F\in\aiEnd(\cC)$ with approximating sequence $(v,H)$ and $G\in \ctEnd(\cC)$.
Then $(v^{G(a)})^\dag\cdot\sigma^a_{G,H}\cdot G(v^a)\in\widetilde{\cC}(G(F(a))\to F(G(a)))$ is independent of the choice of approximating sequence $(v,H)$ for $F$.
\end{prop}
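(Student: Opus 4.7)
The plan is to fix ONBs $\{e_i\}\subseteq H$ and $\{f_j\}\subseteq K$ for any two approximating sequences $(v,H)$ and $(w,K)$ of $F$, decompose each approximating sequence into scalar-indexed components $v^a_i := (e_i^\dag\rhd 1_a) v^a \in \cC^\infty(F(a)\to a)$ and $w^a_j := (f_j^\dag\rhd 1_a) w^a \in \cC^\infty(F(a)\to a)$, recognize appropriate composites of these components as central sequences of $\cC$, and then exploit central triviality of $G$.

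First, I would unpack the formula for $\sigma^a_{G,H}$ from Definition \ref{defn:sigmaFH} and verify by a short direct calculation that
$$(v^{G(a)})^\dag \sigma^a_{G,H} G(v^a) = \sum_i (v^{G(a)}_i)^\dag G(v^a_i) \quad\text{in } \widetilde{\cC},$$
and analogously for $(w,K)$. Thus the task reduces to showing $\sum_i (v^{G(a)}_i)^\dag G(v^a_i) = \sum_j (w^{G(a)}_j)^\dag G(w^a_j)$ in $\widetilde{\cC}$.

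Second, decomposing the naturality of $v$ (resp. $w$) along $\{e_i\}$ (resp. $\{f_j\}$) exhibits each $v_i,w_j : F \Rightarrow \id_\cC$ as an approximate natural transformation. Taking daggers (using that $\widetilde{\cC}$ is a $\dag$-category) shows each $(w_j)^\dag:\id_\cC\Rightarrow F$ is approximately natural as well, so the composite $\eta^a_{i,j} := v^a_i (w^a_j)^\dag \in \cC^\infty(a\to a)$ is a central sequence. Central triviality of $G$, via Proposition \ref{Prop:CentralTrivial&CentralSq}, then yields
$$G(v^a_i (w^a_j)^\dag) = v^{G(a)}_i (w^{G(a)}_j)^\dag \quad\text{in } \widetilde{\cC}.$$

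Finally, since $w^a$ is an approximate isometry and $G$ is $\tau$-continuous on bounded sets, in $\widetilde{\cC}$ one has $\sum_j G(w^a_j)^\dag G(w^a_j) = G((w^a)^\dag w^a) = 1_{G(F(a))}$, and similarly $\sum_i (v^{G(a)}_i)^\dag v^{G(a)}_i = 1_{F(G(a))}$. Inserting the first identity on the right of $\sum_i (v^{G(a)}_i)^\dag G(v^a_i)$, applying the central-triviality identity to rewrite $G(v^a_i (w^a_j)^\dag)$, and contracting via the second identity telescopes the double sum to $\sum_j (w^{G(a)}_j)^\dag G(w^a_j)$, as desired. The main obstacle is recognizing the composite $v^a_i (w^a_j)^\dag$ as a central sequence to which central triviality can be applied; once this is in place, the rest is a routine ``insert-the-identity'' manipulation in $\widetilde{\cC}$.
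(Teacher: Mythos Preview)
Your argument is correct, but it takes a genuinely different route from the paper's proof.

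The paper works at the level of the approximate natural transformation $wv^\dag:H\rhd -\Rightarrow K\rhd -$ as a whole and applies Definition~\ref{Defn:CentrallyTrivial} directly: since $G$ is centrally trivial, the square with $\sigma^a_{G,H}$, $\sigma^a_{G,K}$, $G(w^a(v^a)^\dag)$, and $w^{G(a)}(v^{G(a)})^\dag$ approximately commutes, and then the arrow-flipping Remark~\ref{rem:ArrowFlipping} (using that $v,w$ are approximate isometries) yields the desired equality in $\widetilde{\cC}$ in one stroke. No bases or component manipulations appear.

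Your approach instead decomposes along orthonormal bases, recognizes each $v^a_i(w^a_j)^\dag$ as a scalar-valued central sequence, and invokes the equivalent characterization Proposition~\ref{Prop:CentralTrivial&CentralSq} rather than Definition~\ref{Defn:CentrallyTrivial}. The final insert-the-identity telescoping is then a component-level unpacking of exactly the same square the paper writes down. In effect, you are re-proving the content of Lemma~\ref{Lem:ApproNT&CentralSq} and Proposition~\ref{Prop:CentralTrivial&CentralSq} in this particular instance. This buys you a proof that is entirely in terms of ordinary central sequences and avoids any appeal to the $\sigma$-square definition of central triviality; the cost is more bookkeeping. The paper's proof is shorter and illustrates why Definition~\ref{Defn:CentrallyTrivial} was formulated with arbitrary $H,K$ in the first place, but your version is a perfectly valid alternative.
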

\begin{proof}
Suppose $(v,H)$ and $(w,K)$ are approximating sequences for the endofunctor $F\in\aiEnd(\cC)$.
Observe that $vw^\dag:K\rhd - \Rightarrow H\rhd -$ is approximately natural. 
Now since $G$ is centrally trivial,
by Definition \ref{Defn:CentrallyTrivial},
$$
\begin{tikzcd}
& G(F(a))  
\arrow{dr}{G(w_n^{a})} &    
\\
G(H\rhd a)\arrow[swap]{d}{\sigma^{a}_{G,H}} 
\arrow{ur}{G((v_n^{a})^{\dag})} 
& & G(K\rhd a)
\arrow{d}{\sigma^{a}_{G,K}}
\\
H\rhd G(a)
\arrow{r}{(v_n^{G(a)})^{\dag}} 
& F(G(a)) 
\arrow{r}{w_n^{G(a)}}    
& K\rhd G(a)                                              
\end{tikzcd}
$$
approximately commutes for each object $a\in\cC$.
Then by Remark \ref{rem:ArrowFlipping},
$$
\begin{tikzcd}
& G(F(a)) 
\arrow[swap]{dl}{G(v_n^{a})} 
\arrow{dr}{G(w_n^{a})} &
\\
G(H\rhd a)
\arrow[swap]{d}{\sigma^{a}_{G,H}} &   & 
G(K\rhd a)
\arrow{d}{\sigma^{a}_{G,K}}
\\
H\rhd G(a)
\arrow{r}{(v_n^{G(a)})^{\dag}} 
& F(G(a)) 
& K\rhd G(a)
\arrow[swap]{l}{(w_n^{G(a)})^{\dag}}                                              
\end{tikzcd}
$$
approximately commutes.
Therefore, $(v^{G(a)})^\dag\cdot\sigma^a_{G,H}\cdot G(v^a)\in\widetilde{\cC}(G(F(a))\to F(G(a)))$ is independent of the choice of approximating sequences $(v,H)$ for $F$.
\end{proof}

\begin{lem}\label{lem:CauchySequenceBySubsequence}
Suppose $(x_n)$ is a sequence in a metric space $(X,d)$
which satisfies the following property:
\begin{itemize}
\item 
For all functions $k: \bbN \to \bbN$ such that $n< k_n\leq k_{n+1}$,
$d(x_n,x_{k_n})\to 0$.
(Note here that $(x_{k_n})$ is not quite a subsequence as terms can repeat.)
\end{itemize}
Then $(x_n)$ is Cauchy.
\end{lem}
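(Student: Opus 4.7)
The plan is to prove the contrapositive: if $(x_n)$ is not Cauchy, I will construct an explicit function $k:\bbN\to\bbN$ satisfying $n<k_n\le k_{n+1}$ for which $d(x_n,x_{k_n})\not\to 0$.

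Suppose $(x_n)$ is not Cauchy. Then there exists $\varepsilon>0$ such that for every $N$ there are indices $m>n\ge N$ with $d(x_n,x_m)\ge \varepsilon$. Using this repeatedly, I extract by induction two strictly increasing sequences of indices
\[
n_1<m_1\le n_2<m_2\le n_3<m_3\le\cdots
\]
with $d(x_{n_j},x_{m_j})\ge \varepsilon$ for every $j$. (At step $j+1$, take $N=m_j$ so that the new pair $(n_{j+1},m_{j+1})$ satisfies $m_j\le n_{j+1}<m_{j+1}$.)

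Next I define $k_n$ by interlacing: for $n<n_1$ set $k_n=n_1$; for $n_j\le n<m_j$ set $k_n=m_j$; and for $m_j\le n<n_{j+1}$ set $k_n=n_{j+1}$. On each of the intervals $[n_j,m_j)$ and $[m_j,n_{j+1})$ the value of $k_n$ is constant, so monotonicity reduces to checking the transitions: at $n=m_j-1$ we have $k_{m_j-1}=m_j\le n_{j+1}=k_{m_j}$, and at $n=n_{j+1}-1$ we have $k_{n_{j+1}-1}=n_{j+1}\le m_{j+1}=k_{n_{j+1}}$. Both hold by the construction of the two sequences. The bound $n<k_n$ is immediate from the inequalities $n<m_j$ on $[n_j,m_j)$ and $n<n_{j+1}$ on $[m_j,n_{j+1})$. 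Thus $k$ satisfies the hypothesis $n<k_n\le k_{n+1}$ for all $n$.

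Finally, evaluating at $n=n_j$ gives $k_{n_j}=m_j$ and therefore $d(x_{n_j},x_{k_{n_j}})=d(x_{n_j},x_{m_j})\ge \varepsilon$ for every $j$. Since $n_j\to\infty$, the sequence $d(x_n,x_{k_n})$ does not converge to $0$, contradicting the assumption on $(x_n)$. The only subtle point in the argument is arranging the construction so that $k$ is nondecreasing and strictly exceeds $n$ simultaneously; the interlacing above was chosen precisely to make this bookkeeping automatic, so I expect no serious obstacle.
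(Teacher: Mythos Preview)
Your proof is correct and follows essentially the same approach as the paper's: both argue by contrapositive, extract an increasing sequence of ``bad'' pairs witnessing the failure of the Cauchy condition, and build a piecewise-constant function $k$ that jumps to the next index in that sequence. Your two interleaved sequences $n_j,m_j$ correspond exactly to the paper's single sequence $\ell_i$ with $(\ell_{2j-1},\ell_{2j})=(n_j,m_j)$, and your definition of $k$ on the intervals $[n_j,m_j)$ and $[m_j,n_{j+1})$ matches the paper's rule $k_n=\ell_{i+1}$ for $\ell_i\le n<\ell_{i+1}$; your version simply spells out the monotonicity checks a bit more explicitly.
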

\begin{proof}
Suppose for contradiction that $(x_n)$ is not Cauchy.
Then there is an $\varepsilon>0$ such that for all $N\in \bbN$, there are $m,n>N$ such that $d(x_m,x_n)\geq \varepsilon$.
Pick a subsequence $(x_{\ell_n})$ with $\ell_1>1$ and $d(x_{\ell_{2n-1}},x_{\ell_{2n}})\geq \varepsilon$.
We now define a function $k:\bbN \to \bbN$ by
$$
k_n:= 
\begin{cases}
\ell_1
&\text{if }1\le n < \ell_1
\\
\ell_{i+1}
&\text{if }\ell_i \leq n < \ell_{i+1}.
\end{cases}
$$
Since $k: \bbN \to \bbN$ satisfies $n< k_n \leq k_{n+1}$, we have $d(x_n , x_{k_n})\to 0$.
But observe that when $i$ is odd and $n=\ell_i$, we have
$$
d(x_{\ell_i} , x_{k_{\ell_i}})
=
d(x_{\ell_i} , x_{\ell_{i+1}})
\geq \varepsilon,
$$
a contradiction.
\end{proof}

\begin{thm}
\label{ThmDef:uGFa}
Suppose $F\in \aiEnd(\cC)$ and $G\in \ctEnd(\cC)$.
For each $a\in\cC$,
there exists a unique morphism in $\cC$, $u^a_{G,F}\in \cC(G(F(a))\to F(G(a)))$, such that for all approximate sequences $(v,H)$, 
the following diagram approximately commutes. 
$$
\begin{tikzcd}[column sep=3.4em]
{G(F(a))}
\arrow[" u^a_{G,F}", dashrightarrow]{r} 
\arrow["G(v_n^a)"', rightarrow]{d} 
&
{F(G(a))} 
\\
{G(H\rhd a)} 
\arrow["\sigma^a_{G,H}"', rightarrow]{r} 
&
{H\rhd G(a)}
\arrow["(v_n^{G(a)})^\dag"', rightarrow]{u} 
\end{tikzcd}
$$
\end{thm}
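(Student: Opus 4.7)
The plan is to show that the sequence $w_n := (v_n^{G(a)})^\dag \cdot \sigma^a_{G,H} \cdot G(v_n^a)$ in $\cC(G(F(a))\to F(G(a)))$ is $\tau$-Cauchy and uniformly norm bounded, hence converges in the $\sigma$-strong* topology (completeness on bounded sets is \ref{tau:metrizable}) to a morphism $u^a_{G,F}\in \cC$. Proposition \ref{prop:vwdagCommuteG} already establishes that the class of $w_n$ in $\widetilde{\cC}$ is independent of the approximating sequence; once $u^a_{G,F}$ is produced from one approximating sequence, independence at the level of $\widetilde{\cC}$ forces the same $u^a_{G,F}$ to satisfy the diagram for every other approximating sequence (since a constant sequence that is $\tau$-null must be identically zero, giving both existence for all $(v,H)$ and uniqueness of $u^a_{G,F}$ in $\cC$).

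Norm boundedness of $(w_n)$ is immediate: $v_n^a$ and $v_n^{G(a)}$ are uniformly bounded, $\sigma^a_{G,H}$ is unitary, and $G$ is norm-contractive on morphisms. The main work is the Cauchy condition, which I plan to verify via Lemma \ref{lem:CauchySequenceBySubsequence} rather than directly. Fix any function $k:\bbN\to\bbN$ with $n<k_n\leq k_{n+1}$, and define the reindexed family $v'_n := v_{k_n}^{(-)}$. I claim $(v',H)$ is again an approximating sequence for $F$: the bound is inherited, $(v'_n)^\dag v'_n - \id = v_{k_n}^\dag v_{k_n} - \id \to_\tau 0$ because $k_n\to\infty$ and a reindexing of a $\tau$-null sequence (through indices tending to infinity, with possible repetition) is still $\tau$-null, and likewise approximate naturality $(1_H\rhd f)\cdot v'^a_n - v'^b_n\cdot F(f)\to_\tau 0$ holds by the same reindexing principle.

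Now Proposition \ref{prop:vwdagCommuteG}, applied to the two approximating sequences $(v,H)$ and $(v',H)$, says that $(v^{G(a)}_n)^\dag \sigma^a_{G,H} G(v^a_n)$ and $(v'^{G(a)}_n)^\dag \sigma^a_{G,H} G(v'^a_n)$ represent the same class in $\widetilde{\cC}(G(F(a))\to F(G(a)))$. But the second expression is literally $w_{k_n}$, so $w_n - w_{k_n}\to_\tau 0$. Since this holds for every admissible $k$, Lemma \ref{lem:CauchySequenceBySubsequence} (applied to a complete metric inducing $\tau$ on the norm ball containing all $w_n$) shows $(w_n)$ is $\tau$-Cauchy; let $u^a_{G,F}$ be its limit. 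By construction $(v^{G(a)}_n)^\dag \sigma^a_{G,H} G(v^a_n) \to_\tau u^a_{G,F}$, which is exactly the statement that the displayed diagram approximately commutes with the constant sequence $u^a_{G,F}$ at the top.

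The step I expect to be the main technical obstacle is the reindexing argument — precisely checking that arbitrary non-decreasing reindexings with $k_n>n$ preserve the defining properties (approximate naturality and approximate isometry) of an approximating sequence. This is essentially elementary once one notes that $k_n\to\infty$, but it is the pivotal fact that lets Proposition \ref{prop:vwdagCommuteG} supply Cauchy-ness through Lemma \ref{lem:CauchySequenceBySubsequence}. The uniqueness of $u^a_{G,F}$ in $\cC$ is then immediate from faithfulness of $\cC\hookrightarrow \widetilde{\cC}$: any $u'\in \cC(G(F(a))\to F(G(a)))$ satisfying the same approximate equality would have $u'-u^a_{G,F}$ (viewed as a constant sequence) $\tau$-null, forcing $u'=u^a_{G,F}$.
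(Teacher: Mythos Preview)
Your proposal is correct and follows essentially the same approach as the paper: establish norm boundedness, reindex via an arbitrary $k$ with $n<k_n\le k_{n+1}$ to get a second approximating sequence, invoke Proposition~\ref{prop:vwdagCommuteG} to conclude $w_n - w_{k_n}\to_\tau 0$, and apply Lemma~\ref{lem:CauchySequenceBySubsequence} together with \ref{tau:metrizable} to obtain the limit $u^a_{G,F}$. Your treatment of uniqueness via faithfulness of $\cC\hookrightarrow\widetilde{\cC}$ and your handling of independence of the approximating sequence are also in line with the paper's argument.
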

\begin{proof}
Suppose $(v,H)$ is an approximating sequence for $F$.
Then for any function $k:\bbN\to\bbN$ such that $n<k_n\le k_{n+1}$, $(v':=(v_{k_n})_n,H)$ is also an approximating sequence for $F$. 
By Proposition \ref{prop:vwdagCommuteG}, 
$$(v^{G(a)})^\dag\cdot\sigma^a_{G,H}\cdot G(v^a) = ({v'}^{G(a)})^\dag\cdot\sigma^a_{G,H}\cdot G({v'}^a)$$
in $\widetilde{\cC}(G(F(a))\to F(G(a)))$.
By \ref{tau:metrizable}, 
the $\tau$ topology on 
any bounded subspace of $\cC(G(F(a))\to F(G(a)))$ is completely metrizable.
Then by Lemma \ref{lem:CauchySequenceBySubsequence} and the definition of $\widetilde{\cC}$, 
the bounded sequence
$$(v^{G(a)})^\dag\cdot\sigma^a_{G,H}\cdot G(v^a) = \left((v_n^{G(a)})^\dag\cdot\sigma^a_{G,H}\cdot G(v_n^a)\right)_n$$
is a Cauchy sequence,
and we denote $u^a_{G,F}$ to be its unique limit.
Again, by Proposition \ref{prop:vwdagCommuteG}, $u^a_{G,F}$ does not depend on the choice of the approximating sequence for $F$.
\end{proof}

\begin{prop}
\label{prop:uGFUnitary}
For $F\in \aiEnd(\cC)$ and $G\in \ctEnd(\cC)$, 
$u^a_{G,F}$ is unitary.
\end{prop}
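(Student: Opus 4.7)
The plan is to show both $u^a_{G,F}(u^a_{G,F})^\dag = \id_{F(G(a))}$ and $(u^a_{G,F})^\dag u^a_{G,F} = \id_{G(F(a))}$ by direct computation in the quotient $\widetilde{\cC}$. Since $u^a_{G,F}$ is an actual morphism in $\cC$ (the $\tau$-limit produced by Theorem \ref{ThmDef:uGFa}), equality with the constant sequence $\id$ in $\widetilde{\cC}$ automatically upgrades to equality in $\cC$.

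First, using the approximate formula $u^a_{G,F} \approx (v^{G(a)}_n)^\dag \cdot \sigma^a_{G,H} \cdot G(v^a_n)$ from Theorem \ref{ThmDef:uGFa} and its adjoint, I would expand
\[
u^a_{G,F}(u^a_{G,F})^\dag \;\approx\; (v^{G(a)}_n)^\dag \cdot \sigma^a_{G,H} \cdot G(v^a_n (v^a_n)^\dag) \cdot (\sigma^a_{G,H})^\dag \cdot v^{G(a)}_n
\]
in $\widetilde{\cC}$. The crucial input is that $\eta^b := v^b(v^b)^\dag$ defines an approximate natural transformation $H\rhd - \Rightarrow H\rhd -$; this follows by combining approximate naturality of $v$ with its adjoint version $F(f)(v^a)^\dag \approx (v^b)^\dag (1_H\rhd f)$ to get $(1_H\rhd f)\cdot v^a(v^a)^\dag \approx v^b(v^b)^\dag \cdot (1_H\rhd f)$. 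Central triviality of $G$ (Definition \ref{Defn:CentrallyTrivial}) with $K = H$ then yields the intertwining relation
\[
\sigma^a_{G,H} \cdot G(v^a(v^a)^\dag) \;\approx\; v^{G(a)}(v^{G(a)})^\dag \cdot \sigma^a_{G,H}.
\]
Substituting and using $\sigma^a_{G,H}(\sigma^a_{G,H})^\dag = \id$ collapses the expression for $u^a_{G,F}(u^a_{G,F})^\dag$ to $((v^{G(a)}_n)^\dag v^{G(a)}_n)^2 \approx \id_{F(G(a))}$ by the approximate isometric property of $v^{G(a)}$.

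For $(u^a_{G,F})^\dag u^a_{G,F}$, I would rewrite the same intertwining relation in the equivalent form $(\sigma^a_{G,H})^\dag v^{G(a)}(v^{G(a)})^\dag \sigma^a_{G,H} \approx G(v^a(v^a)^\dag)$ to convert the sandwiched block inside $(u^a_{G,F})^\dag u^a_{G,F}$ into a single $G(v^a_n(v^a_n)^\dag)$ factor. Normality of $G$ (which makes it preserve $\cI$) and the approximate isometric property of $v^a$ then collapse the product to $G((v^a_n)^\dag v^a_n (v^a_n)^\dag v^a_n) \approx G(\id_{F(a)}) = \id_{G(F(a))}$.

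The main obstacle is the verification that $v(v)^\dag$ defines an approximate natural transformation; once this is in hand, central triviality of $G$ transports the approximate projection across $\sigma$, and the two unitarity relations fall out symmetrically. A minor technical point is to ensure that all approximate sequences involved lie in $\cC^\infty$ so that \ref{tau:composition} and \ref{tau:dagcontinuity} on bounded subsets legitimize each approximate cancellation and absorption.
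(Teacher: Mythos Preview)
Your proposal is correct and follows essentially the same approach as the paper: both observe that $v(v)^\dag$ is an approximate natural transformation $H\rhd-\Rightarrow H\rhd-$, invoke central triviality of $G$ to obtain the intertwining relation $\sigma^a_{G,H}\cdot G(v^a(v^a)^\dag)\approx v^{G(a)}(v^{G(a)})^\dag\cdot\sigma^a_{G,H}$, and then collapse each of $u^a_{G,F}(u^a_{G,F})^\dag$ and $(u^a_{G,F})^\dag u^a_{G,F}$ to the identity in $\widetilde{\cC}$, concluding via faithfulness of $\cC\hookrightarrow\widetilde{\cC}$.
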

\begin{proof}
By Proposition \ref{prop:vwdagCommuteG}, 
for an approximating sequence $(v,H)$ for $F$, 
the following diagram approximately commutes.
$$
\begin{tikzcd}[column sep=4.5em]
G(H\rhd a) 
\arrow{r}{G(v_n^{a}\cdot (v_n^a)^\dag)} 
\arrow[swap]{d}{\sigma^{a}_{G,H}} 
& G(H\rhd a) 
\arrow{d}{\sigma^{a}_{G,H}} 
\\
H\rhd G(a) 
\arrow{r}{v_n^{G(a)}\cdot (v_n^{G(a)})^\dag} 
& H\rhd G(a)
\end{tikzcd}
$$
Since $\sigma^a_{G,H}$ is unitary, then in $\widetilde{\cC}$, we have
\begin{align*}
u^a_{G,F} \cdot (u^a_{G,F})^\dag 
& = \left((v^{G(a)})^\dag\cdot \sigma^a_{G,H} \cdot G(v^a)\right)\cdot \left(G(v^a)^\dag \cdot (\sigma^a_{G,H})^\dag \cdot v^{G(a)}\right) \\
& = (v^{G(a)})^\dag\cdot \left(\sigma^a_{G,H} \cdot G(v^a) \cdot G(v^a)^\dag \cdot (\sigma^a_{G,H})^\dag \right) \cdot v^{G(a)}\\
& = (v^{G(a)})^\dag \cdot\left(v^{G(a)}\cdot(v^{G(a)})^\dag \right)\cdot v^{G(a)}\\
& = \id_{F(G(a))},\\
(u^a_{G,F})^\dag \cdot u^a_{G,F} 
& = \left(G(v^a)^\dag \cdot (\sigma^a_{G,H})^\dag \cdot v^{G(a)}\right) \cdot \left((v^{G(a)})^\dag\cdot \sigma^a_{G,H} \cdot G(v^a)\right) \\
& = G(v^a)^\dag \cdot \left((\sigma^a_{G,H})^\dag \cdot v^{G(a)} \cdot (v^{G(a)})^\dag\cdot \sigma^a_{G,H}\right) \cdot G(v^a) \\
& = G(v^a)^\dag \cdot \left(G(v^a) \cdot G(v^a)^\dag\right) \cdot G(v^a) \\
& = \id_{G(F(a))}.
\end{align*}
Since the inclusion $\cC\hookrightarrow \widetilde{\cC}$ is faithful, we are finished.
\end{proof}

\begin{prop}
\label{prop:uGF-Natural}
For $F\in \aiEnd(\cC)$ and $G\in \ctEnd(\cC)$, 
the family $u_{G,F}:\{u^{a}_{G,F}\}_{a\in \cC}$ is a natural transformation $G\circ F \Rightarrow F\circ G$.
\end{prop}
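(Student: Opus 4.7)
The plan is to fix an approximating sequence $(v,H)$ for $F$ and verify the naturality square by working in $\widetilde{\cC}$, using that the inclusion $\cC\hookrightarrow \widetilde{\cC}$ is faithful. By Theorem \ref{ThmDef:uGFa}, in $\widetilde{\cC}$ we may represent $u^a_{G,F}$ by the class $(v^{G(a)})^\dag \cdot \sigma^a_{G,H}\cdot G(v^a)$, and similarly for $u^b_{G,F}$. For $f\in\cC(a\to b)$, we want to show
\[
F(G(f))\cdot u^a_{G,F}= u^b_{G,F}\cdot G(F(f))\quad\text{in }\widetilde{\cC}.
\]

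First I would compute the right-hand side. Using functoriality of $G$, naturality of $v$ (which is preserved under $G$ because $G$ is normal and hence $\tau$-continuous on bounded sets), and genuine naturality of $\sigma_{G,H}^{-}$, we get in $\widetilde{\cC}$
\[
u^b_{G,F}\cdot G(F(f))
= (v^{G(b)})^\dag\cdot \sigma^b_{G,H}\cdot G(v^b\cdot F(f))
= (v^{G(b)})^\dag\cdot (1_H\rhd G(f))\cdot \sigma^a_{G,H}\cdot G(v^a).
\]

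Next I would rewrite the left-hand side. Naturality of $v$ at $G(f)\colon G(a)\to G(b)$ gives $(1_H\rhd G(f))\cdot v^{G(a)}=v^{G(b)}\cdot F(G(f))$ in $\widetilde{\cC}$; multiplying on the left by $(v^{G(b)})^\dag$ (which is a left inverse of $v^{G(b)}$ in $\widetilde{\cC}$) and then on the right by $(v^{G(a)})^\dag$ gives
\[
F(G(f))\cdot (v^{G(a)})^\dag = (v^{G(b)})^\dag\cdot (1_H\rhd G(f))\cdot v^{G(a)}\cdot (v^{G(a)})^\dag.
\]
Thus
\[
F(G(f))\cdot u^a_{G,F}=(v^{G(b)})^\dag\cdot (1_H\rhd G(f))\cdot \bigl(v^{G(a)}\cdot (v^{G(a)})^\dag\bigr)\cdot \sigma^a_{G,H}\cdot G(v^a).
\]
Comparing with the right-hand side, the problem reduces to showing that the extra approximate projection $v^{G(a)}(v^{G(a)})^\dag$ can be absorbed, i.e.\ that
\[
\bigl(v^{G(a)}\cdot (v^{G(a)})^\dag\bigr)\cdot \sigma^a_{G,H}\cdot G(v^a) = \sigma^a_{G,H}\cdot G(v^a)\quad\text{in }\widetilde{\cC}.
\]

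The key step, which I anticipate to be the main obstacle since $v$ is only approximately isometric, is to invoke central triviality of $G$ on the right approximate natural transformation. Precisely, $v\cdot v^\dag\colon H\rhd -\Rightarrow H\rhd -$ is an approximate natural transformation, so by the argument used in the proof of Proposition \ref{prop:uGFUnitary} (central triviality of $G$ together with unitarity of $\sigma$), we have $v^{G(a)}(v^{G(a)})^\dag\cdot \sigma^a_{G,H}=\sigma^a_{G,H}\cdot G(v^a(v^a)^\dag)$ in $\widetilde{\cC}$. Substituting and then using $G((v^a)^\dag v^a)=G(\id)=\id$ in $\widetilde{\cC}$ (because $v^a$ is an approximate isometry and $G$ preserves approximate equalities), we obtain the desired absorption. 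Combining with the computation of $u^b_{G,F}\cdot G(F(f))$ above yields equality in $\widetilde{\cC}$, and faithfulness of $\cC\hookrightarrow \widetilde{\cC}$ promotes this to equality in $\cC$, completing the verification that $u_{G,F}$ is a natural transformation $G\circ F\Rightarrow F\circ G$.
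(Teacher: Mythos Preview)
Your proof is correct and follows essentially the same route as the paper: fix an approximating sequence $(v,H)$, replace $u^a_{G,F}$ and $u^b_{G,F}$ by their approximating expressions, and verify the naturality square in $\widetilde{\cC}$ using approximate naturality of $v$ (once under $G$, once at $G(f)$) together with genuine naturality of $\sigma_{G,H}$. The paper organizes this as a five-cell pasting diagram; you unroll it into equations.

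The one substantive difference is that you take an unnecessary detour through central triviality of $G$. When handling the left-hand side, you rewrite $F(G(f))\cdot (v^{G(a)})^\dag$ as $(v^{G(b)})^\dag\cdot (1_H\rhd G(f))\cdot v^{G(a)}(v^{G(a)})^\dag$ and then invoke the $vv^\dag$ trick from Proposition~\ref{prop:uGFUnitary} to absorb the approximate projection. This works, but it is not needed: since $\dag$ is $\tau$-continuous on bounded sets, the approximate naturality of $v$ immediately gives approximate naturality of $v^\dag$, so $F(G(f))\cdot (v^{G(a)})^\dag = (v^{G(b)})^\dag\cdot (1_H\rhd G(f))$ holds directly in $\widetilde{\cC}$ (this is exactly the paper's ``right square commutes by approximate naturality''). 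With that observation the two sides match on the nose and no appeal to $G\in\ctEnd(\cC)$ is required beyond what is already baked into the definition of $u_{G,F}$.
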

\begin{proof}
For $f\in\cC(a\to b)$, it suffices to prove the following diagram approximately commutes.
\[
\begin{tikzcd}
{G(F(a))}
\arrow["u^a_{G,F}", rightarrow]{rrr} 
\arrow["G(F(f))", rightarrow, swap]{ddd} 
\arrow["G(v_n^a)", rightarrow]{rd} 
&
{}
&
{}
&
{F(G(a))}
\arrow["F(G(f))", rightarrow]{ddd} 
\\
{}
&
{G(H\rhd a)}
\arrow["\sigma^a_{G,H}", rightarrow]{r} 
\arrow["G(1_H\rhd f)", rightarrow, swap]{d} 
&
{H\rhd G(a)}
\arrow["1_H\rhd G(f)", rightarrow]{d} 
\arrow["(v_n^{G(a)})^\dag", rightarrow]{ru} 
&
{}
\\
{}
&
{G(H\rhd b)}
\arrow["\sigma^b_{G,H}", rightarrow, swap]{r} 
&
{H\rhd G(b)}
\arrow["(v_n^{G(b)})^\dag", rightarrow, swap]{rd} 
&
{}
\\
{G(F(b))}
\arrow["u^b_{G,F}", rightarrow, swap]{rrr} 
\arrow["G(v_n^b)", rightarrow, swap]{ru} 
&
{}
&
{}
&
{F(G(b))}
\end{tikzcd}
\]
The top and bottom squares commute by definition of $u^a_{G,F}$ by Theorem \ref{ThmDef:uGFa}. 
The left square commute because $F$ is approximately inner and $G$ is $\tau$-continuous on bounded sets.
The right square commutes by approximate naturality.
The middle square commutes by naturality of $\sigma$.
\end{proof}

In the next definition, we use strict monoidal categories simply because the case we care about is strict, but one obtains the general definition by inserting coheretors where appropriate.

\begin{defn}\label{Defn:centralizing} Let $\cC$ be a (strict) $\rm C^*$-tensor category, and $\cA, \cB$ full, replete $\dag$ tensor subcategories of $\cC$. A \textit{centralizing structure} on the pair $(\cA, \cB)$ is a family of unitary isomorphisms $u_{a,b}:a\otimes b\to b\otimes a$ for $a\in \cA$ and $b\in \cB$ 
satisfying the following conditions:
\begin{enumerate}
\item
(Natural) For $a,a'\in \cA$ and $f\in \cC(a\to a')$ and $b,b\in \cB$, $g\in \cC(b\to b')$, $(g\otimes f)\circ u_{a,b}=u_{a',b'}\circ (f\otimes g)$.
\item
(Braid relation 1) For $a\in \cA$, $b,b'\in \cB$, $(1_{a}\otimes u_{a,b'})\circ (u_{a,b}\otimes 1_{b'})=u_{a, b\otimes b'}$.
\item
(Braid relation 2) For $a,a'\in \cA$ and $b\in \cB$, $(u_{a,b}\otimes 1_{a'})\circ (1_{a}\otimes u_{a',b})=u_{a\otimes a', b}$.
\end{enumerate}
\end{defn}

The goal of this section is to construct a \textit{centralizing structure} for the pair of full, replete subcategories $( \ctEnd(\cC),  \aiEnd(\cC))$ inside $ \End(\cC)$.

\begin{prop}
\label{prop:uGF-Centralizing-Natural}
For $F\in \aiEnd(\cC)$ and $G\in \ctEnd(\cC)$, 
$u_{G,F}$ satisfies condition (1) in Definition \ref{Defn:centralizing}.
\end{prop}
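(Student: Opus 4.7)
The plan is to split the naturality equation
$(g\otimes f)\circ u_{G,F}=u_{G',F'}\circ (f\otimes g)$
into two one-sided statements and recombine them via the interchange identity for horizontal composition. Writing $\alpha := g \in \End(\cC)(F\to F')$ and $\beta := f \in \End(\cC)(G\to G')$, and unraveling in components at each $a\in \cC$, I would establish
\begin{align*}
(\mathrm{I})\quad& \alpha^{G(a)}\cdot u^a_{G,F} = u^a_{G,F'}\cdot G(\alpha^a),
\\
(\mathrm{II})\quad& F(\beta^a)\cdot u^a_{G,F} = u^a_{G',F}\cdot \beta^{F(a)},
\end{align*}
then chain these using the horizontal composition identities $(\alpha*\beta)^a = F'(\beta^a)\alpha^{G(a)}$ and $(\beta*\alpha)^a = \beta^{F'(a)} G(\alpha^a)$ to recover the full naturality equation at $a$.

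For $(\mathrm{I})$, choose approximating sequences $(v,H)$ for $F$ and $(w,K)$ for $F'$. The key observation is that $w\cdot\alpha\cdot v^\dag : H\rhd -\Rightarrow K\rhd -$ is an approximate natural transformation: $\alpha$ is strictly natural, $w$ is approximately natural by hypothesis, and $v^\dag$ is approximately natural because taking $\dag$ of the squares witnessing approximate naturality of $v$ and invoking $\tau$-continuity of $\dag$ yields the required approximately commuting squares for $v^\dag$. Central triviality of $G$ applied to $w\alpha v^\dag$ then gives
\[
\sigma^a_{G,K}\cdot G(w^a\alpha^a (v^a)^\dag)\ \approx\ w^{G(a)}\alpha^{G(a)}(v^{G(a)})^\dag\cdot \sigma^a_{G,H}
\]
in $\widetilde{\cC}$. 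Right-multiplying by $G(v^a)$ and using $(v^a)^\dag v^a\approx\id$ together with $\tau$-continuity of $G$ on bounded sets collapses the left side to $\sigma^a_{G,K}\cdot G(w^a\alpha^a)$, while the defining relation for $u^a_{G,F}$ collapses the right side to $w^{G(a)}\alpha^{G(a)}\cdot u^a_{G,F}$. Left-multiplying by $(w^{G(a)})^\dag$ and invoking the defining relation for $u^a_{G,F'}$ along with $(w^{G(a)})^\dag w^{G(a)}\approx \id$ delivers $(\mathrm{I})$ in $\widetilde{\cC}$; faithfulness of $\cC\hookrightarrow\widetilde{\cC}$ promotes it to an equality in $\cC$.

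For $(\mathrm{II})$, the arrow-flipped form of the defining diagram (Remark \ref{rem:ArrowFlipping}), available since $v$ is approximately isometric, reads $v^{G(a)}\cdot u^a_{G,F}\approx \sigma^a_{G,H}\cdot G(v^a)$. Pre-composing with $1_H\rhd\beta^a$ and then applying, in succession, naturality of $\sigma$ in the functor argument (i.e.\ $(1_H\rhd \beta^a)\cdot\sigma^a_{G,H}=\sigma^a_{G',H}\cdot\beta^{H\rhd a}$, checked as in Definition \ref{defn:sigmaFH}), strict naturality of $\beta$ applied to $v^a$, and the flipped defining relation for $u^a_{G',F}$, yields $(1_H\rhd \beta^a)\cdot v^{G(a)}\cdot u^a_{G,F}\approx v^{G'(a)}\cdot u^a_{G',F}\cdot \beta^{F(a)}$. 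Separately, approximate naturality of $v$ applied to $\beta^a\in \cC(G(a)\to G'(a))$ gives $(1_H\rhd\beta^a)\cdot v^{G(a)}\approx v^{G'(a)}\cdot F(\beta^a)$. Equating these two approximate expressions and cancelling $v^{G'(a)}$ on the left via $(v^{G'(a)})^\dag v^{G'(a)}\approx \id$ then yields $(\mathrm{II})$ in $\widetilde{\cC}$, hence in $\cC$.

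The main technical hurdle lies in $(\mathrm{I})$: one must verify that $w\cdot\alpha\cdot v^\dag$ is genuinely an approximate natural transformation between module functors so that central triviality of $G$ applies, and then confirm that the merely approximate (rather than strict) isometry of $v$ and $w$ is enough to absorb the residual terms $\id-(v^a)^\dag v^a$ and $\id-(w^{G(a)})^\dag w^{G(a)}$ through the $\approx$ relation, which leans on $\tau$-continuity of $G$ on bounded subsets. Statement $(\mathrm{II})$ is then comparatively straightforward diagram chasing using only the flipped defining relation and naturality of $\sigma$ in the functor argument, and the final recombination is mechanical.
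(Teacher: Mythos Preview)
Your proof is correct and follows essentially the same approach as the paper: both split the naturality into the $F$-variable and $G$-variable cases, use that $w\cdot\alpha\cdot v^\dag$ is an approximate natural transformation together with central triviality of $G$ for the first case, and use naturality of $\sigma$ in the functor argument plus approximate naturality of $v$ and strict naturality of $\beta$ for the second. The only difference is presentational---the paper packages each case as a single large approximately-commuting pasting diagram, whereas you unwind the same steps algebraically via arrow-flipping and cancellation of approximate isometries.
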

\begin{proof}
Suppose $G,G'\in\ctEnd(\cC)$ and $F,F'\in\aiEnd(\cC)$ with approximating sequences $(v,H)$ and $(w,K)$ respectively. 
Let $\eta\in\End(\cC)( F\Rightarrow F' )$ be a natural transformation.
We shall show $u^a_{G,F'}\cdot G(\eta^a) =\eta^{G(a)}\cdot u^a_{G,F}$.
It suffices to prove the following diagram approximately commutes.
\[
\begin{tikzcd}
{G(F(a))}
\arrow["u^a_{G,F}", rightarrow]{rrr} 
\arrow["G(\eta^a)", rightarrow, swap]{ddd} 
\arrow["G(v_n^a)", rightarrow]{rd} 
&
{}
&
{}
&
{F(G(a))}
\arrow["\eta^{G(a)}", rightarrow]{ddd} 
\\
{}
&
{G(H\rhd a)}
\arrow["\sigma^a_{G,H}", rightarrow]{r} 
\arrow["G((w_n\cdot\eta\cdot v_n^\dag)^a)", rightarrow, swap]{d} 
&
{H\rhd G(a)}
\arrow["(w_n\cdot\eta\cdot v_n^\dag)^{G(a)}", rightarrow]{d} 
\arrow["(v_n^{G(a)})^\dag", rightarrow]{ru} 
&
{}
\\
{}
&
{G(K\rhd a)}
\arrow["\sigma^a_{G,K}", rightarrow, swap]{r} 
&
{K\rhd G(a)}
\arrow["(w_n^{G(a)})^\dag", rightarrow, swap]{rd} 
&
{}
\\
{G(F'(a))}
\arrow["u^a_{G,F'}", rightarrow, swap]{rrr} 
\arrow["G(w_n^a)", rightarrow, swap]{ru} 
&
{}
&
{}
&
{F'(G(a))}
\end{tikzcd}
\]
The left/right squares commute since $v,w$ are approximately isometric.
The top/bottom squares commute by the definition of $u_{G,F}$.
The middle square commutes because $G$ is centrally trivial.

Let $\psi\in\End(\cC)(G\Rightarrow G')$.
We shall show $u^a_{G',F}\cdot \psi^{F(a)} = F(\psi^a)\cdot u^a_{G,F}$.
It suffices to prove the following diagram approximately commutes.
\[
\begin{tikzcd}
{G(F(a))}
\arrow["u^a_{G,F}", rightarrow]{rrr} 
\arrow["\psi^{F(a)}", rightarrow, swap]{ddd} 
\arrow["G(v_n^a)", rightarrow]{rd} 
&
{}
&
{}
&
{F(G(a))}
\arrow["\psi^{G(a)}", rightarrow]{ddd} 
\\
{}
&
{G(H\rhd a)}
\arrow["\sigma^a_{G,H}", rightarrow]{r} 
\arrow["\psi^{H\rhd a}", rightarrow, swap]{d} 
&
{H\rhd G(a)}
\arrow["1_H\rhd \psi^a", rightarrow]{d} 
\arrow["(v_n^{G(a)})^\dag", rightarrow]{ru} 
&
{}
\\
{}
&
{G'(H\rhd a)}
\arrow["\sigma^a_{G',H}", rightarrow, swap]{r} 
&
{H\rhd G'(a)}
\arrow["(v_n^{G'(a)})^\dag", rightarrow, swap]{rd} 
&
{}
\\
{G'(F(a))}
\arrow["u^a_{G',F}", rightarrow, swap]{rrr} 
\arrow["G'(v_n^a)", rightarrow, swap]{ru} 
&
{}
&
{}
&
{F(G'(a))}
\end{tikzcd}
\]
The left square commutes by the naturality of $\psi$.
The right square commutes because $F$ is approximately inner.
The top/bottom squares commute by the definition of $u_{G,F}$.
The middle square commutes by the naturality of $\sigma$.
\end{proof}

\begin{prop}
\label{braid4}
For $F\in \aiEnd(\cC)$ and $G,G'\in \ctEnd(\cC)$, 
$u_{G,F}$ satisfies condition (2) of Definition \ref{Defn:centralizing}.
\end{prop}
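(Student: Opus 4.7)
My plan is to show that $u^{G'(a)}_{G,F}\cdot G(u^a_{G',F})$ satisfies the defining approximately commutative diagram of Theorem \ref{ThmDef:uGFa} for $u^a_{G\circ G',F}$ with respect to any approximating sequence $(v,H)$ for $F$; the uniqueness clause of Theorem \ref{ThmDef:uGFa} together with faithfulness of the inclusion $\cC\hookrightarrow\widetilde{\cC}$ will then give the desired equality.  Fix such an approximating sequence.  Using functoriality and $\tau$-continuity of $G$ on bounded sets, one computes in $\widetilde{\cC}$
$$u^{G'(a)}_{G,F}\cdot G(u^a_{G',F})\cdot (G\circ G')(v_n^a) = u^{G'(a)}_{G,F}\cdot G(u^a_{G',F}\cdot G'(v_n^a)) \approx u^{G'(a)}_{G,F}\cdot G(v_n^{G'(a)})^\dag\cdot G(\sigma^a_{G',H}),$$
by the defining relation for $u^a_{G',F}$.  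By Proposition \ref{prop:sigmabraid}(2), the target of the defining diagram factors as $(v_n^{G(G'(a))})^\dag\cdot \sigma^{G'(a)}_{G,H}\cdot G(\sigma^a_{G',H})$, and since $G(\sigma^a_{G',H})$ is a fixed unitary the task reduces to verifying the key auxiliary relation
$$(\star)\qquad u^{G'(a)}_{G,F}\cdot G(v_n^{G'(a)})^\dag \approx (v_n^{G(G'(a))})^\dag\cdot \sigma^{G'(a)}_{G,H}\qquad\text{in }\widetilde{\cC}.$$

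The main obstacle is that $(\star)$ does not follow from the defining diagram for $u^{G'(a)}_{G,F}$ by naive manipulation: that diagram features $G(v_n^{G'(a)})$ rather than $G(v_n^{G'(a)})^\dag$, and post-multiplying by $G(v_n^{G'(a)})^\dag$ inserts the spurious factor $G(v_n^{G'(a)}(v_n^{G'(a)})^\dag)$, which is only an approximate projection and not approximately the identity on $G(H\rhd G'(a))$.  The central triviality of $G$ is precisely what is needed to kill this extra factor.  Specifically, I would first verify that $\eta^a_n := v_n^a(v_n^a)^\dag - \id_{H\rhd a}$ defines an approximate natural transformation $H\rhd-\Rightarrow H\rhd-$; this is a direct computation, expanding $\eta^b_n\cdot(1_H\rhd f) - (1_H\rhd f)\cdot\eta^a_n$ and applying the approximate naturality of $v$ twice (once to move $v^\dag$ past $(1_H\rhd f)$, once to move $v$).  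Applying Definition \ref{Defn:CentrallyTrivial} with $K=H$ at $G'(a)$ and pre-composing by $(v_n^{G(G'(a))})^\dag$ (which annihilates the right-hand side via $(v_n^{G(G'(a))})^\dag v_n^{G(G'(a))}\approx\id$) then produces
$$(v_n^{G(G'(a))})^\dag\cdot\sigma^{G'(a)}_{G,H}\cdot G(v_n^{G'(a)}(v_n^{G'(a)})^\dag) \approx (v_n^{G(G'(a))})^\dag\cdot\sigma^{G'(a)}_{G,H}\qquad\text{in }\widetilde{\cC}.$$

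Letting $D_n$ denote the difference in $(\star)$, I would complete the proof by combining two convergences.  First, the defining relation for $u^{G'(a)}_{G,F}$ together with $G((v_n^{G'(a)})^\dag v_n^{G'(a)})\to_{\tau} \id$ gives $D_n\cdot G(v_n^{G'(a)})\to_{\tau} 0$; post-composing with the uniformly bounded $G(v_n^{G'(a)})^\dag$ upgrades this to $D_n\cdot G(v_n^{G'(a)}(v_n^{G'(a)})^\dag)\to_{\tau} 0$.  Second, the central-triviality display above combined with the elementary $G((v_n^{G'(a)})^\dag v_n^{G'(a)}(v_n^{G'(a)})^\dag)\to_{\tau} G(v_n^{G'(a)})^\dag$ yields $D_n\cdot G(v_n^{G'(a)}(v_n^{G'(a)})^\dag)\approx D_n$ in $\widetilde{\cC}$.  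Subtracting these two conclusions forces $D_n\to_{\tau} 0$, which establishes $(\star)$ and hence the braid relation.
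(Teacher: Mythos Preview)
Your overall strategy is sound and close to the paper's, but your opening displayed computation contains a genuine type error: the composite $u^{G'(a)}_{G,F}\cdot G(u^a_{G',F})\cdot (G\circ G')(v_n^a)$ is ill-formed, since $(G\circ G')(v_n^a)\colon GG'F(a)\to GG'(H\rhd a)$ while $G(u^a_{G',F})$ has domain $GG'F(a)$, not $GG'(H\rhd a)$. Likewise the inner composite $u^a_{G',F}\cdot G'(v_n^a)$ does not typecheck. What you presumably intend is to expand $G(u^a_{G',F})$ using the defining relation (applied inside $G$, using $\tau$-continuity on bounded sets):
\[
u^{G'(a)}_{G,F}\cdot G(u^a_{G',F})
\;\approx\;
u^{G'(a)}_{G,F}\cdot G(v_n^{G'(a)})^\dag\cdot G(\sigma^a_{G',H})\cdot GG'(v_n^a),
\]
and compare with $(v_n^{GG'(a)})^\dag\cdot\sigma^{G'(a)}_{G,H}\cdot G(\sigma^a_{G',H})\cdot GG'(v_n^a)$ via Proposition~\ref{prop:sigmabraid}(2). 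Once $(\star)$ is established, post-multiplying $(\star)$ by the uniformly bounded $G(\sigma^a_{G',H})\cdot GG'(v_n^a)$ gives the comparison directly; no cancellation of $GG'(v_n^a)$ is needed. With this correction, your reduction to $(\star)$ and your proof of $(\star)$ are correct.

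Your proof of $(\star)$ is essentially the same mechanism the paper uses---that $vv^\dag$ is an approximate natural transformation $H\rhd{-}\Rightarrow H\rhd{-}$, so central triviality lets it pass through a $\sigma$---but you deploy it differently. The paper organizes everything into a single large approximately commuting diagram and invokes \emph{$G'$ centrally trivial} at $a$ (then applies $G$) to handle the problematic square, whereas you isolate $(\star)$ algebraically and invoke \emph{$G$ centrally trivial} at $G'(a)$. Both applications are valid since $G,G'\in\ctEnd(\cC)$; your packaging is a bit more computational while the paper's diagram makes the global structure visible, but the content is equivalent.
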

\begin{proof}
It suffices to show that for an approximating sequence $(v,H)$ for $F$, the following diagram approximately commutes.
\[
\begin{tikzcd}[column sep=5em]
GG'F(a)
\arrow[dddddd, swap, bend right=55, "G(u^a_{G',F})"']
\arrow[rrrddd, bend left=25, "u^a_{G\circ G',F}"]
\arrow[dd, "G(G'(v_n^a))"]
\arrow[rdd, "G(G'(v_n^a))"]
& & & \\ 
& & & & \\ 
GG'(H\rhd a)
\arrow[r, swap, "G(G'(v_n^a(v_n^a)^\dag))"]
\arrow[dd, "G(\sigma^a_{G',H})"]
& GG'(H\rhd a)
\arrow[rd, "\sigma^a_{G\circ G',H}"]
\arrow[dd, swap, "G(\sigma^a_{G',H})"]
& & \\
& & 
H\rhd GG'(a)
\arrow[r, "(v_n^{G(G'(a))})^\dag"]
& FGG'(a)
\\
G(H\rhd G'(a))
\arrow[dd, "G(v_n^{G'(a)})^\dag"]
\arrow[r, "G(v_n^{G'(a)}(v_n^{G'(a)})^\dag)"]
& G(H\rhd G'(a))
\arrow[ru, swap, "\sigma^{G'(a)}_{G,H}"]
& & \\
& & & & \\ 
GFG'(a)
\arrow[ruu, swap, "G(v_n^{G'(a)})"]
\arrow[rrruuu, bend right=25, swap, "u^{G'(a)}_{G,F}"]
& & &
\end{tikzcd}
\]
For any choice of approximating sequence, the outer three cells approximately commute by the definition of $u$ from Theorem \ref{ThmDef:uGFa}.
The upper triangle approximately commutes as $v$ is approximately isometric.
The middle triangle approximately commutes by Proposition \ref{prop:sigmabraid}(2). 
The lower triangle is trivial. 
Finally, the remaining square approximately commutes because $vv^\dag:H\rhd -\Rightarrow H\rhd -$ is approximately natural and $G'$ is centrally trivial (see also Proposition \ref{prop:uGFUnitary}). 
\end{proof}

\begin{prop}
\label{braid3}
For $F,F'\in \aiEnd(\cC)$ and $G\in \ctEnd(\cC)$, 
$u_{G,F}$ satisfies condition (3) of Definition \ref{Defn:centralizing}.
\end{prop}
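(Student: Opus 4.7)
Fix approximating sequences $(v,H)$ for $F$ and $(w,K)$ for $F'$. From the proof of Proposition \ref{prop:aiRepleteComplete},
\[
V^a \;:=\; \alpha^{-1}_{H,K,a}\circ v^{K\rhd a}\circ F(w^a)
\]
is an approximating sequence for $F\circ F'$ with Hilbert space $H\otimes K$. The goal is to establish the braid identity
\[
u^a_{G,F\circ F'} \;=\; F(u^a_{G,F'})\circ u^{F'(a)}_{G,F}
\]
as morphisms in $\cC$, which by faithfulness of $\cC\hookrightarrow\widetilde{\cC}$ and Theorem \ref{ThmDef:uGFa} reduces to verifying the identity in $\widetilde{\cC}$. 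The plan is to construct a large approximately commuting diagram, in the style of the proof of Proposition \ref{braid4}, whose outer paths compute the two sides of this identity. I will suppress the module associators throughout; they are unitary and natural, and can be reinserted without affecting any step.

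Substituting $V$ into $u^a_{G,F\circ F'}\approx (V^{G(a)})^\dag \circ \sigma^a_{G,H\otimes K}\circ G(V^a)$ and decomposing $\sigma^a_{G,H\otimes K}$ via Proposition \ref{prop:sigmabraid}(1) expresses the left side in $\widetilde{\cC}$ as
\[
F((w^{G(a)})^\dag)\circ (v^{K\rhd G(a)})^\dag\circ (1_H\rhd \sigma^a_{G,K})\circ \sigma^{K\rhd a}_{G,H}\circ G(v^{K\rhd a})\circ GF(w^a).
\]
The crucial move is to identify the middle block $(v^{K\rhd G(a)})^\dag\circ (1_H\rhd \sigma^a_{G,K})\circ \sigma^{K\rhd a}_{G,H}\circ G(v^{K\rhd a})$ with $F(\sigma^a_{G,K})\circ u^{K\rhd a}_{G,F}$; using the defining approximate formula for $u^{K\rhd a}_{G,F}$, this reduces to the identification
\[
(v^{K\rhd G(a)})^\dag\circ (1_H\rhd \sigma^a_{G,K}) \;\approx\; F(\sigma^a_{G,K})\circ (v^{G(K\rhd a)})^\dag \qquad\text{in }\widetilde{\cC}.
\]
Once this is in place, honest naturality of $u_{G,F}$ (Proposition \ref{prop:uGF-Natural}) applied at each $w^a_n\in\cC(F'(a)\to K\rhd a)$ lets me commute $u^{K\rhd a}_{G,F}$ past $GF(w^a)$ to produce $FG(w^a)\circ u^{F'(a)}_{G,F}$. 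Finally, the $\tau$-continuity of $F$ collapses the remaining prefix $F((w^{G(a)})^\dag)\circ F(\sigma^a_{G,K})\circ FG(w^a)$ to $F(u^a_{G,F'})$ via the approximate formula for $u^a_{G,F'}$, yielding the right-hand side.

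The main obstacle is the middle-block identification above. A direct appeal to approximate naturality of $v$ at $\sigma^a_{G,K}$ itself does not deliver it: because $v$ is only approximately isometric and not approximately coisometric, the projection $v\cdot v^\dag$ produced by attempting to flip $v$ to the opposite side of the composite is not approximately the identity, and stops the computation. The correct route is instead to apply approximate naturality of $v$ at the adjoint morphism $(\sigma^a_{G,K})^\dag\colon K\rhd G(a)\to G(K\rhd a)$ (which lies in $\cC$ since $\cC$ is a $\dag$-category) and then take the $\dag$ of the resulting relation in $\widetilde{\cC}$, using the $\tau$-continuity of $\dag$ on bounded sets from \ref{tau:dagcontinuity}; the involutivity of $\dag$ then returns $\sigma^a_{G,K}$ to its original form on both sides. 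This $\dag$-transposition is the structural analogue in our setting of the diamond appearing in the proof of Proposition \ref{braid4}, where central triviality of $G'$ applied to the approximately natural family $v\cdot v^\dag$ played the corresponding role.
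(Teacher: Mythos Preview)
Your argument is correct and takes a genuinely different route from the paper. The paper builds one large pasting diagram whose two problematic cells require commuting $(v_n)^\dag$ past morphisms depending on $w_n$; since approximate naturality of $v$ is only asserted against a \emph{fixed} morphism in $\cC$, they resolve this by a diagonal/subsequence argument, replacing $v_n$ by $v_{k_n}$ chosen (for each fixed $a$) so that the two offending squares approximately commute up to $1/n$. You avoid this entirely: after decomposing $\sigma^a_{G,H\otimes K}$, you first commute $v^\dag$ past the \emph{fixed} unitary $\sigma^a_{G,K}$ (via approximate naturality at $(\sigma^a_{G,K})^\dag$ followed by $\dag$, which is legitimate by \ref{tau:dagcontinuity}), collapsing the $v$-block to the honest morphism $u^{K\rhd a}_{G,F}\in\cC$. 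At that point the interaction with $w_n$ is handled by the \emph{exact} naturality of $u_{G,F}$ from Proposition~\ref{prop:uGF-Natural}, so no subsequence is needed, and $\tau$-continuity of $F$ finishes. Your approach is cleaner and more conceptual: it trades the analytic diagonal trick for the observation that once the $v$-data has been packaged into the honest natural transformation $u_{G,F}$, honest naturality does all the work. The paper's approach, by contrast, keeps everything at the level of approximating sequences and pays for it with the subsequence bookkeeping.
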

\begin{proof}
We must prove for each fixed $a\in \cC$, $F(u^a_{G,F'})u_{G,F}^{F'(a)}=u^a_{G,F\circ F'}$.
To do so, we carefully choose approximating sequences $(v,H)$ and $(w,K)$ for $F$ and $F'$ respectively such that the following diagram approximately commutes.
\[
\begin{tikzcd}
GFF'(a)
\arrow{dd}{G(v_n^{F'(a)})}
\arrow[ddr, "G(v_n^{F'(a)})"] 
\arrow[dddddddddddd, swap, bend right=50, "u^{a}_{G, F\circ F'}"'] 
\arrow[ddddddrrr, bend left=35, "u^{F'(a)}_{G,F}"]
& & & \\
& & & \\
G(H\rhd F'(a))
\arrow[dd, swap, "G(1_{H}\rhd w_n^{a})"]
& G(H\rhd F'(a))
\arrow{ddl}{G(1_{H}\rhd w_n^{a})} 
\arrow[ddr, swap, "\sigma^{F'(a)}_{G,H}"]
& & \\
& & & \\
G(H\rhd(K\rhd a))
\arrow{dddd}{\sigma^{a}_{G, H\otimes K}} 
\arrow{ddr}{\sigma^{K\rhd a}_{G,H}}
& & H\rhd GF'(a) 
\arrow{ddl}[swap]{1_{H}\rhd G(w_n^{a})} 
\arrow[swap]{ddr}{(v_n^{GF'(a)})^{\dag}} 
& \\
& & & \\
& H\rhd G(K\rhd a)
\arrow{ddl}{1_{H}\rhd \sigma^{a}_{G,K}} 
\arrow[swap]{ddr}{(v_n^{G(K\rhd a)})^{\dag}} & & FGF'(a) 
\arrow[swap]{ddl}{FG(w_n^{a})}  
\arrow[ddddddlll, bend left=35, "F(u^{a}_{G,F'})"]\\
& & & \\
H\rhd K\rhd G(a) 
\arrow[swap]{dd}{1_{H}\rhd (w_n^{G(a)})^{\dag}} 
\arrow{ddr}{(v_n^{K\rhd G(a)})^{\dag}} 
& & FG(K\rhd a) 
\arrow[swap]{ddl}{F(\sigma^{a}_{G,K})} & \\
& & & \\
H\rhd F'G(a) 
\arrow{dd}{(v_n^{F'G(a)})^{\dag} } 
& F(K\rhd G(a)) 
\arrow[ddl, "F(w_n^{G(a)})^{\dag}"] 
& & \\
& & & \\
FF'G(a) 
& & & \\
\end{tikzcd}
\]
For any choices of approximating sequences, the outer three cells approximately commute by the definition of $u$ from Theorem \ref{ThmDef:uGFa}. 
The upper left square is trivial. 
The adjacent square to its lower right approximately commutes by the naturality of $\sigma_{G,H}$.
The left middle triangle approximately commutes by Proposition \ref{prop:sigmabraid}(1),
and the square to the lower right of this triangle approximately commutes by approximate naturality of $v$.
This leaves us to consider the two remaining squares
$$
\begin{tikzcd}[column sep=3.4em]
{H\rhd K\rhd G(a)}
\arrow["(v_n^{K\rhd G(a)})^\dag", rightarrow]{r} 
\arrow["1\rhd(w_n^{G(a)})^\dag"', rightarrow]{d} 
&
{F(K\rhd G(a))} 
\arrow["F(w_n^{G(a)})^\dag", rightarrow]{d} 
\\
{H\rhd F'G(a)} 
\arrow["(v_n^{F'G(a)})^\dag"', rightarrow]{r} 
&
{FF'G(a)}
\end{tikzcd}
\quad\text{and}\quad
\begin{tikzcd}[column sep=3.4em]
{H\rhd GF'(a)}
\arrow["(v_n^{GF'(a)})^\dag", rightarrow]{r} 
\arrow["1_H\rhd G(w_n^a)"', rightarrow]{d} 
&
{FGF'(a)} 
\arrow["FG(w_n^a)", rightarrow]{d} 
\\
{H\rhd G(K\rhd a)} 
\arrow["(v_n^{G(K\rhd a)})^\dag"', rightarrow]{r} 
&
{FG(K\rhd a)}
\end{tikzcd}
\,.
$$
These squares may not approximately commute for an arbitrary choice of approximating sequences, but we can get around this by replacing $v$ with a subsequence, as any approximating sequence for $F$ can be used.
Indeed, for $b,c\in \cC$, let $d_{b\to c}$ denote a metric inducing the $\tau$-topology on bounded subsets of $\cC(b\to c)$.
Since $(w_n^{G(a)})^\dag$ and $G(w_n^a)$ are morphisms in $\cC$ for each fixed $n$, using approximate naturality of $v^\dag$,
we may inductively choose $1\leq k_{n-1} < k_n$ so that both
\begin{align*}
d_{H\rhd K\rhd G(a)\to FF'G(a)}\left(F(w_n^{G(a)})^\dag\cdot (v_{k_n}^{K\rhd G(a)^\dag}),\  (v_{k_n}^{F'G(a)^\dag})\cdot (1_H\rhd (w_n^{G(a)})^\dag) \right)
&< \frac{1}{n}
\qquad\qquad\text{and}
\\
d_{H\rhd GF'(a) \to FG(K\rhd a)}\left(FG(w_n^a)\cdot (v_{k_n}^{GF'(a)^\dag}),\ (v_{k_n}^{G(K\rhd a)^\dag})\cdot (1_H\rhd G(w_n^a) \right)
&< \frac{1}{n}
\end{align*}
simultaneously.
Thus replacing $(v_n^c)$ with $(v_{k_n}^c)$ for all $c\in\cC$, 
the previous arguments still hold, as $((v_{k_n}^c)_c,H)$ is still an approximating sequence for $F$, and
these two squares approximately commute for our fixed $a\in \cC$.
Since we only need to verify condition (2) of Definition \ref{Defn:centralizing} for one $a\in\cC$ at a time, the result follows.
\end{proof}

\begin{defn}
We define the category $\locEnd(\cC)$ of \textit{local endofunctors}, to be 
the full
$\rm W^*$-monoidal subcategory of $\End(\cC)$ whose objects are normal $\dag$-endofunctors which are both approximately inner and centrally trivial. 
By Propositions \ref{prop:ctEnd(C)FullReplete} and \ref{prop:aiRepleteComplete}, $\locEnd(\cC)$ is replete and unitarily Cauchy complete.
The family of unitary natural transformations $u_{G,F}: G\circ F\Rightarrow F\circ G$ equips $\locEnd(\cC)$ with the structure of a \textit{braided $\rm W^*$-tensor category}.
\end{defn}

\section{\texorpdfstring{$\Chi(M)$}{chi(M)} for finite von Neumann algebras via bimodules}
\label{sec:Translation}

In this section, we give the main application of the construction of the last section to give a definition of $\Chi(M)$ for a $\rm II_1$-factor $M$ in terms of bimodules. 

Given a W*-tensor category $\cC$, the \textit{dualizable part}, denoted $\cC_{\rm dualizable}$, is the full tensor subcategory whose objects have two sided duals.
Observe that if $\End_\cC(1_\cC)$ is finite dimensional, then $\cC_{\rm dualizable}$ is a rigid $\rm C^*/W^*$ tensor category.
If moreover $\cC_{\rm dualizable}$ is semisimple (equivalently orthogonal projection complete), it is called a \emph{unitary multitensor category} \cite{MR4133163}; it is called a \emph{unitary tensor category} if $\End_\cC(1_\cC)$ is one dimensional.

\begin{defn}
Given a von Neumann algebra $M$, we denote the braided unitary tensor category $\Chi(M):=\locEnd(\fgpMod(M))_{\rm dualizable}$.
\end{defn}

Identifying $\End(\fgpMod(M))\simeq\fgpBim(M)^{\rm mp}$ as in Remark \ref{rem:OppositeIssue}, we call a bimodule $X\in \fgpBim(M)$ approximately inner (respectively centrally trivial) if the functor $-\boxtimes_M X$ is approximately inner (respectively centrally trivial). 
We see the underlying unitary tensor category of $\Chi(M)$ agrees with the definition of $\Chi(M)$ from \cite[Rem.~2.7]{MR1317367}.
We may thus think of $\Chi(M)$ as the dualizable approximately inner and centrally trivial bimodules of $M$ whose conjugate bimodule is also approximately inner and centrally trivial. 

Since $\locEnd(\fgpMod(M))$ is braided, we get a monoidal equivalence from $\Chi(M)$ to its monoidal opposite $\Chi(M)^{\rm mp}$, which allows us to bypass this opposite issue. 
We address this in detail in Remark \ref{rem:OppositeWithBraiding} below and the discussion thereafter.

Note the dualizable objects in $\fgpBim(M)$ are precisely the bifinite Hilbert bimodules of $M$, and the dual object is the conjugate bimodule. 
It is easy to see that $H$ is centrally trivial if and only if $\overline{H}$ is centrally trivial, but for approximately inner, any such relationship is not obvious. 
Thus it may be possible for a centrally trivial bifinite bimodule to be approximately inner, but its conjugate bimodule may not be approximately inner. 
We do not have an example, but we cannot rule this out at this time.

\subsection{Module and bimodule realization}
\label{sec:ModuleRealization}

In order to translate the results of \S\ref{sec:W*ApproxNatTrans}, we use the graphical calculus for $\fgpMod(M)$ as a right $\fgpBim(M)$-module $\rm W^*$-category.
One way to do this is to use the realization graphical calculus from \cite{2105.12010} based on \cite[\S4.1]{MR3221289}.
We only introduce the part of the graphical calculus that we need in this section, and we introduce the rest of the graphical calculus for Q-system realization in \S\ref{sect:Q-systemRealization} below.

In the 2D graphical calculus for $\WStarRCorr$, von Neumann algebras are denoted by shaded regions, bimodules are denoted by 1D strands, and intertwiners are denoted by 0D coupons.
For the rest of this section, let $M$ be a $\rm II_1$ factor.
Of particular importance is the right $M$-module $M_M$.
The missing label on the left hand side is inferred to be $\bbC$, which is always represented by the empty shading.
That is, we identify $\fgpMod(M)=\WStarRCorr(\bbC \to M)$ in the 2D graphical calculus.
We denote ${}_\bbC M_M$ by a dashed line which is shaded by $M$ on the right hand side.
$$
\tikzmath{\filldraw[\rColor, rounded corners=5, very thin, baseline=1cm] (0,0) rectangle (.6,.6);}=M
\qquad
\qquad
\tikzmath{
\filldraw[white, rounded corners=5, very thin, baseline=1cm] (0,0) rectangle (.6,.6);
\draw[dotted, rounded corners=5pt] (0,0) rectangle (.6,.6);
}=\bbC
\qquad
\qquad
\tikzmath{
\begin{scope}
\clip[rounded corners = 5] (0,0) rectangle (.6,.6);
\filldraw[\rColor] (.3,0) rectangle (.6,.6);
\end{scope}
\draw[dashed] (.3,0) -- (.3,.6);
\draw[dotted, rounded corners=5pt] (.3,0) -- (0,0) -- (0,.6) -- (.3,.6);
}={}_\bbC M_M.
$$
A bounded, adjointable intertwiner $f: Y_M \to Z_M$ is denoted graphically by
$$
\tikzmath{
\begin{scope}
\clip[rounded corners = 5] (-.3,-.7) rectangle (.6,.7);
\filldraw[\rColor] (0,-.7) -- (0,.7) -- (.7,.7) -- (.7,-.7);
\end{scope}
\draw[\YColor, thick] (0,-.7) -- (0,0);
\draw[\ZColor, thick] (0,0) -- (0,.7);
\roundNbox{unshaded}{(0,0)}{.3}{0}{0}{\scriptsize{$f$}};
}
:Y_M \to Z_M;
\qquad\qquad\qquad
\tikzmath{
\begin{scope}
\clip[rounded corners = 5] (0,0) rectangle (.6,.6);
\filldraw[\rColor] (.3,0) rectangle (.6,.6);
\end{scope}
\draw[\YColor, thick] (.3,0) -- (.3,.6);
\draw[dotted, rounded corners=5pt] (.3,0) -- (0,0) -- (0,.6) -- (.3,.6);
}=Y_M
\qquad\text{and}\qquad
\tikzmath{
\begin{scope}
\clip[rounded corners = 5] (0,0) rectangle (.6,.6);
\filldraw[\rColor] (.3,0) rectangle (.6,.6);
\end{scope}
\draw[\ZColor, thick] (.3,0) -- (.3,.6);
\draw[dotted, rounded corners=5pt] (.3,0) -- (0,0) -- (0,.6) -- (.3,.6);
}=Z_M.
$$

\begin{construction}[{\cite[Const.~4.1]{2105.12010}}]
Given $Y_M\in \fgpMod(M)$, the map $x\mapsto L_y$ where $L_y(m) := ym$ gives a canonical isomorphism $Y_M \cong |Y|_M:= \Hom(M_M \to Y_M)$ 
such that $\langle x|y\rangle_M = L_x^\dag L_y$.
In \S\ref{sec:CTandAIbimods} and \S\ref{sec:LocalExtension} below, we make heavy use of this identification.
\[
\tikzmath{
\begin{scope}
\clip[rounded corners = 5] (-.3,-.7) rectangle (.6,.7);
\filldraw[\rColor] (0,-.7) -- (0,.7) -- (.7,.7) -- (.7,-.7);
\end{scope}
\draw[dashed] (0,-.7) -- (0,0);
\draw[\YColor,thick] (0,0) -- (0,.7);
\roundNbox{unshaded}{(0,0)}{.3}{0}{0}{\scriptsize{$y$}};
}
\in 
|Y|:= \Hom(M_M \to Y_M).
\]
The right $M$-action is given by identifying $M=\End(M_M)$ and stacking coupons:
\[
\tikzmath{
\begin{scope}
\clip[rounded corners = 5] (-.3,-1.7) rectangle (.6,.7);
\filldraw[\rColor] (0,-1.7) -- (0,.7) -- (.7,.7) -- (.7,-1.7);
\end{scope}
\draw[dashed] (0,-1.7) -- (0,0);
\draw[\YColor,thick] (0,0) -- (0,.7);
\roundNbox{unshaded}{(0,0)}{.3}{0}{0}{\scriptsize{$y$}};
\roundNbox{unshaded}{(0,-1)}{.3}{0}{0}{\scriptsize{$a$}};
}\,
=
y\lhd a.
\]
The condition that $\{c_j\}$ is a $Y_M$-basis can be written graphically as
\[
\sum_j
\tikzmath{
\begin{scope}
\clip[rounded corners = 5] (-.15,-1.2) rectangle (.6,1.2);
\filldraw[\rColor] (0,-1.2) rectangle (.6,1.2);
\end{scope}
\draw[dashed] (0,-.5) -- (0,.5);
\draw[\YColor,thick] (0,.5) -- (0,1.2);
\draw[\YColor,thick] (0,-.5) -- (0,-1.2);
\roundNbox{unshaded}{(0,.5)}{.3}{0}{0}{\scriptsize{$c_j$}};
\roundNbox{unshaded}{(0,-.5)}{.3}{0}{0}{\scriptsize{$c_j^\dag$}};
}
=
\tikzmath{
\begin{scope}
\clip[rounded corners = 5] (-.15,-.5) rectangle (.45,.5);
\filldraw[\rColor] (0,-.5) rectangle (.45,.5);
\end{scope}
\draw[\YColor,thick] (0,-.5) -- (0,.5);
}\,.
\]

If $X\in \fgpBim(M)$, then we define the realization $|X|$ slightly differently:
$$
\tikzmath{
\begin{scope}
\clip[rounded corners = 5] (-.3,-.7) rectangle (.6,.7);
\filldraw[\rColor] (0,-.7) -- (0,0) -- (-.2,0) -- (-.2,.7) -- (.7,.7) -- (.7,-.7);
\end{scope}
\draw[dashed] (0,-.7) -- (0,0);
\draw[dashed] (-.2,0) -- (-.2,.7);
\draw[\XColor,thick] (.2,0) -- (.2,.7);
\roundNbox{unshaded}{(0,0)}{.3}{.05}{.05}{\scriptsize{$x$}};
}
\in
|X|:= \Hom(M_M \to M\boxtimes_M X_M).
$$
While this definition is canonically isomorphic to the previous definition via the canonical unitor $M\boxtimes_M X \cong X$, this definition offers the advantage of depicting both the left and the right $M$-actions graphically by
\[
\tikzmath{
\begin{scope}
\clip[rounded corners = 5] (-.4,-.7) rectangle (.75,1.7);
\filldraw[\rColor] (0,-.7) -- (0,0) -- (-.25,0) -- (-.25,1.7) -- (.75,1.7) -- (.75,-.7);
\end{scope}
\draw[dashed] (-.25,0) -- (-.25,1.7);
\draw[\XColor,thick] (.25,0) -- (.25,1.7);
\draw[dashed] (0,-.7) -- (0,0);
\roundNbox{unshaded}{(-.25,1)}{.3}{0}{0}{\scriptsize{$a$}};
\roundNbox{unshaded}{(0,0)}{.3}{.15}{.15}{\scriptsize{$x$}};
}
\,=
a\rhd x
\qquad\text{and}\qquad
\tikzmath{
\begin{scope}
\clip[rounded corners = 5] (-.4,-1.7) rectangle (.7,.7);
\filldraw[\rColor] (0,-1.7) -- (0,0) -- (-.2,0) -- (-.2,.7) -- (.75,.7) -- (.75,-1.7);
\end{scope}
\draw[dashed] (-.2,0) -- (-.2,.7);
\draw[\XColor,thick] (.2,0) -- (.2,.7);
\draw[dashed] (0,-1.7) -- (0,0);
\roundNbox{unshaded}{(0,0)}{.3}{.05}{.05}{\scriptsize{$x$}};
\roundNbox{unshaded}{(0,-1)}{.3}{0}{0}{\scriptsize{$b$}};
}
\,=
x\lhd b.
\]
\end{construction}

\subsection{Centrally trivial and approximately inner bimodules}
\label{sec:CTandAIbimods}
In this section we clarify the equivalence between our definitions of approximately inner and centrally trivial given in terms of endofunctors, and Popa's original definitions as translated to bimodules, which are much more natural from the point of view of a single von Neumann algebra \cite[Rem.~2.7]{MR1317367}.
For this section, let $M$ be a finite separable von Neumann algebra with faithful normal trace $\tr_M$.

\begin{nota}
For norm bounded sequences $(f_n)_n,(g_n)_n\subseteq \Hom(X_M\to Y_M)$,
we write $f_n\approx g_n$ if $\lim_n\|f_n-g_n\|_2 = 0$. 
For $f\in \Hom(X_M\to Y_M)$, we write $f\approx f_n$ if $\lim_n \|f-f_n\|_2 = 0$.
As a consequence, $f\approx g$ if and only if $f=g$.

We remark that since composition is jointly $\tau$-continuous on norm-bounded subsets of hom spaces, we may use $f_n\approx g_n$ as a \emph{local relation} amongst morphisms in $\fgpMod(M)$.
If we precompose with an appropriate $h$, we still have $f_n \circ h \approx g_n\circ h$, and similarly for composing on the other side, or both sides simultaneously.
\end{nota}

\begin{prop}
\label{prop:BimodualCT&CS}
$X$ is centrally trivial over $M$ if and only if for all central sequences $(a_n)_n\subseteq M$ and for all $x\in X$, $\|a_nx-xa_n\|_2\to 0$, i.e.,
\[
\tikzmath{
\begin{scope}
\clip[rounded corners = 5] (-.4,-.7) rectangle (.75,1.7);
\filldraw[\rColor] (0,-.7) -- (0,0) -- (-.25,0) -- (-.25,1.7) -- (.75,1.7) -- (.75,-.7);
\end{scope}
\draw[dashed] (-.25,0) -- (-.25,1.7);
\draw[\XColor,thick] (.25,0) -- (.25,1.7);
\draw[dashed] (0,-.7) -- (0,0);
\roundNbox{unshaded}{(-.25,1)}{.3}{0}{0}{\scriptsize{$a_n$}};
\roundNbox{unshaded}{(0,0)}{.3}{.15}{.15}{\scriptsize{$x$}};
}
\approx
\tikzmath{
\begin{scope}
\clip[rounded corners = 5] (-.4,-1.7) rectangle (.7,.7);
\filldraw[\rColor] (0,-1.7) -- (0,0) -- (-.2,0) -- (-.2,.7) -- (.75,.7) -- (.75,-1.7);
\end{scope}
\draw[dashed] (-.2,0) -- (-.2,.7);
\draw[\XColor,thick] (.2,0) -- (.2,.7);
\draw[dashed] (0,-1.7) -- (0,0);
\roundNbox{unshaded}{(0,0)}{.3}{.05}{.05}{\scriptsize{$x$}};
\roundNbox{unshaded}{(0,-1)}{.3}{0}{0}{\scriptsize{$a_n$}};
}
\qquad\qquad
\forall\, x\in X,
\qquad
\forall\text{ central sequences }(a_n)\subseteq M.
\]
\end{prop}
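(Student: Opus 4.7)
The plan is to translate the categorical central-triviality of $F := -\boxtimes_M X \in \End(\fgpMod(M))$ into the asymptotic commutation $\|a_n x - xa_n\|_2\to 0$ by unwinding the definitions at the generator $c = M_M$ and matching both sides explicitly.

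By Proposition \ref{Prop:CentralTrivial&CentralSq}, $F$ is centrally trivial if and only if $F(\eta^c) \approx \eta^{F(c)}$ in $\widetilde{\fgpMod(M)}$ for every central sequence $\eta$ and every $c\in\fgpMod(M)$. My first step is to reduce to checking this at the generator $c = M_M$. By Example \ref{ex:MCentralSequencesAgree}, central sequences of $\fgpMod(M)$ are in canonical bijection with central sequences $(a_n)\subseteq M$. Using the explicit extension formulas $\Add(\eta)_{ii} = \pi_i^\dag \eta^{c_i}\pi_i$ and $\Proj(\eta)^{(c_0,p)} = p\eta^{c_0} p$ from Lemma \ref{lem:CauchyCompletionPreservesCentralSequences}, and the fact that a normal $\dag$-functor commutes with direct sums and with splittings of orthogonal projections up to unitary natural isomorphism, I observe that if $F(\eta^{M_M}) \approx \eta^{F(M_M)}$ for every central sequence $(a_n)\subseteq M$, then the approximation propagates automatically to every object of $\fgpMod(M)$.

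Next I identify both sides at $c = M_M$. Via the canonical unitor $M\boxtimes_M X \cong X$, which sends $m\boxtimes x \mapsto m\cdot x$ using the bimodule left action, the morphism $F(\eta^{M_M}) = L_{a_n}\boxtimes \id_X$ corresponds to $\lambda(a_n)\colon x\mapsto a_n x$. For the other side, I pick an $X_M$-basis $\{b_i\}$ and realize $X$ as an object of $\Proj(\Add(M))$ with projection $p_{ij} = \langle b_i|b_j\rangle_M$; the extension formulas of Lemma \ref{lem:CauchyCompletionPreservesCentralSequences} then give
\[
\eta^X(x) \;=\; \sum_i b_i\, a_n\, \langle b_i|x\rangle_M.
\]
Rewriting $a_n\langle b_i|x\rangle_M = \langle b_i|x\rangle_M a_n + [a_n,\langle b_i|x\rangle_M]$, using the basis completeness $\sum_i b_i\langle b_i|x\rangle_M = x$, and bounding $\|b_i[a_n,\langle b_i|x\rangle_M]\|_2 \leq \|b_i\|_\infty\,\|[a_n,\langle b_i|x\rangle_M]\|_2$, the commutator sum tends to $0$ in $\|\cdot\|_2$, so $\eta^X(x) \approx x a_n$ for every $x\in X$.

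Putting the two computations together, the condition $F(\eta^{M_M}) \approx \eta^{F(M_M)}$ (for every central sequence $(a_n)\subseteq M$) is equivalent to $\|a_n x - x a_n\|_2 \to 0$ for every $x\in X$ and every central sequence $(a_n)\subseteq M$, which yields both directions of the claim. The main technical point, more bookkeeping than conceptual, is the identification of the Cauchy-completion extension $\eta^X$ as an ``approximate right multiplication by $a_n$''; once this explicit formula is in hand, the categorical and the bimodule conditions coincide verbatim.
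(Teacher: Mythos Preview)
Your proof is correct and follows the same approach as the paper, which simply invokes Proposition~\ref{Prop:CentralTrivial&CentralSq} and Example~\ref{ex:MCentralSequencesAgree} and asserts that the categorical condition ``translates into'' the displayed bimodule condition. You have carried out that translation explicitly: the reduction to the generator $M_M$ via the Cauchy-completion extension formulas, the identification $F(\eta^{M_M})\cong\lambda(a_n)$, and the basis computation showing $\eta^X_n(x)\approx x a_n$ are exactly the routine details the paper suppresses.
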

\begin{proof}
Recall a central sequence of $\fgpMod(M)$ is a natural transformation of the identity functor.
By Example \ref{ex:MCentralSequencesAgree}, equivalence classes of these central sequences agree with the usual equivalence classes of central sequences of $M$.
The result now follows directly from Proposition \ref{Prop:CentralTrivial&CentralSq}, which translates into the displayed condition in the statement of the proposition. 
\end{proof}

Now by definition, the functor $-\boxtimes_M X$ is approximately inner if there exists an approximately natural isometry $v_n^Y:Y\boxtimes_M X\to Y\otimes H$, i.e., we have
\[
v_n^Y=
\tikzmath{
\begin{scope}
\clip[rounded corners = 5] (-.15,-.7) rectangle (.9,.7);
\filldraw[\rColor] (0,-.7) -- (0,.7) -- (1,.7) -- (1,-.7);
\end{scope}
\draw[\YColor,thick] (0,-.7) node[below]{$\scriptstyle Y$} -- (0,.7);
\draw[\HColor,thick] (.4,0) -- (.4,.7) node[above,black]{$\scriptstyle H$};
\draw[\XColor,thick] (.4,0) -- (.4,-.7) node[below]{$\scriptstyle X$};
\roundNbox{unshaded}{(.2,0)}{.3}{.1}{.1}{\scriptsize{$v_n^{Y}$}};
}
\qquad\text{such that}\qquad
\tikzmath{
\begin{scope}
\clip[rounded corners = 5] (-.15,-1.7) rectangle (1,.7);
\filldraw[\rColor] (0,-1.7) -- (0,.7) -- (1,.7) -- (1,-1.7);
\end{scope}
\draw[\YColor,thick] (0,-.7) -- (0,.7);
\draw[\XColor,thick] (.5,-1.7) -- (.5,0);
\draw[\ZColor,thick] (0,-1.7) -- (0,-.7);
\draw[\HColor,thick] (.5,0) -- (.5,.7);
\roundNbox{unshaded}{(.2,0)}{.3}{.2}{.2}{\scriptsize{$v_n^{Y}$}};
\roundNbox{unshaded}{(0,-1)}{.3}{0}{0}{\scriptsize{$f$}};
}
\approx
\tikzmath{
\begin{scope}
\clip[rounded corners = 5] (-.15,-.7) rectangle (1,1.7);
\filldraw[\rColor] (0,-.7) -- (0,1.7) -- (1,1.7) -- (1,-.7);
\end{scope}
\draw[\YColor,thick] (0,1) -- (0,1.7);
\draw[\XColor,thick] (.5,-.7) -- (.5,0);
\draw[\ZColor,thick] (0,-.7) -- (0,1);
\draw[\HColor,thick] (.5,0) -- (.5,1.7);
\roundNbox{unshaded}{(.2,0)}{.3}{.2}{.2}{\scriptsize{$v_n^{Z}$}};
\roundNbox{unshaded}{(0,1)}{.3}{0}{0}{\scriptsize{$f$}};
}
\qquad\text{and}\qquad
\tikzmath{
\begin{scope}
\clip[rounded corners = 5] (-.15,-1.7) rectangle (.9,.7);
\filldraw[\rColor] (0,-1.7) -- (0,.7) -- (1,.7) -- (1,-1.7);
\end{scope}
\draw[\YColor,thick] (0,-1.7) -- (0,.7);
\draw[\XColor,thick] (.4,-1.7) -- (.4,-1);
\draw[\HColor,thick] (.4,-1) -- (.4,0);
\draw[\XColor,thick] (.4,0) -- (.4,.7);
\roundNbox{unshaded}{(.2,0)}{.3}{.1}{.1}{\scriptsize{$(v_n^{Y})^\dag$}};
\roundNbox{unshaded}{(.2,-1)}{.3}{.1}{.1}{\scriptsize{$v_n^{Y}$}};
}
\approx
\tikzmath{
\begin{scope}
\clip[rounded corners = 5] (-.15,-1.7) rectangle (.9,.7);
\filldraw[\rColor] (0,-1.7) -- (0,.7) -- (1,.7) -- (1,-1.7);
\end{scope}
\draw[\YColor,thick] (0,-1.7) -- (0,.7);
\draw[\XColor,thick] (.4,-1.7) -- (.4,.7);
}
\]
for all intertwiners $f\in \Hom(Z_M\to Y_M)$.

\begin{defn}
\label{defn:ApproximatePPBasis}
For $Y\in\fgpMod(M)$, an \emph{approximate $Y_M$-basis} is a sequence $\{b_i^{(n)}\}_{i=1}^{m(n)}\subseteq Y$ such that 
$\sup_n m(n) <\infty$,
$\sup_{i,n} \|\langle b_i^{(n)}| b_i^{(n)}\rangle_M^Y\| <\infty$,
and
$$
\lim_{n\to\infty} \left\|x-\sum_{i=1}^m b_i^{(n)}\langle b_i^{(n)}|x\rangle_M^Y \right\|_2=0
\qquad\qquad
\forall\,x\in Y.
$$
For $X\in \fgpBim(X)$, an \emph{approximately inner} $X_M$-basis is an approximate $X_M$-basis such that
\begin{equation}
\label{eq:AICommutativity}
\left\|ab_i^{(n)}-b_i^{(n)}a\right\|_2\to 0
\qquad\qquad
\forall\,a\in M.
\end{equation}
\end{defn}

\begin{prop}
\label{prop:AI&APPB}
A bimodule $X$ is approximately inner over $M$ if and only if there exists an approximately inner $X_M$-basis.
\end{prop}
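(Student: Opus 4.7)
The plan is to exploit the fact that $M_M$ is a generator for the unitarily Cauchy complete $\rm W^*$-category $\fgpMod(M)$, so that an approximate natural transformation between normal $\dag$-endofunctors is essentially determined by its $M_M$-component, in the spirit of Lemma \ref{lem:CauchyCompletionPreservesCentralSequences}. Under the Riesz-type correspondence $b \leftrightarrow L_b$ between vectors $b \in X$ and right $M$-linear maps $L_b:M_M \to X$ (with $L_b(m)=bm$ and $L_b^\dag L_c = \langle b|c\rangle_M$), a putative approximating sequence $v_n^{M_M}:X \to M\otimes H$ should look like $v_n^{M_M}(x) = \sum_i \langle b_i^{(n)}|x\rangle_M \otimes e_i$ for some $b_i^{(n)} \in X$, where $H$ is a fixed finite dimensional Hilbert space with orthonormal basis $\{e_i\}$. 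The plan is to use this correspondence in both directions.

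For the $(\Leftarrow)$ direction, given an approximately inner $X_M$-basis $\{b_i^{(n)}\}_{i=1}^{m(n)}$ with $m := \sup_n m(n) < \infty$, I would set $H = \bbC^m$ and define $v_n^{M_M}$ by the above formula (padding with zeros when $m(n) < m$), which is manifestly right $M$-linear. Approximate isometry, $(v_n^{M_M})^\dag v_n^{M_M}(x) = \sum_i b_i^{(n)}\langle b_i^{(n)}|x\rangle_M \to x$ in $\|\cdot\|_2$, is exactly the approximate basis condition of Definition \ref{defn:ApproximatePPBasis}; summing over a finite $X_M$-basis then upgrades this pointwise convergence to the Hilbert-Schmidt norm on operators, which agrees with $\tau$ on norm-bounded subsets of hom spaces. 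For approximate naturality it suffices to check it against morphisms $a\in M = \End_{\fgpMod(M)}(M_M)$, where a short calculation rewrites $v_n^{M_M}\circ a - a\circ v_n^{M_M}$ as the map $x \mapsto \sum_i \langle b_i^{(n)}a^* - a^* b_i^{(n)}|x\rangle_M \otimes e_i$, whose $\|\cdot\|_2$-norm is controlled by $\sum_i\|[a^*,b_i^{(n)}]\|_2 \to 0$ via \eqref{eq:AICommutativity}. I would then extend $v_n^{M_M}$ to a general $Y = pM^{\oplus k}$ by compressing the block-diagonal map $(v_n^{M_M})^{\oplus k}$ by the projection $p$ on both sides.

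For the $(\Rightarrow)$ direction, given an approximating sequence $(v_n,H)$ for $-\boxtimes_M X$, I would extract basis vectors $b_i^{(n)} := (v_n^{M_M})^\dag(1_M \otimes e_i) \in X$ for a fixed orthonormal basis $\{e_i\}$ of $H$, so that $m(n) = \dim H$ is constant. A direct computation of the adjoint confirms $v_n^{M_M}(x) = \sum_i \langle b_i^{(n)}|x\rangle_M \otimes e_i$. The approximate-isometry condition $(v_n^{M_M})^\dag v_n^{M_M} \approx \id_X$ then unpacks precisely as the approximate basis condition, and the approximate naturality of $v$ at left-multiplication morphisms $a\in M$ unpacks, after taking adjoints, as $\|ab_i^{(n)} - b_i^{(n)}a\|_2 \to 0$. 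The uniform bound $\sup_{n,i}\|\langle b_i^{(n)}|b_i^{(n)}\rangle_M\| < \infty$ follows from $v \in \cC^\infty$ being a uniformly norm-bounded sequence.

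The main subtlety I anticipate is in the forward direction: the map $v_n^{M_M}:X \to M\otimes H$ built from the approximately inner basis is right $M$-linear but only \emph{approximately} left $M$-linear, so one cannot literally apply $-\boxtimes_M \id$ to extend to an approximate natural transformation on all of $\fgpMod(M)$. The extension must be carried out via the projection presentation $Y = pM^{\oplus k}$ with a careful verification that approximate naturality survives for arbitrary intertwiners between finitely generated projective right $M$-modules; this reduces to the one-object case by compatibility with direct sums and retracts, in parallel to the argument for central sequences in Lemma \ref{lem:CauchyCompletionPreservesCentralSequences}.
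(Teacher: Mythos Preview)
Your proposal is correct and follows essentially the same approach as the paper. The paper extends $v_n^{M}$ to general $Y$ via a choice of $Y_M$-basis $\{c_j\}$, setting $v_n^Y := \sum_j c_j \cdot v_n^{M} \cdot c_j^\dag$, and verifies approximate naturality by a direct graphical computation using that $c_j^\dag f d_k \in M$ approximately commutes with $v_n^{M}$; this is equivalent to your projection-presentation extension $Y \cong pM^{\oplus k}$.
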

\begin{proof}
Suppose $X$ is approximately inner.
By Definition \ref{Defn:ApproximatelyInner}, there exists a finite dimensional Hilbert space $H$ and an approximate natural isometry $v=(v_n):-\boxtimes_M X\to H\rhd-$. 
Let $\{e_i\}$ be an orthonormal basis of $H$, and define $b_i^{(n)}$ as follows.
\[
\tikzmath{
\begin{scope}
\clip[rounded corners = 5] (-.3,-.7) rectangle (.6,.7);
\filldraw[\rColor] (0,-.7) -- (0,0) -- (-.2,0) -- (-.2,.7) -- (.7,.7) -- (.7,-.7);
\end{scope}
\draw[dashed] (0,-.7) -- (0,0);
\draw[dashed] (-.2,0) -- (-.2,.7);
\draw[\XColor,thick] (.2,0) -- (.2,.7);
\roundNbox{unshaded}{(0,0)}{.3}{.05}{.05}{\scriptsize{$b_i^{(n)}$}};
}
: = 
\tikzmath{
\begin{scope}
\clip[rounded corners = 5] (-.5,-.7) rectangle (.8,.7);
\filldraw[\rColor] (-.25,-.7) -- (-.25,.7) -- (1,.7) -- (1,-.7);
\end{scope}
\draw[dashed] (-.25,-.7) -- (-.25,.7);
\draw[\XColor,thick] (.25,0) -- (.25,.7);
\draw[\HColor,thick] (.25,0) -- (.25,-.55);
\filldraw[\HColor] (.25,-.55) circle (.05cm);
\roundNbox{unshaded}{(0,0)}{.3}{.15}{.15}{\scriptsize{$(v_n^{M})^\dag$}};
\node at (.5,-.55) {\scriptsize{$e_i$}};
}
\,.
\]
Observe that for all $x\in X$, we have
\[
\sum_{i} b_i^{(n)}\langle b_i^{(n)}|x\rangle_M^X
=
\sum_i
\tikzmath{
\begin{scope}
\clip[rounded corners = 5] (-.5,-2.2) rectangle (.8,1.2);
\filldraw[\rColor] (0,-2.2) -- (0,-1.5) -- (-.2,-1.5) -- (-.2,-.5) -- (0,-.5) -- (0,.5) -- (-.2,.5) -- (-.2,1.2) -- (.8,1.2) -- (.8,-2.2);
\end{scope}
\draw[dashed] (-.2,-1.5) -- (-.2,-.5);
\draw[dashed] (-.2,.5) -- (-.2,1.2);
\draw[dashed] (0,-.5) -- (0,.5);
\draw[dashed] (0,-1.5) -- (0,-2.2);
\draw[\XColor,thick] (.2,-1.5) -- (.2,-.5);
\draw[\XColor,thick] (.2,.5) -- (.2,1.2);
\roundNbox{unshaded}{(0,.5)}{.3}{.2}{.2}{\scriptsize{$b_i^{(n)}$}};
\roundNbox{unshaded}{(0,-.5)}{.3}{.2}{.2}{\scriptsize{$(b_i^{(n)})^\dag$}};
\roundNbox{unshaded}{(0,-1.5)}{.3}{.2}{.2}{\scriptsize{$x$}};
}
=
\sum_i 
\tikzmath{
\begin{scope}
\clip[rounded corners = 5] (-.5,-2.4) rectangle (1,1.4);
\filldraw[\rColor] (0,-2.4) -- (0,-1.5) -- (-.3,-1.5) -- (-.3,1.4) -- (1,1.4) -- (1,-2.4);
\end{scope}
\draw[dashed] (-.3,-1.7) -- (-.3,1.4);
\draw[dashed] (0,-2.4) -- (0,-1.7);
\draw[\XColor,thick] (.3,-1.7) -- (.3,-.7);
\draw[\XColor,thick] (.3,.7) -- (.3,1.4);
\draw[\HColor,thick] (.3,.15) -- (.3,.4);
\draw[\HColor,thick] (.3,-.15) -- (.3,-.4);
\filldraw[\HColor] (.3,.15) circle (.05cm);
\filldraw[\HColor] (.3,-.15) circle (.05cm);
\roundNbox{unshaded}{(0,.7)}{.3}{.2}{.2}{\scriptsize{$(v_n^{M})^\dag$}};
\roundNbox{unshaded}{(0,-.7)}{.3}{.2}{.2}{\scriptsize{$v_n^{M}$}};
\roundNbox{unshaded}{(0,-1.7)}{.3}{.2}{.2}{\scriptsize{$x$}};
\node at (.55,.15) {\scriptsize{$e_i$}};
\node at (.55,-.15) {\scriptsize{$e_i^*$}};
}
=
\tikzmath{
\begin{scope}
\clip[rounded corners = 5] (-.5,-2.2) rectangle (1,1.2);
\filldraw[\rColor] (0,-2.2) -- (0,-1.5) -- (-.3,-1.5) -- (-.3,1.2) -- (1,1.2) -- (1,-2.2);
\end{scope}
\draw[dashed] (-.3,-1.5) -- (-.3,1.2);
\draw[dashed] (0,-2.2) -- (0,-1.5);
\draw[\XColor,thick] (.3,-1.5) -- (.3,-.5);
\draw[\XColor,thick] (.3,.5) -- (.3,1.2);
\draw[\HColor,thick] (.3,-.5) -- (.3,.5);
\roundNbox{unshaded}{(0,.5)}{.3}{.2}{.2}{\scriptsize{$(v_n^{M})^\dag$}};
\roundNbox{unshaded}{(0,-.5)}{.3}{.2}{.2}{\scriptsize{$v_n^{M}$}};
\roundNbox{unshaded}{(0,-1.5)}{.3}{.2}{.2}{\scriptsize{$x$}};
}
=
(v_n^{M})^\dag(v_n^{M}(x)).
\]
The condition \eqref{eq:AICommutativity} follows immediately by approximate naturality of $v$ on the $M$-component.

Conversely, starting with an approximately inner $X_M$-basis, we define
\[
\tikzmath{
\begin{scope}
\clip[rounded corners = 5] (-.5,-.7) rectangle (.8,.7);
\filldraw[\rColor] (-.25,-.7) -- (-.25,.7) -- (1,.7) -- (1,-.7);
\end{scope}
\draw[dashed] (-.25,-.7) -- (-.25,.7);
\draw[\HColor,thick] (.25,0) -- (.25,.7);
\draw[\XColor,thick] (.25,0) -- (.25,-.7);
\roundNbox{unshaded}{(0,0)}{.3}{.15}{.15}{\scriptsize{$v_n^{M}$}};
\node at (.25,.9) {\scriptsize{$H$}};
}
:=
\sum_i
\tikzmath{
\begin{scope}
\clip[rounded corners = 5] (-.7,-.7) rectangle (.9,1.1);
\filldraw[\rColor] (-.25,-.7) -- (-.25,1.1) -- (1,1.1) -- (1,-.7);
\end{scope}
\draw[dashed] (-.25,-.7) -- (-.25,1.1);
\draw[\XColor,thick] (.25,0) -- (.25,-.7);
\draw[\HColor,thick] (.25,0) -- (.25,.55);
\draw[\HColor,thick] (.25,.85) -- (.25,1.1);
\filldraw[\HColor] (.25,.55) circle (.05cm);
\filldraw[\HColor] (.25,.85) circle (.05cm);
\roundNbox{unshaded}{(0,0)}{.3}{.15}{.15}{\scriptsize{$v_n^{M}$}};
\node at (.5,.55) {\scriptsize{$e_i^*$}};
\node at (.5,.85) {\scriptsize{$e_i$}};
}
=
\sum_i
\tikzmath{
\begin{scope}
\clip[rounded corners = 5] (-.3,-.7) rectangle (.8,.8);
\filldraw[\rColor] (-.2,-.7) -- (-.2,.8) -- (.8,.8) -- (.8,-.7);
\end{scope}
\draw[dashed] (-.2,-.7) -- (-.2,.8);
\draw[\XColor,thick] (.2,0) -- (.2,-.7);
\draw[\HColor] (.2,.55) -- (.2,.8);
\filldraw[\HColor] (.2,.55) circle (.05cm);
\roundNbox{unshaded}{(0,0)}{.3}{.2}{.2}{\scriptsize{$(b_i^{(n)})^\dag$}};
\node at (.45,.55) {\scriptsize{$e_i$}};
}\,,
\]
and we define each $v_n^Y$ for $Y\in\fgpMod(M)$ in terms of $v_n^{M}$ and a $Y_M$-basis $\{c_j\}$:
\[
\tikzmath{
\begin{scope}
\clip[rounded corners = 5] (-.15,-.7) rectangle (.9,.7);
\filldraw[\rColor] (0,-.7) -- (0,.7) -- (1,.7) -- (1,-.7);
\end{scope}
\draw[\YColor,thick] (0,-.7) node[below]{$\scriptstyle Y$} -- (0,.7);
\draw[\HColor,thick] (.4,0) -- (.4,.7);
\draw[\XColor,thick] (.4,0) -- (.4,-.7) node[below]{$\scriptstyle X$};
\roundNbox{unshaded}{(.2,0)}{.3}{.1}{.1}{\scriptsize{$v_n^{Y}$}};
}
:=
\sum_j
\tikzmath{
\begin{scope}
\clip[rounded corners = 5] (-.45,-1.7) rectangle (.9,1.7);
\filldraw[\rColor] (-.3,-1.7) rectangle (.9,1.7);
\end{scope}
\draw[dashed] (-.3,-1) -- (-.3,1);
\draw[\HColor,thick] (.3,0) -- (.3,1.7);
\draw[\XColor,thick] (.3,0) -- (.3,-1.7);
\draw[\YColor,thick] (-.3,-1.7) -- (-.3,-1);
\draw[\YColor,thick] (-.3,1) -- (-.3,1.7);
\roundNbox{unshaded}{(0,0)}{.3}{.3}{.3}{\scriptsize{$v_n^{M}$}};
\roundNbox{unshaded}{(-.3,1)}{.3}{0}{0}{\scriptsize{$c_j$}};
\roundNbox{unshaded}{(-.3,-1)}{.3}{0}{0}{\scriptsize{$c_j^\dag$}};
}\,.
\]
We have $v_n^Y$ is norm-bounded as $\sum_j \|c_j\|_2^2<\infty$.
To see the approximate naturality of $\{v_n^Y\}_{Y\in\fgpMod(M)}$, for $f\in \Hom(Z_M\to Y_M)$,
\[
\tikzmath{
\begin{scope}
\clip[rounded corners = 5] (-.15,-1.7) rectangle (1,.7);
\filldraw[\rColor] (0,-1.7) -- (0,.7) -- (1,.7) -- (1,-1.7);
\end{scope}
\draw[\YColor,thick] (0,-.7) -- (0,.7) node[above]{$\scriptstyle Y$};
\draw[\XColor,thick] (.5,-1.7) node[below]{$\scriptstyle X$} -- (.5,0);
\draw[\ZColor,thick] (0,-1.7) node[below]{$\scriptstyle Z$} -- (0,-.7);
\draw[\HColor,thick] (.5,0) -- (.5,.7);
\roundNbox{unshaded}{(.2,0)}{.3}{.2}{.2}{\scriptsize{$v_n^{Y}$}};
\roundNbox{unshaded}{(0,-1)}{.3}{0}{0}{\scriptsize{$f$}};
}
=
\sum_j
\tikzmath{
\begin{scope}
\clip[rounded corners = 5] (-.45,-2.7) rectangle (.9,1.7);
\filldraw[\rColor] (-.3,-2.7) rectangle (.9,1.7);
\end{scope}
\draw[dashed] (-.3,-1) -- (-.3,1);
\draw[\HColor,thick] (.3,0) -- (.3,1.7);
\draw[\XColor,thick] (.3,0) -- (.3,-2.7);
\draw[\YColor,thick] (-.3,-2) -- (-.3,-1);
\draw[\YColor,thick] (-.3,1) -- (-.3,1.7);
\draw[\ZColor,thick] (-.3,-2.7) -- (-.3,-2);
\roundNbox{unshaded}{(0,0)}{.3}{.3}{.3}{\scriptsize{$v_n^{M}$}};
\roundNbox{unshaded}{(-.3,1)}{.3}{0}{0}{\scriptsize{$c_j$}};
\roundNbox{unshaded}{(-.3,-1)}{.3}{0}{0}{\scriptsize{$c_j^\dag$}};
\roundNbox{unshaded}{(-.3,-2)}{.3}{0}{0}{\scriptsize{$f$}};
}
=
\sum_{j,k}
\tikzmath{
\begin{scope}
\clip[rounded corners = 5] (-.45,-4.7) rectangle (.9,1.7);
\filldraw[\rColor] (-.3,-4.7) rectangle (.9,1.7);
\end{scope}
\draw[dashed] (-.3,-1) -- (-.3,1);
\draw[dashed] (-.3,-4) -- (-.3,-3);
\draw[\HColor,thick] (.3,0) -- (.3,1.7);
\draw[\XColor,thick] (.3,0) -- (.3,-4.7);
\draw[\YColor,thick] (-.3,-2) -- (-.3,-1);
\draw[\YColor,thick] (-.3,1) -- (-.3,1.7);
\draw[\ZColor,thick] (-.3,-3) -- (-.3,-2);
\draw[\ZColor,thick] (-.3,-4.7) -- (-.3,-4);
\roundNbox{unshaded}{(0,0)}{.3}{.3}{.3}{\scriptsize{$v_n^{M}$}};
\roundNbox{unshaded}{(-.3,1)}{.3}{0}{0}{\scriptsize{$c_j$}};
\roundNbox{unshaded}{(-.3,-1)}{.3}{0}{0}{\scriptsize{$c_j^\dag$}};
\roundNbox{unshaded}{(-.3,-2)}{.3}{0}{0}{\scriptsize{$f$}};
\roundNbox{unshaded}{(-.3,-3)}{.3}{0}{0}{\scriptsize{$d_k$}};
\roundNbox{unshaded}{(-.3,-4)}{.3}{0}{0}{\scriptsize{$d_k^\dag$}};
}
\approx
\sum_{j,k}
\tikzmath{
\begin{scope}
\clip[rounded corners = 5] (-.45,-1.7) rectangle (.9,4.7);
\filldraw[\rColor] (-.3,-1.7) rectangle (.9,4.7);
\end{scope}
\draw[dashed] (-.3,-1) -- (-.3,1);
\draw[dashed] (-.3,3) -- (-.3,4);
\draw[\HColor,thick] (.3,0) -- (.3,4.7);
\draw[\XColor,thick] (.3,0) -- (.3,-1.7);
\draw[\ZColor,thick] (-.3,-1.7) -- (-.3,-1);
\draw[\ZColor,thick] (-.3,1) -- (-.3,2);
\draw[\YColor,thick] (-.3,2) -- (-.3,3);
\draw[\YColor,thick] (-.3,4) -- (-.3,4.7);
\roundNbox{unshaded}{(0,0)}{.3}{.3}{.3}{\scriptsize{$v_n^{M}$}};
\roundNbox{unshaded}{(-.3,1)}{.3}{0}{0}{\scriptsize{$d_k$}};
\roundNbox{unshaded}{(-.3,-1)}{.3}{0}{0}{\scriptsize{$d_k^\dag$}};
\roundNbox{unshaded}{(-.3,2)}{.3}{0}{0}{\scriptsize{$f$}};
\roundNbox{unshaded}{(-.3,3)}{.3}{0}{0}{\scriptsize{$c_j^\dag$}};
\roundNbox{unshaded}{(-.3,4)}{.3}{0}{0}{\scriptsize{$c_j$}};
}
=
\sum_k
\tikzmath{
\begin{scope}
\clip[rounded corners = 5] (-.45,-1.7) rectangle (.9,2.7);
\filldraw[\rColor] (-.3,-1.7) rectangle (.9,2.7);
\end{scope}
\draw[dashed] (-.3,-1) -- (-.3,1);
\draw[\HColor,thick] (.3,0) -- (.3,2.7);
\draw[\XColor,thick] (.3,0) -- (.3,-1.7);
\draw[\ZColor,thick] (-.3,-1.7) -- (-.3,-1);
\draw[\ZColor,thick] (-.3,1) -- (-.3,2);
\draw[\YColor,thick] (-.3,2) -- (-.3,2.7);
\roundNbox{unshaded}{(0,0)}{.3}{.3}{.3}{\scriptsize{$v_n^{M}$}};
\roundNbox{unshaded}{(-.3,1)}{.3}{0}{0}{\scriptsize{$d_k$}};
\roundNbox{unshaded}{(-.3,-1)}{.3}{0}{0}{\scriptsize{$d_k^\dag$}};
\roundNbox{unshaded}{(-.3,2)}{.3}{0}{0}{\scriptsize{$f$}};
}
=
\tikzmath{
\begin{scope}
\clip[rounded corners = 5] (-.15,-.7) rectangle (1,1.7);
\filldraw[\rColor] (0,-.7) -- (0,1.7) -- (1,1.7) -- (1,-.7);
\end{scope}
\draw[\YColor,thick] (0,1) -- (0,1.7);
\draw[\XColor,thick] (.5,-.7) -- (.5,0);
\draw[\ZColor,thick] (0,-.7) -- (0,1);
\draw[\HColor,thick] (.5,0) -- (.5,1.7);
\roundNbox{unshaded}{(.2,0)}{.3}{.2}{.2}{\scriptsize{$v_n^{Z}$}};
\roundNbox{unshaded}{(0,1)}{.3}{0}{0}{\scriptsize{$f$}};
}\,.
\]
We see $v_n^Y$ is independent of the choice of $Y_M$-basis by taking $f=\id_Y$ above.
\end{proof}

In \cite[Def.~1.1]{MR1317367}, Popa gave a definition of approximate innerness for a finite index $\rm II_1$ subfactor $N\subseteq M$.
We can also view ${}_NM_N \in \fgpBim(N)$ with $\langle x|y\rangle_N := E_N(x^*y)$.
We will show in Proposition \ref{PopaEq} below that
$N\subseteq M$ is approximately inner in the sense of \cite[Def.~1.1]{MR1317367} if and only if ${}_NM_N$ is approximately inner.

We quickly recall the notion of ultraproduct for $\rm II_1$ (sub)factors following \cite[\S5.4]{ClaireSorinII_1}.
Let $\omega$ be a non-principal unltrafilter on $\bbN$. 
For a $\rm{II}_{1}$ factor $N$, define $N^{\omega}=\ell^{\infty}(\bbN, M)/\cI$, where $\cI$ is the ideal of sequences which converge to $0$ in $\|\cdot \|_{2}$ along $\omega$. 
Then $N^{\omega}$ is a $\rm II_1$ factor, with trace given by taking the limit along $\omega$.

Now consider a finite index $\rm{II}_{1}$ subfactor $N\subseteq M$. 
Then $N^{\omega}\subseteq M^{\omega}$ is another $\rm II_1$ subfactor with the same Jones index and trace preserving condition expectation extending $E:M\rightarrow N$. 
Using this expectation, we can view $M^{\omega}$ as an $N^{\omega}$ bimodule. 
We can also consider the inclusion $N_{\omega}:=N^{\prime}\cap N^{\omega}\subseteq N^{\prime}\cap M^{\omega}=M_{\omega}$, and $E$ restricts to the trace preserving  conditional expectation $E: M_{\omega}\rightarrow N_{\omega}$.
We recall the following definition due to Popa.

\begin{defn}[{\cite[Def.~1.1 and Prop.~1.2]{MR1317367}}]
A finite index $\rm II_1$ subfactor $N\subseteq M$ is called \textit{approximately inner} if the inclusion $N_{\omega}\subseteq_{E} M_{\omega}$ is $[M:N]^{-1}$-Markov, i.e., there is a (finite) Pimsner-Popa basis $\{b\}$ for $M_\omega$ over $N_\omega$ which satisfies $\sum_b bb^* \in [1,\infty)$.
\end{defn}

\begin{prop}
\label{PopaEq}
A finite index $\rm II_1$ subfactor $N\subseteq M$ is approximately inner if and only if ${}_{N} M_{N} \in \fgpBim(N)$ is approximately inner.
\end{prop}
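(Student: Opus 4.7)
The plan is to pass between the two notions through the tracial ultrapower $M^\omega$, using Proposition \ref{prop:AI&APPB} to reduce approximate innerness of ${}_NM_N$ to the existence of an approximately inner ${}_NM_N$-basis $\{b_i^{(n)}\}_{i=1}^m \subseteq M$: a uniformly bounded family satisfying $\sum_i b_i^{(n)} E_N((b_i^{(n)})^* x) \to x$ in $\|\cdot\|_2$ for every $x \in M$ and $\|[a, b_i^{(n)}]\|_2 \to 0$ for every $a \in N$. The correspondence $b_i := [(b_i^{(n)})_n] \in M^\omega$ should translate asymptotic $N$-centrality into the membership $b_i \in N' \cap M^\omega = M_\omega$, and should match the approximate basis condition with the Pimsner--Popa basis condition for $N_\omega \subseteq M_\omega$.

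For the direction Popa's condition $\Rightarrow$ ${}_NM_N$ approximately inner, I would start from a Pimsner--Popa basis $\{b_i\}_{i=1}^m \subseteq M_\omega$ with $\sum_i b_i b_i^* \in [1,\infty)$, lift each $b_i$ to a bounded sequence $(b_i^{(n)}) \subseteq M$, and observe that the Pimsner--Popa basis relation applied to constant sequences $x \in M \subseteq M_\omega$ gives $\lim_\omega \|\sum_i b_i^{(n)} E_N((b_i^{(n)})^* x) - x\|_2 = 0$, while $b_i \in M_\omega$ gives $\lim_\omega \|[a, b_i^{(n)}]\|_2 = 0$ for $a \in N$. Separability of $M$ permits a standard diagonal extraction against countable $\|\cdot\|_2$-dense subsets of $M$ and $N$: each convergence condition defines a set in $\omega$, and a diagonal subsequence $(n_k)$ carries all required convergences in the ordinary sequential sense, yielding (upon relabeling) an approximately inner ${}_NM_N$-basis. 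Proposition \ref{prop:AI&APPB} closes this direction.

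For the converse, I would start with an approximating sequence $v_n : - \boxtimes_N {}_NM_N \Rightarrow - \otimes H$ from Definition \ref{Defn:ApproximatelyInner} and restrict to the base object $N_N \in \fgpMod(N)$ to obtain $v_n^N : M_N \to N_N \otimes H$ approximately isometric and approximately $N$-linear. Setting $b_i^{(n)} := (v_n^N)^\dag(1 \otimes e_i) \in M$ and $b_i := [(b_i^{(n)})] \in M^\omega$, the norm boundedness of $v_n^N$ makes $b_i$ well-defined, and approximate $N$-linearity places each $b_i \in M_\omega$. The required Pimsner--Popa basis relation $\sum_i b_i E(b_i^* m) = m$ for $m \in M_\omega$ is equivalent (via polarization and a standard Parseval frame argument) to the isometry identity $\sum_i E^\omega(m^* b_i)E^\omega(b_i^* m') = E^\omega(m^* m')$ for all $m, m' \in M_\omega$. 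I would establish the latter by unpacking it into a statement about the approximating sequence $(v_n^N)^\dag v_n^N \to \id_M$ and exploiting the full approximate naturality of $v_n$ across $\fgpMod(N)$ (and not just the base object) to obtain the uniform control needed over varying bounded sequences $(m^{(n)})$. A Gram--Schmidt orthogonalization inside $M_\omega$ (cf.~\cite[Lem.~8.5.2]{ClaireSorinII_1}) then normalizes $\sum_i b_i b_i^*$ to the desired Markov value in $[1,\infty)$.

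The main technical obstacle is precisely this upgrade in the converse: pointwise $\|\cdot\|_2$-convergence $(v_n^N)^\dag v_n^N \to \id_M$ on $M$ does not immediately yield the $\omega$-limit Pimsner--Popa relation on varying sequences $(m^{(n)}) \in M^\omega$, since $\sigma$-strong${}^*$ (equivalently, commutant-trace $\|\cdot\|_2$) convergence of operators on $M_N$ does not imply operator-norm convergence. The extra information needed must be extracted from the naturality of $v_n$ across the ambient module category $\fgpMod(N)$, which provides the missing uniformity. The forward direction, by contrast, is a routine diagonal extraction enabled by separability.
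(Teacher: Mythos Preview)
Your proposal has the difficulty of the two directions reversed, and the direction you call routine actually contains a genuine gap.

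\textbf{Popa $\Rightarrow$ bimodule AI.} You write that the Pimsner--Popa basis relation for $M_\omega$ over $N_\omega$ can be ``applied to constant sequences $x\in M\subseteq M_\omega$.'' But $M_\omega = N'\cap M^\omega$, so the only constant sequences from $M$ lying in $M_\omega$ are those in the relative commutant $N'\cap M$; for an irreducible subfactor this is just scalars. Hence the basis relation in $M_\omega$ says nothing about general $x\in M$. The paper closes this gap by invoking \cite[Prop.~2.2(3)]{MR1339767}, which guarantees a Pimsner--Popa basis for $M_\omega$ over $N_\omega$ that simultaneously serves as a basis for $M^\omega$ over $N^\omega$. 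Once you have the relation in $M^\omega$, you can legitimately specialize to constant $x\in M\subseteq M^\omega$, and your diagonal extraction goes through.

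\textbf{Bimodule AI $\Rightarrow$ Popa.} You identify as the main obstacle the upgrade from pointwise convergence $S_n:=\sum_i |b_i^{(n)}\rangle\langle b_i^{(n)}| \to \id$ on $M$ to convergence on varying sequences in $M^\omega$, and propose extracting uniformity from naturality of $v_n$. This is unnecessary. Recall that $\End(M_N)\cong M_1$ and that the commutant trace on $M_1$ is $\Tr(f)=\sum_j \tr_N(\langle c_j|fc_j\rangle_N)$ for any \emph{fixed} finite $M_N$-basis $\{c_j\}$. Thus
\[
\|S_n-1\|_{2,M_1}^2 = [M:N]^{-1}\sum_j \|(S_n-1)c_j\|_2^2,
\]
and the right-hand side tends to zero by the approximate basis condition applied to the finitely many fixed vectors $c_j$. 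So $S_n\to 1$ in $\|\cdot\|_{2,M_1}$, hence $b_i:=[(b_i^{(n)})]$ is an honest Pimsner--Popa basis for $M^\omega$ over $N^\omega$. The asymptotic $N$-centrality places each $b_i$ in $M_\omega$, and restricting the basis relation to $M_\omega$ gives the Markov property for $N_\omega\subseteq M_\omega$ with $\sum_i b_ib_i^*=[M:N]$. This is what the paper means by ``it is easy to see.''
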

\begin{proof}
It is easy to see that for a finite index subfactor $N\subseteq M$, if ${}_NM_N\in \fgpBim(N)$ is approximately inner, then an approximate $M_N$-basis 
gives an honest Pimsner-Popa basis for $M^\omega$ over $N^\omega$, and the extra commutativity condition \eqref{eq:AICommutativity} means this Pimsner-Popa basis lies in $M_\omega=N'\cap M^\omega$. 
Thus we have a Pimsner-Popa basis for $M_\omega$ over $N_\omega$ which lifts to a Pimsner-Popa basis for $M^\omega$ over $N^\omega$, which implies $N_\omega \subseteq_E M_\omega$ is $[M:N]^{-1}$-Markov.

Conversely, suppose $N\subseteq M$ is approximately inner.
Let $\{b^{(n)}_{i}\}^{k}_{i=1}$ be a representative of the Pimsner-Popa basis for $N_{\omega}\subseteq_{E} M_{\omega}$ which lifts to a Pimsner-Popa basis for $N^{\omega}\subseteq_{E} M^{\omega}$, which exists by \cite[Proposition 2.2 (3)]{MR1339767}.
Note that for every $x\in M$ and $y\in N$,
$$
\left\|x-\sum b^{(n)}_{i}E((b^{(n)}_{i})^{*}x)\right\|_{2}\longrightarrow_{\omega}0
\qquad\text{and}\qquad
\left\|yb^{(n)}_{i}-b^{(n)}_{i}y\right\|_{2}\longrightarrow_{\omega} 0.
$$
Now let $F\subseteq N$ and $G\subseteq M$ be countable $\sigma$-strong* dense subsets, and write $F=\bigcup_{l} F_{l}$ and $G=\bigcup_{l} G_{l}$ where the $F_{l}$ and $G_{l}$ increasing sequences of finite subsets. 
For each $l$, we have
$$
\sum_{x\in G_{l}}\left\|x-\sum b^{(n)}_{i}E((b^{(n)}_{i})^{*}x)\right\|_{2}+\sum_{y\in F_{l}} \left\|yb^{(n)}_{i}-b^{(n)}_{i}y\right\|_{2}\longrightarrow_{\omega} 0.
$$
Therefore there exists a subsequence of the $b^{(n_{j})}_{i}$ such that
$$
\sum_{x\in G_{l}}\left\|x-\sum b^{(n_{j})}_{i}E((b^{(n_{j})}_{i})^{*}x)\right\|_{2}+\sum_{y\in F_{l}} \left\|yb^{(n_{j})}_{i}-b^{(n_{j})}_{i}y\right\|_{2}\longrightarrow 0.
$$
In particular, we can choose $c^{(l)}_{i}:=b^{(n_{j})}_{i}$ where $n_{j}$ is sufficiently large so that the above sum is less than $2^{-l}$.

We claim is the sequence $\{c^{(l)}_{i}\}$ is an approximately inner $L^2M_N$-basis.
Clearly the collection $\{c^{(l)}_{i}\}$ satisfies the conditions from Definition \ref{defn:ApproximatePPBasis} for $x\in G$ while it satisfies \eqref{eq:AICommutativity} for $y\in F$. 
The result follows since $G$ and $F$ are $\sigma$-strong* dense in $M$ and $N$ respectively.
\end{proof}

\begin{rem}[{cf.~\cite[Ex.~8.1.9]{MR3242743}}]
\label{rem:OppositeWithBraiding}
For a monoidal category $\cC$, the \emph{monoidal opposite} $\cC^{\rm mp}$ is the same category with the opposite monoidal product given by
$a \otimes_{\rm mp} b := b \otimes_\cC a$.
It is equipped with the inverse associator: $$
\alpha_{a,b,c}^{\rm mp} 
:
(a \otimes_{\rm mp} b) \otimes_{\rm mp} c 
:= 
c \otimes (b \otimes a)
\xrightarrow{\alpha_{c,b,a}^{-1}} 
(c \otimes b) \otimes a 
=: 
a \otimes_{\rm mp} (b \otimes_{\rm mp} c). 
$$
Now suppose $\cC$ is a braided monoidal category.
Observe that the braiding endows the linear equivalence $\cC \to \cC^{\rm mp}$
with a monoidal structure
$$
\mu_{a,b}
:
\id(a\otimes b) 
=
a\otimes b
\xrightarrow{\beta_{a,b}}
b\otimes a
=
\id(a)\otimes_{\rm mp} \id(b)
$$
giving an equivalence of monoidal categories $\cC\simeq \cC^{\rm mp}$.
Now transporting $\beta$ to $\cC^{\rm mp}$ along this monoidal equivalence endows $\cC^{\rm mp}$ with the braiding
$$
\beta^{\rm mp}_{a,b}
:=
\mu_{b,a} \circ \id(\beta_{a,b})\circ \mu^{-1}_{a,b}
:
a \otimes_{\rm mp} b 
=
b \otimes a
\xrightarrow{
\beta_{b,a}\circ \beta_{a,b}\circ \beta_{a,b}^{-1}
=
\beta_{b,a}
} 
a \otimes b
=
b \otimes_{\rm mp} a
$$
such that $(\id,\mu): (\cC, \alpha, \beta) \to (\cC^{\rm mp}, \alpha^{\rm mp}, \beta^{\rm mp})$ is a braided monoidal equivalence.
(Observe that if we chose our monoidal structure for $\id$ to be $\mu_{a,b}:=\beta_{b,a}^{-1}$, we would still obtain $\beta^{\rm mp}_{a,b} = \beta_{b,a}$ as the transported braiding on $\cC^{\rm mp}$.)
\end{rem}

Using the above remark, we now translate the definition of the unitary braiding into the language of bimodules.
For $X,Y \in \fgpBim(M)$,
we write $u_{X,Y}=u^{L^2M}_{-\boxtimes_M Y, -\boxtimes_M X}$.
Choosing approximately inner $X_M,Y_M$-bases $\{b_i^{(n)}\}_i,\{c_j^{(n)}\}_j$
and ordinary $X_M,Y_M$-bases $\{b_i\}_i,\{c_j\}_j$ respectively,
unpacking Theorem \ref{ThmDef:uGFa} and Definition \ref{defn:sigmaFH} gives the following formulas.
\begin{align}
u_{X,Y} &=
\tikzmath{
\begin{scope}
\clip[rounded corners = 5] (-.55,-.5) rectangle (.7,.5);
\filldraw[\rColor] (-.4,-.5) -- (-.4,.5) -- (.7,.5) -- (.7,-.5);
\end{scope}
\draw[dashed] (-.4,-.5) -- (-.4,.5);
\draw[\YColor,thick] (.4,-.5) node[below]{$\scriptstyle Y$} .. controls ++(90:.45cm) and ++(270:.45cm) .. (0,.5);
\filldraw[\rColor] (.2,0) circle (.05cm);
\draw[\XColor,thick] (0,-.5) node[below]{$\scriptstyle X$} .. controls ++(90:.45cm) and ++(270:.45cm) .. (.4,.5);
}
\approx
\tikzmath{
\begin{scope}
\clip[rounded corners = 5] (-.7,-1.5) rectangle (.9,1.5);
\filldraw[\rColor] (-.4,-1.5) -- (-.4,1.5) -- (1,1.5) -- (1,-1.5);
\end{scope}
\draw[dashed] (-.4,-1.5) -- (-.4,1.5);
\draw[\XColor,thick] (0,-1.5) -- (0,-.8);
\draw[\YColor,thick] (.4,-1.5) -- (.4,-.5) .. controls ++(90:.45cm) and ++(270:.45cm) .. (0,.5) -- (0,1.5);
\filldraw[\rColor] (.2,0) circle (.05cm);
\draw[\XColor,thick] (.4,.8) -- (.4,1.5);
\draw[\HColor,thick] (0,-.5) .. controls ++(90:.45cm) and ++(270:.45cm) .. (.4,.5);
\roundNbox{unshaded}{(0,.8)}{.3}{.35}{.35}{\scriptsize{$(v_n^Y)^\dag$}};
\roundNbox{unshaded}{(-.2,-.8)}{.3}{.15}{.15}{\scriptsize{$v_n^{M}$}};
}
=
\sum_i
\tikzmath{
\begin{scope}
\clip[rounded corners = 5] (-.7,-1.5) rectangle (.9,1.5);
\filldraw[\rColor] (-.4,-1.5) -- (-.4,1.5) -- (1,1.5) -- (1,-1.5);
\end{scope}
\draw[dashed] (-.4,-1.5) -- (-.4,1.5);
\draw[\XColor,thick] (0,-1.5) -- (0,-.8);
\draw[\HColor,thick] (0,-.5) -- (0,-.25);
\draw[\HColor,thick] (.4,.5) -- (.4,.25);
\filldraw[\HColor] (0,-.25) circle (.05cm);
\filldraw[\HColor] (.4,.25) circle (.05cm);
\draw[\YColor,thick] (.4,-1.5) -- (.4,-.5) .. controls ++(90:.45cm) and ++(270:.45cm) .. (0,.5) -- (0,1.5);
\draw[\XColor,thick] (.4,.8) -- (.4,1.5);
\roundNbox{unshaded}{(0,.8)}{.3}{.35}{.35}{\scriptsize{$(v_n^Y)^\dag$}};
\roundNbox{unshaded}{(-.2,-.8)}{.3}{.15}{.15}{\scriptsize{$v_n^{M}$}};
\node at (.65,.25) {\scriptsize{$e_i$}};
}
=
\sum_{i,j}
\tikzmath{
\begin{scope}
\clip[rounded corners = 5] (-.7,-1.5) rectangle (.9,3.5);
\filldraw[\rColor] (-.4,-1.5) -- (-.4,.5) -- (-.2,.5) -- (-.2,2.5) -- (-.4,2.5) -- (-.4,3.5) -- (1,3.5) -- (1,-1.5);
\end{scope}
\draw[dashed] (-.4,-1.5) -- (-.4,.5);
\draw[\XColor,thick] (0,-1.5) -- (0,-.8);
\draw[\HColor,thick] (0,-.5) -- (0,-.25);
\draw[\HColor,thick] (.4,1.5) -- (.4,1.25);
\filldraw[\HColor] (0,-.25) circle (.05cm);
\filldraw[\HColor] (.4,1.25) circle (.05cm);
\draw[\YColor,thick] (.4,-1.5) -- (.4,-.5) .. controls ++(90:.45cm) and ++(270:.45cm) .. (0,.5);
\draw[dashed] (-.2,.5) -- (-.2,2.5);
\draw[\XColor,thick] (.4,1.8) -- (.4,3.5);
\draw[dashed] (-.4,2.5) -- (-.4,3.5);
\draw[\YColor,thick] (0,2.5) -- (0,3.5);
\roundNbox{unshaded}{(-.2,.8)}{.3}{.15}{.15}{\scriptsize{$c_j^\dag$}};
\roundNbox{unshaded}{(-.2,2.8)}{.3}{.15}{.15}{\scriptsize{$c_j$}};
\roundNbox{unshaded}{(0,1.8)}{.3}{.35}{.35}{\scriptsize{$(v_n^{M})^\dag$}};
\roundNbox{unshaded}{(-.2,-.8)}{.3}{.15}{.15}{\scriptsize{$v_n^{M}$}};
\node at (.65,1.25) {\scriptsize{$e_i$}};
}
=
\sum_{i,j}
\tikzmath{
\begin{scope}
\clip[rounded corners = 5] (-.35,-2.2) rectangle (1,2.2);
\filldraw[\rColor] (-.2,-2.2) -- (-.2,-1.5) -- (0,-1.5) -- (0,-.5) -- (.3,-.5) -- (.3,.5) -- (0,.5) -- (0,1.5) -- (-.2,1.5) -- (-.2,2.2) -- (1.05,2.2) -- (1.05,-2.2);
\end{scope}
\draw[dashed] (-.2,-2.2) -- (-.2,-1.5);
\draw[\XColor,thick] (.2,-2.2) -- (.2,-1.5);
\draw[dashed] (0,-1.5) -- (0,-.5);
\draw[\YColor,thick] (.6,-2.2) -- (.6,-.5);
\draw[dashed] (.3,-.5) -- (.3,.5);
\draw[dashed] (0,.5) -- (0,1.5);
\draw[\XColor,thick] (.6,.5) -- (.6,2.2);
\draw[dashed] (-.2,1.5) -- (-.2,2.2);
\draw[\YColor,thick] (.2,1.5) -- (.2,2.2);
\roundNbox{unshaded}{(0,1.5)}{.3}{.1}{.1}{\scriptsize{$c_j$}};
\roundNbox{unshaded}{(.3,.5)}{.3}{.15}{.15}{\scriptsize{$b_i^{(n)}$}};
\roundNbox{unshaded}{(.3,-.5)}{.3}{.15}{.15}{\scriptsize{$c_j^\dag$}};
\roundNbox{unshaded}{(0,-1.5)}{.3}{.1}{.1}{\scriptsize{$(b_i^{(\!n\!)}\!)^\dag$}};
}
\label{eq:OverBraiding}
\\
u_{Y,X}^\dag&=
\tikzmath{
\begin{scope}
\clip[rounded corners = 5] (-.55,-.5) rectangle (.7,.5);
\filldraw[\rColor] (-.4,-.5) -- (-.4,.5) -- (.7,.5) -- (.7,-.5);
\end{scope}
\draw[dashed] (-.4,-.5) -- (-.4,.5);
\draw[\XColor,thick] (0,-.5) .. controls ++(90:.45cm) and ++(270:.45cm) .. (.4,.5);
\filldraw[\rColor] (.2,0) circle (.05cm);
\draw[\YColor,thick] (.4,-.5) .. controls ++(90:.45cm) and ++(270:.45cm) .. (0,.5);
}
\approx
\tikzmath{
\begin{scope}
\clip[rounded corners = 5] (-.55,-1.5) rectangle (.9,1.5);
\filldraw[\rColor] (-.4,-1.5) -- (-.4,1.5) -- (1,1.5) -- (1,-1.5);
\end{scope}
\draw[dashed] (-.4,-1.5) -- (-.4,1.5);
\draw[\XColor,thick] (0,-1.5) -- (0,-.5) .. controls ++(90:.45cm) and ++(270:.45cm) .. (.4,.5) -- (.4,1.5);
\filldraw[\rColor] (.2,0) circle (.05cm);
\draw[\HColor,thick] (.4,-.5) .. controls ++(90:.45cm) and ++(270:.45cm) .. (0,.5);
\draw[\YColor,thick] (.4,-1.5) -- (.4,-.5);
\draw[\YColor,thick] (0,.5) -- (0,1.5);
\roundNbox{unshaded}{(-.2,.8)}{.3}{.15}{.15}{\scriptsize{$(w_n^{M})^\dag$}};
\roundNbox{unshaded}{(0,-.8)}{.3}{.35}{.35}{\scriptsize{$w_n^X$}};
}
=
\sum_j
\tikzmath{
\begin{scope}
\clip[rounded corners = 5] (-.55,-1.5) rectangle (.9,1.5);
\filldraw[\rColor] (-.4,-1.5) -- (-.4,1.5) -- (1,1.5) -- (1,-1.5);
\end{scope}
\draw[dashed] (-.4,-1.5) -- (-.4,1.5);
\draw[\HColor,thick] (0,.5) -- (0,.25);
\draw[\HColor,thick] (.4,-.25) -- (.4,-.5);
\filldraw[\HColor] (0,.25) circle (.05cm);
\filldraw[\HColor] (.4,-.25) circle (.05cm);
\draw[\XColor,thick] (0,-1.5) -- (0,-.5) .. controls ++(90:.45cm) and ++(270:.45cm) .. (.4,.5) -- (.4,1.5);
\draw[\YColor,thick] (.4,-1.5) -- (.4,-.5);
\draw[\YColor,thick] (0,.5) -- (0,1.5);
\roundNbox{unshaded}{(-.2,.8)}{.3}{.15}{.15}{\scriptsize{$(w_n^{M})^\dag$}};
\roundNbox{unshaded}{(0,-.8)}{.3}{.35}{.35}{\scriptsize{$w_n^{X}$}};
\node at (-.2,.25) {\scriptsize{$f_j$}};
}
=
\sum_{i.j}
\tikzmath{
\begin{scope}
\clip[rounded corners = 5] (-.55,-3.5) rectangle (.9,1.5);
\filldraw[\rColor] (-.4,-3.5) -- (-.4,-2.5) -- (-.2,-2.5) -- (-.2,-.5) -- (-.4,-.5) -- (-.4,1.5) -- (1,1.5) -- (1,-3.5);
\end{scope}
\draw[dashed] (-.4,-.5) -- (-.4,1.5);
\draw[\HColor,thick] (0,.5) -- (0,.25);
\draw[\HColor,thick] (.4,-1.25) -- (.4,-1.5);
\filldraw[\HColor] (0,.25) circle (.05cm);
\filldraw[\HColor] (.4,-1.25) circle (.05cm);
\draw[\XColor,thick] (0,-.5) .. controls ++(90:.45cm) and ++(270:.45cm) .. (.4,.5) -- (.4,1.5);
\draw[dashed] (-.2,-2.5) -- (-.2,-.5);
\draw[\YColor,thick] (.4,-3.5) -- (.4,-1.5);
\draw[\YColor,thick] (0,.5) -- (0,1.5);
\draw[dashed] (-.4,-3.5) -- (-.4,-2.5);
\draw[\XColor,thick] (0,-3.5) -- (0,-2.5);
\roundNbox{unshaded}{(-.2,.8)}{.3}{.15}{.15}{\scriptsize{$(w_n^{M})^\dag$}};
\roundNbox{unshaded}{(0,-1.8)}{.3}{.35}{.35}{\scriptsize{$w_n^{M}$}};
\roundNbox{unshaded}{(-.2,-.8)}{.3}{.15}{.15}{\scriptsize{$b_i$}};
\roundNbox{unshaded}{(-.2,-2.8)}{.3}{.15}{.15}{\scriptsize{$b_i^\dag$}};
\node at (-.2,.25) {\scriptsize{$f_j$}};
}
=
\sum_{i,j}
\tikzmath{
\begin{scope}
\clip[rounded corners = 5] (-.35,-2.2) rectangle (1,2.2);
\filldraw[\rColor] (-.2,-2.2) -- (-.2,-1.5) -- (0,-1.5) -- (0,-.5) -- (.3,-.5) -- (.3,.5) -- (0,.5) -- (0,1.5) -- (-.2,1.5) -- (-.2,2.2) -- (1.05,2.2) -- (1.05,-2.2);
\end{scope}
\draw[dashed] (-.2,-2.2) -- (-.2,-1.5);
\draw[\XColor,thick] (.2,-2.2) -- (.2,-1.5);
\draw[dashed] (0,-1.5) -- (0,-.5);
\draw[\YColor,thick] (.6,-2.2) -- (.6,-.5);
\draw[dashed] (.3,-.5) -- (.3,.5);
\draw[dashed] (0,.5) -- (0,1.5);
\draw[\XColor,thick] (.6,.5) -- (.6,2.2);
\draw[dashed] (-.2,1.5) -- (-.2,2.2);
\draw[\YColor,thick] (.2,1.5) -- (.2,2.2);
\roundNbox{unshaded}{(0,1.5)}{.3}{.1}{.1}{\scriptsize{$c_j^{(n)}$}};
\roundNbox{unshaded}{(.3,.5)}{.3}{.15}{.15}{\scriptsize{$b_i$}};
\roundNbox{unshaded}{(.3,-.5)}{.3}{.15}{.15}{\scriptsize{$(c_j^{(\!n\!)}\!)^\dag$}};
\roundNbox{unshaded}{(0,-1.5)}{.3}{.1}{.1}{\scriptsize{$b_i^\dag$}};
}
\label{eq:UnderBraiding}
\end{align}
That is, the braidings $u_{X,Y}, u_{Y,X}^\dag$ can be expressed as the following $\|\cdot\|_2$-limits of $M$-finite rank operators:
\begin{equation}
\label{eq:uXY}
u_{X,Y}
=
\lim_\tau\sum_{i,j} | c_j \boxtimes b_i^{(n)} \rangle\langle b_i^{(n)} \boxtimes c_j |
\qquad\text{and}\qquad
u_{Y,X}^\dag
=
\lim_\tau\sum_{i,j} | c_j^{(n)} \boxtimes b_i \rangle\langle b_i \boxtimes c_j^{(n)} |.
\end{equation}
The content of \S\ref{sec:W*ApproxNatTrans} then translates into the facts that:
\begin{itemize}
\item 
The limit $u_{X,Y}$ exists and is independent of the choice of (approximately inner) $X_M,Y_M$-bases (Theorem \ref{ThmDef:uGFa} and Proposition \ref{prop:uGF-Natural}),
\item
$u_{X,Y}$ is unitary (Proposition \ref{prop:uGFUnitary}),
\item
$u_{X,Y}$ is natural in $X$ and $Y$ (Proposition \ref{prop:uGF-Centralizing-Natural}), and
\item
$u_{X,Y}$ satisfies the braid relations (Propositions \ref{braid3} and \ref{braid4}).
\end{itemize}

\subsection{Examples}
\label{sec:Examples}

We now compute many examples of $\Chi(M)$ for various von Neumann algebras $M$.
In this section, in order to easily make contact with other results in the literature, we work with the Hilbert space version of $\fgpMod(M)$, i.e., Hilbert spaces with a normal right $M$-action such that $\dim(H_M)<\infty$ (see Example \ref{ex:RightFGPMod} for the equivalence with $\fgpWStarRCorr(\bbC \to M)$).

Now we recall two different notions of central sequence in the von Neumann algebra literature which agree for finite von Neumann algebras but are subtly distinct in general. The standard notion to use outside the finite setting is \textit{centralizing sequence}, introduced by Connes in \cite{MR0358374}. 
One way to define this is a say a norm bounded sequence $\{x_{i}\}\subseteq M$ is centralizing if for all $\eta\in L^{2}M$, $\|x_{i}\eta-\eta x_{i}\|_2\rightarrow 0$. 
This is in contrast to \textit{central sequences}, which satisfy $x_{i}m-mx_{i}\rightarrow 0$, in the strong*-topology (the latter notion is compatible with our use of the term central sequence). Note every centralizing sequence is central, but in general the converse is not true. For finite von Neumann algebras, however, these two notions agree. 
In both settings, we say a sequence is \textit{trivial} if there exists a scalar $\lambda$ such that $x_{i}-\lambda 1_{M}\rightarrow 0$ in the strong*-topology.

For our formalism, the notion of central sequence is the correct one; however, we will occasionally need to make use of results that are stated in terms of centralizing sequences. 

\begin{ex}
[$L^{\infty}(X, \mu)$] 
For the abelian von Neumann algebra $L^{\infty}(X,\mu)$ over a finite measure space $(X, \mu)$, constant sequences are centrally trivial. 
Thus any centrally trivial right-finite correspondence is simply a finitely generated projective module, made into a bimodule by defining the right action to be the left action. 
All of these correspondences are inner, hence approximately inner. 
This braided category is equivalent to the symmetric monoidal category of finitely generated projective modules, which is equivalent to the category of finite dimensional measurable Hilbert bundles over $(X, \mu)$ \cite{MR641217}.
\end{ex}

\begin{ex}
[Connes' $\chi(M)$ \cite{MR377534} and Jones' $\kappa$ \cite{MR585235}]
\label{ex:ConnesChi}

Let $M$ be a separable finite von Neumann algebra with faithful normal trace $\tr_M$.
We show that Connes' $\chi(M)$ embeds as a multiplicative subgroup of the monoid of equivalence classes of invertible objects in $\Chi(M)$.

For $\alpha\in\Aut(M)$, we define the corresponding $M-M$ bimodule as ${}_\alpha L^2M$.
Denoting the image of $1_M$ in ${}_\alpha L^2M$ by $\Omega$,
the left action is given by $a\rhd x\Omega:=\alpha(a)x\Omega$, 
the right action is given by $x\Omega\lhd b=xb\Omega$, and 
the $M$-valued inner product is given by $\langle x\Omega|x'\Omega\rangle^{L^2M}_M: = x^*x'$.
Moreover, the map $\alpha \mapsto {}_\alpha L^2M$ descends to a group isomorphism from $\Out(M)$ onto the group of unitary equivalence classes of 
invertible $M-M$ bimodules $H$ such that $\dim({}_MH) = \dim(H_M) = 1$.

Recall that an automorphism $\alpha$ is approximately inner if there exists a sequence of unitaries $u_{n}\in U(M)$ such that $\|\alpha(x)u_n - u_{n} x\|_2 \to 0$ for all $x\in M$.
According to Proposition \ref{prop:AI&APPB}, to show ${}_\alpha L^2M$ is  approximately inner, 
it suffices to show $\{u_n\Omega\}$ is an approximate Pimsner-Popa basis centralized by $M$.
It is clear that $u_n\rhd \langle u_n\Omega|x\Omega\rangle^{{}_\alpha L^2M}_M = x$ for all $x\in M$,
and for all $a\in M$,
$$
\|a\rhd u_n\Omega - u_n\Omega\lhd a\|_2 
= 
\|\alpha(a)u_n\Omega - u_na\Omega\|_2
\longrightarrow 
0.
$$
Therefore, ${}_\alpha L^2M$ is approximately inner.
By Proposition \ref{prop:BimodualCT&CS},
the bimodule ${}_\alpha L^2M$ is centrally trivial if $\alpha$ is centrally trivial. 

Now assume $M$ is a $\rm{II}_{1}$ factor. 
The subcategory of $\Chi(M)$ spanned by the image of $\chi(M)$ is a pointed braided unitary tensor category. 
The entire braided tensor structure here is uniquely determined by the quadratic form $\kappa$ on $\chi(M)$ determined by $u:=u_{{}_\alpha L^2M,{}_\alpha L^2M}=\kappa(\alpha) 1_{{}_\alpha L^2M\boxtimes_{M} {}_\alpha L^2M}$.
Since
$$
u=
\|\cdot\|_2-\lim_n |\Omega\boxtimes u_n\Omega\rangle \langle u_n\Omega\boxtimes \Omega|,
$$
for an arbitrary bounded vector $\Omega\boxtimes m\Omega$, 
we see 
$$
\kappa(\alpha)(\Omega \boxtimes m\Omega)
=
u(\Omega\boxtimes m\Omega)
= 
\Omega \boxtimes \|\cdot\|_2-\lim_n u^{*}_{n}\alpha(u_{n})m\Omega.
$$
Thus this $\kappa$ is precisely Jones' quadratic form $\kappa$ on $\chi(M)$ \cite[Def.~2.4]{MR585235}.

Further references for Connes' $\chi(M)$ and Jones' $\kappa$ include \cite{VaughanNotesOnChi,JianduanChenThesis,MR1642584}.
\end{ex}

\begin{ex}
[$\Chi(R)$ is trivial]

The following proposition specialized to the case $S=\bbC$ shows that the only centrally trivial bimodules of $R$ are inner, and thus $\Chi(R)$ is trivial.
This extends Connes' result that $\chi(R)$ is trivial \cite{MR377534}.

\begin{prop}
\label{CTRHYPER}
Let $M=R\otimes S$ where $R$ is the hyperfinite $\rm{II}_1$ factor and $S$ is any factor. Let $H$ be a separable $M-M$ Hilbert bimodule such that for all $\xi\in H$ and all central sequences $x=(x_{n})_{n\in \mathbb{N}}\subseteq R$, $\|(x_{n}\otimes 1_{S})\xi -\xi(x_{n}\otimes 1_{S})\|_{2}\rightarrow 0$. 
Then $H\cong L^{2}R\otimes K$ for some $S-S$ bimodule $K$.
\end{prop}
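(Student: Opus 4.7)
The plan is to reduce to the purely hyperfinite case ($S = \bbC$) by a commutant argument, and then prove the resulting $R$-bimodule statement using the approximately finite dimensional structure of $R$. Concretely, I would first establish the following reduction: \emph{if $H_0$ is a separable $R$-$R$ Hilbert bimodule on which every central sequence of $R$ acts trivially, then $H_0 \cong L^2R \otimes K_0$ as $R$-$R$ bimodules, where $R$ acts trivially on $K_0$}. Applying this to $H$ viewed as an $(R \otimes 1)$-$(R \otimes 1)$-bimodule --- the hypothesis of the proposition is precisely central triviality over $R \otimes 1$ --- gives $H \cong L^2R \otimes K_0$ as such bimodules. The commuting left and right actions of $1 \otimes S$ on $H$ must commute with both sides of the $R$-action on $L^2R \otimes K_0$; since $R$ is a factor and $L^2R$ is its standard form, $R \vee JRJ = B(L^2R)$, so these $S$-actions lie in $1 \otimes B(K_0)$ and equip $K_0$ with a normal $S$-$S$ bimodule structure $K$, yielding $H \cong L^2R \otimes K$ as $M$-$M$ bimodules.

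For the key $R$-bimodule statement, I would use the hyperfinite tower structure. Fix an increasing sequence of finite dimensional subalgebras $A_n \nearrow R$ with $A_n \cong M_{k_n}(\bbC)$ and matrix units $\{e_{ij}^{(n)}\}$. Using the McDuff-type decomposition $R \cong A_n \otimes R_n$ with $R_n := A_n' \cap R \cong R$, the projections $e_{ii}^{(n)}$ spatially decompose
$$
H_0 = \bigoplus_{i,j} e_{ii}^{(n)} H_0 e_{jj}^{(n)},
$$
and the bimodule structure identifies each off-diagonal block with the diagonal corner $K_0^{(n)} := e_{11}^{(n)} H_0 e_{11}^{(n)}$ via left and right multiplication by $e_{i1}^{(n)}$ and $e_{1j}^{(n)}$. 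This yields an isomorphism $H_0 \cong L^2(A_n) \otimes K_0^{(n)}$ at the level of $A_n$-$A_n$ bimodules.

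The main obstacle is showing that the splittings at different scales are compatible in the limit $n \to \infty$, so as to produce a genuine global decomposition. The centrally trivial hypothesis enters here in an essential way: matrix-unit systems in tensor factors sitting inside $R_n$ that grow with $n$ form central sequences in $R$, and the hypothesis forces such sequences to act identically from the left and from the right on $H_0$. This alignment is what makes the natural embeddings between the diagonal corners $K_0^{(n)}$ (coming from $A_n \subseteq A_{n+1}$) asymptotically isometric, so a $\|\cdot\|_2$-Cauchy argument produces a well-defined limit Hilbert space $K_0$ and a unitary $R$-$R$ bimodule isomorphism $H_0 \cong L^2R \otimes K_0$. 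The delicate point will be constructing the approximately central unitaries in $R_n$ that implement the required intertwiners between successive matrix-unit decompositions, and then invoking the triviality hypothesis to upgrade these asymptotic intertwiners to a genuine isomorphism in the limit --- essentially a bimodule-level version of Connes' argument that $\chi(R) = 1$.
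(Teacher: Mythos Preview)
Your reduction to the case $S=\bbC$ and the commutant argument recovering the $S$-$S$ structure on $K$ are both correct; the paper handles the $S$-structure in exactly the same way at the end (identifying $K$ with the space of $R\otimes 1_S$-central vectors, which is visibly closed under the $1\otimes S$ actions).

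The gap is in the $R$-only core. Your plan is to take the level-$n$ matrix decompositions $H_0\cong L^2(A_n)\otimes K_0^{(n)}$ and pass to a limit, with central triviality somehow forcing compatibility. But as stated this does not go through: the corners $K_0^{(n)}=e_{11}^{(n)}H_0e_{11}^{(n)}$ form a \emph{decreasing} tower with $\bigcap_n K_0^{(n)}=0$ (already for $H_0=L^2R$ one has $e_{11}^{(n)}\to 0$ in $\|\cdot\|_2$), so there is no nontrivial inverse-limit Hilbert space to land in. Renormalising to the spaces of $A_n$-central vectors does give a decreasing sequence of genuine subspaces whose intersection is the $R$-central vectors, but you still need a mechanism guaranteeing that this intersection is large enough to generate $H_0$ under the $R$-action --- and nothing in your sketch produces even a single nonzero $R$-central vector. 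Your ``approximately central unitaries implementing intertwiners between successive decompositions'' and the ensuing ``Cauchy argument'' are exactly where the proof has to happen, and they are not supplied; invoking ``a bimodule-level version of Connes' $\chi(R)=1$'' is a restatement of the goal, not a proof. You also do not address why an $R$-central vector, once found, is $R$-\emph{bounded}, which is what makes $\overline{R\xi R}\cong L^2R$ as a bimodule.

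The paper's proof avoids the limit entirely and is much more direct. Fix any unit vector $\xi\in H$. Using the hypothesis (by contradiction, building a central sequence from offending unitaries), there is an $n_0$ such that every unitary $u\in R_{n_0}'\cap R$ satisfies $\|(u\otimes 1)\xi-\xi(u\otimes 1)\|<\tfrac12$. Then the weakly closed convex hull $\overline{\mathrm{co}}^w\{(u\otimes 1)\xi(u^*\otimes 1)\}$ has a unique element $\xi_0$ of minimal norm, and the $\tfrac12$-bound forces $\xi_0\neq 0$; uniqueness makes $\xi_0$ fixed by $R_{n_0}'\cap R$. Inside the finite-dimensional cyclic $R_{n_0}$-bimodule $R_{n_0}\xi_0 R_{n_0}$ one then finds a nonzero $R_{n_0}$-central (hence $R$-central) vector $\xi_1$, which is automatically $R$-bounded, so $\overline{(R\otimes 1)\xi_1(R\otimes 1)}\cong L^2R$. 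Iterating on the orthogonal complement gives $H\cong L^2R\otimes K$. This is the missing idea in your sketch: a convexity/averaging argument that manufactures a genuine $R$-central vector from the approximate centrality at a single finite level, rather than trying to synchronise an infinite tower of decompositions.
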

\begin{proof}
Let $\xi\in H$ with $\|\xi\|=1$. Represent $R=\otimes^{\infty}_{i=1} M_{2}(\bbC)$, and denote $R_{n}=\otimes^{n}_{i=1} M_{2}(\bbC)\subseteq R$. We claim there is some $n_0$ so that for all unitaries $u\in R^{\prime}_{n_0}\cap R$, $\|(u\otimes 1_{S})\xi-\xi (u\otimes 1_{S})\|<\frac{1}{2}$. Otherwise, we could find a sequence of unitaries $u_{n}\in R^{\prime}_{n}\cap R$ with $\|(u_{n}\otimes 1_{S})\xi-\xi (u_{n}\otimes 1_{S})\|\ge \frac{1}{2}$. However by construction the sequence $(u_{n})_{n\in \bbN}\subseteq R$ is central, contradicting the hypothesis.

Choose such an $n_{0}$, and consider the weakly compact convex subset 
$$
\co^{w}\set{(u\otimes 1_{S})\xi(u^{*}\otimes 1_{S})}{ u\in R^{\prime}_{n_0}\cap R}\subseteq H.
$$ 
This has a unique element of minimal norm, $\xi_{0}$. By hypothesis 

$$
2\|\xi\|^{2}-2\operatorname{Re}\langle (u\otimes 1_{S})\xi(u^{*}\otimes 1_{S}), \xi\rangle\ 
=
\|(u\otimes 1_{S})\xi(u^{*}\otimes 1_{S})-\xi \|^{2}
<
\frac{1}{4}
$$
and thus
$$
\frac{7}{8} <\ \operatorname{Re}\langle (u\otimes 1_{S})\xi(u^{*}\otimes 1_{S}), \xi\rangle.
$$
Therefore $\xi_{0}\ne 0$.  
Since $\|(u\otimes 1_{S})\xi_{0}(u^{*}\otimes 1_{S})\|=\|\xi_{0}\|$,
uniqueness of $\xi_0$ implies
$(u\otimes 1_{S})\xi_0=\xi_0 (u\otimes 1_{S})$ for all unitaries (hence all elements) $u\in R^{\prime}_{n}\cap R$. 
Note that $R_{n}\xi_{0} R_{n}$ is a cyclic bimodule over the finite dimensional full matrix algebra $R_{n}$, and thus contains a non-zero $R_{n}$ central vector $\xi_{1}$ (which is evidently $R^{\prime}_{n}\cap R$-central). 
Thus since $R=\langle R_{n}, R^{\prime}_{n}\cap R\rangle$, we see that $\xi_{1}$ is R-central and thus $R$-bounded by \cite[Lem.~3.20]{MR3040370}.
This means the $R-R$ bimodule $H_{1}:=\overline{(R\otimes 1_{S})\xi_{1} (R\otimes 1_{S})}^{\|\cdot\|}$ is canonically isomorphic to $L^{2}R$ as an $R$-$R$ bimodule via the map defined by $1_{R}\rightarrow \xi_{1}$. 
Choosing a vector $\xi^{\prime}\in H^{\perp}_{1}$, we can repeat this procedure, obtaining a decomposition as $R$-bimodules $H\cong L^{2}R\otimes K $ where $K$ is a separable multiplicity space. But note $K\cong \Omega_{R}\otimes K$ is the space of $R\otimes 1_{S}$ central vectors hence is closed under the left and right actions of $1_{R}\otimes S$.
\end{proof}

\end{ex}

The following proposition is well known to experts.
We record it here for completeness and convenience of the reader.

\begin{prop}\label{quasitrivialsubfactor}
Let $N\subseteq M$ be a finite index $\rm{II}_{1}$ subfactor. 
Then $M=N\vee (N^{\prime}\cap M)$ if and only if $ L^{2}M\cong \bigoplus_{i}L^{2}N$ as $N-N$ bimodules.
\end{prop}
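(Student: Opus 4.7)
The plan is to identify the space of $N$-$N$ central vectors in $L^2M$ with $N'\cap M$, to characterize the isomorphism $L^2M \cong \bigoplus_{i} L^2N$ as the statement that this central subspace generates $L^2M$ as an $N$-$N$ bimodule, and then to relate this generation property to the density of $N(N'\cap M)$ in $M$.

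Set $V := \{\xi \in L^2M : n\xi = \xi n \text{ for all } n \in N\}$. Since $N\subseteq M$ has finite Jones index, $L^2M$ is dualizable as an $N$-$N$ bimodule, decomposing as a finite orthogonal direct sum of irreducible $N$-$N$ bimodules with finite-dimensional hom spaces. In particular $V$ is finite dimensional and consists entirely of $N$-bounded vectors, so one verifies $V = (N'\cap M)\hat{1}_M$ via $a\mapsto a\hat{1}_M$ (boundedness forces $\xi = a\hat{1}_M$ for some $a\in M$, and $N$-centrality then forces $a\in N'\cap M$).

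Now $L^2M \cong \bigoplus_{i} L^2N$ as $N$-$N$ bimodules iff every irreducible summand in the above decomposition is isomorphic to the trivial bimodule $L^2N$, iff the $L^2N$-isotypic component of $L^2M$ equals $L^2M$. Under the identification $\Hom_{N\text{-}N}(L^2N, L^2M) \cong V$ by evaluation at $\hat{1}_N$, this isotypic component equals the $N$-$N$ sub-bimodule generated by $V$, which is $\overline{NV}^{\|\cdot\|_2}$ (the central vectors absorb the right $N$-action into the left). Hence the desired bimodule isomorphism holds iff $N(N'\cap M)\hat{1}_M$ is $\|\cdot\|_2$-dense in $L^2M$.

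Finally, since $\hat{1}_M$ is a cyclic and separating trace vector for $M$ acting on $L^2M$, this $\|\cdot\|_2$-density is equivalent to the $*$-subalgebra $N(N'\cap M)\subseteq M$ having $\sigma$-strong* closure equal to $M$, i.e.\ $N\vee(N'\cap M) = M$. The main subtlety is the first step: ruling out unbounded $N$-central vectors and verifying $V=(N'\cap M)\hat{1}_M$. This is precisely where finite index is used, via the finite-dimensionality of the $L^2N$-isotypic component.
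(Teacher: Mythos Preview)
Your proof is correct and shares the paper's core observation that the $N$-central vectors in $L^2M$ are exactly $(N'\cap M)\hat 1_M$. The paper's argument for the forward direction is more hands-on: it picks an orthonormal basis $\{e_i\}$ of the finite-dimensional algebra $N'\cap M$ with respect to $\tr_M$, and directly verifies via the inner-product computation $\langle e_i n_i, e_j n_j\rangle = \delta_{ij}\langle n_i,n_j\rangle_{L^2N}$ that the subspaces $e_i L^2N$ are mutually orthogonal copies of $L^2N$ whose sum is $L^2M$ (the spanning uses $M=N\vee(N'\cap M)$). The converse is declared obvious. Your route is more conceptual and symmetric: you characterize both conditions as the statement that the $L^2N$-isotypic component of $L^2M$ is everything, and identify that component with $\overline{N(N'\cap M)\hat 1_M}$. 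This buys a uniform treatment of both implications and avoids choosing a basis; the paper's version buys an explicit decomposition. Either way the essential content is the same.
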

\begin{proof}
Suppose $M=N\vee (N^{\prime}\cap M)$. 
Since $N^{\prime}\cap M$ is finite dimensional \cite{MR0696688}, pick an orthonormal basis $ \{e_{i}\}\subseteq N^{\prime}\cap M $ with respect to $\tr_{M}=E_N|_{N'\cap M}$.
Then each $e_iL^2N \cong L^2N$,
and
$L^{2}M\cong \bigoplus_{i} e_{i}L^{2}N$ 
as 
$$
\langle e_in_i, e_j n_j \rangle_{L^2M} 
=
\tr_M(n_j^*e_j^*e_i n_i)
=
\tr_N(n_j^* E_N(e_j^*e_i)n_i)
=
\delta_{i=j}\langle n_i, n_j\rangle_{L^2N}.
$$
The converse is obvious.
\end{proof}

\begin{ex}
[$\Chi(N)$ is trivial for $N$ non-Gamma]

Let $N$ be a non-Gamma $\rm{II}_1$ factor, and $H\in \Chi(N)$ irreducible. 
Setting $X=L^{2}N\oplus H$ and $M:= |X\boxtimes_{N} \overline{X}|$, we get a finite index $\rm II_1$ subfactor $N\subseteq M$ by Example \ref{ex:RealizeSeparatedQSystem} which is approximately inner by Proposition \ref{PopaEq}.
By \cite[Proposition 2.6 (iv)]{MR1339767}, $M=N\vee (N^{\prime}\cap M)$,  
so by Proposition \ref{quasitrivialsubfactor}, ${}_{N} L^{2}M_{N}\cong \bigoplus_{i} {}_{N} L^{2}N_{N} $. 
On the other hand, $L^{2}M\cong X\boxtimes_{N} \overline{X}\cong L^{2}N\oplus H\oplus \overline{H}\oplus (H\boxtimes_{N} \overline{H})$ contains $H$ as an irreducible summand, and thus we have $H\cong L^{2}N$ as an $N$-$N$ bimodule.
\end{ex}

\begin{ex}
[$\Chi(R\otimes N)$ is trivial for $N$ non-Gamma] \label{ex:Chi_fus(RboxtimesN)}

Let $H\in \Chi(R\otimes N)$ be irreducible, and consider $X=L^{2}(R\otimes N)\oplus H$.
Then since $H$ is centrally trivial, so is $X$.
By Lemma \ref{CTRHYPER}, $X\cong L^{2}(R)\otimes K$ where $K$ is an  $N$-$N$ bifinite bimodule.
In particular, setting $M:=|K\boxtimes_{N} \overline{K}|$, $N\subseteq M$ is a finite index $\rm II_1$ subfactor by Example \ref{ex:RealizeSeparatedQSystem}. 
Furthermore, since $N$ is non-Gamma, $M$ is non-Gamma by \cite[Prop.~1.11]{MR860811}.

Now since $H$ is approximately inner, the $R\otimes N-R\otimes N$ bimodule $X\boxtimes_{R\otimes N} \overline{X}\cong L^{2}(R\otimes M)$ is approximately inner, hence $R\otimes N\subseteq R\otimes M$ is a finite index approximately inner subfactor.
But note this subfactor is simply $R\otimes (N\subseteq M)$. 
Thus the inclusion $(R\otimes N)_{\omega}\subseteq (R\otimes M)_{\omega}$
is $[M:N]^{-1}$-Markov, and has a finite Pimsner-Popa basis lifting to a Pimsner-Popa basis of $(R\otimes N)^{\omega}\subseteq (R\otimes M)^{\omega}$ (where $\omega$ is a non-principal ultrafilter).

Furthermore, since 
$R \otimes N= (R\otimes 1)\ \vee (1\otimes N)$,
we have the equality
\begin{equation}
\label{eq:RNcommutant}
(R\otimes N)^{\prime}\cap (R\otimes M)^{\omega}
=
\left((1\otimes N)^{\prime}\cap (R\otimes M)^{\omega}\right) \cap \left( (R\otimes 1)^{\prime} \cap (R\otimes M)^{\omega}\right).
\end{equation}
By \cite[Prop.~3.2(1)]{MR2661553}, the inclusion $1\otimes N\subseteq R\otimes M$ has spectral gap, and thus 
\begin{equation}
\label{eq:1Ncommutant}
(1\otimes N)^{\prime}\cap (R\otimes M)^{\omega}=\left((1\otimes N)^{\prime}\cap (R\otimes M)\right)^{\omega}=(R\otimes (N^{\prime}\cap M))^{\omega}.
\end{equation}
Since $(R\otimes 1)^{\omega}\subseteq (R\otimes M)^{\omega}$, 
combining \eqref{eq:RNcommutant} and \eqref{eq:1Ncommutant},
we have
$$
(R\otimes N)^{\prime}\cap (R\otimes M)^{\omega}= (R\otimes (N^{\prime} \cap M))^{\omega} \cap \left( (R\otimes 1)^{\prime} \cap (R\otimes M)^{\omega}\right)=(R\otimes 1)^{\prime} \cap (R \otimes (N^{\prime}\cap M))^{\omega}.
$$
Thus there is a finite Pimsner-Popa basis $\{\tilde{m}_{i}\}$ for $(R\otimes N)^{\omega}\subseteq (R\otimes M)^{\omega}$ with $\tilde{m}_{i}=\{b^{(k)}_{i}\}_{k\in \bbN}$ where each $b^{(k)}_{i}\in R\otimes (N^{\prime}\cap M)$.

In particular, for any $m\in M$ and any $\varepsilon>0$, there exists finitely many elements $r_{i}\in R$, $k_{i}\in N^{\prime}\cap M$, and $n_{i}\in N$ such that $\|(1_{R}\otimes m)-\sum_{i} r_{i}\otimes k_{i}n_{i}\|_{2}<\varepsilon.$
But then applying the trace preserving conditional expectation $E=\tr_R\otimes \id_M:R\otimes M\rightarrow M$, we have 
$$
\|m-\sum_{i} \tr_{R}(r_{i})k_{i}n_{i}\|_{2}=\|E((1_{R}\otimes m)-\sum_{i} r_{i}\otimes k_{i}n_{i})\|_{2}\le \|(1_{R}\otimes m)-\sum_{i} r_{i}\otimes k_{i}n_{i}\|_{2}<\varepsilon.
$$
Therefore $N\vee (N^{\prime}\cap M)=M$. By Proposition \ref{quasitrivialsubfactor}, this implies that ${}_{N}L^{2}M_{N}\cong \bigoplus_{i} {}_{N}L^{2}N_{N}$. 
But since $ |X\boxtimes_{R\otimes N} \overline{X}|$ is isomorphic to $L^{2}R\otimes |K\boxtimes_{R\otimes N} \overline{K}|$ as $N-N$ bimodules, we have that $X\boxtimes_{R\otimes N} \overline{X} \cong \bigoplus_{i} L^{2}(R\otimes N) $ as $R\otimes N-R\otimes N$ bimodules. 
But recall $X\boxtimes_{R\otimes N} \overline{X}\cong L^{2}(R\otimes N)\oplus H\oplus \overline{H}\oplus (H\boxtimes_{R\otimes N} \overline{H})$, which contains $H$ as an irreducible summand. Thus $H$ is trivial.
\end{ex}

\section{Local extension}
\label{sec:LocalExtensionChapter}

In this section,  we prove Theorem \ref{thmalpha:LocalExtension}, i.e., $\Chi(|Q|) = \Chi(M)_Q^{\loc}$ for a commutative Q-system $Q\in \Chi(M)$, where $|Q|$ is the realization of $Q$ defined in \S\ref{sect:Q-systemRealization} below.
This result appears as Theorem \ref{thm:LocalExtension}.

\subsection{Q-system realization}
\label{sect:Q-systemRealization}

Q-systems are unitary versions of Frobenius algebra objects which were originally introduced by Longo in \cite{MR1257245} to describe the canonical endomorphism for type $\rm III$ subfactors \cite{MR1027496}.
In this section, we define the \emph{realization} procedure \cite{MR3948170,2105.12010} 
(based on \cite{MR2097363} and \cite[\S4.1]{MR3221289})
which 
given a Q-system $Q$ over a $\rm II_1$ factor $M$, recovers a von Neumann algebra $|Q|$ containing $M$ and a conditional expectation $E_M: |Q| \to M$ with finite Pimsner-Popa index.
This story works in much broader generality, but we restrict to $\rm II_1$ factors here for ease of exposition.
As in \S\ref{sec:ModuleRealization}, for this section, $M$ is a $\rm II_1$ factor, and we denote $\bbC,M, {}_\bbC M_M$ as before:
$$
\tikzmath{\filldraw[\rColor, rounded corners=5, very thin, baseline=1cm] (0,0) rectangle (.6,.6);}=M
\qquad
\qquad
\tikzmath{
\filldraw[white, rounded corners=5, very thin, baseline=1cm] (0,0) rectangle (.6,.6);
\draw[dotted, rounded corners=5pt] (0,0) rectangle (.6,.6);
}=\bbC
\qquad
\qquad
\tikzmath{
\begin{scope}
\clip[rounded corners = 5] (0,0) rectangle (.6,.6);
\filldraw[\rColor] (.3,0) rectangle (.6,.6);
\end{scope}
\draw[dashed] (.3,0) -- (.3,.6);
\draw[dotted, rounded corners=5pt] (.3,0) -- (0,0) -- (0,.6) -- (.3,.6);
}={}_\bbC M_M.
$$

\begin{defn}
Given a $\rm II_1$ factor $M$, a \emph{Q-system} in $\fgpBim(M)$ is a triple $(Q,m,i)$ 
where $Q\in \Bim(M)$ is bimodule, and $m: Q\boxtimes_M Q \to Q$ and $i: M \to Q$ are bounded maps that satisfy certain relations best described graphically.
Representing $M$ by a shaded region and $M$ by a strand, $m$ is a trivalent vertex, and $i$ is a univalent vertex; adjoints are represented by vertical reflection.
$$
\tikzmath{\filldraw[\rColor, rounded corners=5, very thin, baseline=1cm] (0,0) rectangle (.6,.6);}=M
\qquad
\tikzmath{
\fill[\rColor, rounded corners=5pt ] (0,0) rectangle (.6,.6);
\draw[\QsColor,thick] (.3,0) -- (.3,.6);
}={}_MQ_M.
\qquad
\tikzmath{
\fill[\rColor, rounded corners=5pt] (-.3,0) rectangle (.9,.6);
\draw[\QsColor,thick] (0,0) arc (180:0:.3cm);
\draw[\QsColor,thick] (.3,.3) -- (.3,.6);
\filldraw[\QsColor] (.3,.3) circle (.05cm);
}=m
\qquad
\tikzmath{
\fill[\rColor, rounded corners=5pt] (-.3,0) rectangle (.9,-.6);
\draw[\QsColor,thick] (0,0) arc (-180:0:.3cm);
\draw[\QsColor,thick] (.3,-.3) -- (.3,-.6);
\filldraw[\QsColor] (.3,-.3) circle (.05cm);
}=m^\dag
\qquad
\tikzmath{
\fill[\rColor, rounded corners=5pt] (0,0) rectangle (.6,.6);
\draw[\QsColor,thick] (.3,.3) -- (.3,.6);
\filldraw[\QsColor] (.3,.3) circle (.05cm);
}=i
\qquad
\tikzmath{
\fill[\rColor, rounded corners=5pt] (0,0) rectangle (.6,-.6);
\draw[\QsColor,thick] (.3,-.3) -- (.3,-.6);
\filldraw[\QsColor] (.3,-.3) circle (.05cm);
}=i^\dag.
$$
The axioms that $m,i$ must satisfy are associativity, unitality, the Frobenius relations, and unitary separability.
We refer the reader to \cite[\S3.1]{2105.12010} for a full discussion with many helpful diagrams. 
We call a Q-system $Q\in \fgpBim(M)$ \emph{connected} if $\Hom_{M-M}(M\to Q) =\bbC i$.
\end{defn}

\begin{defn}[{\cite[\S4.1]{2105.12010}}]
For a Q-system $(Q,m,i)\in \Bim(M)$, 
its \emph{realization}
$|Q|$ is the unital $*$-algebra
with underlying vector space is $\Hom_{\bbC - M}({}_{\bbC}M_M \to {}_{\bbC}M\boxtimes_M Q_M)$,
whose elements are denoted by
\[
\tikzmath{
\begin{scope}
\clip[rounded corners = 5] (-.3,-.7) rectangle (.6,.7);
\filldraw[\rColor] (0,-.7) -- (0,0) -- (-.15,0) -- (-.15,.7) -- (.7,.7) -- (.7,-.7);
\end{scope}
\draw[dashed] (0,-.7) -- (0,0);
\draw[dashed] (-.15,0) -- (-.15,.7);
\draw[\QsColor,thick] (.15,0) -- (.15,.7);
\roundNbox{unshaded}{(0,0)}{.3}{0}{0}{\scriptsize{$q$}};
}
\in |Q| 
:=
\Hom_{\bbC - M}({}_{\bbC}M_M \to {}_{\bbC}M\boxtimes_M Q_M).
\]
The multiplication, unit, and adjoint, respectively, of $|Q|$ are given by
\[
q_1\cdot q_2:=
\tikzmath{
\begin{scope}
\clip[rounded corners = 5] (-1,-.7) rectangle (.5,2);
\filldraw[\rColor] (-.2,-.7) -- (-.2,0) -- (-.4,.3) .. controls ++(90:.2cm) and ++(270:.2cm) .. (-.6,.7) -- (-.8,1) -- (-.8,2.8) -- (.9,2.8) -- (.9,-.7);
\end{scope}
\draw[dashed] (-.2,-.7) -- (-.2,0);
\draw[dashed] (-.4,.3) .. controls ++(90:.2cm) and ++(270:.2cm) .. (-.6,.7);
\draw[dashed] (-.8,1) -- (-.8,2);
\filldraw[\rColor] (-.2,1.8) circle (.05cm);
\draw[\QsColor,thick] (-.4,1.3) -- (-.4,1.5) arc (180:0:.2cm) -- (0,0);
\draw[\QsColor,thick] (-.2,1.7) -- (-.2,2); 
\filldraw[\QsColor] (-.2,1.7) circle (.05cm);
\roundNbox{unshaded}{(-.6,1)}{.3}{.1}{.1}{\scriptsize{$q_1$}};
\roundNbox{unshaded}{(-.2,0)}{.3}{.1}{.1}{\scriptsize{$q_2$}};
}
\qquad\qquad
1_{|Q|}
:=
\tikzmath{
\begin{scope}
\clip[rounded corners = 5] (-.5,-.4) rectangle (.3,.4);
\filldraw[\rColor] (-.3,-.5) rectangle (.3,.5);
\end{scope}
\draw[dashed] (-.3,-.4) -- (-.3,.4);
\draw[\QsColor,thick] (0,0) -- (0,.4);
\filldraw[\QsColor] (0,0) circle (.05cm);
}
\qquad\qquad
q^*:=
\tikzmath{
\begin{scope}
\clip[rounded corners = 5] (-.5,-1.2) rectangle (.85,.7);
\filldraw[\rColor] (-.15,-1.2) -- (-.15,0) -- (0,0) -- (0,.7) -- (.85,.7) -- (.85,-1.2);
\end{scope}
\draw[dashed] (0,0) -- (0,.7);
\draw[dashed] (-.15,-1.2) -- (-.15,0);
\draw[\QsColor,thick] (.15,0) -- (.15,-.5) arc (-180:0:.2cm) -- (.55,.7);
\draw[\QsColor,thick] (.35,-1) -- (.35,-.7);
\filldraw[\QsColor] (.35,-.7) circle (.05cm);
\filldraw[\QsColor] (.35,-1) circle (.05cm);
\roundNbox{unshaded}{(0,0)}{.3}{0}{0}{\scriptsize{$q^\dag$}};
}
\]
Identifying $M=\End(M_M)$, the inclusion $M\hookrightarrow |Q|$ is given by
$$
\End({}_\bbC M_M)
\ni
\tikzmath{
\begin{scope}
\clip[rounded corners = 5] (-.3,-.7) rectangle (.6,.7);
\filldraw[\rColor] (0,-.7) rectangle (.6,.7);
\end{scope}
\draw[dashed] (0,-.7) -- (0,.7);
\roundNbox{unshaded}{(0,0)}{.3}{0}{0}{\scriptsize{$m$}};
}
\mapsto 
\tikzmath{
\begin{scope}
\clip[rounded corners = 5] (-.3,-.7) rectangle (.6,.7);
\filldraw[\rColor] (0,-.7) rectangle (.6,.7);
\end{scope}
\draw[dashed] (0,-.7) -- (0,.7);
\draw[\QsColor, thick] (.2,.5) -- (.2,.7);
\filldraw[\QsColor] (.2,.5) circle (.05cm);
\roundNbox{unshaded}{(0,0)}{.3}{0}{0}{\scriptsize{$m$}};
}
\in |Q|
$$
By \cite[Rem.~4.4 and Prop.~4.6]{2105.12010}, $|Q|$ is a finite (Pimsner-Popa) index von Neumann algebra over $M$.
Moreover, $|Q|$ is a $\rm II_1$ factor if and only if $\End_{Q-Q}(Q)=\bbC\id_Q$.
In this case, the unique trace-preserving conditional expectation is given by
$$
E_M(q) =
\frac{1}{[|Q|:M]}
\cdot\,
\tikzmath{
\begin{scope}
\clip[rounded corners = 5] (-.3,-.7) rectangle (.6,.7);
\filldraw[\rColor] (0,-.7) -- (0,0) -- (-.15,0) -- (-.15,.7) -- (.7,.7) -- (.7,-.7);
\end{scope}
\draw[dashed] (0,-.7) -- (0,0);
\draw[dashed] (-.15,0) -- (-.15,.7);
\draw[\QsColor,thick] (.15,0) -- (.15,.5);
\filldraw[\QsColor] (.15,.5) circle (.05cm);
\roundNbox{unshaded}{(0,0)}{.3}{0}{0}{\scriptsize{$q$}};
}
$$
\end{defn}

\begin{ex}[Canonical Q-systems]
\label{ex:RealizeSeparatedQSystem}
Suppose ${}_NH_P \in \fgpWStarRCorr(N\to P)$.
Then $H\boxtimes_P \overline{H}$ is a Q-system in $\fgpBim(N)$.
By \cite[Prop.~3.1]{MR703809}, there is a canonical isomorphism $H\boxtimes_P \overline{H} \cong L^2(P^{\op})'\cap B(H)$, and thus $|H\boxtimes_P \overline{H}|\cong (P^{\op})'=:M$.
Observe that the relative commutant of $N \subseteq M$ is exactly given by
$N'\cap M = N'\cap (P^{\op})' = \End_{N-P}(H)$.
\end{ex}

\begin{rem}
\label{rem:AllQSystems}
Let $N$ be a $\rm II_1$ factor.
If $Q$ is a connected Q-system in $\fgpBim(N)$, then $N\subseteq |Q|$ is a finite index irreducible $\rm II_1$ subfactor.
By the $\rm W^*$ version of \cite[Prop.~4.16]{2105.12010}, $|Q|$ is also a connected Q-system in $\fgpBim(N)$, and $Q\cong |Q|$ as Q-systems.
Hence the connected Q-systems in $\fgpBim(N)$ are exactly $\rm II_1$ factors $M$ containing $N$ such that $N\subseteq M$ is finite index and irreducible.
\end{rem}

\begin{defn}[{\cite[\S4.2]{2105.12010}}]
\label{defn:RealizationOfBimodules}
Suppose $P,Q\in\fgpBim(M)$ are two Q-systems and $X$ is a $P-Q$ bimodule.
The \emph{realization} $|X|:=\Hom_{\bbC-M}({}_\bbC M_M \to {}_\bbC M \boxtimes_M X_M)$ of $X$ is a $|P|-|Q|$ bimodule whose elements are denoted by
$$
\tikzmath{
\begin{scope}
\clip[rounded corners = 5] (-.3,-.7) rectangle (.6,.7);
\filldraw[\rColor] (0,-.7) -- (0,0) -- (-.15,0) -- (-.15,.7) -- (.7,.7) -- (.7,-.7);
\end{scope}
\draw[dashed] (0,-.7) -- (0,0);
\draw[dashed] (-.15,0) -- (-.15,.7);
\draw[\XColor,thick] (.15,0) -- (.15,.7);
\roundNbox{unshaded}{(0,0)}{.3}{0}{0}{\scriptsize{$x$}};
}
\in |X| 
:=
\Hom_{\bbC - M}({}_{\bbC}M_M \to {}_{\bbC}M\boxtimes_M X_M).
$$
with left $|P|$ and right $|Q|$-actions given respectively by
\[
p\rhd x
:=
\tikzmath{
\begin{scope}
\clip[rounded corners = 5] (-1,-.7) rectangle (.5,2);
\filldraw[\rColor] (-.2,-.7) -- (-.2,0) -- (-.4,.3) .. controls ++(90:.2cm) and ++(270:.2cm) .. (-.6,.7) -- (-.8,1) -- (-.8,2.8) -- (.9,2.8) -- (.9,-.7);
\end{scope}
\draw[dashed] (-.2,-.7) -- (-.2,0);
\draw[dashed] (-.4,.3) .. controls ++(90:.2cm) and ++(270:.2cm) .. (-.6,.7);
\draw[dashed] (-.8,1) -- (-.8,2);
\filldraw[\rColor] (-.2,1.8) circle (.05cm);
\draw[\QsColor,thick] (-.4,1.3) arc (180:90:.4cm);
\draw[\XColor,thick] (0,0) -- (0,2); 
\filldraw[\XColor] (0,1.7) circle (.05cm);
\roundNbox{unshaded}{(-.6,1)}{.3}{.1}{.1}{\scriptsize{$p$}};
\roundNbox{unshaded}{(-.2,0)}{.3}{.1}{.1}{\scriptsize{$x$}};
}
\qquad\qquad
x\lhd q:=
\tikzmath{
\begin{scope}
\clip[rounded corners = 5] (-1,-.7) rectangle (.5,2);
\filldraw[\rColor] (-.2,-.7) -- (-.2,0) -- (-.4,.3) .. controls ++(90:.2cm) and ++(270:.2cm) .. (-.6,.7) -- (-.8,1) -- (-.8,2.8) -- (.9,2.8) -- (.9,-.7);
\end{scope}
\draw[dashed] (-.2,-.7) -- (-.2,0);
\draw[dashed] (-.4,.3) .. controls ++(90:.2cm) and ++(270:.2cm) .. (-.6,.7);
\draw[dashed] (-.8,1) -- (-.8,2);
\filldraw[\rColor] (-.2,1.8) circle (.05cm);
\draw[\XColor,thick] (-.4,1.3) -- (-.4,2);
\draw[\QsColor,thick] (0,0) -- (0,1.3) arc (0:90:.4cm); 
\filldraw[\XColor] (-.4,1.7) circle (.05cm);
\roundNbox{unshaded}{(-.6,1)}{.3}{.1}{.1}{\scriptsize{$x$}};
\roundNbox{unshaded}{(-.2,0)}{.3}{.1}{.1}{\scriptsize{$q$}};
}
=x\lhd q
\qquad\qquad
\langle x_1|x_2\rangle_{|Q|}^X
:=
\tikzmath{
\begin{scope}
\clip[rounded corners = 5] (-.4,-1.4) rectangle (.9,1.4);
\filldraw[\rColor] (0,-1.4) -- (0,-.5) -- (-.2,-.5) -- (-.2,.5) -- (0,.5) -- (0,1.4) -- (1.1,1.4) -- (1.1,-1.4);
\end{scope}
\draw[dashed] (0,.5) -- (0,1.4);
\draw[dashed] (0,-1.4) -- (0,-.5);
\draw[dashed] (-.2,-.5) -- (-.2,.5);
\draw[\XColor,thick] (.2,-.5) -- (.2,.5);
\draw[\QsColor,thick] (.2,0) arc (-90:0:.4cm) -- (.6,1.4);
\filldraw[\XColor] (.2,0) circle (.05cm);
\roundNbox{unshaded}{(0,.7)}{.3}{.05}{.05}{\scriptsize{$x_1^\dag$}};
\roundNbox{unshaded}{(0,-.7)}{.3}{.05}{.05}{\scriptsize{$x_2$}};
}
\]
Clearly $|X|$ has a predual as it is a hom space in a $\rm W^*$-category.
By \cite[Lem.~2.3 and Prop.~2.4]{2105.12010}, the right $|Q|$-valued inner product is separately weak*-continuous, and the left action of $|P|$ on $X$ is normal.
Hence $|X|\in \fgpWStarRCorr(|P|\to |Q|)$ is a $\rm W^*$-correspondence. 
\end{defn}

\begin{nota}
Given another Q-system $R\in \fgpBim(M)$ and a $Q-R$ bimodule $Y$, 
there is a notion of the relative tensor product of $X$ with $Y$ over $Q$, denoted $X\otimes_Q Y$.
We refer the reader to \cite[\S3.2]{2105.12010} for the detailed definition.
We have two canonical projectors which we denote graphically as follows:
\begin{equation}
\label{eq:CanonicalProjectors}
\tikzmath{
\draw[thin, dotted, rounded corners = 5] (-.2,0) -- (-.5,0) -- (-.5,.8) -- (-.2,.8);
\draw[thin, dotted, rounded corners = 5] (.2,0) -- (.5,0) -- (.5,.8) -- (.2,.8);
\filldraw[\rColor] (-.2,0) rectangle (.2,.4);
\filldraw[\rColor] (-.2,.4) rectangle (.2,.8);
\draw[\XColor,thick] (-.2,0) -- (-.2,.8);
\draw[\YColor,thick] (.2,0) -- (.2,.8);
\draw[\QsColor, thick] (-.2,.4) -- (.2,.4);
}
:X\boxtimes_M Y \to X\otimes_Q Y
\qquad\text{and}\qquad
\tikzmath{
\draw[thin, dotted, rounded corners = 5] (-.2,0) -- (-.5,0) -- (-.5,.8) -- (-.2,.8);
\draw[thin, dotted, rounded corners = 5] (.2,0) -- (.5,0) -- (.5,.8) -- (.2,.8);
\filldraw[\rColor] (-.2,0) rectangle (.2,.4);
\filldraw[\QrColor] (-.2,.4) rectangle (.2,.8);
\draw[\XColor,thick] (-.2,0) -- (-.2,.8);
\draw[\YColor,thick] (.2,0) -- (.2,.8);
\draw[\QsColor, thick] (-.2,.4) -- (.2,.4);
}
:|X\boxtimes_M Y| \to |X|\boxtimes_{|Q|} |Y|;
\qquad
\tikzmath{
\fill[\QrColor, rounded corners=5] (0,0) rectangle (.6,.6);
}
=
|Q|.
\end{equation}
For the second diagram, we omit the external shadings, which may denote either a left/right $M$-action, or a left $|P|$ and right $|R|$-action depending on context.
\end{nota}

\begin{rem}
\label{rem:QBimodules}
By the $\rm W^*$ version of \cite[Thm.~A]{2105.12010},
realization $|\,\cdot\,|$ gives a dagger 2-equivalence from the Q-system completion of $\WStarRCorr$ to $\WStarRCorr$.
Thus $|\,\cdot\,|$ gives an equivalence from the unitary tensor category of 
$Q-Q$ bimodules in $\fgpBim(M)$
with the tensor product $\otimes_Q$
to $\fgpBim(|Q|)$ with the Connes fusion relative tensor product $\boxtimes_Q$.
Moreover, the canonical tensorator $\mu_{X,Y}:|X|\boxtimes_{|Q|} |Y| \to |X\otimes_Q Y|$ fits into a commuting diagram with the canonical projectors \eqref{eq:CanonicalProjectors}:
\begin{equation}
\label{eq:CanonicalProjectorTriangle}
\tikzmath{
\node (a) at (0,0) {$|X\boxtimes_M Y|$};
\node (b) at (4,1) {$|X|\boxtimes_{|Q|}|Y|$};
\node (c) at (4,-1) {$|X\otimes_QY|$};
\draw[->] (a) -- (b);
\draw[->] (a) -- (c);
\draw[->] (b) -- node[right]{$\scriptstyle \mu_{X,Y}$} (c);
\coordinate (a) at (1.2,-1.2);
\draw ($ (a) + (-.05,0) $) -- ($ (a) + (-.05,.6) $) ;
\draw ($ (a) + (.85,0) $) -- ($ (a) + (.85,.6) $) ;
\draw[dotted, thin, rounded corners=5] ($ (a) + (.2,0) $) -- (a) -- ($ (a) + (0,.6) $) -- ($ (a) + (.2,.6) $);
\draw[dotted, thin, rounded corners=5] ($ (a) + (.6,0) $) -- ($ (a) + (.8,0) $)  -- ($ (a) + (.8,.6) $) -- ($ (a) + (.6,.6) $);
\filldraw[\rColor] ($ (a) + (.2,0) $) rectangle ($ (a) + (.6,.6)$);
\draw[\XColor,thick] ($ (a) + (.2,0) $) -- ($ (a) + (.2,.6) $);
\draw[\YColor,thick] ($ (a) + (.6,0) $) -- ($ (a) + (.6,.6) $);
\draw[\QsColor, thick] ($ (a) + (.2,.3) $) -- ($ (a) + (.6,.3) $);
\coordinate (b) at (1.2,.6);
\draw[dotted, thin, rounded corners=5] ($ (b) + (.2,0) $) -- (b) -- ($ (b) + (0,.6) $) -- ($ (b) + (.2,.6) $);
\draw[dotted, thin, rounded corners=5] ($ (b) + (.6,0) $) -- ($ (b) + (.8,0) $)  -- ($ (b) + (.8,.6) $) -- ($ (b) + (.6,.6) $);
\filldraw[\rColor] ($ (b) + (.2,.3) $) rectangle ($ (b) + (.6,0)$);
\filldraw[\QrColor] ($ (b) + (.2,.3) $) rectangle ($ (b) + (.6,.6)$);
\draw[\XColor,thick] ($ (b) + (.2,0) $) -- ($ (b) + (.2,.6) $);
\draw[\YColor,thick] ($ (b) + (.6,0) $) -- ($ (b) + (.6,.6) $);
\draw[\QsColor, thick] ($ (b) + (.2,.3) $) -- ($ (b) + (.6,.3) $);
}
\end{equation}
\end{rem}

\subsection{Local extension}
\label{sec:LocalExtension}

We now turn to the proof that $\Chi(|Q|) = \Chi(M)_Q^{\loc}$.

\begin{defn}
A Q-system $Q$ in a unitary braided tensor category $\cC$ is called \textit{commutative} if 
$$
\tikzmath{
\begin{scope}
\clip[rounded corners = 5] (-.3,-.5) rectangle (.7,1);
\filldraw[\rColor] (-.4,-.5) -- (-.4,1) -- (.7,1) -- (.7,-.5);
\end{scope}
\draw[\QsColor,thick] (0,-.5) .. controls ++(90:.45cm) and ++(270:.45cm) .. (.4,.5);
\draw[\QsColor,thick] (0,.5) arc (180:0:.2cm);
\draw[\QsColor,thick] (.2,.7) -- (.2,1);
\filldraw[\rColor] (.2,0) circle (.05cm);
\filldraw[\QsColor] (.2,.7) circle (.05cm);
\draw[\QsColor,thick] (.4,-.5) .. controls ++(90:.45cm) and ++(270:.45cm) .. (0,.5);
}
=
\tikzmath{
\fill[\rColor, rounded corners=5pt] (-.3,0) rectangle (.9,.6);
\draw[\QsColor,thick] (0,0) arc (180:0:.3cm);
\draw[\QsColor,thick] (.3,.3) -- (.3,.6);
\filldraw[\QsColor] (.3,.3) circle (.05cm);
}
=
\tikzmath{
\begin{scope}
\clip[rounded corners = 5] (-.3,-.5) rectangle (.7,1);
\filldraw[\rColor] (-.4,-.5) -- (-.4,1) -- (.7,1) -- (.7,-.5);
\end{scope}
\draw[\QsColor,thick] (.4,-.5) .. controls ++(90:.45cm) and ++(270:.45cm) .. (0,.5);
\draw[\QsColor,thick] (0,.5) arc (180:0:.2cm);
\draw[\QsColor,thick] (.2,.7) -- (.2,1);
\filldraw[\rColor] (.2,0) circle (.05cm);
\filldraw[\QsColor] (.2,.7) circle (.05cm);
\draw[\QsColor,thick] (0,-.5) .. controls ++(90:.45cm) and ++(270:.45cm) .. (.4,.5);
}\,.
$$
Suppose $Q\in\cC$ is a commutative connected Q-system and let $X\in\cC$ be a right $Q$-module. We say $X$ is  \textit{local} if 
\[
\tikzmath{
\begin{scope}
\clip[rounded corners = 5] (-.3,-.5) rectangle (.7,1.3);
\filldraw[\rColor] (-.4,-.5) -- (-.4,1.3) -- (.7,1.3) -- (.7,-.5);
\end{scope}
\draw[\QsColor,thick] (0,-.5) .. controls ++(90:.45cm) and ++(270:.45cm) .. (.4,.5) arc (0:90:.4cm);
\filldraw[\rColor] (.2,0) circle (.05cm);
\draw[\XColor,thick] (.4,-.5) .. controls ++(90:.45cm) and ++(270:.45cm) .. (0,.5) -- (0,1.3);
\filldraw[\XColor] (0,.9) circle (.05cm);
}
=
\tikzmath{
\begin{scope}
\clip[rounded corners = 5] (-.3,-.5) rectangle (.7,1.3);
\filldraw[\rColor] (-.4,-.5) -- (-.4,1.3) -- (.7,1.3) -- (.7,-.5);
\end{scope}
\draw[\XColor,thick] (.4,-.5) .. controls ++(90:.45cm) and ++(270:.45cm) .. (0,.5) -- (0,1.3);
\filldraw[\rColor] (.2,0) circle (.05cm);
\draw[\QsColor,thick] (0,-.5) .. controls ++(90:.45cm) and ++(270:.45cm) .. (.4,.5) arc (0:90:.4cm);
\filldraw[\XColor] (0,.9) circle (.05cm);
}
\]
We can turn a local $Q$-module $X_Q$ into a $Q-Q$ bimodule by defining the left action map by
\[
\tikzmath{
\begin{scope}
\clip[rounded corners = 5] (-.7,0) rectangle (.3,1);
\filldraw[\rColor] (-.7,-.5) -- (-.7,1.3) -- (.7,1.3) -- (.7,-.5);
\end{scope}
\draw[\XColor,thick] (0,0) -- (0,1);
\draw[\QsColor,thick] (-.4,0) -- (-.4,.2) arc (180:90:.4cm);
\filldraw[\XColor] (0,.6) circle (.05cm);
}
: = 
\tikzmath{
\begin{scope}
\clip[rounded corners = 5] (-.3,-.5) rectangle (.7,1.3);
\filldraw[\rColor] (-.4,-.5) -- (-.4,1.3) -- (.7,1.3) -- (.7,-.5);
\end{scope}
\draw[\QsColor,thick] (0,-.5) .. controls ++(90:.45cm) and ++(270:.45cm) .. (.4,.5) arc (0:90:.4cm);
\filldraw[\rColor] (.2,0) circle (.05cm);
\draw[\XColor,thick] (.4,-.5) .. controls ++(90:.45cm) and ++(270:.45cm) .. (0,.5) -- (0,1.3);
\filldraw[\XColor] (0,.9) circle (.05cm);
}
=
\tikzmath{
\begin{scope}
\clip[rounded corners = 5] (-.3,-.5) rectangle (.7,1.3);
\filldraw[\rColor] (-.4,-.5) -- (-.4,1.3) -- (.7,1.3) -- (.7,-.5);
\end{scope}
\draw[\XColor,thick] (.4,-.5) .. controls ++(90:.45cm) and ++(270:.45cm) .. (0,.5) -- (0,1.3);
\filldraw[\rColor] (.2,0) circle (.05cm);
\draw[\QsColor,thick] (0,-.5) .. controls ++(90:.45cm) and ++(270:.45cm) .. (.4,.5) arc (0:90:.4cm);
\filldraw[\XColor] (0,.9) circle (.05cm);
}\,.
\]
It is well known that under the bimodule relative tensor product $\otimes_Q$, 
the collection of local $Q$-modules $\cC^{\loc}_{Q}$ is a unitary braided tensor category, where the braiding is inherited from $\cC$. 
\end{defn}

Our goal now is to prove
$\Chi(|Q|)\cong\Chi(M)_Q^{\loc}$ as braided unitary tensor categories.

\begin{lem}
\label{lem:CSinMCSin|Q|}
If $(a_n)_n\subseteq M$ is a central sequence, then $(a_n)_n$ is also a central sequence in $|Q|$.
\end{lem}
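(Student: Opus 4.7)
The plan is to translate the commutator $a_n q - q a_n$ computed inside the vNA $|Q|$ into a commutator of vectors in the $M$-$M$ bimodule $Q$, where central triviality of $Q\in\Chi(M)$ directly applies. Since $|Q|$ is a finite vNA (we are in the setting of Theorem \ref{thmalpha:LocalExtension}, so $M$ and $|Q|$ are $\rm II_1$ factors), it suffices, on norm bounded sets, to show $\|a_n q - q a_n\|_{|Q|,2} \to 0$ for every $q \in |Q|$.

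First I would unpack the $*$-algebra structure of $|Q|$ from \S\ref{sect:Q-systemRealization}. The inclusion $M\hookrightarrow|Q|$ sends $a$ to $a\rhd i$, where $i$ is the unit of the Q-system $(Q,m,i)$. Using the unitality axiom $m\circ(i\boxtimes_M\id_Q)=\id_Q$ together with $M$-bimodularity of $m$ and $i$, a short graphical calculation shows that for $q\in |Q|$ with corresponding vector $x_q=q(\widehat{1_M})\in M\boxtimes_M Q\cong Q$,
\[
a\cdot q \longleftrightarrow a\rhd x_q
\qquad\text{and}\qquad
q\cdot a \longleftrightarrow x_q\lhd a.
\]
Consequently $a_n q - q a_n$ is identified with the vector $a_n\rhd x_q - x_q\lhd a_n$ in $Q$.

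Because $Q\in\Chi(M)$ is centrally trivial, Proposition \ref{prop:BimodualCT&CS} applied to $x_q$ gives $\|a_n\rhd x_q - x_q\lhd a_n\|_2\to 0$, where $\|x\|_2^2=\tr_M\langle x|x\rangle_M^Q$. The two relevant $2$-norms match exactly: using the realization identifications $\langle x|y\rangle_M^Q=E_M(x^*y)$ and $\tr_{|Q|}=\tr_M\circ E_M$, one computes $\|q\|_{|Q|,2}^2=\tr_{|Q|}(q^*q)=\tr_M(E_M(x_q^*x_q))=\|x_q\|_2^2$. Combining these gives $\|a_n q - q a_n\|_{|Q|,2}\to 0$, which is the desired $\sigma$-strong* convergence in $|Q|$. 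The main content is the first identification via the Q-system unitality; the rest is formal bookkeeping around the definitions of the realization and the $2$-norms.
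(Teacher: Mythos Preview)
Your proof is correct and follows the same approach as the paper, which simply writes: ``Since $Q$ is centrally trivial, for all $q\in |Q|$, $\|a_n q-qa_n\|_2\to 0$ by Proposition \ref{prop:BimodualCT&CS}.'' You are just making explicit the two identifications the paper leaves implicit: that left/right multiplication by $M\subseteq|Q|$ agrees with the bimodule actions on $Q$ (via Q-system unitality), and that the $\|\cdot\|_2$-norm on the $\rm II_1$ factor $|Q|$ agrees with the module $\|\cdot\|_2$-norm on $Q_M$. One small caution: depending on normalization conventions for $E_M$ (the paper includes a factor $[|Q|:M]^{-1}$), the identity $\langle x|y\rangle_M^Q=E_M(x^*y)$ may hold only up to a positive scalar, but this does not affect the convergence-to-zero conclusion.
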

\begin{proof}
Since $Q$ is centrally trivial, for all $q\in |Q|$, $\|a_n q-qa_n\|_2\to 0$ by Proposition \ref{prop:BimodualCT&CS}.
\end{proof}

\begin{prop}
\label{prop:APPBofQisCSinQ}
A Q-system $Q\in \Chi(M)$ is commutative if and only if 
for every approximate inner $Q_M$-basis $\{q_i^{(n)}\}$,
$(q_i^{(n)})_n$ is a central sequence in $|Q|$ for 
each fixed $i$.
\end{prop}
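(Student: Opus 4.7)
The plan is to translate commutativity of $Q$, i.e., the intertwiner equation $m\circ u_{Q,Q}=m$ in $\End_{M-M}(Q\boxtimes_M Q, Q)$, into an explicit identity in the realization $|Q|$. Using the braiding formula \eqref{eq:uXY} with a $Q_M$-basis $\{q_j\}$ and an approximately inner $Q_M$-basis $\{q_i^{(n)}\}$,
\[
u_{Q,Q}=\lim_\tau \sum_{i,j}|q_j\boxtimes q_i^{(n)}\rangle\langle q_i^{(n)}\boxtimes q_j|,
\]
I would apply $m$ on the left, commute each $q_i^{(n)}$ past the $M$-valued inner-product factor using the APPB condition $\|m q_i^{(n)}-q_i^{(n)}m\|_2\to 0$ for $m\in M$, and collapse the $j$-sum via the Pimsner--Popa reconstruction $\sum_j q_j \langle q_j|X\rangle_M=X$. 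The equation $m\circ u_{Q,Q}=m$ thereby becomes the equivalent statement
\[
qq'=\lim_n \sum_i \langle q_i^{(n)}|q\rangle_M\, q'\, q_i^{(n)}\qquad \forall\,q,q'\in Q\subseteq |Q|,\qquad (\ast)
\]
where products are taken in $|Q|$.

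For the easy direction, if each $(q_i^{(n)})_n$ is central in $|Q|$, then in $(\ast)$ I commute $q_i^{(n)}$ past $q'$, then past $\langle q_i^{(n)}|q\rangle_M\in M$ via the APPB condition, and invoke the APPB reconstruction $\lim_n\sum_i q_i^{(n)}\langle q_i^{(n)}|q\rangle_M = q$ to recover $qq'$; hence $(\ast)$ holds and $Q$ is commutative.

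For the converse, combining $(\ast)$ with the unconditional identity $qq'=\lim_n\sum_i \langle q_i^{(n)}|q\rangle_M q_i^{(n)} q'$ (derived by APPB reconstruction and the APPB condition to pull $q_i^{(n)}$ past $\langle q_i^{(n)}|q\rangle_M$), subtraction yields
\[
\lim_n \sum_i \langle q_i^{(n)}|q\rangle_M\,[q_i^{(n)},q']=0 \qquad \forall\, q,q'\in Q,
\]
and running the parallel computation with $u_{Q,Q}^\dag$ via \eqref{eq:UnderBraiding} produces the dual identity $\lim_n\sum_j [q_j^{(n)},q]\,\langle q_j^{(n)}|q'\rangle_M=0$. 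To isolate per-$i$ commutators $\|[q_i^{(n)},q']\|_2\to 0$, I would first upgrade $\{q_i^{(n)}\}$ to an orthogonal APPB via Gram--Schmidt so that the $M$-valued weights $\langle q_i^{(n)}|q_{i'}^{(n)}\rangle_M$ become diagonal projections, and then test the dual identity against the individual orthogonal basis elements to decouple the weighted sum into per-index bounds. Combined with the APPB condition on $M$, this gives asymptotic commutation of $(q_i^{(n)})_n$ with both $M$ and $Q$, and hence with all of $|Q|=M\vee Q$.

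The main obstacle is this final decoupling: the weights $\langle q_i^{(n)}|q\rangle_M$ are $M$-valued rather than scalar, so termwise inversion is not immediate. I expect it will require combining Gram--Schmidt orthogonalization of the APPB with the uniform bound $\sup_n m(n)<\infty$ on the number of basis elements and careful $\|\cdot\|_2$-estimates using $\sup_{i,n}\|\langle q_i^{(n)}|q_i^{(n)}\rangle_M\|<\infty$ to ensure that the decoupling is controlled uniformly in $n$.
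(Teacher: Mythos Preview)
Your easy direction is essentially the paper's, and your translation of commutativity into the identity $(\ast)$ is correct.

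For the converse, however, there are two genuine gaps. First, your ``unconditional identity'' $qq'=\lim_n\sum_i \langle q_i^{(n)}|q\rangle_M\, q_i^{(n)} q'$ requires pulling $q_i^{(n)}$ past the $n$-dependent element $\langle q_i^{(n)}|q\rangle_M$; the approximately-inner condition \eqref{eq:AICommutativity} only guarantees $\|[q_i^{(n)},a]\|_2\to 0$ for \emph{fixed} $a\in M$, so this step is not justified as stated. Second, and more seriously, the decoupling you flag as the main obstacle is in fact the entire difficulty: your weighted sums have $M$-valued coefficients varying with $n$, Gram--Schmidt on an approximately inner basis need not preserve approximate innerness, and in any case the statement demands the conclusion for \emph{every} approximately inner basis, not just some orthogonalized one.

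The paper bypasses all of this with a positivity trick you did not find. Set $x_n^{i,j}:=q_i^{(n)}q_j - q_j q_i^{(n)}\in\Hom(M_M\to Q_M)$ for a fixed $Q_M$-basis $\{q_j\}$, and compute the \emph{sum} $\sum_{i,j} x_n^{i,j}(x_n^{i,j})^\dag\in\End(Q_M)$. Expanding and using the braiding formulas \eqref{eq:ExpandBraidingsQ}, the four cross-terms assemble into
\[
m\circ m^\dag \;-\; m\circ u_{Q,Q}\circ m^\dag \;-\; m\circ u_{Q,Q}^\dag\circ m^\dag \;+\; m\circ m^\dag,
\]
which vanishes by commutativity of $Q$. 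Since the sum is positive and converges to $0$ in $\|\cdot\|_2$, each summand $x_n^{i,j}(x_n^{i,j})^\dag\to 0$, hence $x_n^{i,j}\to 0$ for every fixed $i,j$. This gives $\|[q_i^{(n)},q_j]\|_2\to 0$ for all basis elements $q_j$, and combined with the approximately-inner condition for $M$, centrality in $|Q|$ follows. No decoupling or orthogonalization is needed; the sum-of-squares structure does the work.
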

\begin{proof}
Suppose $\{q_j\}_j$ is a $Q_M$-basis and $\{q_i^{(n)}\}_i$ is an approximate $Q_M$-basis.
By \eqref{eq:OverBraiding} and \eqref{eq:UnderBraiding},
\begin{equation}
\label{eq:ExpandBraidingsQ}
\tikzmath{
\begin{scope}
\clip[rounded corners = 5] (-.55,-.5) rectangle (.7,.5);
\filldraw[\rColor] (-.4,-.5) -- (-.4,.5) -- (.7,.5) -- (.7,-.5);
\end{scope}
\draw[dashed] (-.4,-.5) -- (-.4,.5);
\draw[\QsColor,thick] (.4,-.5) .. controls ++(90:.45cm) and ++(270:.45cm) .. (0,.5);
\filldraw[\rColor] (.2,0) circle (.05cm);
\draw[\QsColor,thick] (0,-.5) .. controls ++(90:.45cm) and ++(270:.45cm) .. (.4,.5);
}
\approx
\sum_{i,j}
\tikzmath{
\begin{scope}
\clip[rounded corners = 5] (-.35,-2.2) rectangle (1,2.2);
\filldraw[\rColor] (-.2,-2.2) -- (-.2,-1.5) -- (0,-1.5) -- (0,-.5) -- (.3,-.5) -- (.3,.5) -- (0,.5) -- (0,1.5) -- (-.2,1.5) -- (-.2,2.2) -- (1.05,2.2) -- (1.05,-2.2);
\end{scope}
\draw[dashed] (-.2,-2.2) -- (-.2,-1.5);
\draw[\QsColor,thick] (.2,-2.2) -- (.2,-1.5);
\draw[dashed] (0,-1.5) -- (0,-.5);
\draw[\QsColor,thick] (.6,-2.2) -- (.6,-.5);
\draw[dashed] (.3,-.5) -- (.3,.5);
\draw[dashed] (0,.5) -- (0,1.5);
\draw[\QsColor,thick] (.6,.5) -- (.6,2.2);
\draw[dashed] (-.2,1.5) -- (-.2,2.2);
\draw[\QsColor,thick] (.2,1.5) -- (.2,2.2);
\roundNbox{unshaded}{(0,1.5)}{.3}{.1}{.1}{\scriptsize{$q_j$}};
\roundNbox{unshaded}{(.3,.5)}{.3}{.15}{.15}{\scriptsize{$q_i^{(n)}$}};
\roundNbox{unshaded}{(.3,-.5)}{.3}{.15}{.15}{\scriptsize{$q_j^\dag$}};
\roundNbox{unshaded}{(0,-1.5)}{.3}{.1}{.1}{\scriptsize{$(q_i^{(\!n\!)}\!)^\dag$}};
}
\qquad\text{and}\qquad
\tikzmath{
\begin{scope}
\clip[rounded corners = 5] (-.55,-.5) rectangle (.7,.5);
\filldraw[\rColor] (-.4,-.5) -- (-.4,.5) -- (.7,.5) -- (.7,-.5);
\end{scope}
\draw[dashed] (-.4,-.5) -- (-.4,.5);
\draw[\QsColor,thick] (0,-.5) .. controls ++(90:.45cm) and ++(270:.45cm) .. (.4,.5);
\filldraw[\rColor] (.2,0) circle (.05cm);
\draw[\QsColor,thick] (.4,-.5) .. controls ++(90:.45cm) and ++(270:.45cm) .. (0,.5);
}
\approx
\sum_{i,j}
\tikzmath{
\begin{scope}
\clip[rounded corners = 5] (-.35,-2.2) rectangle (1,2.2);
\filldraw[\rColor] (-.2,-2.2) -- (-.2,-1.5) -- (0,-1.5) -- (0,-.5) -- (.3,-.5) -- (.3,.5) -- (0,.5) -- (0,1.5) -- (-.2,1.5) -- (-.2,2.2) -- (1.05,2.2) -- (1.05,-2.2);
\end{scope}
\draw[dashed] (-.2,-2.2) -- (-.2,-1.5);
\draw[\QsColor,thick] (.2,-2.2) -- (.2,-1.5);
\draw[dashed] (0,-1.5) -- (0,-.5);
\draw[\QsColor,thick] (.6,-2.2) -- (.6,-.5);
\draw[dashed] (.3,-.5) -- (.3,.5);
\draw[dashed] (0,.5) -- (0,1.5);
\draw[\QsColor,thick] (.6,.5) -- (.6,2.2);
\draw[dashed] (-.2,1.5) -- (-.2,2.2);
\draw[\QsColor,thick] (.2,1.5) -- (.2,2.2);
\roundNbox{unshaded}{(0,1.5)}{.3}{.1}{.1}{\scriptsize{$q_j^{(n)}$}};
\roundNbox{unshaded}{(.3,.5)}{.3}{.15}{.15}{\scriptsize{$q_i$}};
\roundNbox{unshaded}{(.3,-.5)}{.3}{.15}{.15}{\scriptsize{$(q_j^{(\!n\!)}\!)^\dag$}};
\roundNbox{unshaded}{(0,-1.5)}{.3}{.1}{.1}{\scriptsize{$q_i^\dag$}};
}\,.
\end{equation}

First, if each $q_i^{(n)}$ is a central sequence, it is straightforward to see that $Q$ is commutative using \eqref{eq:ExpandBraidingsQ}.

Conversely, if $Q$ is commutative, to show each $q_i^{(n)}$ is a central sequence,
since $\|q_i^{(n)}a-aq_i^{(n)}\|_2\to 0$ for each $a\in M$, it suffices to prove $\|q_i^{(n)}q_j-q_jq_i^{(n)}\|_2\to 0$ for each $j$.
For each $i,j$ and $n$, define
\[
x_n^{i,j}:=q_i^{(n)}q_j-q_jq_i^{(n)}=
\tikzmath{
\begin{scope}
\clip[rounded corners = 5] (-.45,-1.2) rectangle (1.05,1.6);
\filldraw[\rColor] (.3,-1.2) -- (.3,-.5) -- (0,-.5) -- (0,.5) -- (-.2,.5) -- (-.2,2) -- (1.05,2) -- (1.05,-1.2);
\end{scope}
\draw[dashed] (-.2,.5) -- (-.2,1.6);
\draw[dashed] (0,-.5) -- (0,.5);
\draw[dashed] (.3,-1.2) -- (.3,-.5);
\draw[\QsColor,thick] (.2,.8) -- (.2,1) arc (180:0:.2cm) -- (.6,-.5);
\draw[\QsColor,thick] (.4,1.2) -- (.4,1.6);
\filldraw[\QsColor] (.4,1.2) circle (.05cm);
\roundNbox{unshaded}{(0,.5)}{.3}{.1}{.1}{\scriptsize{$q_i^{(n)}$}};
\roundNbox{unshaded}{(.3,-.5)}{.3}{.15}{.15}{\scriptsize{$q_j$}};
}
-
\tikzmath{
\begin{scope}
\clip[rounded corners = 5] (-.45,-1.2) rectangle (1.05,1.6);
\filldraw[\rColor] (.3,-1.2) -- (.3,-.5) -- (0,-.5) -- (0,.5) -- (-.2,.5) -- (-.2,2) -- (1.05,2) -- (1.05,-1.2);
\end{scope}
\draw[dashed] (-.2,.5) -- (-.2,1.6);
\draw[dashed] (0,-.5) -- (0,.5);
\draw[dashed] (.3,-1.2) -- (.3,-.5);
\draw[\QsColor,thick] (.2,.8) -- (.2,1) arc (180:0:.2cm) -- (.6,-.5);
\draw[\QsColor,thick] (.4,1.2) -- (.4,1.6);
\filldraw[\QsColor] (.4,1.2) circle (.05cm);
\roundNbox{unshaded}{(0,.5)}{.3}{.1}{.1}{\scriptsize{$q_j$}};
\roundNbox{unshaded}{(.3,-.5)}{.3}{.15}{.15}{\scriptsize{$q_i^{(n)}$}};
}
\in\,
\Hom(M_M \to Q_M).
\]
Observe that in $\End(Q_M)$, then again by \eqref{eq:ExpandBraidingsQ} we have
\begin{align*}
\sum_{i,j} x_n^{i,j} (x_n^{i,j})^\dag 
&=
\sum_{i,j}
\left(
\tikzmath{
\begin{scope}
\clip[rounded corners = 5] (-.35,-2.5) rectangle (1,2.5);
\filldraw[\rColor] (-.2,-2.5) -- (-.2,-1.5) -- (0,-1.5) -- (0,-.5) -- (.3,-.5) -- (.3,.5) -- (0,.5) -- (0,1.5) -- (-.2,1.5) -- (-.2,2.5) -- (1.05,2.5) -- (1.05,-2.5);
\end{scope}
\draw[dashed] (-.2,-2.5) -- (-.2,-1.5);
\draw[\QsColor,thick] (.2,-1.5) -- (.2,-2) arc (-180:0:.2cm) -- (.6,-.5);
\draw[dashed] (0,-1.5) -- (0,-.5);
\draw[\QsColor,thick] (.4,-2.5) -- (.4,-2.2);
\draw[dashed] (.3,-.5) -- (.3,.5);
\draw[dashed] (0,.5) -- (0,1.5);
\draw[\QsColor,thick] (.4,2.2) -- (.4,2.5);
\draw[dashed] (-.2,1.5) -- (-.2,2.5);
\draw[\QsColor,thick] (.2,1.5) -- (.2,2) arc (180:0:.2cm) -- (.6,.5);
\filldraw[\QsColor] (.4,-2.2) circle (.05cm);
\filldraw[\QsColor] (.4,2.2) circle (.05cm);
\roundNbox{unshaded}{(0,1.5)}{.3}{.1}{.1}{\scriptsize{$q_i^{(n)}$}};
\roundNbox{unshaded}{(.3,.5)}{.3}{.15}{.15}{\scriptsize{$q_j$}};
\roundNbox{unshaded}{(.3,-.5)}{.3}{.15}{.15}{\scriptsize{$q_j^\dag$}};
\roundNbox{unshaded}{(0,-1.5)}{.3}{.1}{.1}{\scriptsize{$(q_i^{(\!n\!)}\!)^\dag$}};
}
-
\tikzmath{
\begin{scope}
\clip[rounded corners = 5] (-.35,-2.5) rectangle (1,2.5);
\filldraw[\rColor] (-.2,-2.5) -- (-.2,-1.5) -- (0,-1.5) -- (0,-.5) -- (.3,-.5) -- (.3,.5) -- (0,.5) -- (0,1.5) -- (-.2,1.5) -- (-.2,2.5) -- (1.05,2.5) -- (1.05,-2.5);
\end{scope}
\draw[dashed] (-.2,-2.5) -- (-.2,-1.5);
\draw[\QsColor,thick] (.2,-1.5) -- (.2,-2) arc (-180:0:.2cm) -- (.6,-.5);
\draw[dashed] (0,-1.5) -- (0,-.5);
\draw[\QsColor,thick] (.4,-2.5) -- (.4,-2.2);
\draw[dashed] (.3,-.5) -- (.3,.5);
\draw[dashed] (0,.5) -- (0,1.5);
\draw[\QsColor,thick] (.4,2.2) -- (.4,2.5);
\draw[dashed] (-.2,1.5) -- (-.2,2.5);
\draw[\QsColor,thick] (.2,1.5) -- (.2,2) arc (180:0:.2cm) -- (.6,.5);
\filldraw[\QsColor] (.4,-2.2) circle (.05cm);
\filldraw[\QsColor] (.4,2.2) circle (.05cm);
\roundNbox{unshaded}{(0,1.5)}{.3}{.1}{.1}{\scriptsize{$q_i^{(n)}$}};
\roundNbox{unshaded}{(.3,.5)}{.3}{.15}{.15}{\scriptsize{$q_j$}};
\roundNbox{unshaded}{(.3,-.5)}{.3}{.15}{.15}{\scriptsize{$(q_i^{(\!n\!)})^\dag$}};
\roundNbox{unshaded}{(0,-1.5)}{.3}{.1}{.1}{\scriptsize{$q_j^\dag$}};
}
-
\tikzmath{
\begin{scope}
\clip[rounded corners = 5] (-.35,-2.5) rectangle (1,2.5);
\filldraw[\rColor] (-.2,-2.5) -- (-.2,-1.5) -- (0,-1.5) -- (0,-.5) -- (.3,-.5) -- (.3,.5) -- (0,.5) -- (0,1.5) -- (-.2,1.5) -- (-.2,2.5) -- (1.05,2.5) -- (1.05,-2.5);
\end{scope}
\draw[dashed] (-.2,-2.5) -- (-.2,-1.5);
\draw[\QsColor,thick] (.2,-1.5) -- (.2,-2) arc (-180:0:.2cm) -- (.6,-.5);
\draw[dashed] (0,-1.5) -- (0,-.5);
\draw[\QsColor,thick] (.4,-2.5) -- (.4,-2.2);
\draw[dashed] (.3,-.5) -- (.3,.5);
\draw[dashed] (0,.5) -- (0,1.5);
\draw[\QsColor,thick] (.4,2.2) -- (.4,2.5);
\draw[dashed] (-.2,1.5) -- (-.2,2.5);
\draw[\QsColor,thick] (.2,1.5) -- (.2,2) arc (180:0:.2cm) -- (.6,.5);
\filldraw[\QsColor] (.4,-2.2) circle (.05cm);
\filldraw[\QsColor] (.4,2.2) circle (.05cm);
\roundNbox{unshaded}{(0,1.5)}{.3}{.1}{.1}{\scriptsize{$q_j$}};
\roundNbox{unshaded}{(.3,.5)}{.3}{.15}{.15}{\scriptsize{$q_i^{(n)}$}};
\roundNbox{unshaded}{(.3,-.5)}{.3}{.15}{.15}{\scriptsize{$q_j^\dag$}};
\roundNbox{unshaded}{(0,-1.5)}{.3}{.1}{.1}{\scriptsize{$(q_i^{(\!n\!)}\!)^\dag$}};
}
+
\tikzmath{
\begin{scope}
\clip[rounded corners = 5] (-.35,-2.5) rectangle (1,2.5);
\filldraw[\rColor] (-.2,-2.5) -- (-.2,-1.5) -- (0,-1.5) -- (0,-.5) -- (.3,-.5) -- (.3,.5) -- (0,.5) -- (0,1.5) -- (-.2,1.5) -- (-.2,2.5) -- (1.05,2.5) -- (1.05,-2.5);
\end{scope}
\draw[dashed] (-.2,-2.5) -- (-.2,-1.5);
\draw[\QsColor,thick] (.2,-1.5) -- (.2,-2) arc (-180:0:.2cm) -- (.6,-.5);
\draw[dashed] (0,-1.5) -- (0,-.5);
\draw[\QsColor,thick] (.4,-2.5) -- (.4,-2.2);
\draw[dashed] (.3,-.5) -- (.3,.5);
\draw[dashed] (0,.5) -- (0,1.5);
\draw[\QsColor,thick] (.4,2.2) -- (.4,2.5);
\draw[dashed] (-.2,1.5) -- (-.2,2.5);
\draw[\QsColor,thick] (.2,1.5) -- (.2,2) arc (180:0:.2cm) -- (.6,.5);
\filldraw[\QsColor] (.4,-2.2) circle (.05cm);
\filldraw[\QsColor] (.4,2.2) circle (.05cm);
\roundNbox{unshaded}{(0,1.5)}{.3}{.1}{.1}{\scriptsize{$q_j$}};
\roundNbox{unshaded}{(.3,.5)}{.3}{.15}{.15}{\scriptsize{$q_i^{(n)}$}};
\roundNbox{unshaded}{(.3,-.5)}{.3}{.15}{.15}{\scriptsize{$(q_i^{(\!n\!)})^\dag$}};
\roundNbox{unshaded}{(0,-1.5)}{.3}{.1}{.1}{\scriptsize{$q_j^\dag$}};;
}
\right)
\displaybreak[1]\\
&\approx
\tikzmath{
\begin{scope}
\clip[rounded corners = 5] (-.8,-.6) rectangle (.6,.6);
\filldraw[\rColor] (-.6,-.6) rectangle (.6,.6);
\end{scope}
\draw[dashed] (-.6,-.6) -- (-.6,.6); 
\draw[\QsColor,thick] (0,-.6) -- (0,-.3) arc (-90:270:.3cm);
\draw[\QsColor,thick] (0,.3) -- (0,.6);
\filldraw[\QsColor] (0,-.3) circle (.05cm);
\filldraw[\QsColor] (0,.3) circle (.05cm);
}
-
\tikzmath{
\begin{scope}
\clip[rounded corners = 5] (-.5,-1) rectangle (.7,1);
\filldraw[\rColor] (-.3,-1) rectangle (.7,1);
\end{scope}
\draw[dashed] (-.3,-1) -- (-.3,1); 
\draw[\QsColor,thick] (0,-.5) .. controls ++(90:.45cm) and ++(270:.45cm) .. (.4,.5);
\draw[\QsColor,thick] (0,.5) arc (180:0:.2cm);
\draw[\QsColor,thick] (.2,.7) -- (.2,1);
\draw[\QsColor,thick] (0,-.5) arc (-180:0:.2cm);
\draw[\QsColor,thick] (.2,-.7) -- (.2,-1);
\filldraw[\rColor] (.2,0) circle (.05cm);
\draw[\QsColor,thick] (.4,-.5) .. controls ++(90:.45cm) and ++(270:.45cm) .. (0,.5);
\filldraw[\QsColor] (.2,.7) circle (.05cm);
\filldraw[\QsColor] (.2,-.7) circle (.05cm);
}
-
\tikzmath{
\begin{scope}
\clip[rounded corners = 5] (-.5,-1) rectangle (.7,1);
\filldraw[\rColor] (-.3,-1) rectangle (.7,1);
\end{scope}
\draw[dashed] (-.3,-1) -- (-.3,1); 
\draw[\QsColor,thick] (.4,-.5) .. controls ++(90:.45cm) and ++(270:.45cm) .. (0,.5);
\draw[\QsColor,thick] (0,.5) arc (180:0:.2cm);
\draw[\QsColor,thick] (.2,.7) -- (.2,1);
\draw[\QsColor,thick] (0,-.5) arc (-180:0:.2cm);
\draw[\QsColor,thick] (.2,-.7) -- (.2,-1);
\filldraw[\rColor] (.2,0) circle (.05cm);
\draw[\QsColor,thick] (0,-.5) .. controls ++(90:.45cm) and ++(270:.45cm) .. (.4,.5);
\filldraw[\QsColor] (.2,.7) circle (.05cm);
\filldraw[\QsColor] (.2,-.7) circle (.05cm);
}
+
\tikzmath{
\begin{scope}
\clip[rounded corners = 5] (-.8,-.6) rectangle (.6,.6);
\filldraw[\rColor] (-.6,-.6) rectangle (.6,.6);
\end{scope}
\draw[dashed] (-.6,-.6) -- (-.6,.6); 
\draw[\QsColor,thick] (0,-.6) -- (0,-.3) arc (-90:270:.3cm);
\draw[\QsColor,thick] (0,.3) -- (0,.6);
\filldraw[\QsColor] (0,-.3) circle (.05cm);
\filldraw[\QsColor] (0,.3) circle (.05cm);
}
= 0. 
\end{align*}
But since $\sum_{i,j} x_n^{i,j} (x_n^{i,j})^\dag$ is positive, we must have 
$\|\cdot\|_2-\lim_n x_n^{i,j} (x_n^{i,j})^\dag=0$ for each $i,j$. 
Since $\WStarRCorr$ is $\rm W^*$, we must have $\lim_n x_n^{i,j} = 0$, so each $q_i^{(n)}$ is a central sequence.
\end{proof}

\begin{lem}
\label{lem:APPAfromMto|Q|}
Suppose $X\in \fgpBim(M)$ is a right $Q$-module.
If $\{b_i^{(n)}\}$ is an approximate $X_M$-basis, then it is also an approximate $|X|_{|Q|}$-basis.
Similarly, if $\{b_i\}$ is an $X_M$-basis, then it is also an $|X|_{|Q|}$-basis.
\end{lem}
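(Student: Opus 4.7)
The plan is to reduce both statements to a single graphical identity in $\fgpBim(M)$. Writing $b\in X$ as $L_b\in |X|$ via the realization construction, the formulas of Definition~\ref{defn:RealizationOfBimodules} for the right $|Q|$-action and the $|Q|$-valued inner product on $|X|$ unpack to
\[
\sum_i L_{b_i^{(n)}}\lhd \bigl\langle L_{b_i^{(n)}}\,\big|\,\psi\bigr\rangle_{|Q|}^{|X|}
=(\id_M\boxtimes\rho)\Bigl(\bigl(\textstyle\sum_i L_{b_i^{(n)}}\circ L_{b_i^{(n)}}^\dag\bigr)\boxtimes\id_Q\Bigr)(\id_M\boxtimes\rho^\dag)\psi
\]
for every $\psi\in|X|$, where $\rho:X\boxtimes_M Q\to X$ is the right $Q$-action on $X$ (so that $\rho^\dag$ ``splits'' the $X$-strand graphically). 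I would establish this at the outset by drawing the diagram and sliding the trivalent $Q$-vertex past the basis sum, and then derive both statements as consequences.

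For the genuine basis case, the $X_M$-basis relation $\sum_i L_{b_i}\circ L_{b_i}^\dag=\id_{M\boxtimes X}$ combines with the unitary-separability relation $\rho\circ\rho^\dag=\id_X$ inherent to the Q-module structure to collapse the right-hand side of the identity to $\psi$, showing $\{b_i\}$ is an $|X|_{|Q|}$-basis. For the approximate case, the cardinality bound $\sup_n m(n)<\infty$ is immediate. The inner-product bound $\sup_{i,n}\|\langle b_i^{(n)}|b_i^{(n)}\rangle_{|Q|}^{|X|}\|_{|Q|}<\infty$ is obtained by applying the same graphical formula with $\psi=L_{b_i^{(n)}}$ and estimating by the operator norm of $\rho$ times $\sup_{i,n}\|\langle b_i^{(n)}|b_i^{(n)}\rangle_M^X\|_M$.

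For basis convergence, set $\Delta_n:=\id_{M\boxtimes X}-\sum_i L_{b_i^{(n)}}\circ L_{b_i^{(n)}}^\dag$. The approximate $X_M$-basis property forces $\Delta_n\to 0$ pointwise in the $\tau_M$-topology, while the crude estimate $\bigl\|\sum_i L_{b_i^{(n)}}L_{b_i^{(n)}}^\dag\bigr\|\le m(n)\sup_{i,n}\|\langle b_i^{(n)}|b_i^{(n)}\rangle_M^X\|_M$ provides a uniform operator-norm bound on $\Delta_n$. Joint $\tau$-continuity of composition on norm-bounded subsets (axiom~\ref{tau:composition}) therefore yields
\[
\psi-\sum_i b_i^{(n)}\lhd\bigl\langle b_i^{(n)}\,\big|\,\psi\bigr\rangle_{|Q|}^{|X|}
=(\id_M\boxtimes\rho)(\Delta_n\boxtimes\id_Q)(\id_M\boxtimes\rho^\dag)\psi\xrightarrow{\tau_M}0.
\]

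The final step is topological transfer. Because $M\subseteq|Q|$ is a strongly Markov inclusion (the Pimsner-Popa index is finite by construction of $|Q|$ from the Q-system), the Remark concluding \S\ref{sec:CMC*Cat} identifies the $\tau_M$ and $\tau_{|Q|}$ topologies on norm-bounded subsets under the restriction functor. This upgrades the $\tau_M$-convergence above to $\tau_{|Q|}$-convergence, which on this bounded sequence is precisely $\|\cdot\|_2$-convergence inside $|X|_{|Q|}$. The main delicate point is exactly this identification of topologies: one must verify that the finite-rank differences in question can be viewed simultaneously as morphisms in $\fgpMod(M)$ and $\fgpMod(|Q|)$, and that the uniform boundedness required to apply the Remark is indeed at hand, which is supplied by the inner-product bound step above.
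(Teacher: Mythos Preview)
Your proof is correct and follows the same route as the paper: both hinge on the graphical identity
\[
\sum_i b_i^{(n)}\lhd\langle b_i^{(n)}|\psi\rangle_{|Q|}^{|X|}
=(\id_M\boxtimes\rho)\bigl((\textstyle\sum_i L_{b_i^{(n)}}L_{b_i^{(n)}}^\dag)\boxtimes\id_Q\bigr)(\id_M\boxtimes\rho^\dag)\,\psi,
\]
together with separability $\rho\rho^\dag=\id_X$ and the local-relation use of $\approx$. The paper compresses all of this into a single diagram with an ``$\Rightarrow$''; you have simply unpacked it and added the boundedness checks the paper leaves implicit. The one place you work harder than necessary is the topological transfer. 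A direct computation shows that for $\psi\in|X|$,
\[
E_M\bigl(\langle\psi|\psi\rangle_{|Q|}^{|X|}\bigr)=[|Q|:M]^{-1}\,\langle\psi|\psi\rangle_M^X
\]
(cap the $Q$-strand with $i^\dag$ and use unitality $\rho\circ(\id_X\boxtimes i)=\id_X$), so $\|\psi\|_{2,|Q|}^2=[|Q|:M]^{-1}\|\psi\|_{2,M}^2$ and the two $\|\cdot\|_2$-topologies on $|X|$ coincide outright. The Remark at the end of \S\ref{sec:CMC*Cat} compares $\Hom_{|Q|}(|Q|\to|X|)$ with $\Hom_M(|Q|_M\to|X|_M)$ under restriction, not with $\Hom_M(M_M\to|X|_M)$, so invoking it here would require an extra identification; the direct norm comparison is both shorter and cleaner.
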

\begin{proof}
We prove the approximate version, and the ordinary version is similar, but easier.
Identifying $X_M\cong |X|_M$ as right $M$-modules, observe
\[
\sum_i b_i^{(n)}\langle b_i^{(n)}|x\rangle^X_M 
=
\sum_i
\tikzmath{
\begin{scope}
\clip[rounded corners = 5] (-.35,-2.2) rectangle (.7,1.2);
\filldraw[\rColor] (0,-2.2) -- (0,-1.5) -- (-.2,-1.55) -- (-.2,-.5) -- (0,-.5) -- (0,.5) -- (-.2,.5) -- (-.2,1.7) -- (1.3,1.7) -- (1.3,-2.55);
\end{scope}
\draw[dashed] (0,-2.2) -- (0,-1.5);
\draw[dashed] (-.2,-1.55) -- (-.2,-.5);
\draw[dashed] (0,-.5) -- (0,.5);
\draw[dashed] (-.2,.5) -- (-.2,1.2);
\draw[\XColor,thick] (.2,-1.55) -- (.2,-.5);
\draw[\XColor,thick] (.2,.5) -- (.2,1.2);
\roundNbox{unshaded}{(0,.5)}{.3}{.1}{.1}{\scriptsize{$b_i^{(n)}$}};
\roundNbox{unshaded}{(0,-.5)}{.3}{.1}{.1}{\scriptsize{$(b_i^{(\!n\!)}\!)^\dag$}};
\roundNbox{unshaded}{(0,-1.5)}{.3}{.1}{.1}{\scriptsize{$x$}};
}
\approx x
\qquad\Longrightarrow\qquad
\sum_i b_i^{(n)}\langle b_i^{(n)}|x\rangle^{|X|}_{|Q|} =
\sum_i
\tikzmath{
\begin{scope}
\clip[rounded corners = 5] (-.35,-2.6) rectangle (1,1.6);
\filldraw[\rColor] (0,-2.6) -- (0,-1.9) -- (-.2,-1.9) -- (-.2,-.5) -- (0,-.5) -- (0,.5) -- (-.2,.5) -- (-.2,1.7) -- (1.3,1.7) -- (1.3,-2.6);
\end{scope}
\draw[dashed] (0,-2.6) -- (0,-1.9);
\draw[dashed] (-.2,-1.9) -- (-.2,-.5);
\draw[dashed] (0,-.5) -- (0,.5);
\draw[dashed] (-.2,.5) -- (-.2,1.6);
\draw[\XColor,thick] (.2,-1.9) -- (.2,-.5);
\draw[\XColor,thick] (.2,.5) -- (.2,1.6);
\draw[\QsColor,thick] (.2,-1.2) arc (-90:0:.4cm) -- (.6,.8) arc (0:90:.4cm);
\filldraw[\XColor] (.2,-1.2) circle (.05cm);
\filldraw[\XColor] (.2,1.2) circle (.05cm);
\roundNbox{unshaded}{(0,.5)}{.3}{.1}{.1}{\scriptsize{$b_i^{(n)}$}};
\roundNbox{unshaded}{(0,-.5)}{.3}{.1}{.1}{\scriptsize{$(b_i^{(\!n\!)}\!)^\dag$}};
\roundNbox{unshaded}{(0,-1.9)}{.3}{.1}{.1}{\scriptsize{$x$}};
}
\approx x.
\qedhere
\]
\end{proof}

\begin{prop}
\label{Chi(M)locQsubsetChi(Q)}
Realization $|\,\cdot\,|$ takes every bimodule in $\Chi(M)_Q^\loc$ into $\Chi(|Q|)$.
\end{prop}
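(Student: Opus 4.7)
I want to verify the three defining properties of $\Chi(|Q|)$ for $|X|$, where $X \in \Chi(M)_Q^{\loc}$: dualizability, approximate innerness over $|Q|$, and central triviality over $|Q|$. Dualizability is immediate from Remark \ref{rem:QBimodules}: realization is a dagger 2-equivalence from the Q-system completion of $\WStarRCorr$ onto $\WStarRCorr$, so it preserves duals. Hence the $Q$-$Q$-bimodule structure on $X$ (coming from the local $Q$-module structure) having a dual in the Q-system-completed category translates into $|X|$ being dualizable over $|Q|$.

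\textbf{Approximate innerness.} By Proposition \ref{prop:AI&APPB}, choose an approximately inner $X_M$-basis $\{b_i^{(n)}\}$. By Lemma \ref{lem:APPAfromMto|Q|} this is already an approximate $|X|_{|Q|}$-basis, so by Proposition \ref{prop:AI&APPB} I only need to verify the centralization condition \eqref{eq:AICommutativity} at each $a\in |Q|$. Every $a\in |Q|$ can be $\|\cdot\|_2$-approximated by finite sums $\sum_j q_j m_j$ with $q_j\in Q$ and $m_j\in M$, and $\{b_i^{(n)}\}$ already approximately commutes with $M$, so it suffices to handle $a=q\in Q$. The left action of $q$ on $b_i^{(n)}$ in $|X|$ is determined by the left $Q$-action coming from locality, namely the composite of the right $Q$-action with $u_{Q,X}$. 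Substituting the explicit limit formula \eqref{eq:uXY} for $u_{Q,X}$ in terms of an approximately inner $Q_M$-basis $\{q_k^{(m)}\}$ (which are central sequences in $|Q|$ by Proposition \ref{prop:APPBofQisCSinQ}) and using approximate naturality plus the commutativity of $Q$, a direct graphical manipulation converts $q\rhd b_i^{(n)}$ into $b_i^{(n)}\lhd q$ up to a term that vanishes in $\|\cdot\|_2$ as $n,m\to\infty$.

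\textbf{Central triviality.} Let $(a_n)\subseteq |Q|$ be a central sequence and $x\in |X|$. Picking an $X_M$-basis $\{b_i\}$, which by Lemma \ref{lem:APPAfromMto|Q|} is also an $|X|_{|Q|}$-basis, write $x=\sum_i b_i\lhd \langle b_i|x\rangle_{|Q|}^{|X|}$. Then
\[
a_n x - x a_n = \sum_i (a_n b_i - b_i a_n)\lhd \langle b_i|x\rangle_{|Q|}^{|X|} + \sum_i b_i\lhd (a_n\langle b_i|x\rangle_{|Q|}^{|X|} - \langle b_i|x\rangle_{|Q|}^{|X|} a_n).
\]
The second sum tends to $0$ in $\|\cdot\|_2$ because $(a_n)$ is central in $|Q|$. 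So the question reduces to showing $\|a_n b_i - b_i a_n\|_2\to 0$ for each fixed $b_i$. Expanding $a_n=\sum_j q_j\lhd m_j^{(n)}$ along a $Q_M$-basis $\{q_j\}$ (with $m_j^{(n)}\in M$) and using that $(a_n)$ is central in $|Q|$ together with Proposition \ref{prop:BimodualCT&CS} applied to $Q$ (commutators with the generators $q_j\in Q\subseteq |Q|$ go to zero), one deduces that each sequence $(m_j^{(n)})$ is essentially central in $M$. Central triviality of $X$ over $M$ (Proposition \ref{prop:BimodualCT&CS}) then gives $\|m_j^{(n)} b_i - b_i m_j^{(n)}\|_2\to 0$, and combining with locality in the graphical calculus to handle the $q_j$-part of $a_n$ gives the result.

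The main obstacle is the central triviality computation, since central sequences in $|Q|$ need not be central sequences of $M$ and one must both extract their "$M$-central part" using the Pimsner-Popa decomposition and use locality of $X$ to handle the $Q$-part cleanly; the approximate innerness step, by contrast, is essentially a direct verification once the locality identity for $u_{Q,X}$ is combined with \eqref{eq:uXY} and Proposition \ref{prop:APPBofQisCSinQ}.
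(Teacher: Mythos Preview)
Your approximate-innerness argument is essentially the paper's: push the approximately inner $X_M$-basis $\{b_i^{(n)}\}$ to an approximate $|X|_{|Q|}$-basis via Lemma~\ref{lem:APPAfromMto|Q|}, then use locality together with the braiding expansion (the paper uses \eqref{eq:UnderBraiding}) to turn $q\rhd b_i^{(n)}$ into $b_i^{(n)}\lhd q$ asymptotically. That part is fine.

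The central-triviality argument has a real gap. After your (legitimate) reduction to showing $\|a_n b_i-b_i a_n\|_2\to 0$ for a fixed $b_i$, you expand $a_n=\sum_j q_j\, m_j^{(n)}$ along a \emph{fixed} $Q_M$-basis and assert that each $m_j^{(n)}=\langle q_j\mid a_n\rangle_M$ is central in $M$. But for $m\in M$ a direct computation gives
\[
[m,\,m_j^{(n)}]
=\langle q_j m^{*}\mid a_n\rangle_M-\langle q_j\mid a_n m\rangle_M
\approx \langle q_j m^{*}-m^{*}q_j\mid a_n\rangle_M,
\]
using only $[m,a_n]\to 0$ in $|Q|$. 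The element $q_j m^{*}-m^{*}q_j\in Q$ is fixed and generically nonzero, so this pairing has no reason to vanish. Neither the centrality of $(a_n)$ in $|Q|$ (which only controls commutators with elements of $|Q|$, not the bimodule commutator $q_j m^{*}-m^{*}q_j$) nor Proposition~\ref{prop:BimodualCT&CS} (which concerns central sequences of $M$, not of $|Q|$) closes this.

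The paper's fix is to never expand $a_n$ along a fixed basis. Instead it writes $a_n\rhd x$ via locality as a right action composed with $u_{Q,X}$, and expands the braiding through \eqref{eq:OverBraiding} using the \emph{approximately inner} $Q_M$-basis $\{q_k^{(n)}\}$. The coefficients that then appear are $\langle q_k^{(n)}\mid a_n\rangle_M$, and for these the same computation gives
\[
[m,\,\langle q_k^{(n)}\mid a_n\rangle_M]\approx \langle q_k^{(n)} m^{*}-m^{*}q_k^{(n)}\mid a_n\rangle_M\to 0,
\]
precisely because the approximately inner basis asymptotically commutes with $M$. This is the mechanism that converts ``central in $|Q|$'' into ``central in $M$'', and it is what your argument is missing.
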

\begin{proof}
Suppose $X\in \Chi(M)_Q^{\loc}$.
We show ${}_{|Q|}|X|_{|Q|}$ is approximately inner and centrally trivial.

Since $X$ is approximately inner over $M$,
by Proposition \ref{prop:AI&APPB},
there is an approximately inner $X_M$-basis
$\{b_i^{(n)}\}\subseteq X$.
By Lemma \ref{lem:APPAfromMto|Q|}, $\{b_i^{(n)}\}$ is an approximate $|X|_{|Q|}$-basis. 
Now we show \eqref{eq:AICommutativity}, i.e., $\|qb_i^{(n)}-b_i^{(n)}q\|_2\to 0$ for all $q\in |Q|$.
\[
q b_i^{(n)}
=
\tikzmath{
\begin{scope}
\clip[rounded corners = 5] (-.45,-1.2) rectangle (1.05,1.6);
\filldraw[\rColor] (.3,-1.2) -- (.3,-.5) -- (0,-.5) -- (0,.5) -- (-.2,.5) -- (-.2,2) -- (1.05,2) -- (1.05,-1.2);
\end{scope}
\draw[dashed] (-.2,.5) -- (-.2,1.6);
\draw[dashed] (0,-.5) -- (0,.5);
\draw[dashed] (.3,-1.2) -- (.3,-.5);
\draw[\QsColor,thick] (.2,.8) arc (180:90:.4cm);
\draw[\XColor,thick] (.6,-.5) -- (.6,1.6);
\filldraw[\XColor] (.6,1.2) circle (.05cm);
\roundNbox{unshaded}{(0,.5)}{.3}{.1}{.1}{\scriptsize{$q$}};
\roundNbox{unshaded}{(.3,-.5)}{.3}{.15}{.15}{\scriptsize{$b_i^{(n)}$}};
}
=
\tikzmath{
\begin{scope}
\clip[rounded corners = 5] (-1,-.7) rectangle (.5,3.1);
\filldraw[\rColor] (-.2,-.7) -- (-.2,0) -- (-.4,.3) .. controls ++(90:.2cm) and ++(270:.2cm) .. (-.6,.7) -- (-.8,1) -- (-.8,3.1) -- (.9,3.1) -- (.9,-.7);
\end{scope}
\draw[dashed] (-.2,-.7) -- (-.2,0);
\draw[dashed] (-.4,.3) .. controls ++(90:.2cm) and ++(270:.2cm) .. (-.6,.7);
\draw[dashed] (-.8,1) -- (-.8,3.1);
\draw[\QsColor,thick] (-.4,1.3) .. controls ++(90:.45cm) and ++(270:.45cm) .. (0,2.3) arc (0:90:.4cm);
\filldraw[\rColor] (-.2,1.8) circle (.05cm);
\filldraw[\XColor] (-.4,2.7) circle (.05cm);
\draw[\XColor,thick] (0,0) -- (0,1.3) .. controls ++(90:.45cm) and ++(270:.45cm) .. (-.4,2.3) -- (-.4,3.1);
\roundNbox{unshaded}{(-.6,1)}{.3}{.1}{.1}{\scriptsize{$q$}};
\roundNbox{unshaded}{(-.2,0)}{.3}{.1}{.1}{\scriptsize{$b_i^{(n)}$}};
}
\approx
\sum_{j,k}
\tikzmath{
\begin{scope}
\clip[rounded corners = 5] (-.45,-4.2) rectangle (1,2.6);
\filldraw[\rColor] (.3,-4.2) -- (.3,-3.5) -- (0,-3.5) -- (0,-2.5) -- (-.2,-2.5) -- (-.2,-1.5) -- (0,-1.5) -- (0,-.5) -- (.3,-.5) -- (.3,.5) -- (0,.5) -- (0,1.5) -- (-.2,1.5) -- (-.2,2.6) -- (1,2.6) -- (1,-4.2);
\end{scope}
\draw[dashed] (.3,-4.2) -- (.3,-3.5);
\draw[dashed] (0,-3.5) -- (0,-2.5);
\draw[dashed] (-.2,-2.5) -- (-.2,-1.5);
\draw[\QsColor,thick] (.2,-2.5) -- (.2,-1.5);
\draw[dashed] (0,-1.5) -- (0,-.5);
\draw[\XColor,thick] (.6,-3.5) -- (.6,-.5);
\draw[dashed] (.3,-.5) -- (.3,.5);
\draw[dashed] (0,.5) -- (0,1.5);
\draw[\QsColor,thick] (.6,.5) -- (.6,1.8) arc (0:90:.4cm);
\draw[dashed] (-.2,1.5) -- (-.2,2.2);
\draw[\XColor,thick] (.2,1.5) -- (.2,2.6);
\filldraw[\XColor] (.2,2.2) circle (.05cm);
\roundNbox{unshaded}{(0,1.5)}{.3}{.1}{.1}{\scriptsize{$b_j^{(n)}$}};
\roundNbox{unshaded}{(.3,.5)}{.3}{.15}{.15}{\scriptsize{$q_k$}};
\roundNbox{unshaded}{(.3,-.5)}{.3}{.15}{.15}{\scriptsize{$(b_j^{(n)})^\dag$}};
\roundNbox{unshaded}{(0,-1.5)}{.3}{.1}{.1}{\scriptsize{$q_k^\dag$}};
\roundNbox{unshaded}{(0,-2.5)}{.3}{.1}{.1}{\scriptsize{$q$}};
\roundNbox{unshaded}{(.3,-3.5)}{.3}{.15}{.15}{\scriptsize{$b_i^{(n)}$}};
}
\approx
\sum_{j,k}
\tikzmath{
\begin{scope}
\clip[rounded corners = 5] (-.45,-4.2) rectangle (1,2.6);
\filldraw[\rColor] (.3,-4.2) -- (.3,-3.5) -- (0,-3.5) -- (0,-2.5) -- (.3,-2.5) -- (.3,-1.5) -- (0,-1.5) -- (0,-.5) -- (.3,-.5) -- (.3,.5) -- (0,.5) -- (0,1.5) -- (-.2,1.5) -- (-.2,2.6) -- (1,2.6) -- (1,-4.2);
\end{scope}
\draw[dashed] (.3,-4.2) -- (.3,-3.5);
\draw[dashed] (0,-3.5) -- (0,-2.5);
\draw[dashed] (.3,-2.5) -- (.3,-1.5);
\draw[\QsColor,thick] (.6,-3.5) -- (.6,-2.5);
\draw[dashed] (0,-1.5) -- (0,-.5);
\draw[\XColor,thick] (.6,-1.5) -- (.6,-.5);
\draw[dashed] (.3,-.5) -- (.3,.5);
\draw[dashed] (0,.5) -- (0,1.5);
\draw[\QsColor,thick] (.6,.5) -- (.6,1.8) arc (0:90:.4cm);
\draw[dashed] (-.2,1.5) -- (-.2,2.6);
\draw[\XColor,thick] (.2,1.5) -- (.2,2.6);
\filldraw[\XColor] (.2,2.2) circle (.05cm);
\roundNbox{unshaded}{(0,1.5)}{.3}{.1}{.1}{\scriptsize{$b_j^{(m)}$}};
\roundNbox{unshaded}{(.3,.5)}{.3}{.15}{.15}{\scriptsize{$q_k$}};
\roundNbox{unshaded}{(.3,-.5)}{.3}{.15}{.15}{\scriptsize{$(b_j^{(n)})^\dag$}};
\roundNbox{unshaded}{(.3,-1.5)}{.3}{.15}{.15}{\scriptsize{$b_i^{(n)}$}};
\roundNbox{unshaded}{(.3,-2.5)}{.3}{.15}{.15}{\scriptsize{$q_k^\dag$}};
\roundNbox{unshaded}{(.3,-3.5)}{.3}{.15}{.15}{\scriptsize{$q$}};
}
\approx
\sum_{j,k}
\tikzmath{
\begin{scope}
\clip[rounded corners = 5] (-.45,-4.2) rectangle (1,2.6);
\filldraw[\rColor] (.3,-4.2) -- (.3,-3.5) -- (0,-3.5) -- (0,-2.5) -- (.3,-2.5) -- (.3,-1.5) -- (0,-1.5) -- (0,-.5) -- (-.2,-.5) -- (-.2,.5) -- (0,.5) -- (0,1.5) -- (-.2,1.5) -- (-.2,2.6) -- (1,2.6) -- (1,-4.2);
\end{scope}
\draw[dashed] (.3,-4.2) -- (.3,-3.5);
\draw[dashed] (0,-3.5) -- (0,-2.5);
\draw[dashed] (.3,-2.5) -- (.3,-1.5);
\draw[\QsColor,thick] (.6,-3.5) -- (.6,-2.5);
\draw[dashed] (0,-1.5) -- (0,-.5);
\draw[\XColor,thick] (.2,-.5) -- (.2,.5);
\draw[dashed] (-.2,-.5) -- (-.2,.5);
\draw[dashed] (0,.5) -- (0,1.5);
\draw[\QsColor,thick] (.6,-1.5) -- (.6,1.8) arc (0:90:.4cm);
\draw[dashed] (-.2,1.5) -- (-.2,2.6);
\draw[\XColor,thick] (.2,1.5) -- (.2,2.6);
\filldraw[\XColor] (.2,2.2) circle (.05cm);
\roundNbox{unshaded}{(0,1.5)}{.3}{.1}{.1}{\scriptsize{$b_j^{(n)}$}};
\roundNbox{unshaded}{(0,.5)}{.3}{.1}{.1}{\scriptsize{$(b_j^{(\!n\!)})^\dag$}};
\roundNbox{unshaded}{(0,-.5)}{.3}{.1}{.1}{\scriptsize{$b_i^{(n)}$}};
\roundNbox{unshaded}{(.3,-1.5)}{.3}{.15}{.15}{\scriptsize{$q_k$}};
\roundNbox{unshaded}{(.3,-2.5)}{.3}{.15}{.15}{\scriptsize{$q_k^\dag$}};
\roundNbox{unshaded}{(.3,-3.5)}{.3}{.15}{.15}{\scriptsize{$q$}};
}
\approx
\tikzmath{
\begin{scope}
\clip[rounded corners = 5] (-.45,-1.2) rectangle (1.05,1.6);
\filldraw[\rColor] (.3,-1.2) -- (.3,-.5) -- (0,-.5) -- (0,.5) -- (-.2,.5) -- (-.2,1.6) -- (1.05,1.6) -- (1.05,-1.2);
\end{scope}
\draw[dashed] (-.2,.5) -- (-.2,1.6);
\draw[dashed] (0,-.5) -- (0,.5);
\draw[dashed] (.3,-1.2) -- (.3,-.5);
\draw[\XColor,thick] (.2,.5) -- (.2,1.6);
\draw[\QsColor,thick] (.6,-.5) -- (.6,.8) arc (0:90:.4cm);
\filldraw[\XColor] (.2,1.2) circle (.05cm);
\roundNbox{unshaded}{(0,.5)}{.3}{.1}{.1}{\scriptsize{$b_i^{(n)}$}};
\roundNbox{unshaded}{(.3,-.5)}{.3}{.15}{.15}{\scriptsize{$q$}};
}
= 
b_i^{(n)} q
\]
The second equality uses the hypothesis that $X$ is a local bimodule.
The third $\approx$ uses \eqref{eq:UnderBraiding}. 
The fourth $\approx$ holds because $\langle q_k|q\rangle^Q_M \in M$ and $\|[x,b_i^{(n)}]\|_2\to 0$ for $x\in M$.
The fifth $\approx$ holds because $\langle b_j^{(n)}|b_i^{(n)}\rangle^X_M$ is a central sequence in $M$ for each $i,j$ 
and $Q$ is centrally trivial over $M$.
The sixth $\approx$ holds from the definition of (approximate) basis for $X_M$ and $Q_M$.
We conclude that $|X|$ is approximately inner over $|Q|$.

Since $X$ is centrally trivial over $M$, by Proposition \ref{prop:BimodualCT&CS}, for all central sequences $(a_n)_n\subseteq M$ and all $x\in X$, $\|a_n x-xa_n\|_2 \to 0$.
If $(q_n)_n\subseteq |Q|$ is a central sequence, then 
\[
q_n x
=
\tikzmath{
\begin{scope}
\clip[rounded corners = 5] (-.45,-1.2) rectangle (1.05,1.6);
\filldraw[\rColor] (.3,-1.2) -- (.3,-.5) -- (0,-.5) -- (0,.5) -- (-.2,.5) -- (-.2,2) -- (1.05,2) -- (1.05,-1.2);
\end{scope}
\draw[dashed] (-.2,.5) -- (-.2,1.6);
\draw[dashed] (0,-.5) -- (0,.5);
\draw[dashed] (.3,-1.2) -- (.3,-.5);
\draw[\QsColor,thick] (.2,.8) arc (180:90:.4cm);
\draw[\XColor,thick] (.6,-.5) -- (.6,1.6);
\filldraw[\XColor] (.6,1.2) circle (.05cm);
\roundNbox{unshaded}{(0,.5)}{.3}{.1}{.1}{\scriptsize{$q_n$}};
\roundNbox{unshaded}{(.3,-.5)}{.3}{.15}{.15}{\scriptsize{$x$}};
}
=
\tikzmath{
\begin{scope}
\clip[rounded corners = 5] (-1,-.7) rectangle (.5,3);
\filldraw[\rColor] (-.2,-.7) -- (-.2,0) -- (-.4,.3) .. controls ++(90:.2cm) and ++(270:.2cm) .. (-.6,.7) -- (-.8,1) -- (-.8,3) -- (.9,3) -- (.9,-.7);
\end{scope}
\draw[dashed] (-.2,-.7) -- (-.2,0);
\draw[dashed] (-.4,.3) .. controls ++(90:.2cm) and ++(270:.2cm) .. (-.6,.7);
\draw[dashed] (-.8,1) -- (-.8,3);
\draw[\XColor,thick] (0,0) -- (0,1.3) .. controls ++(90:.45cm) and ++(270:.45cm) .. (-.4,2.3) -- (-.4,3);
\filldraw[\rColor] (-.2,1.8) circle (.05cm);
\draw[\QsColor,thick] (-.4,1.3) .. controls ++(90:.45cm) and ++(270:.45cm) .. (0,2.3) arc (0:90:.4cm);
\filldraw[\XColor] (-.4,2.7) circle (.05cm);
\roundNbox{unshaded}{(-.6,1)}{.3}{.1}{.1}{\scriptsize{$q_n$}};
\roundNbox{unshaded}{(-.2,0)}{.3}{.1}{.1}{\scriptsize{$x$}};
}
\approx
\sum_{j,k}
\tikzmath{
\begin{scope}
\clip[rounded corners = 5] (-.45,-4.2) rectangle (1,2.5);
\filldraw[\rColor] (.3,-4.2) -- (.3,-3.5) -- (0,-3.5) -- (0,-2.5) -- (-.2,-2.5) -- (-.2,-1.5) -- (0,-1.5) -- (0,-.5) -- (.3,-.5) -- (.3,.5) -- (0,.5) -- (0,1.5) -- (-.2,1.5) -- (-.2,2.5) -- (1,2.5) -- (1,-4.2);
\end{scope}
\draw[dashed] (.3,-4.2) -- (.3,-3.5);
\draw[dashed] (0,-3.5) -- (0,-2.5);
\draw[dashed] (-.2,-2.2) -- (-.2,-1.5);
\draw[\QsColor,thick] (.2,-2.2) -- (.2,-1.5);
\draw[dashed] (0,-1.5) -- (0,-.5);
\draw[\XColor,thick] (.6,-3.5) -- (.6,-.5);
\draw[dashed] (.3,-.5) -- (.3,.5);
\draw[dashed] (0,.5) -- (0,1.5);
\draw[\QsColor,thick] (.6,.5) -- (.6,1.8) arc (0:90:.4cm);
\draw[dashed] (-.2,1.5) -- (-.2,2.5);
\draw[\XColor,thick] (.2,1.5) -- (.2,2.5);
\filldraw[\XColor] (.2,2.2) circle (.05cm);
\roundNbox{unshaded}{(0,1.5)}{.3}{.1}{.1}{\scriptsize{$b_j$}};
\roundNbox{unshaded}{(.3,.5)}{.3}{.15}{.15}{\scriptsize{$q_k^{(n)}$}};
\roundNbox{unshaded}{(.3,-.5)}{.3}{.15}{.15}{\scriptsize{$b_j^\dag$}};
\roundNbox{unshaded}{(0,-1.5)}{.3}{.1}{.1}{\scriptsize{$(q_k^{(\!n\!)})^\dag$}};
\roundNbox{unshaded}{(0,-2.5)}{.3}{.1}{.1}{\scriptsize{$q_n$}};
\roundNbox{unshaded}{(.3,-3.5)}{.3}{.15}{.15}{\scriptsize{$x$}};
}
\approx
\sum_{j,k}
\tikzmath{
\begin{scope}
\clip[rounded corners = 5] (-.45,-4.2) rectangle (1,2.5);
\filldraw[\rColor] (.3,-4.2) -- (.3,-3.5) -- (0,-3.5) -- (0,-2.5) -- (.3,-2.5) -- (.3,-1.5) -- (0,-1.5) -- (0,-.5) -- (.3,-.5) -- (.3,.5) -- (0,.5) -- (0,1.5) -- (-.2,1.5) -- (-.2,2.5) -- (1,2.5) -- (1,-4.2);
\end{scope}
\draw[dashed] (.3,-4.2) -- (.3,-3.5);
\draw[dashed] (0,-3.5) -- (0,-2.5);
\draw[dashed] (.3,-2.5) -- (.3,-1.5);
\draw[\QsColor,thick] (.6,-3.5) -- (.6,-2.5);
\draw[dashed] (0,-1.5) -- (0,-.5);
\draw[\XColor,thick] (.6,-1.5) -- (.6,-.5);
\draw[dashed] (.3,-.5) -- (.3,.5);
\draw[dashed] (0,.5) -- (0,1.5);
\draw[\QsColor,thick] (.6,.5) -- (.6,1.8) arc (0:90:.4cm);
\draw[dashed] (-.2,1.5) -- (-.2,2.5);
\draw[\XColor,thick] (.2,1.5) -- (.2,2.5);
\filldraw[\XColor] (.2,2.2) circle (.05cm);
\roundNbox{unshaded}{(0,1.5)}{.3}{.1}{.1}{\scriptsize{$b_j$}};
\roundNbox{unshaded}{(.3,.5)}{.3}{.15}{.15}{\scriptsize{$q_k^{(n)}$}};
\roundNbox{unshaded}{(.3,-.5)}{.3}{.15}{.15}{\scriptsize{$b_j^\dag$}};
\roundNbox{unshaded}{(.3,-1.5)}{.3}{.15}{.15}{\scriptsize{$x$}};
\roundNbox{unshaded}{(.3,-2.5)}{.3}{.15}{.15}{\scriptsize{$(q_k^{(\!n\!)})^\dag$}};
\roundNbox{unshaded}{(.3,-3.5)}{.3}{.15}{.15}{\scriptsize{$q_n$}};
}
\approx
\sum_{j,k}
\tikzmath{
\begin{scope}
\clip[rounded corners = 5] (-.45,-4.2) rectangle (1,2.5);
\filldraw[\rColor] (.3,-4.2) -- (.3,-3.5) -- (0,-3.5) -- (0,-2.5) -- (.3,-2.5) -- (.3,-1.5) -- (0,-1.5) -- (0,-.5) -- (-.2,-.5) -- (-.2,.5) -- (0,.5) -- (0,1.5) -- (-.2,1.5) -- (-.2,2.5) -- (1,2.5) -- (1,-4.2);
\end{scope}
\draw[dashed] (.3,-4.2) -- (.3,-3.5);
\draw[dashed] (0,-3.5) -- (0,-2.5);
\draw[dashed] (.3,-2.5) -- (.3,-1.5);
\draw[\QsColor,thick] (.6,-3.5) -- (.6,-2.5);
\draw[dashed] (0,-1.5) -- (0,-.5);
\draw[\XColor,thick] (.2,-.5) -- (.2,.5);
\draw[dashed] (-.2,-.5) -- (-.2,.5);
\draw[dashed] (0,.5) -- (0,1.5);
\draw[\QsColor,thick] (.6,-1.5) -- (.6,1.8) arc (0:90:.4cm);
\draw[dashed] (-.2,1.5) -- (-.2,2.5);
\draw[\XColor,thick] (.2,1.5) -- (.2,2.5);
\filldraw[\XColor] (.2,2.2) circle (.05cm);
\roundNbox{unshaded}{(0,1.5)}{.3}{.1}{.1}{\scriptsize{$b_j$}};
\roundNbox{unshaded}{(0,.5)}{.3}{.1}{.1}{\scriptsize{$b_j^\dag$}};
\roundNbox{unshaded}{(0,-.5)}{.3}{.1}{.1}{\scriptsize{$x$}};
\roundNbox{unshaded}{(.3,-1.5)}{.3}{.15}{.15}{\scriptsize{$q_k^{(n)}$}};
\roundNbox{unshaded}{(.3,-2.5)}{.3}{.15}{.15}{\scriptsize{$(q_k^{(\!n\!)})^\dag$}};
\roundNbox{unshaded}{(.3,-3.5)}{.3}{.15}{.15}{\scriptsize{$q_n$}};
}
\approx
\tikzmath{
\begin{scope}
\clip[rounded corners = 5] (-.45,-1.2) rectangle (1.05,1.6);
\filldraw[\rColor] (.3,-1.2) -- (.3,-.5) -- (0,-.5) -- (0,.5) -- (-.2,.5) -- (-.2,1.6) -- (1.05,1.6) -- (1.05,-1.2);
\end{scope}
\draw[dashed] (-.2,.5) -- (-.2,1.6);
\draw[dashed] (0,-.5) -- (0,.5);
\draw[dashed] (.3,-1.2) -- (.3,-.5);
\draw[\XColor,thick] (.2,.5) -- (.2,1.6);
\draw[\QsColor,thick] (.6,-.5) -- (.6,.8) arc (0:90:.4cm);
\filldraw[\XColor] (.2,1.2) circle (.05cm);
\roundNbox{unshaded}{(0,.5)}{.3}{.1}{.1}{\scriptsize{$x$}};
\roundNbox{unshaded}{(.3,-.5)}{.3}{.15}{.15}{\scriptsize{$q_n$}};
}
=
x q_n
\]
The second equality uses the hypothesis that $X$ is local.
The third $\approx$ uses \eqref{eq:OverBraiding}.
The fourth $\approx$ holds because $\langle q_k^{(n)}| q_n\rangle^Q_M$ is a central sequence in $M$, and $X$ is centrally trivial.
The fifth $\approx$ holds because $\langle b_j|x\rangle^X_M\in M$ and $\|[a,q_k^{(n)}]\|_2 \to 0$ for $a\in M$.
The sixth $\approx$ holds from the definition of (approximate) basis for $X_M$ and $Q_M$.
We conclude that $|X|$ is centrally trivial over $|Q|$.
\end{proof}

The following proposition is straightforward; we omit its proof.

\begin{prop}
\label{prop:(A)PPBfromQtoM}
\mbox{}
\begin{enumerate}
\item 
If $\{b_i\}_i$ is an $|X|_{|Q|}$-basis and $\{q_j\}_j$ is a $Q_M$-basis,
then 
$$\left\{\tikzmath{
\begin{scope}
\clip[rounded corners = 5] (-.3,-.7) rectangle (.6,.7);
\filldraw[\rColor] (0,-.7) -- (0,0) -- (-.2,0) -- (-.2,.7) -- (.7,.7) -- (.7,-.7);
\end{scope}
\draw[dashed] (0,-.7) -- (0,0);
\draw[dashed] (-.2,0) -- (-.2,.7);
\draw[\XColor,thick] (.2,0) -- (.2,.7);
\roundNbox{unshaded}{(0,0)}{.3}{.1}{.1}{\scriptsize{$c_{i,j}$}};
}
:=
\tikzmath{
\begin{scope}
\clip[rounded corners = 5] (-.45,-1.2) rectangle (1.05,1.6);
\filldraw[\rColor] (.3,-1.2) -- (.3,-.5) -- (0,-.5) -- (0,.5) -- (-.2,.5) -- (-.2,1.6) -- (1.05,1.6) -- (1.05,-1.2);
\end{scope}
\draw[dashed] (-.2,.5) -- (-.2,1.6);
\draw[dashed] (0,-.5) -- (0,.5);
\draw[dashed] (.3,-1.2) -- (.3,-.5);
\draw[\XColor,thick] (.2,.5) -- (.2,1.6);
\draw[\QsColor,thick] (.6,-.5) -- (.6,.8) arc (0:90:.4cm);
\filldraw[\XColor] (.2,1.2) circle (.05cm);
\roundNbox{unshaded}{(0,.5)}{.3}{.1}{.1}{\scriptsize{$b_i$}};
\roundNbox{unshaded}{(.3,-.5)}{.3}{.15}{.15}{\scriptsize{$q_j$}};
}\right\}_{i,j}$$
is an $|X|_M$-basis.
\item
If $\{b_i^{(n)}\}$ is an approximate $X_{|Q|}$-basis and $\{q_j^{(n)}\}$ is an approximately inner $Q_M$-basis,
then 
$$
\left\{\tikzmath{
\begin{scope}
\clip[rounded corners = 5] (-.3,-.7) rectangle (.6,.7);
\filldraw[\rColor] (0,-.7) -- (0,0) -- (-.2,0) -- (-.2,.7) -- (.7,.7) -- (.7,-.7);
\end{scope}
\draw[dashed] (0,-.7) -- (0,0);
\draw[dashed] (-.2,0) -- (-.2,.7);
\draw[\XColor,thick] (.2,0) -- (.2,.7);
\roundNbox{unshaded}{(0,0)}{.3}{.1}{.1}{\scriptsize{$c_{i,j}^{(n)}$}};
}
:=
\tikzmath{
\begin{scope}
\clip[rounded corners = 5] (-.45,-1.2) rectangle (1.05,1.6);
\filldraw[\rColor] (.3,-1.2) -- (.3,-.5) -- (0,-.5) -- (0,.5) -- (-.2,.5) -- (-.2,1.6) -- (1.05,1.6) -- (1.05,-1.2);
\end{scope}
\draw[dashed] (-.2,.5) -- (-.2,1.6);
\draw[dashed] (0,-.5) -- (0,.5);
\draw[dashed] (.3,-1.2) -- (.3,-.5);
\draw[\XColor,thick] (.2,.5) -- (.2,1.6);
\draw[\QsColor,thick] (.6,-.5) -- (.6,.8) arc (0:90:.4cm);
\filldraw[\XColor] (.2,1.2) circle (.05cm);
\roundNbox{unshaded}{(0,.5)}{.3}{.1}{.1}{\scriptsize{$b_i^{(n)}$}};
\roundNbox{unshaded}{(.3,-.5)}{.3}{.15}{.15}{\scriptsize{$q_j^{(n)}$}};
}\right\}_{i,j}
$$ is an approximate $|X|_M$-basis.
Moreover, if $\{b_i^{(n)}\}_i$ is approximately inner, so is $\{c_{i,j}^{(n)}\}$.
\end{enumerate}
Since $X_M$ is canonically isomorphic to $|X|_M$, we may view (1) as an $X_M$-basis and (2) as an approximate(ly inner) $X_M$-basis.
\end{prop}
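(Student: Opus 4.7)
The plan is a direct computation in the graphical calculus of \S\ref{sect:Q-systemRealization}, whose technical heart is the compatibility
\[
\langle b \lhd q \mid x \rangle^{|X|}_M \;=\; \langle q \mid \langle b \mid x \rangle^{|X|}_{|Q|} \rangle^{|Q|}_M
\]
between the $M$-valued inner product on $|X|_M$ and the $|Q|$-valued inner product on $|X|_{|Q|}$, for all $b, x \in |X|$ and $q \in |Q|$. I would first verify this identity graphically by expanding both sides using Definition \ref{defn:RealizationOfBimodules} and then using the Frobenius and unit--counit relations for the Q-system $Q$ to cancel the $Q$-strand appearing on the right-hand side; this is essentially the computation that makes $|X|$ a $\rm W^*$-correspondence over $|Q|$ compatibly with its $M$-module structure.

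Given the compatibility identity, Part (1) reduces to a two-step substitution. For $x \in |X|$, expand first using the $|X|_{|Q|}$-basis $\{b_i\}$ as $x = \sum_i b_i \lhd \langle b_i \mid x\rangle^{|X|}_{|Q|}$. The coefficient lives in $|Q|$, and by Lemma \ref{lem:APPAfromMto|Q|} the $Q_M$-basis $\{q_j\}$ is also a $|Q|_M$-basis, so $\langle b_i \mid x\rangle^{|X|}_{|Q|} = \sum_j q_j \lhd \langle q_j \mid \langle b_i \mid x\rangle^{|X|}_{|Q|}\rangle^{|Q|}_M$. Combining these via associativity of the right $|Q|$-action on $|X|$ and applying the compatibility identity yields $x = \sum_{i,j} c_{i,j} \lhd \langle c_{i,j} \mid x\rangle^{|X|}_M$, as desired.

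For Part (2), the same derivation applies with $=$ replaced by $\approx$, since composition, adjoint, and the bimodule actions are jointly $\tau$-continuous on norm-bounded sets by \ref{tau:composition}--\ref{tau:dagcontinuity}. The uniform bounds on $\{b_i^{(n)}\}$ and $\{q_j^{(n)}\}$ combine to give the uniform bounds needed for $\{c_{i,j}^{(n)}\}$. For the \emph{moreover} statement, observe that $\{b_i^{(n)}\}$ being approximately inner as an $|X|_{|Q|}$-basis means it asymptotically commutes with every element of $|Q|$, and in particular with every $a \in M \subseteq |Q|$. Then, using associativity of the actions,
\[
a \rhd c_{i,j}^{(n)} = (a \rhd b_i^{(n)}) \lhd q_j^{(n)} \approx (b_i^{(n)} \lhd a) \lhd q_j^{(n)} = b_i^{(n)} \lhd (a q_j^{(n)}) \approx b_i^{(n)} \lhd (q_j^{(n)} a) = c_{i,j}^{(n)} \lhd a,
\]
where the first $\approx$ uses approximate innerness of $\{b_i^{(n)}\}$ over $M$ and the second uses approximate innerness of $\{q_j^{(n)}\}$ over $M$. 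The expected main obstacle is pinning down the inner-product compatibility identity cleanly from the graphical definitions of the actions and the $|Q|$-valued inner product on $|X|$; once that is in hand, the rest is a mechanical unwinding.
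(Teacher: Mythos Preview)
Your argument is correct and complete. The paper itself omits the proof, stating only that the proposition ``is straightforward; we omit its proof,'' so you have supplied exactly the routine graphical-calculus verification the authors intended the reader to carry out. One small quibble: your appeal to Lemma~\ref{lem:APPAfromMto|Q|} is unnecessary and slightly misplaced, since that lemma goes from an $X_M$-basis to an $|X|_{|Q|}$-basis, whereas what you need is simply that a $Q_M$-basis is a $|Q|_M$-basis---and this is immediate from the canonical identification $Q_M \cong |Q|_M$ (Construction~4.1 in \S\ref{sec:ModuleRealization}), not from any lemma.
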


\begin{prop}
\label{Chi(Q)subsetChi(M)locQ}
Every bimodule in $\Chi(|Q|)$ is unitarily isomorphic to a realization of a bimodule in $\Chi(M)_Q^\loc$.
\end{prop}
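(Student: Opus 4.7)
The plan is to use Remark \ref{rem:QBimodules} to produce a $Q-Q$ bimodule $X$ in $\fgpBim(M)$ with $|X|\cong Y$ as $|Q|-|Q|$ bimodules. I would then verify that $X$ lies in $\Chi(M)_Q^\loc$ by checking separately (i) that $X$ is approximately inner and centrally trivial over $M$, and (ii) that $X$ is local as a right $Q$-module.

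For (i), approximate innerness of $X$ over $M$ follows by combining an approximately inner $Y_{|Q|}$-basis (which exists since $Y\in\Chi(|Q|)$ by Proposition \ref{prop:AI&APPB}) with an approximately inner $Q_M$-basis (from $Q\in\Chi(M)$) via Proposition \ref{prop:(A)PPBfromQtoM}(2), yielding an approximately inner $X_M$-basis and hence, by another application of Proposition \ref{prop:AI&APPB}, approximate innerness of $X$. For central triviality, every $M$-central sequence is $|Q|$-central by Lemma \ref{lem:CSinMCSin|Q|}, so the central triviality of $Y$ over $|Q|$ (Proposition \ref{prop:BimodualCT&CS}) yields $\|a_n x - x a_n\|_{2,|Q|}\to 0$ for all $x\in X\subseteq|X|=Y$, and this convergence descends to $\|\cdot\|_{2,M}$ via the finite index inclusion $M\subseteq|Q|$ as discussed at the end of \S\ref{sec:CMC*Cat}.

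For (ii), the locality condition amounts to verifying $\lambda = \rho\circ u_{Q,X}$ as $M$-$M$ bimodule maps $Q\boxtimes_M X\to X$. Using formula \eqref{eq:OverBraiding} with an approximately inner $Q_M$-basis $\{q_j^{(n)}\}$ and an $X_M$-basis $\{c_k\}$, the right-hand side becomes approximately
$$
(\rho\circ u_{Q,X})(q\boxtimes x)\approx\sum_{j,k}(c_k\lhd q_j^{(n)})\cdot\langle c_k|\langle q_j^{(n)}|q\rangle_M\rhd x\rangle_M^X.
$$
The commutativity of $Q$ makes each $(q_j^{(n)})_n$ a central sequence in $|Q|$ by Proposition \ref{prop:APPBofQisCSinQ}, and central triviality of $Y$ over $|Q|$ then allows one to commute $q_j^{(n)}$ past elements of $Y=|X|$ in the $\|\cdot\|_2$-limit. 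Combined with the reproducing property of the $X_M$-basis (collapsing the $k$-sum) and the approximately inner commutation \eqref{eq:AICommutativity}, this expression can be reorganized to approach $\bigl(\lim_n \sum_j q_j^{(n)}\langle q_j^{(n)}|q\rangle_M\bigr)x = qx = \lambda(q\boxtimes x)$ in the realization.

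The main obstacle is the locality verification in (ii): the double sum involves both the varying basis elements $q_j^{(n)}$ and the varying $M$-valued inner products $\langle q_j^{(n)}|q\rangle_M$, so one must carefully track commutators between these in order to invoke the central-sequence property and central triviality at the correct moment. The uniform norm bound on the approximate basis and the uniform bound on the number of basis terms at each $n$ reduce the analysis to per-index commutator estimates, making the arguments tractable but requiring some graphical bookkeeping parallel to (and essentially dual to) the computations in the proof of Proposition \ref{Chi(M)locQsubsetChi(Q)}.
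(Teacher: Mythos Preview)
Your overall strategy matches the paper's proof exactly: reduce to a $Q$--$Q$ bimodule $X$ via Remark~\ref{rem:QBimodules}, then verify separately that $X\in\Chi(M)$ and that $X$ is local. Your arguments for (i) are correct and essentially identical to the paper's, with your extra remark about comparing $\|\cdot\|_{2,|Q|}$ and $\|\cdot\|_{2,M}$ being a welcome clarification the paper leaves implicit.

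There is, however, a genuine gap in (ii). The locality condition for a right $Q$-module is \emph{not} the single equation $\lambda=\rho\circ u_{Q,X}$; it is the statement that the two candidate left actions $\rho\circ u_{Q,X}$ and $\rho\circ u_{X,Q}^{-1}$ coincide. To show the given $Q$--$Q$ bimodule $X$ lies in $\Chi(M)_Q^{\loc}$ (so that its realization recovers the original bimodule in $\Chi(|Q|)$), you must verify \emph{both} $\lambda=\rho\circ u_{Q,X}$ and $\lambda=\rho\circ u_{X,Q}^{-1}$. The paper carries out two parallel computations: one using \eqref{eq:OverBraiding} (approximately inner $Q_M$-basis, ordinary $X_M$-basis) and one using \eqref{eq:UnderBraiding} (approximately inner $X_M$-basis, ordinary $Q_M$-basis). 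Your sketch covers only the first. The second is not automatic from the first, even with $Q$ commutative: it requires a separate argument using the approximately inner $X_M$-basis you constructed in part (i), together with the same key inputs you identified (Proposition~\ref{prop:APPBofQisCSinQ} and central triviality of $|X|$ over $|Q|$).

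A smaller difference worth noting: the paper chooses its $X_M$-bases to be the product bases $\{b_i\lhd q_j\}$ from Proposition~\ref{prop:(A)PPBfromQtoM}, which makes the $|Q|$-module structure visible in the diagrammatics and streamlines the step where central triviality over $|Q|$ is invoked. Your generic-basis computation also works (as your outline shows), but the product-basis choice is what makes the paper's graphical manipulations clean.
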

\begin{proof}
By Remark \ref{rem:QBimodules}, it suffices to consider a $Q-Q$ bimodule  ${}_QX_Q$ in $\fgpBim(M)$ such that $|X| \in \Chi(|Q|)$.
In order to show $X\in\Chi(M)_Q^\loc$, must prove $X$ is centrally trivial and approximately inner over $M$, and $X$ is a local $Q-Q$ bimodule.

By Lemma \ref{lem:CSinMCSin|Q|}, any central sequence $(a_n)\subseteq M$ is also a central sequence in $|Q|$. 
By Proposition \ref{prop:BimodualCT&CS}, $X$ 
is centrally trivial over $|Q|$, so $X$ is centrally trivial over $M$.
By Proposition \ref{prop:(A)PPBfromQtoM} and Proposition \ref{prop:AI&APPB},
we have $X$ 
is approximately inner over $M$.
Therefore,as an $M-M$ bimodule, $X\in \Chi(M)$.

Since $X$ is already a $Q-Q$ bimodule, it remains to show $X$ is local.
Let $\{b_i\},\{q_j\}$ be $X_{|Q|},Q_M$-bases
and let
$\{b_i^{(n)}\},\{q_j^{(n)}\}$ be approximately inner $X_{|Q|},Q_M$-bases respectively.
Defining 
$\{c_{i,j}\}$ and $\{c_{i,j}^{(n)}\}$ as in
Proposition \ref{prop:(A)PPBfromQtoM}
gives an $X_M$-basis and an approximately inner $X_M$-basis respectively.

For the over braiding,
\begin{align*}
\tikzmath{
\begin{scope}
\clip[rounded corners = 5] (-.55,-1.2) rectangle (.7,.5);
\filldraw[\rColor] (-.4,-1.2) rectangle (.7,.5);
\end{scope}
\draw[dashed] (-.4,-1.2) -- (-.4,.5);
\draw[\XColor,thick] (.4,-1.2) -- (.4,-.5) .. controls ++(90:.45cm) and ++(270:.45cm) .. (0,.5);
\filldraw[\rColor] (.2,0) circle (.05cm);
\draw[\QsColor,thick] (.4,-.9) arc (270:180:.4cm) -- (0,-.5) .. controls ++(90:.45cm) and ++(270:.45cm) .. (.4,.5);
\filldraw[\XColor] (.4,-.9) circle (.05cm);
}
&\approx
\sum_{i,j,k}
\tikzmath{
\begin{scope}
\clip[rounded corners = 5] (-.35,-2.5) rectangle (1,2.2);
\filldraw[\rColor] (-.2,-2.5) -- (-.2,-1.5) -- (0,-1.5) -- (0,-.5) -- (.3,-.5) -- (.3,.5) -- (0,.5) -- (0,1.5) -- (-.2,1.5) -- (-.2,2.2) -- (1.05,2.2) -- (1.05,-2.5);
\end{scope}
\draw[dashed] (-.2,-2.5) -- (-.2,-1.5);
\draw[\QsColor,thick] (.6,-2.2) arc (270:180:.4cm);
\draw[dashed] (0,-1.5) -- (0,-.5);
\draw[\XColor,thick] (.6,-2.5) -- (.6,-.5);
\draw[dashed] (.3,-.5) -- (.3,.5);
\draw[dashed] (0,.5) -- (0,1.5);
\draw[\QsColor,thick] (.6,.5) -- (.6,2.2);
\draw[dashed] (-.2,1.5) -- (-.2,2.2);
\draw[\XColor,thick] (.2,1.5) -- (.2,2.2);
\filldraw[\XColor] (.6,-2.2) circle (.05cm);
\roundNbox{unshaded}{(0,1.5)}{.3}{.1}{.1}{\scriptsize{$c_{i,j}$}};
\roundNbox{unshaded}{(.3,.5)}{.3}{.15}{.15}{\scriptsize{$q_k^{(n)}$}};
\roundNbox{unshaded}{(.3,-.5)}{.3}{.15}{.15}{\scriptsize{$c_{i,j}^\dag$}};
\roundNbox{unshaded}{(0,-1.5)}{.3}{.1}{.1}{\scriptsize{$(q_k^{(\!n\!)})^\dag$}};
}
=
\sum_{i,j,k}
\tikzmath{
\begin{scope}
\clip[rounded corners = 5] (-.75,-3.7) rectangle (1,3.5);
\filldraw[\rColor] (-.6,-3.7) -- (-.6,-2.5) -- (-.4,-2.5) -- (-.4,-1.5) -- (-.1,-1.5) -- (-.1,-.5) -- (.25,-.5) -- (.25,.5) -- (-.1,.5) -- (-.1,1.5) -- (-.4,1.5) -- (-.4,2.5) -- (-.6,2.5) -- (-.6,3.5) -- (1,3.5) -- (1,-3.7);
\end{scope}
\draw[dashed] (-.6,-3.7) -- (-.6,-2.5);
\draw[dashed] (-.4,-2.5) -- (-.4,-1.5);
\draw[\QsColor,thick] (.2,-3.4) arc (270:180:.4cm);
\draw[dashed] (-.1,-1.5) -- (-.1,-.5);
\draw[\QsColor,thick] (.6,-.5) -- (.6,-1.8) arc (0:-90:.4cm);
\draw[\XColor,thick] (.2,-3.7) -- (.2,-1.5);
\draw[dashed] (.25,-.5) -- (.25,.5);
\draw[dashed] (-.1,.5) -- (-.1,1.5);
\draw[\QsColor,thick] (.6,.5) -- (.6,3.5);
\draw[dashed] (-.4,1.5) -- (-.4,2.5);
\draw[\QsColor,thick] (.2,1.5) -- (.2,2.8) arc (0:90:.4cm);
\draw[dashed] (-.6,2.5) -- (-.6,3.5);
\draw[\XColor,thick] (-.2,2.5) -- (-.2,3.5);
\filldraw[\XColor] (.2,-3.4) circle (.05cm);
\filldraw[\XColor] (.2,-2.2) circle (.05cm);
\filldraw[\XColor] (-.2,3.2) circle (.05cm);
\roundNbox{unshaded}{(-.4,2.5)}{.3}{.1}{.1}{\scriptsize{$b_i$}};
\roundNbox{unshaded}{(-.1,1.5)}{.3}{.15}{.15}{\scriptsize{$q_j$}};
\roundNbox{unshaded}{(.25,.5)}{.3}{.2}{.2}{\scriptsize{$q_k^{(n)}$}};
\roundNbox{unshaded}{(.25,-.5)}{.3}{.2}{.2}{\scriptsize{$q_j^\dag$}};
\roundNbox{unshaded}{(-.1,-1.5)}{.3}{.15}{.15}{\scriptsize{$b_i^\dag$}};
\roundNbox{unshaded}{(-.4,-2.7)}{.3}{.1}{.1}{\scriptsize{$(q_k^{(\!n\!)})^\dag$}};
}
=
\sum_{i,j,k}
\tikzmath{
\begin{scope}
\clip[rounded corners = 5] (-.75,-3.7) rectangle (1,3.5);
\filldraw[\rColor] (-.6,-3.7) -- (-.6,-2.5) -- (-.4,-2.5) -- (-.4,-1.5) -- (-.1,-1.5) -- (-.1,-.5) -- (.25,-.5) -- (.25,.5) -- (-.1,.5) -- (-.1,1.5) -- (-.4,1.5) -- (-.4,2.5) -- (-.6,2.5) -- (-.6,3.5) -- (1,3.5) -- (1,-3.7);
\end{scope}
\draw[dashed] (-.6,-3.7) -- (-.6,-2.5);
\draw[dashed] (-.4,-2.5) -- (-.4,-1.5);
\draw[\QsColor,thick] (.2,-3.2) arc (270:180:.4cm);
\draw[dashed] (-.1,-1.5) -- (-.1,-.5);
\draw[\QsColor,thick] (.6,-.5) -- (.6,-3) arc (0:-90:.4cm);
\draw[\XColor,thick] (.2,-3.7) -- (.2,-1.5);
\draw[dashed] (.25,-.5) -- (.25,.5);
\draw[dashed] (-.1,.5) -- (-.1,1.5);
\draw[\QsColor,thick] (.6,.5) -- (.6,3.5);
\draw[dashed] (-.4,1.5) -- (-.4,2.5);
\draw[\QsColor,thick] (.2,1.5) -- (.2,2.8) arc (0:90:.4cm);
\draw[dashed] (-.6,2.5) -- (-.6,3.5);
\draw[\XColor,thick] (-.2,2.5) -- (-.2,3.5);
\filldraw[\XColor] (.2,-3.4) circle (.05cm);
\filldraw[\XColor] (.2,-3.2) circle (.05cm);
\filldraw[\XColor] (-.2,3.2) circle (.05cm);
\roundNbox{unshaded}{(-.4,2.5)}{.3}{.1}{.1}{\scriptsize{$b_i$}};
\roundNbox{unshaded}{(-.1,1.5)}{.3}{.15}{.15}{\scriptsize{$q_j$}};
\roundNbox{unshaded}{(.25,.5)}{.3}{.2}{.2}{\scriptsize{$q_k^{(n)}$}};
\roundNbox{unshaded}{(.25,-.5)}{.3}{.2}{.2}{\scriptsize{$q_j^\dag$}};
\roundNbox{unshaded}{(-.1,-1.5)}{.3}{.15}{.15}{\scriptsize{$b_i^\dag$}};
\roundNbox{unshaded}{(-.4,-2.5)}{.3}{.1}{.1}{\scriptsize{$(q_k^{(\!n\!)})^\dag$}};
}
\approx
\sum_{i,j,k}
\tikzmath{
\begin{scope}
\clip[rounded corners = 5] (-.75,-3.7) rectangle (1,3.5);
\filldraw[\rColor] (-.6,-3.7) -- (-.6,-2.5) -- (-.4,-2.5) -- (-.4,-1.5) -- (-.1,-1.5) -- (-.1,-.5) -- (.25,-.5) -- (.25,.5) -- (-.1,.5) -- (-.1,1.5) -- (-.4,1.5) -- (-.4,2.5) -- (-.6,2.5) -- (-.6,3.5) -- (1,3.5) -- (1,-3.7);
\end{scope}
\draw[dashed] (-.6,-3.7) -- (-.6,-2.5);
\draw[dashed] (-.4,-2.5) -- (-.4,-1.5);
\draw[\QsColor,thick] (-.2,-3.2) arc (-90:0:.4cm) -- (.2,-1.5);
\draw[dashed] (-.1,-1.5) -- (-.1,-.5);
\draw[\QsColor,thick] (.6,-.5) -- (.6,-2.6) arc (0:-90:.8cm);
\draw[\XColor,thick] (-.2,-3.7) -- (-.2,-2.5);
\draw[dashed] (.25,-.5) -- (.25,.5);
\draw[dashed] (-.1,.5) -- (-.1,1.5);
\draw[\QsColor,thick] (.6,.5) -- (.6,3.5);
\draw[dashed] (-.4,1.5) -- (-.4,2.5);
\draw[\QsColor,thick] (.2,1.5) -- (.2,2.8) arc (0:90:.4cm);
\draw[dashed] (-.6,2.5) -- (-.6,3.5);
\draw[\XColor,thick] (-.2,2.5) -- (-.2,3.5);
\filldraw[\XColor] (-.2,-3.4) circle (.05cm);
\filldraw[\XColor] (-.2,-3.2) circle (.05cm);
\filldraw[\XColor] (-.2,3.2) circle (.05cm);
\roundNbox{unshaded}{(-.4,2.5)}{.3}{.1}{.1}{\scriptsize{$b_i$}};
\roundNbox{unshaded}{(-.1,1.5)}{.3}{.15}{.15}{\scriptsize{$q_j$}};
\roundNbox{unshaded}{(.25,.5)}{.3}{.2}{.2}{\scriptsize{$q_k^{(n)}$}};
\roundNbox{unshaded}{(.25,-.5)}{.3}{.2}{.2}{\scriptsize{$q_j^\dag$}};
\roundNbox{unshaded}{(-.1,-1.5)}{.3}{.15}{.15}{\scriptsize{$(q_k^{(n)})^\dag$}};
\roundNbox{unshaded}{(-.4,-2.5)}{.3}{.1}{.1}{\scriptsize{$b_i^\dag$}};
}
\displaybreak[1]\\
&\approx
\sum_i
\tikzmath{
\begin{scope}
\clip[rounded corners = 5] (-.95,-2) rectangle (.7,1.8);
\filldraw[\rColor] (-.8,-2) -- (-.8,-.8) -- (-.6,-.8) -- (-.6,.8) -- (-.8,.8) -- (-.8,1.8) -- (1.3,1.8) -- (1.3,-2);
\end{scope}
\draw[dashed] (-.8,-2) -- (-.8,-.8);
\draw[\XColor,thick] (-.4,-2) -- (-.4,-.8);
\draw[dashed] (-.6,-.8) -- (-.6,.8);
\draw[\QsColor,thick] (-.4,-1.7) arc (-90:0:.8cm) -- (.4,-.5) .. controls ++(90:.45cm) and ++(270:.45cm) .. (0,.5) -- (0,1.1) arc (0:90:.4cm);
\filldraw[\rColor] (.2,0) circle (.05cm);
\draw[\QsColor,thick] (-.4,-1.5) arc (-90:0:.4cm) -- (0,-.5) .. controls ++(90:.45cm) and ++(270:.45cm) .. (.4,.5) -- (.4,1.8);
\draw[dashed] (-.8,.8) -- (-.8,1.8);
\draw[\XColor,thick] (-.4,.8) -- (-.4,1.8);
\filldraw[\XColor] (-.4,-1.7) circle (.05cm);
\filldraw[\XColor] (-.4,-1.5) circle (.05cm);
\filldraw[\XColor] (-.4,1.5) circle (.05cm);
\roundNbox{unshaded}{(-.6,-.8)}{.3}{.1}{.1}{\scriptsize{$b_i^\dag$}};
\roundNbox{unshaded}{(-.6,.8)}{.3}{.1}{.1}{\scriptsize{$b_i$}};
}
=
\sum_i
\tikzmath{
\begin{scope}
\clip[rounded corners = 5] (-.35,-1.7) rectangle (1.3,1.5);
\filldraw[\rColor] (-.2,-1.7) -- (-.2,-.5) -- (0,-.5) -- (0,.5) -- (-.2,.5) -- (-.2,1.5) -- (1.3,1.5) -- (1.3,-1.7);
\end{scope}
\draw[dashed] (-.2,-1.7) -- (-.2,-.5);
\draw[dashed] (0,-.5) -- (0,.5);
\draw[dashed] (-.2,.5) -- (-.2,1.5);
\draw[\QsColor,thick] (.2,1.2) arc (90:0:.4cm) -- (.6,-.8) arc (0:-90:.4cm);
\draw[\QsColor,thick] (.2,-1.4) arc (-90:0:.8cm) -- (1,1.5);
\draw[\XColor,thick] (.2,-1.7) -- (.2,-.5);
\draw[\XColor,thick] (.2,.5) -- (.2,1.5);
\filldraw[\XColor] (.2,-1.4) circle (.05cm);
\filldraw[\XColor] (.2,-1.2) circle (.05cm);
\filldraw[\XColor] (.2,1.2) circle (.05cm);
\roundNbox{unshaded}{(0,.5)}{.3}{.1}{.1}{\scriptsize{$b_i$}};
\roundNbox{unshaded}{(0,-.5)}{.3}{.1}{.1}{\scriptsize{$b_i^\dag$}};
}
=
\tikzmath{
\begin{scope}
\clip[rounded corners = 5] (-.55,-.4) rectangle (.7,.7);
\filldraw[\rColor] (-.4,-.4) rectangle (.7,.7);
\end{scope}
\draw[\XColor,thick] (0,-.4) -- (0,.7);
\draw[\QsColor,thick] (0,0) arc (-90:0:.4cm) -- (.4,.7);
\draw[dashed] (-.4,-.4) -- (-.4,.7);
\filldraw[\XColor] (0,0) circle (.05cm);
}
\end{align*}
The first $\approx$ uses \eqref{eq:OverBraiding}, the second equality is the defintion of $c_{i,j}$, the third equality uses associativty of the $Q-Q$ bimodule actions, the fourth $\approx$ uses that $X$ is centrally trivial over $|Q|$, the fifth $\approx$ uses \eqref{eq:ExpandBraidingsQ}, and the sixth equation
follows from
\begin{equation}
\label{eq:UseCommutativityForRichtAction}
\tikzmath{
\begin{scope}
\clip[rounded corners = 5] (-.7,-1.6) rectangle (.7,.5);
\filldraw[\rColor] (-.7,-1.6) rectangle (.7,.5);
\end{scope}
\draw[\XColor,thick] (-.4,-1.6) -- (-.4,.5);
\draw[\QsColor,thick] (-.4,-1.3) arc (-90:0:.8cm) -- (.4,-.5) .. controls ++(90:.45cm) and ++(270:.45cm) .. (0,.5);
\filldraw[\rColor] (.2,0) circle (.05cm);
\draw[\QsColor,thick] (-.4,-1.1) arc (-90:0:.4cm) -- (0,-.5) .. controls ++(90:.45cm) and ++(270:.45cm) .. (.4,.5);
\filldraw[\XColor] (-.4,-1.3) circle (.05cm);
\filldraw[\XColor] (-.4,-1.1) circle (.05cm);
}
=
\tikzmath{
\begin{scope}
\clip[rounded corners = 5] (-.7,-1.6) rectangle (.7,.5);
\filldraw[\rColor] (-.7,-1.6) rectangle (.7,.5);
\end{scope}
\draw[\XColor,thick] (-.4,-1.6) -- (-.4,.5);
\draw[\QsColor,thick] (0,-.5) arc (-180:0:.2cm) -- (.4,-.5) .. controls ++(90:.45cm) and ++(270:.45cm) .. (0,.5);
\filldraw[\rColor] (.2,0) circle (.05cm);
\draw[\QsColor,thick] (0,-.5) .. controls ++(90:.45cm) and ++(270:.45cm) .. (.4,.5);
\draw[\QsColor,thick] (.2,-.7) arc (0:-90:.6cm);
\filldraw[\QsColor] (.2,-.7) circle (.05cm);
\filldraw[\XColor] (-.4,-1.3) circle (.05cm);
}
=
\tikzmath{
\begin{scope}
\clip[rounded corners = 5] (-.7,-.9) rectangle (.7,.5);
\filldraw[\rColor] (-.7,-1.6) rectangle (.7,.5);
\end{scope}
\draw[\XColor,thick] (-.4,-.9) -- (-.4,.5);
\draw[\QsColor,thick] (0,.5) -- (0,.2) arc (-180:0:.2cm) -- (.4,.5);
\draw[\QsColor,thick] (.2,0) arc (0:-90:.6cm);
\filldraw[\QsColor] (.2,0) circle (.05cm);
\filldraw[\XColor] (-.4,-.6) circle (.05cm);
}
=
\tikzmath{
\begin{scope}
\clip[rounded corners = 5] (-.7,-.9) rectangle (.7,.5);
\filldraw[\rColor] (-.7,-1.6) rectangle (.7,.5);
\end{scope}
\draw[\XColor,thick] (-.4,-.9) -- (-.4,.5);
\draw[\QsColor,thick] (0,.5) -- (0,0) arc (0:-90:.4cm);
\draw[\QsColor,thick] (.4,.5) -- (.4,.2) arc (0:-90:.8cm);
\filldraw[\XColor] (-.4,-.4) circle (.05cm);
\filldraw[\XColor] (-.4,-.6) circle (.05cm);
}
=
\tikzmath{
\begin{scope}
\clip[rounded corners = 5] (-.7,-1.6) rectangle (.7,.5);
\filldraw[\rColor] (-.7,-1.6) rectangle (.7,.5);
\end{scope}
\draw[\XColor,thick] (-.4,-1.6) -- (-.4,.5);
\draw[\QsColor,thick] (-.4,-1.1) arc (-90:0:.4cm) -- (0,-.5) .. controls ++(90:.45cm) and ++(270:.45cm) .. (.4,.5);
\filldraw[\rColor] (.2,0) circle (.05cm);
\draw[\QsColor,thick] (-.4,-1.3) arc (-90:0:.8cm) -- (.4,-.5) .. controls ++(90:.45cm) and ++(270:.45cm) .. (0,.5);
\filldraw[\XColor] (-.4,-1.3) circle (.05cm);
\filldraw[\XColor] (-.4,-1.1) circle (.05cm);
}\,.
\end{equation}
For the under braiding,
\begin{align*}
\tikzmath{
\begin{scope}
\clip[rounded corners = 5] (-.55,-1.2) rectangle (.7,.5);
\filldraw[\rColor] (-.4,-1.2) rectangle (.7,.5);
\end{scope}
\draw[dashed] (-.4,-1.2) -- (-.4,.5);
\draw[\QsColor,thick] (.4,-.9) arc (270:180:.4cm) -- (0,-.5) .. controls ++(90:.45cm) and ++(270:.45cm) .. (.4,.5);
\filldraw[\rColor] (.2,0) circle (.05cm);
\draw[\XColor,thick] (.4,-1.2) -- (.4,-.5) .. controls ++(90:.45cm) and ++(270:.45cm) .. (0,.5);
\filldraw[\XColor] (.4,-.9) circle (.05cm);
}
&\approx
\sum_{i,j,k}
\tikzmath{
\begin{scope}
\clip[rounded corners = 5] (-.35,-2.5) rectangle (1,2.2);
\filldraw[\rColor] (-.2,-2.5) -- (-.2,-1.5) -- (0,-1.5) -- (0,-.5) -- (.3,-.5) -- (.3,.5) -- (0,.5) -- (0,1.5) -- (-.2,1.5) -- (-.2,2.2) -- (1.05,2.2) -- (1.05,-2.5);
\end{scope}
\draw[dashed] (-.2,-2.5) -- (-.2,-1.5);
\draw[\QsColor,thick] (.6,-2.2) arc (270:180:.4cm);
\draw[dashed] (0,-1.5) -- (0,-.5);
\draw[\XColor,thick] (.6,-2.5) -- (.6,-.5);
\draw[dashed] (.3,-.5) -- (.3,.5);
\draw[dashed] (0,.5) -- (0,1.5);
\draw[\QsColor,thick] (.6,.5) -- (.6,2.2);
\draw[dashed] (-.2,1.5) -- (-.2,2.2);
\draw[\XColor,thick] (.2,1.5) -- (.2,2.2);
\filldraw[\XColor] (.6,-2.2) circle (.05cm);
\roundNbox{unshaded}{(0,1.5)}{.3}{.1}{.1}{\scriptsize{$c_{i,j}^{(n)}$}};
\roundNbox{unshaded}{(.3,.5)}{.3}{.15}{.15}{\scriptsize{$q_k$}};
\roundNbox{unshaded}{(.3,-.5)}{.3}{.15}{.15}{\scriptsize{$(c_{i,j}^{(n)})^\dag$}};
\roundNbox{unshaded}{(0,-1.5)}{.3}{.1}{.1}{\scriptsize{$q_k^\dag$}};
}
=
\sum_{i,j,k}
\tikzmath{
\begin{scope}
\clip[rounded corners = 5] (-.75,-3.7) rectangle (1,3.5);
\filldraw[\rColor] (-.6,-3.7) -- (-.6,-2.5) -- (-.4,-2.5) -- (-.4,-1.5) -- (-.1,-1.5) -- (-.1,-.5) -- (.25,-.5) -- (.25,.5) -- (-.1,.5) -- (-.1,1.5) -- (-.4,1.5) -- (-.4,2.5) -- (-.6,2.5) -- (-.6,3.5) -- (1,3.5) -- (1,-3.7);
\end{scope}
\draw[dashed] (-.6,-3.7) -- (-.6,-2.5);
\draw[dashed] (-.4,-2.5) -- (-.4,-1.5);
\draw[\QsColor,thick] (.2,-3.4) arc (270:180:.4cm);
\draw[dashed] (-.1,-1.5) -- (-.1,-.5);
\draw[\QsColor,thick] (.6,-.5) -- (.6,-1.8) arc (0:-90:.4cm);
\draw[\XColor,thick] (.2,-3.7) -- (.2,-1.5);
\draw[dashed] (.25,-.5) -- (.25,.5);
\draw[dashed] (-.1,.5) -- (-.1,1.5);
\draw[\QsColor,thick] (.6,.5) -- (.6,3.5);
\draw[dashed] (-.4,1.5) -- (-.4,2.5);
\draw[\QsColor,thick] (.2,1.5) -- (.2,2.8) arc (0:90:.4cm);
\draw[dashed] (-.6,2.5) -- (-.6,3.5);
\draw[\XColor,thick] (-.2,2.5) -- (-.2,3.5);
\filldraw[\XColor] (.2,-3.4) circle (.05cm);
\filldraw[\XColor] (.2,-2.2) circle (.05cm);
\filldraw[\XColor] (-.2,3.2) circle (.05cm);
\roundNbox{unshaded}{(-.4,2.5)}{.3}{.1}{.1}{\scriptsize{$b_i^{(n)}$}};
\roundNbox{unshaded}{(-.1,1.5)}{.3}{.15}{.15}{\scriptsize{$q_j^{(n)}$}};
\roundNbox{unshaded}{(.25,.5)}{.3}{.2}{.2}{\scriptsize{$q_k$}};
\roundNbox{unshaded}{(.25,-.5)}{.3}{.2}{.2}{\scriptsize{$(q_j^{(n)})^\dag$}};
\roundNbox{unshaded}{(-.1,-1.5)}{.3}{.15}{.15}{\scriptsize{$(b_i^{(n)})^\dag$}};
\roundNbox{unshaded}{(-.4,-2.7)}{.3}{.1}{.1}{\scriptsize{$q_k^\dag$}};
}
=
\sum_{i,j,k}
\tikzmath{
\begin{scope}
\clip[rounded corners = 5] (-.75,-3.7) rectangle (1,3.5);
\filldraw[\rColor] (-.6,-3.7) -- (-.6,-2.5) -- (-.4,-2.5) -- (-.4,-1.5) -- (-.1,-1.5) -- (-.1,-.5) -- (.25,-.5) -- (.25,.5) -- (-.1,.5) -- (-.1,1.5) -- (-.4,1.5) -- (-.4,2.5) -- (-.6,2.5) -- (-.6,3.5) -- (1,3.5) -- (1,-3.7);
\end{scope}
\draw[dashed] (-.6,-3.7) -- (-.6,-2.5);
\draw[dashed] (-.4,-2.5) -- (-.4,-1.5);
\draw[\QsColor,thick] (.2,-3.2) arc (270:180:.4cm);
\draw[dashed] (-.1,-1.5) -- (-.1,-.5);
\draw[\QsColor,thick] (.6,-.5) -- (.6,-3) arc (0:-90:.4cm);
\draw[\XColor,thick] (.2,-3.7) -- (.2,-1.5);
\draw[dashed] (.25,-.5) -- (.25,.5);
\draw[dashed] (-.1,.5) -- (-.1,1.5);
\draw[\QsColor,thick] (.6,.5) -- (.6,3.5);
\draw[dashed] (-.4,1.5) -- (-.4,2.5);
\draw[\QsColor,thick] (.2,1.5) -- (.2,2.8) arc (0:90:.4cm);
\draw[dashed] (-.6,2.5) -- (-.6,3.5);
\draw[\XColor,thick] (-.2,2.5) -- (-.2,3.5);
\filldraw[\XColor] (.2,-3.4) circle (.05cm);
\filldraw[\XColor] (.2,-3.2) circle (.05cm);
\filldraw[\XColor] (-.2,3.2) circle (.05cm);
\roundNbox{unshaded}{(-.4,2.5)}{.3}{.1}{.1}{\scriptsize{$b_i^{(n)}$}};
\roundNbox{unshaded}{(-.1,1.5)}{.3}{.15}{.15}{\scriptsize{$q_j^{(n)}$}};
\roundNbox{unshaded}{(.25,.5)}{.3}{.2}{.2}{\scriptsize{$q_k$}};
\roundNbox{unshaded}{(.25,-.5)}{.3}{.2}{.2}{\scriptsize{$(q_j^{(n)})^\dag$}};
\roundNbox{unshaded}{(-.1,-1.5)}{.3}{.15}{.15}{\scriptsize{$(b_i^{(n)})^\dag$}};
\roundNbox{unshaded}{(-.4,-2.5)}{.3}{.1}{.1}{\scriptsize{$q_k^\dag$}};
}
\approx
\sum_{i,j,k}
\tikzmath{
\begin{scope}
\clip[rounded corners = 5] (-.75,-3.7) rectangle (1,3.5);
\filldraw[\rColor] (-.6,-3.7) -- (-.6,-2.5) -- (-.4,-2.5) -- (-.4,-1.5) -- (-.1,-1.5) -- (-.1,-.5) -- (.25,-.5) -- (.25,.5) -- (-.1,.5) -- (-.1,1.5) -- (-.4,1.5) -- (-.4,2.5) -- (-.6,2.5) -- (-.6,3.5) -- (1,3.5) -- (1,-3.7);
\end{scope}
\draw[dashed] (-.6,-3.7) -- (-.6,-2.5);
\draw[dashed] (-.4,-2.5) -- (-.4,-1.5);
\draw[\QsColor,thick] (-.2,-3.2) arc (-90:0:.4cm) -- (.2,-1.5);
\draw[dashed] (-.1,-1.5) -- (-.1,-.5);
\draw[\QsColor,thick] (.6,-.5) -- (.6,-2.6) arc (0:-90:.8cm);
\draw[\XColor,thick] (-.2,-3.7) -- (-.2,-2.5);
\draw[dashed] (.25,-.5) -- (.25,.5);
\draw[dashed] (-.1,.5) -- (-.1,1.5);
\draw[\QsColor,thick] (.6,.5) -- (.6,3.5);
\draw[dashed] (-.4,1.5) -- (-.4,2.5);
\draw[\QsColor,thick] (.2,1.5) -- (.2,2.8) arc (0:90:.4cm);
\draw[dashed] (-.6,2.5) -- (-.6,3.5);
\draw[\XColor,thick] (-.2,2.5) -- (-.2,3.5);
\filldraw[\XColor] (-.2,-3.4) circle (.05cm);
\filldraw[\XColor] (-.2,-3.2) circle (.05cm);
\filldraw[\XColor] (-.2,3.2) circle (.05cm);
\roundNbox{unshaded}{(-.4,2.5)}{.3}{.1}{.1}{\scriptsize{$b_i^{(n)}$}};
\roundNbox{unshaded}{(-.1,1.5)}{.3}{.15}{.15}{\scriptsize{$q_j^{(n)}$}};
\roundNbox{unshaded}{(.25,.5)}{.3}{.2}{.2}{\scriptsize{$q_k$}};
\roundNbox{unshaded}{(.25,-.5)}{.3}{.2}{.2}{\scriptsize{$(q_j^{(n)})^\dag$}};
\roundNbox{unshaded}{(-.1,-1.5)}{.3}{.15}{.15}{\scriptsize{$q_k^\dag$}};
\roundNbox{unshaded}{(-.4,-2.5)}{.3}{.1}{.1}{\scriptsize{$(b_i^{(\!n\!)})^\dag$}};
}
\\
&\approx
\sum_i
\tikzmath{
\begin{scope}
\clip[rounded corners = 5] (-.95,-2) rectangle (.7,1.8);
\filldraw[\rColor] (-.8,-2) -- (-.8,-.8) -- (-.6,-.8) -- (-.6,.8) -- (-.8,.8) -- (-.8,1.8) -- (1.3,1.8) -- (1.3,-2);
\end{scope}
\draw[dashed] (-.8,-2) -- (-.8,-.8);
\draw[\XColor,thick] (-.4,-2) -- (-.4,-.8);
\draw[dashed] (-.6,-.8) -- (-.6,.8);
\draw[\QsColor,thick] (-.4,-1.5) arc (-90:0:.4cm) -- (0,-.5) .. controls ++(90:.45cm) and ++(270:.45cm) .. (.4,.5) -- (.4,1.8);
\filldraw[\rColor] (.2,0) circle (.05cm);
\draw[\QsColor,thick] (-.4,-1.7) arc (-90:0:.8cm) -- (.4,-.5) .. controls ++(90:.45cm) and ++(270:.45cm) .. (0,.5) -- (0,1.1) arc (0:90:.4cm);
\draw[dashed] (-.8,.8) -- (-.8,1.8);
\draw[\XColor,thick] (-.4,.8) -- (-.4,1.8);
\filldraw[\XColor] (-.4,-1.7) circle (.05cm);
\filldraw[\XColor] (-.4,-1.5) circle (.05cm);
\filldraw[\XColor] (-.4,1.5) circle (.05cm);
\roundNbox{unshaded}{(-.6,-.8)}{.3}{.1}{.1}{\scriptsize{$(b_i^{(\!n\!)})^\dag$}};
\roundNbox{unshaded}{(-.6,.8)}{.3}{.1}{.1}{\scriptsize{$b_i^{(n)}$}};
}
=
\sum_i
\tikzmath{
\begin{scope}
\clip[rounded corners = 5] (-.35,-1.7) rectangle (1.3,1.5);
\filldraw[\rColor] (-.2,-1.7) -- (-.2,-.5) -- (0,-.5) -- (0,.5) -- (-.2,.5) -- (-.2,1.5) -- (1.3,1.5) -- (1.3,-1.7);
\end{scope}
\draw[dashed] (-.2,-1.7) -- (-.2,-.5);
\draw[dashed] (0,-.5) -- (0,.5);
\draw[dashed] (-.2,.5) -- (-.2,1.5);
\draw[\QsColor,thick] (.2,1.2) arc (90:0:.4cm) -- (.6,-.8) arc (0:-90:.4cm);
\draw[\QsColor,thick] (.2,-1.4) arc (-90:0:.8cm) -- (1,1.5);
\draw[\XColor,thick] (.2,-1.7) -- (.2,-.5);
\draw[\XColor,thick] (.2,.5) -- (.2,1.5);
\filldraw[\XColor] (.2,-1.4) circle (.05cm);
\filldraw[\XColor] (.2,-1.2) circle (.05cm);
\filldraw[\XColor] (.2,1.2) circle (.05cm);
\roundNbox{unshaded}{(0,.5)}{.3}{.1}{.1}{\scriptsize{$b_i^{(n)}$}};
\roundNbox{unshaded}{(0,-.5)}{.3}{.1}{.1}{\scriptsize{$(b_i^{(\!n\!)})^\dag$}};
}
\approx
\tikzmath{
\begin{scope}
\clip[rounded corners = 5] (-.55,-.4) rectangle (.7,.7);
\filldraw[\rColor] (-.4,-.4) rectangle (.7,.7);
\end{scope}
\draw[\XColor,thick] (0,-.4) -- (0,.7);
\draw[\QsColor,thick] (0,0) arc (-90:0:.4cm) -- (.4,.7);
\draw[dashed] (-.4,-.4) -- (-.4,.7);
\filldraw[\XColor] (0,0) circle (.05cm);
}\,.
\end{align*}
The first $\approx$ uses \eqref{eq:UnderBraiding}, the second equality is the definition of $c_{i,j}^{(n)}$, the third equality uses associativity of the $Q-Q$ bimodule actions, the fourth $\approx$ uses $X$ is centrally trivial over $|Q|$ so $\|[b_i^{(n)},q_k]\|_2\to 0$,
the fifth $\approx$ uses \eqref{eq:ExpandBraidingsQ}, and the sixth equality uses \eqref{eq:UseCommutativityForRichtAction} again.
\end{proof}

\begin{thm}
\label{thm:LocalExtension}
Realization gives a braided unitary equivalence
$\Chi(M)_Q^{\loc}\to \Chi(|Q|)$.
\end{thm}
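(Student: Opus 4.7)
The plan is to assemble this from the three preceding pieces of structure. First, Remark \ref{rem:QBimodules} already gives a unitary tensor equivalence from the tensor category of $Q$-$Q$ bimodules in $\fgpBim(M)$ (with product $\otimes_Q$) onto $\fgpBim(|Q|)$, with tensorator $\mu_{X,Y}: |X| \boxtimes_{|Q|} |Y| \to |X \otimes_Q Y|$. Proposition \ref{Chi(M)locQsubsetChi(Q)} shows this equivalence sends $\Chi(M)_Q^\loc$ into $\Chi(|Q|)$, and Proposition \ref{Chi(Q)subsetChi(M)locQ} gives essential surjectivity. Together these yield a unitary tensor equivalence $\Chi(M)_Q^\loc \to \Chi(|Q|)$, so the remaining content of the theorem is that this equivalence is \emph{braided}.

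For this I would verify that the square
\[
\begin{tikzcd}
|X| \boxtimes_{|Q|} |Y| \arrow[r,"u_{|X|,|Y|}"] \arrow[d,"\mu_{X,Y}"'] & |Y| \boxtimes_{|Q|} |X| \arrow[d,"\mu_{Y,X}"] \\
|X \otimes_Q Y| \arrow[r, "|u^\loc_{X,Y}|"'] & |Y \otimes_Q X|
\end{tikzcd}
\]
commutes for every pair $X, Y \in \Chi(M)_Q^\loc$, where $u^\loc_{X,Y}$ is the local braiding inherited from the ambient $u_{X,Y}$ on $\Chi(M)$ via the canonical projector $X \boxtimes_M Y \twoheadrightarrow X \otimes_Q Y$. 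The strategy is to expand both routes using the limit formulas \eqref{eq:uXY}. Choose an $|X|_{|Q|}$-basis $\{b_i\}$, an approximately inner $|X|_{|Q|}$-basis $\{b_i^{(n)}\}$, and analogous $\{d_j\}, \{d_j^{(n)}\}$ for $|Y|_{|Q|}$, together with a $Q_M$-basis $\{q_k\}$ and an approximately inner $Q_M$-basis $\{q_l^{(n)}\}$. By Proposition \ref{prop:(A)PPBfromQtoM}, the products $b_i \lhd q_k$ and $b_i^{(n)} \lhd q_l^{(n)}$ (resp.\ for $Y$) give $X_M$- and $Y_M$-bases, with the latter approximately inner. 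Substituting these into the $M$-level formula \eqref{eq:uXY} for $u_{X,Y}$ produces a quadruple sum whose $Q$-strands, together with the canonical projectors \eqref{eq:CanonicalProjectors} and the commuting triangle \eqref{eq:CanonicalProjectorTriangle}, can be pushed across bimodule strands using commutativity of $Q$ exactly as in the manipulation \eqref{eq:UseCommutativityForRichtAction}. After collapsing the intermediate $Q$-caps this reduces to the $|Q|$-level formula for $u_{|X|,|Y|}$, establishing the desired commutativity.

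The main obstacle is the graphical identification described above, but it is essentially a repackaging of work already done: the over- and under-braiding computations in the proof of Proposition \ref{Chi(Q)subsetChi(M)locQ} show that, for bimodules coming from $\Chi(|Q|)$, the ambient braiding $u_{X,Y}$ in $\Chi(M)$ reduces to precisely a $|Q|$-level braid (with $Q$-caps accounting for the passage between $\boxtimes_M$ and $\boxtimes_{|Q|}$). Once this identification is in hand, naturality of $\mu$ and the already-established unitarity (Proposition \ref{prop:uGFUnitary}) close out the braided compatibility check, completing the proof that $|\,\cdot\,|$ is a braided unitary equivalence $\Chi(M)_Q^\loc \to \Chi(|Q|)$.
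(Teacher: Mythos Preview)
Your proposal is correct and follows essentially the same approach as the paper: reduce the braided compatibility to a graphical identity via \eqref{eq:CanonicalProjectorTriangle}, expand the $M$-level braiding \eqref{eq:uXY} using the product bases from Proposition \ref{prop:(A)PPBfromQtoM}, and collapse to the $|Q|$-level braiding using commutativity of $Q$ and the approximately inner/centrally trivial properties over $|Q|$. The paper carries out the intermediate swap steps explicitly (invoking \eqref{eq:ExpandBraidingsQ} and that the approximately inner $Q_M$-basis gives central sequences in $|Q|$ via Proposition \ref{prop:APPBofQisCSinQ}), whereas you point to the analogous manipulations in Proposition \ref{Chi(Q)subsetChi(M)locQ}; the route is the same.
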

\begin{proof}
By Remark 
\ref{rem:QBimodules},
realization $|\,\cdot\,|$ gives a unitary tensor equivalence from $Q-Q$ bimodules in $\fgpBim(M)$ to $\fgpBim(|Q|)$.
By Proposition \ref{Chi(M)locQsubsetChi(Q)}, for $X\in \Chi(M)_Q^{\loc}$, $|X| \in \Chi(|Q|)$, and by Proposition \ref{Chi(Q)subsetChi(M)locQ}, every bimodule in $\Chi(|Q|)$ arises in this way.
Since 
$\Chi(M)_Q^{\loc}$ is a full subcategory of the $Q-Q$ bimodules in $\fgpBim(M)$,
$\Chi(|Q|)$ is a full subcategory of $\fgpBim(|Q|)$, and realization $|\,\cdot\,|$ is fully faithful, 
it restricts to a unitary tensor equivalence $\Chi(M)_Q^{\loc} \to \Chi(|Q|)$.

It remains to verify that  $|\,\cdot\,|:\Chi(M)_Q^{\loc} \to \Chi(|Q|)$ is braided, i.e., the following diagram commutes.
$$
\begin{tikzcd}
{|X| \boxtimes_{|Q|} |Y|}
\arrow[d,"\mu_{X,Y}"]
\arrow[rrr,"u_{X,Y}^{|Q|}"]
\arrow[dr, dashed]
&&&
{|Y| \boxtimes_{|Q|} |X|}
\arrow[d,"\mu_{Y,X}"]
\\
{|X\otimes_Q Y|}
\arrow[r,hookrightarrow]
&
{|X\boxtimes_M Y|}
\arrow[r,"|u_{X,Y}^M|"]
&
{|Y\boxtimes_M X|}
\arrow[ur, dashed]
\arrow[r, two heads]
&
{|Y\otimes_Q X|}
\end{tikzcd}
$$
The two triangles on either side commute by \eqref{eq:CanonicalProjectorTriangle}, so it remains to prove the inner square commutes.
Graphically denoting the $\rm II_1$ factor $|Q|$ and the canonical projector $|X\boxtimes_M Y| \to |X|\otimes_{|Q|} |Y|$ as in \eqref{eq:CanonicalProjectors},
as realization is fully faithful,
this is the condition that
\[
\tikzmath{
\begin{scope}
\clip[rounded corners = 5] (-1.35,-.9) rectangle (1.1,.9);
\filldraw[\QrColor] (-.8,-.9) rectangle (1.1,.9);
\filldraw[\rColor] (-1.2,-.9) rectangle (-.8,.9);
\filldraw[\rColor] (0,-.5) -- (.4,-.5) arc (-90:0:.4cm) -- (.8,.1) arc (0:90:.4cm) -- (0,.5) arc (90:180:.4cm) -- (-.4,-.1) arc (180:270:.4cm);
\end{scope}
\draw[dashed] (-1.2,-.9) -- (-1.2,.9);
\draw[\QsColor,thick] (-.8,-.9) -- (-.8,.9);
\draw[\QsColor,thick] (0,-.5) -- (.4,-.5) arc (-90:0:.4cm) -- (.8,.1) arc (0:90:.4cm) -- (0,.5) arc (90:180:.4cm) -- (-.4,-.1) arc (180:270:.4cm);
\draw[\YColor,thick] (.4,-.9) -- (.4,-.5) .. controls ++(90:.45cm) and ++(270:.45cm) .. (0,.5) -- (0,.9);
\filldraw[\rColor] (.2,0) circle (.05cm);
\draw[\XColor,thick] (0,-.9) -- (0,-.5) .. controls ++(90:.45cm) and ++(270:.45cm) .. (.4,.5) -- (.4,.9);
}
\overset{?}{=}
\tikzmath{
\begin{scope}
\clip[rounded corners = 5] (-.95,-.5) rectangle (.7,.5);
\filldraw[\QrColor] (-.4,-.9) rectangle (.7,.9);
\filldraw[\rColor] (-.8,-.9) rectangle (-.4,.9);
\end{scope}
\draw[dashed] (-.8,-.5) -- (-.8,.5);
\draw[\QsColor,thick] (-.4,-.5) -- (-.4,.5);
\draw[\YColor,thick] (.4,-.5) .. controls ++(90:.45cm) and ++(270:.45cm) .. (0,.5);
\filldraw[\QrColor] (.2,0) circle (.05cm);
\draw[\XColor,thick] (0,-.5) .. controls ++(90:.45cm) and ++(270:.45cm) .. (.4,.5);
}\,.
\]
Let $\{c_k\}$ be a $Y_{|Q|}$-basis, and let $\{b_i^{(n)}\}$ be an approximately inner $X_{|Q|}$-basis. 
Let $\{q_l\}$ be a $Q_M$-basis and $\{q_j^{(n)}\}$ be an approximately inner $Q_M$-basis.
According to Proposition \ref{prop:(A)PPBfromQtoM}, 
$\{c_kq_l\}$ is a $Y_M$ basis and $\{b_i^{(n)}q_j^{(n)}\}$ is an approximately inner $X_M$ basis.
Then 
\begin{align*}
\tikzmath{
\begin{scope}
\clip[rounded corners = 5] (-.95,-.9) rectangle (1.1,.9);
\filldraw[\QrColor] (-.4,-.9) rectangle (1.1,.9);
\filldraw[\rColor] (-.8,-.9) -- (-.4,-.9) arc (180:90:.4cm) -- (.4,-.5) arc (-90:0:.4cm) -- (.8,.1) arc (0:90:.4cm) -- (0,.5) arc (270:180:.4cm) -- (-.8,.9);
\end{scope}
\draw[dashed] (-.8,-.9) -- (-.8,.9);
\draw[\QsColor,thick] (-.4,-.9) arc (180:90:.4cm) -- (.4,-.5) arc (-90:0:.4cm) -- (.8,.1) arc (0:90:.4cm) -- (0,.5) arc (270:180:.4cm);
\draw[\YColor,thick] (.4,-.9) -- (.4,-.5) .. controls ++(90:.45cm) and ++(270:.45cm) .. (0,.5) -- (0,.9);
\filldraw[\rColor] (.2,0) circle (.05cm);
\draw[\XColor,thick] (0,-.9) -- (0,-.5) .. controls ++(90:.45cm) and ++(270:.45cm) .. (.4,.5) -- (.4,.9);
}
&\approx
\sum_{i,j,k,l}
\tikzmath{
\begin{scope}
\clip[rounded corners = 5] (-1.55,-5.1) rectangle (1.3,5.1);
\filldraw[\rColor] (1.3,-5.1) -- (-1.4,-5.1) -- (-1.4,-3.7) -- (-1,-3.4) .. controls ++(90:.2cm) and ++(270:.2cm) .. (-.6,-3) -- (-.4,-2.7) -- (-.4,-1.5) -- (-.1,-1.5) -- (-.1,-.5) -- (.25,-.5) -- (.25,.5) -- (-.1,.5) -- (-.1,1.5) -- (-.4,1.5) -- (-.4,2.5) -- (-.6,3) .. controls ++(90:.2cm) and ++(270:.2cm) .. (-1,3.4) -- (-1.4,3.7) -- (-1.4,5.1) -- (1.3,5.1);
\filldraw[\QrColor] (1.3,-5.1) -- (-1,-5.1) arc (180:90:.4cm) -- (.2,-4.7) arc (-90:0:.8cm) -- (1,3.9) arc (0:90:.8cm) -- (-.6,4.7) arc (270:180:.4cm) -- (1.3,5.1);
\end{scope}
\draw[dashed] (-1.4,-5.1) -- (-1.4,-3.7);
\draw[dashed] (-1,-3.4) .. controls ++(90:.2cm) and ++(270:.2cm) .. (-.6,-3);
\draw[dashed] (-.4,-2.7) -- (-.4,-1.5);
\draw[dashed] (-.1,-1.5) -- (-.1,-.5);
\draw[dashed] (.25,-.5) -- (.25,.5);
\draw[dashed] (-.1,.5) -- (-.1,1.5);
\draw[dashed] (-.4,1.5) -- (-.4,2.5);
\draw[dashed] (-.6,3) .. controls ++(90:.2cm) and ++(270:.2cm) .. (-1,3.4);
\draw[dashed] (-1.4,3.7) -- (-1.4,5.1);
\draw[\XColor,thick] (-.6,-5.1) -- (-.6,-3.7);
\draw[\YColor,thick] (.2,-5.1) -- (.2,-1.5);
\draw[\XColor,thick] (.2,1.5) -- (.2,5.1);
\draw[\YColor,thick] (-.6,3.7) -- (-.6,5.1);
\draw[\QsColor,thick] (-1,-5.1) arc (180:90:.4cm) -- (.2,-4.7) arc (-90:0:.8cm) -- (1,3.9) arc (0:90:.8cm) -- (-.6,4.7) arc (270:180:.4cm);
\draw[\QsColor,thick] (-.2,2.7) -- (-.2,4) arc (0:90:.4cm);
\draw[\QsColor,thick] (.6,.5) -- (.6,1.8) arc (0:90:.4cm);
\draw[\QsColor,thick] (-.2,-2.7) -- (-.2,-4) arc (0:-90:.4cm);
\draw[\QsColor,thick] (.6,-.5) -- (.6,-1.8) arc (0:-90:.4cm);
\filldraw[\XColor] (-.6,-4.4) circle (.05cm);
\filldraw[\YColor] (.2,-2.2) circle (.05cm);
\filldraw[\XColor] (.2,2.2) circle (.05cm);
\filldraw[\YColor] (-.6,4.4) circle (.05cm);
\roundNbox{unshaded}{(-1,3.7)}{.3}{.25}{.25}{\scriptsize{$c_k$}};
\roundNbox{unshaded}{(-.4,2.7)}{.3}{.1}{.1}{\scriptsize{$q_l$}};
\roundNbox{unshaded}{(-.1,1.5)}{.3}{.15}{.15}{\scriptsize{$b_i^{(n)}$}};
\roundNbox{unshaded}{(.25,.5)}{.3}{.2}{.2}{\scriptsize{$q_j^{(n)}$}};
\roundNbox{unshaded}{(.25,-.5)}{.3}{.2}{.2}{\scriptsize{$q_l^\dag$}};
\roundNbox{unshaded}{(-.1,-1.5)}{.3}{.15}{.15}{\scriptsize{$c_k^\dag$}};
\roundNbox{unshaded}{(-.4,-2.7)}{.3}{.1}{.1}{\scriptsize{$(q_j^{(\!n\!)})^\dag$}};
\roundNbox{unshaded}{(-1,-3.7)}{.3}{.25}{.25}{\scriptsize{$(b_i^{(n)})^\dag$}};
}
=
\sum_{i,j,k,l}
\tikzmath{
\begin{scope}
\clip[rounded corners = 5] (-1.55,-5.1) rectangle (1.3,5.1);
\filldraw[\rColor] (1.3,-5.1) -- (-1.4,-5.1) -- (-1.4,-3.7) -- (-1,-3.4) .. controls ++(90:.2cm) and ++(270:.2cm) .. (-.6,-2.8) -- (-.4,-2.7) -- (-.4,-1.5) -- (-.1,-1.5) -- (-.1,-.5) -- (.25,-.5) -- (.25,.5) -- (-.1,.5) -- (-.1,1.5) -- (-.4,1.5) -- (-.4,2.5) -- (-.6,2.8) .. controls ++(90:.2cm) and ++(270:.2cm) .. (-1,3.4) -- (-1.4,3.7) -- (-1.4,5.1) -- (1.3,5.1);
\filldraw[\QrColor] (1.3,-5.1) -- (-1,-5.1) arc (180:90:.4cm) -- (.2,-4.7) arc (-90:0:.8cm) -- (1,3.9) arc (0:90:.8cm) -- (-.6,4.7) arc (270:180:.4cm) -- (1.3,5.1);
\end{scope}
\draw[dashed] (-1.4,-5.1) -- (-1.4,-3.7);
\draw[dashed] (-1,-3.4) .. controls ++(90:.2cm) and ++(270:.2cm) .. (-.6,-2.8);
\draw[dashed] (-.4,-2.7) -- (-.4,-1.5);
\draw[dashed] (-.1,-1.5) -- (-.1,-.5);
\draw[dashed] (.25,-.5) -- (.25,.5);
\draw[dashed] (-.1,.5) -- (-.1,1.5);
\draw[dashed] (-.4,1.5) -- (-.4,2.5);
\draw[dashed] (-.6,2.8) .. controls ++(90:.2cm) and ++(270:.2cm) .. (-1,3.4);
\draw[dashed] (-1.4,3.7) -- (-1.4,5.1);
\draw[\XColor,thick] (-.6,-5.1) -- (-.6,-3.7);
\draw[\YColor,thick] (.2,-5.1) -- (.2,-1.5);
\draw[\XColor,thick] (.2,1.5) -- (.2,5.1);
\draw[\YColor,thick] (-.6,3.7) -- (-.6,5.1);
\draw[\QsColor,thick] (-1,-5.1) arc (180:90:.4cm) -- (.2,-4.7) arc (-90:0:.8cm) -- (1,3.9) arc (0:90:.8cm) -- (-.6,4.7) arc (270:180:.4cm);
\draw[\QsColor,thick] (-.2,2.8) arc (180:90:.4cm);
\draw[\QsColor,thick] (.6,.5) -- (.6,2.8) arc (180:90:.4cm);
\draw[\QsColor,thick] (-.2,-2.8) arc (180:270:.4cm);
\draw[\QsColor,thick] (.6,-.5) -- (.6,-2.8) arc (180:270:.4cm);
\filldraw[\YColor] (.2,-3.2) circle (.05cm);
\filldraw[\XColor] (.2,3.2) circle (.05cm);
\filldraw[\QsColor] (1,-3.2) circle (.05cm);
\filldraw[\QsColor] (1,3.2) circle (.05cm);
\roundNbox{unshaded}{(-1,3.7)}{.3}{.25}{.25}{\scriptsize{$c_k$}};
\roundNbox{unshaded}{(-.4,2.5)}{.3}{.1}{.1}{\scriptsize{$q_l$}};
\roundNbox{unshaded}{(-.1,1.5)}{.3}{.15}{.15}{\scriptsize{$b_i^{(n)}$}};
\roundNbox{unshaded}{(.25,.5)}{.3}{.2}{.2}{\scriptsize{$q_j^{(n)}$}};
\roundNbox{unshaded}{(.25,-.5)}{.3}{.2}{.2}{\scriptsize{$q_l^\dag$}};
\roundNbox{unshaded}{(-.1,-1.5)}{.3}{.15}{.15}{\scriptsize{$c_k^\dag$}};
\roundNbox{unshaded}{(-.4,-2.5)}{.3}{.1}{.1}{\scriptsize{$(q_j^{(\!n\!)})^\dag$}};
\roundNbox{unshaded}{(-1,-3.7)}{.3}{.25}{.25}{\scriptsize{$(b_i^{(n)})^\dag$}};
}
\approx
\sum_{i,j,k,l}
\tikzmath{
\begin{scope}
\clip[rounded corners = 5] (-1.55,-5.1) rectangle (1.3,5.1);
\filldraw[\rColor] (1.3,-5.1) -- (-1.4,-5.1) -- (-1.4,-3.7) -- (-1,-3.4) .. controls ++(90:.2cm) and ++(270:.2cm) .. (-.6,-2.8) -- (-.4,-2.7) -- (-.4,-1.5) -- (-.1,-1.5) -- (-.1,-.5) -- (.25,-.5) -- (.25,.5) -- (-.1,.5) -- (-.1,1.5) -- (-.4,1.5) -- (-.4,2.5) -- (-.6,2.8) .. controls ++(90:.2cm) and ++(270:.2cm) .. (-1,3.4) -- (-1.4,3.7) -- (-1.4,5.1) -- (1.3,5.1);
\filldraw[\QrColor] (1.3,-5.1) -- (-1,-5.1) arc (180:90:.4cm) -- (-.2,-4.7) arc (-90:0:1.2cm) -- (1,3.5) arc (0:90:1.2cm) -- (-.6,4.7) arc (270:180:.4cm) -- (1.3,5.1);
\end{scope}
\draw[dashed] (-1.4,-5.1) -- (-1.4,-3.7);
\draw[dashed] (-1,-3.4) .. controls ++(90:.2cm) and ++(270:.2cm) .. (-.6,-2.8);
\draw[dashed] (-.4,-2.7) -- (-.4,-1.5);
\draw[dashed] (-.1,-1.5) -- (-.1,-.5);
\draw[dashed] (.25,-.5) -- (.25,.5);
\draw[dashed] (-.1,.5) -- (-.1,1.5);
\draw[dashed] (-.4,1.5) -- (-.4,2.5);
\draw[dashed] (-.6,2.8) .. controls ++(90:.2cm) and ++(270:.2cm) .. (-1,3.4);
\draw[dashed] (-1.4,3.7) -- (-1.4,5.1);
\draw[\XColor,thick] (-.6,-5.1) -- (-.6,-3.7);
\draw[\YColor,thick] (-.2,-5.1) -- (-.2,-2.5);
\draw[\XColor,thick] (-.2,2.5) -- (-.2,5.1);
\draw[\YColor,thick] (-.6,3.7) -- (-.6,5.1);
\draw[\QsColor,thick] (-1,-5.1) arc (180:90:.4cm) -- (-.2,-4.7) arc (-90:0:1.2cm) -- (1,3.5) arc (0:90:1.2cm) -- (-.6,4.7) arc (270:180:.4cm);
\draw[\QsColor,thick] (.2,1.5) -- (.2,2.8) arc (0:90:.4cm);
\draw[\QsColor,thick] (.6,.5) -- (.6,2.8) arc (180:90:.4cm);
\draw[\QsColor,thick] (.2,-1.5) -- (.2,-2.8) arc (0:-90:.4cm);
\draw[\QsColor,thick] (.6,-.5) -- (.6,-2.8) arc (180:270:.4cm);
\filldraw[\YColor] (-.2,-3.2) circle (.05cm);
\filldraw[\XColor] (-.2,3.2) circle (.05cm);
\filldraw[\QsColor] (1,-3.2) circle (.05cm);
\filldraw[\QsColor] (1,3.2) circle (.05cm);
\roundNbox{unshaded}{(-1,3.7)}{.3}{.25}{.25}{\scriptsize{$c_k$}};
\roundNbox{unshaded}{(-.4,2.5)}{.3}{.1}{.1}{\scriptsize{$b_i^{(n)}$}};
\roundNbox{unshaded}{(-.1,1.5)}{.3}{.15}{.15}{\scriptsize{$q_l$}};
\roundNbox{unshaded}{(.25,.5)}{.3}{.2}{.2}{\scriptsize{$q_j^{(n)}$}};
\roundNbox{unshaded}{(.25,-.5)}{.3}{.2}{.2}{\scriptsize{$q_l^\dag$}};
\roundNbox{unshaded}{(-.1,-1.5)}{.3}{.15}{.15}{\scriptsize{$(q_j^{(\!n\!)})^\dag$}};
\roundNbox{unshaded}{(-.4,-2.5)}{.3}{.1}{.1}{\scriptsize{$c_k^\dag$}};
\roundNbox{unshaded}{(-1,-3.7)}{.3}{.25}{.25}{\scriptsize{$(b_i^{(n)})^\dag$}};
}
\\
&\approx
\sum_{i,k}
\tikzmath{
\begin{scope}
\clip[rounded corners = 5] (-1.55,-3) rectangle (1.3,3);
\filldraw[\rColor] (1.3,-3) -- (-1.4,-3) -- (-1.4,-2) -- (-1,-1.7) .. controls ++(90:.2cm) and ++(270:.2cm) .. (-.6,-1.1) -- (-.4,-1) -- (-.4,1) -- (-.6,1.1) .. controls ++(90:.2cm) and ++(270:.2cm) .. (-1,1.7) -- (-1.4,2) -- (-1.4,3) -- (1.3,3);
\filldraw[\QrColor] (1.3,-3) -- (-1,-3) arc (180:90:.4cm) -- (-.2,-2.6) arc (-90:0:1.2cm) -- (1,1.4) arc (0:90:1.2cm) -- (-.6,2.6) arc (270:180:.4cm) -- (1.3,3);
\end{scope}
\draw[dashed] (-1.4,-3) -- (-1.4,-2);
\draw[dashed] (-1,-1.7) .. controls ++(90:.2cm) and ++(270:.2cm) .. (-.6,-1.1);
\draw[dashed] (-.4,-1) -- (-.4,1);
\draw[dashed] (-.6,1.1) .. controls ++(90:.2cm) and ++(270:.2cm) .. (-1,1.7);
\draw[dashed] (-1.4,2) -- (-1.4,3);
\draw[\QsColor,thick] (.6,-.5) .. controls ++(90:.45cm) and ++(270:.45cm) .. (.2,.5);
\filldraw[\rColor] (.4,0) circle (.05cm);
\draw[\QsColor,thick] (.2,-.5) .. controls ++(90:.45cm) and ++(270:.45cm) .. (.6,.5);
\draw[\QsColor,thick] (-1,-3) arc (180:90:.4cm) -- (-.2,-2.6) arc (-90:0:1.2cm) -- (1,1.4) arc (0:90:1.2cm) -- (-.6,2.6) arc (270:180:.4cm);
\draw[\QsColor,thick] (.2,.5) -- (.2,1.1) arc (0:90:.4cm);
\draw[\QsColor,thick] (.6,.5) -- (.6,1.1) arc (180:90:.4cm);
\draw[\QsColor,thick] (.2,-.5) -- (.2,-1.1) arc (0:-90:.4cm);
\draw[\QsColor,thick] (.6,-.5) -- (.6,-1.1) arc (180:270:.4cm);
\draw[\XColor,thick] (-.6,-3) -- (-.6,-2);
\draw[\YColor,thick] (-.2,-3) -- (-.2,-.8);
\draw[\XColor,thick] (-.2,.8) -- (-.2,3);
\draw[\YColor,thick] (-.6,2) -- (-.6,3);
\filldraw[\YColor] (-.2,-1.5) circle (.05cm);
\filldraw[\XColor] (-.2,1.5) circle (.05cm);
\filldraw[\QsColor] (1,-1.5) circle (.05cm);
\filldraw[\QsColor] (1,1.5) circle (.05cm);
\roundNbox{unshaded}{(-1,2)}{.3}{.25}{.25}{\scriptsize{$c_k$}};
\roundNbox{unshaded}{(-.4,.8)}{.3}{.1}{.1}{\scriptsize{$b_i^{(n)}$}};
\roundNbox{unshaded}{(-.4,-.8)}{.3}{.1}{.1}{\scriptsize{$c_k^\dag$}};
\roundNbox{unshaded}{(-1,-2)}{.3}{.25}{.25}{\scriptsize{$(b_i^{(n)})^\dag$}};
}
=
\sum_{i,k}
\tikzmath{
\begin{scope}
\clip[rounded corners = 5] (-1.55,-2.5) rectangle (.5,2.5);
\filldraw[\rColor] (.5,-2.5) -- (-1.4,-2.5) -- (-1.4,-1.5) -- (-1,-1.2) .. controls ++(90:.2cm) and ++(270:.2cm) .. (-.6,-.8) -- (-.4,-.5) -- (-.4,.5) -- (-.6,.8) .. controls ++(90:.2cm) and ++(270:.2cm) .. (-1,1.2) -- (-1.4,1.5) -- (-1.4,2.5) -- (.5,2.5);
\filldraw[\QrColor] (.5,-2.5) -- (-1,-2.5) arc (180:90:.4cm) -- (-.2,-2.1) arc (-90:0:.4cm) -- (.2,1.7) arc (0:90:.4cm) -- (-.6,2.1) arc (270:180:.4cm) -- (.5,2.5);
\end{scope}
\draw[dashed] (-1.4,-2.5) -- (-1.4,-1.5);
\draw[dashed] (-1,-1.2) .. controls ++(90:.2cm) and ++(270:.2cm) .. (-.6,-.8);
\draw[dashed] (-.4,-.5) -- (-.4,.5);
\draw[dashed] (-.6,.8) .. controls ++(90:.2cm) and ++(270:.2cm) .. (-1,1.2);
\draw[dashed] (-1.4,1.5) -- (-1.4,2.5);
\draw[\QsColor,thick] (-1,-2.5) arc (180:90:.4cm) -- (-.2,-2.1) arc (-90:0:.4cm) -- (.2,1.7) arc (0:90:.4cm) -- (-.6,2.1) arc (270:180:.4cm);
\draw[\XColor,thick] (-.6,-2.5) -- (-.6,-1.5);
\draw[\YColor,thick] (-.2,-2.5) -- (-.2,-.5);
\draw[\XColor,thick] (-.2,.5) -- (-.2,2.5);
\draw[\YColor,thick] (-.6,1.5) -- (-.6,2.5);
\roundNbox{unshaded}{(-1,1.5)}{.3}{.25}{.25}{\scriptsize{$c_k$}};
\roundNbox{unshaded}{(-.4,.5)}{.3}{.1}{.1}{\scriptsize{$b_i^{(n)}$}};
\roundNbox{unshaded}{(-.4,-.5)}{.3}{.1}{.1}{\scriptsize{$c_k^\dag$}};
\roundNbox{unshaded}{(-1,-1.5)}{.3}{.25}{.25}{\scriptsize{$(b_i^{(n)})^\dag$}};
}
=
\sum_{i,k}
\tikzmath{
\begin{scope}
\clip[rounded corners = 5] (-1.55,-3) rectangle (.9,3);
\filldraw[\rColor] (1.3,-3) -- (-1.4,-3) -- (-1.4,-1.9) -- (-1,-1.6) .. controls ++(90:.2cm) and ++(270:.2cm) .. (-.2,-.8) -- (0,-.5) -- (0,.5) -- (-.2,.8) .. controls ++(90:.2cm) and ++(270:.2cm) .. (-1,1.6) -- (-1.4,1.9) -- (-1.4,3) -- (1.3,3);
\filldraw[\QrColor] (1.3,-3) -- (-1,-3) arc (180:90:.4cm) arc (-90:0:.4cm) -- (-.2,-1.6) arc (180:90:.4cm) arc (-90:0:.4cm) -- (.6,.8) arc (0:90:.4cm) arc (270:180:.4cm) -- (-.2,2.2) arc (0:90:.4cm) arc (270:180:.4cm) -- (1.3,3);
\end{scope}
\draw[dashed] (-1.4,-3) -- (-1.4,-1.9);
\draw[dashed] (-1,-1.6) .. controls ++(90:.2cm) and ++(270:.2cm) .. (-.2,-.8);
\draw[dashed] (0,-.5) -- (0,.5);
\draw[dashed] (-.2,.8) .. controls ++(90:.2cm) and ++(270:.2cm) .. (-1,1.6);
\draw[dashed] (-1.4,1.9) -- (-1.4,3);
\draw[\QsColor,thick] (-1,-3) arc (180:90:.4cm) arc (-90:0:.4cm) -- (-.2,-1.6) arc (180:90:.4cm) arc (-90:0:.4cm) -- (.6,.8) arc (0:90:.4cm) arc (270:180:.4cm) -- (-.2,2.2) arc (0:90:.4cm) arc (270:180:.4cm);
\draw[\XColor,thick] (-.6,-3) -- (-.6,-1.9);
\draw[\YColor,thick] (.2,-3) -- (.2,-.5);
\draw[\XColor,thick] (.2,.5) -- (.2,3);
\draw[\YColor,thick] (-.6,1.9) -- (-.6,3);
\roundNbox{unshaded}{(-1,1.9)}{.3}{.25}{.25}{\scriptsize{$c_k$}};
\roundNbox{unshaded}{(0,.5)}{.3}{.1}{.1}{\scriptsize{$b_i^{(n)}$}};
\roundNbox{unshaded}{(0,-.5)}{.3}{.1}{.1}{\scriptsize{$c_k^\dag$}};
\roundNbox{unshaded}{(-1,-1.9)}{.3}{.25}{.25}{\scriptsize{$(b_i^{(n)})^\dag$}};
}
\approx
\tikzmath{
\begin{scope}
\clip[rounded corners = 5] (-.95,-.5) rectangle (.7,.5);
\filldraw[\QrColor] (-.4,-.9) rectangle (.7,.9);
\filldraw[\rColor] (-.8,-.9) rectangle (-.4,.9);
\end{scope}
\draw[dashed] (-.8,-.5) -- (-.8,.5);
\draw[\QsColor,thick] (-.4,-.5) -- (-.4,.5);
\draw[\YColor,thick] (.4,-.5) .. controls ++(90:.45cm) and ++(270:.45cm) .. (0,.5);
\filldraw[\QrColor] (.2,0) circle (.05cm);
\draw[\XColor,thick] (0,-.5) .. controls ++(90:.45cm) and ++(270:.45cm) .. (.4,.5);
}\,.
\end{align*}
The first $\approx$ uses \eqref{eq:OverBraiding} for $X\boxtimes_M Y$, and the second equality uses associativity of the bimodule actions.
The third $\approx$ uses $X$ is approximately inner over $|Q|$, $Y$ is centrally trivial over $|Q|$ and $(q^{(n)}_l)$ is a central sequence in $|Q|$, and the fourth $\approx$ uses \eqref{eq:ExpandBraidingsQ}.
The fifth equality uses an argument similar to \eqref{eq:UseCommutativityForRichtAction}.
The sixth equality is just isotopy,
and the final $\approx$ uses \eqref{eq:OverBraiding} for $X \boxtimes_{|Q|} Y$.

Finally, since 
\[
\tikzmath{
\begin{scope}
\clip[rounded corners = 5] (-.55,-.6) rectangle (.7,.6);
\filldraw[\rColor] (-.4,-.9) rectangle (.7,.9);
\filldraw[\QrColor] (0,-.9) rectangle (.4,.9);
\end{scope}
\draw[dashed] (-.4,-.6) -- (-.4,.6);
\draw[\QsColor,thick] (0,-.6) -- (0,.6);
\draw[\QsColor,thick] (.4,-.6) -- (.4,.6);
}
=
\tikzmath{
\begin{scope}
\clip[rounded corners = 5] (-.55,-.6) rectangle (.7,.6);
\filldraw[\rColor] (-.4,-.9) rectangle (.7,.9);
\filldraw[\QrColor] (.4,-.6) arc (0:90:.4cm) -- (0,-.6);
\filldraw[\QrColor] (.4,.6) arc (0:-90:.4cm) -- (0,.6); 
\end{scope}
\draw[dashed] (-.4,-.6) -- (-.4,.6);
\draw[\QsColor,thick] (0,-.6) -- (0,.6);
\draw[\QsColor,thick] (.4,-.6) arc (0:90:.4cm);
\draw[\QsColor,thick] (.4,.6) arc (0:-90:.4cm);
\filldraw[\QsColor] (0,-.2) circle (.05cm);
\filldraw[\QsColor] (0,.2) circle (.05cm);
}
=
\sum_j
\tikzmath{
\begin{scope}
\clip[rounded corners = 5] (-.55,-1.5) rectangle (.7,1.5);
\filldraw[\rColor] (.7,-1.5) -- (-.4,-1.5) -- (-.4,-.5) -- (-.2,-.5) -- (-.2,.5) -- (-.4,.5) -- (-.4,1.5) -- (.7,1.5);
\filldraw[\QrColor] (.4,-1.5) arc (0:90:.4cm) -- (0,-1.5);
\filldraw[\QrColor] (.4,1.5) arc (0:-90:.4cm) -- (0,1.5); 
\end{scope}
\draw[dashed] (-.4,-1.5) -- (-.4,-.5);
\draw[dashed] (-.2,-.5) -- (-.2,.5);
\draw[dashed] (-.4,.5) -- (-.4,1.5);
\draw[\QsColor,thick] (0,-1.5) -- (0,-.5);
\draw[\QsColor,thick] (0,.5) -- (0,1.5);
\draw[\QsColor,thick] (.4,-1.5) arc (0:90:.4cm);
\draw[\QsColor,thick] (.4,1.5) arc (0:-90:.4cm);
\filldraw[\QsColor] (0,-1.1) circle (.05cm);
\filldraw[\QsColor] (0,1.1) circle (.05cm);
\roundNbox{unshaded}{(-.2,-.5)}{.3}{.1}{.1}{\scriptsize{$q_j^\dag$}};
\roundNbox{unshaded}{(-.2,.5)}{.3}{.1}{.1}{\scriptsize{$q_j$}};
}
\qquad\text{and}\qquad
\tikzmath{
\filldraw[\rColor] (0,-.8) rectangle (.8,.8);
\filldraw[\QrColor] (0,-.4) arc (-90:0:.4cm) arc (180:90:.4cm) -- (.8,.8) -- (0,.8);
\draw[\QsColor,thick] (0,-.8) -- (0,.8);
\draw[\QsColor,thick] (.8,-.8) -- (.8,.8);
\draw[\QsColor,thick] (0,-.4) arc (-90:0:.4cm) arc (180:90:.4cm);
\filldraw[\QsColor] (0,-.4) circle (.05cm);
\filldraw[\QsColor] (.8,.4) circle (.05cm);
}
=
\tikzmath{
\filldraw[\rColor] (0,-.8) rectangle (.8,.8);
\filldraw[\QrColor] (0,.4) arc (90:0:.4cm) arc (180:270:.4cm) -- (.8,.8) -- (0,.8);
\draw[\QsColor,thick] (0,-.8) -- (0,.8);
\draw[\QsColor,thick] (.8,-.8) -- (.8,.8);
\draw[\QsColor,thick] (0,.4) arc (90:0:.4cm) arc (180:270:.4cm);
\filldraw[\QsColor] (0,.4) circle (.05cm);
\filldraw[\QsColor] (.8,-.4) circle (.05cm);
}\,,
\]
we have
\[
\tikzmath{
\begin{scope}
\clip[rounded corners = 5] (-1.35,-.9) rectangle (1.1,.9);
\filldraw[\QrColor] (-.8,-.9) rectangle (1.1,.9);
\filldraw[\rColor] (-1.2,-.9) rectangle (-.8,.9);
\filldraw[\rColor] (0,-.5) -- (.4,-.5) arc (-90:0:.4cm) -- (.8,.1) arc (0:90:.4cm) -- (0,.5) arc (90:180:.4cm) -- (-.4,-.1) arc (180:270:.4cm);
\end{scope}
\draw[dashed] (-1.2,-.9) -- (-1.2,.9);
\draw[\QsColor,thick] (-.8,-.9) -- (-.8,.9);
\draw[\QsColor,thick] (0,-.5) -- (.4,-.5) arc (-90:0:.4cm) -- (.8,.1) arc (0:90:.4cm) -- (0,.5) arc (90:180:.4cm) -- (-.4,-.1) arc (180:270:.4cm);
\draw[\YColor,thick] (.4,-.9) -- (.4,-.5) .. controls ++(90:.45cm) and ++(270:.45cm) .. (0,.5) -- (0,.9);
\filldraw[\rColor] (.2,0) circle (.05cm);
\draw[\XColor,thick] (0,-.9) -- (0,-.5) .. controls ++(90:.45cm) and ++(270:.45cm) .. (.4,.5) -- (.4,.9);
}
=
\sum_j
\tikzmath{
\begin{scope}
\clip[rounded corners = 5] (-1.35,-2.2) rectangle (1.1,2.2);
\filldraw[\rColor] (1.1,-2.2) -- (-1.2,-2.2) -- (-1.2,-1.2) -- (-1,-.9) -- (-1,.9) -- (-1.2,1.2) -- (-1.2,2.2) -- (1.1,2.2);
\filldraw[\QrColor] (1.1,-2.2) -- (-.8,-2.2) -- (-.8,-1.9) arc (-90:0:.4cm) -- (-.4,-.9) arc (180:90:.4cm) -- (.4,-.5) arc (-90:0:.4cm) -- (.8,.1) arc (0:90:.4cm) -- (0,.5) arc (270:180:.4cm) -- (-.4,1.5) arc (0:90:.4cm) -- (-.8,2.2) -- (1.1,2.2);
\end{scope}
\draw[dashed] (-1.2,-2.2) -- (-1.2,-1.2);
\draw[dashed] (-1,-.9) -- (-1,.9);
\draw[dashed] (-1.2,1.2) -- (-1.2,2.2);
\draw[\QsColor,thick] (-.8,1.5) -- (-.8,2.2);
\draw[\QsColor,thick] (-.8,-2.2) -- (-.8,-1.5);
\draw[\QsColor,thick] (-.8,-1.9) arc (-90:0:.4cm) -- (-.4,-.9) arc (180:90:.4cm) -- (.4,-.5) arc (-90:0:.4cm) -- (.8,.1) arc (0:90:.4cm) -- (0,.5) arc (270:180:.4cm) -- (-.4,1.5) arc (0:90:.4cm);
\draw[\YColor,thick] (.4,-2.2) -- (.4,-.5) .. controls ++(90:.45cm) and ++(270:.45cm) .. (0,.5) -- (0,2.2);
\filldraw[\rColor] (.2,0) circle (.05cm);
\draw[\XColor,thick] (0,-2.2) -- (0,-.5) .. controls ++(90:.45cm) and ++(270:.45cm) .. (.4,.5) -- (.4,2.2);
\filldraw[\QsColor] (-.8,-1.9) circle (.05cm);
\filldraw[\QsColor] (-.8,1.9) circle (.05cm);
\roundNbox{unshaded}{(-1,-1.2)}{.3}{.1}{.1}{\scriptsize{$q_j^\dag$}};
\roundNbox{unshaded}{(-1,1.2)}{.3}{.1}{.1}{\scriptsize{$q_j$}};
}
=
\sum_j
\tikzmath{
\begin{scope}
\clip[rounded corners = 5] (-1.45,-1.8) rectangle (.7,1.8);
\filldraw[\rColor] (.7,-1.8) -- (-1.2,-1.8) -- (-1.2,-.8) -- (-1,-.8) -- (-1,.8) -- (-1.2,.8) -- (-1.2,1.8) -- (.7,1.8);
\filldraw[\QrColor] (.7,-1.8) -- (-.8,-1.8) -- (-.8,-1.5) arc (-90:0:.4cm) -- (-.4,1.1) arc (0:90:.4cm) -- (-.8,1.8) -- (.7,1.8);
\end{scope}
\draw[dashed] (-1.2,-1.8) -- (-1.2,-.8);
\draw[dashed] (-1,-.8) -- (-1,.8);
\draw[dashed] (-1.2,.8) -- (-1.2,1.8);
\draw[\QsColor,thick] (-.8,-1.8) -- (-.8,-.8);
\draw[\QsColor,thick] (-.8,.8) -- (-.8,1.8);
\draw[\QsColor,thick] (-.8,-1.5) arc (-90:0:.4cm) -- (-.4,1.1) arc (0:90:.4cm);
\draw[\YColor,thick] (.4,-1.8) -- (.4,-.5) .. controls ++(90:.45cm) and ++(270:.45cm) .. (0,.5) -- (0,1.8);
\filldraw[\QrColor] (.2,0) circle (.05cm);
\draw[\XColor,thick] (0,-1.8) -- (0,-.5) .. controls ++(90:.45cm) and ++(270:.45cm) .. (.4,.5) -- (.4,1.8);
\filldraw[\QsColor] (-.8,-1.5) circle (.05cm);
\filldraw[\QsColor] (-.8,1.5) circle (.05cm);
\roundNbox{unshaded}{(-1,-.8)}{.3}{.1}{.1}{\scriptsize{$q_j^\dag$}};
\roundNbox{unshaded}{(-1,.8)}{.3}{.1}{.1}{\scriptsize{$q_j$}};
}
=
\tikzmath{
\begin{scope}
\clip[rounded corners = 5] (-.95,-.5) rectangle (.7,.5);
\filldraw[\QrColor] (-.4,-.9) rectangle (.7,.9);
\filldraw[\rColor] (-.8,-.9) rectangle (-.4,.9);
\end{scope}
\draw[dashed] (-.8,-.5) -- (-.8,.5);
\draw[\QsColor,thick] (-.4,-.5) -- (-.4,.5);
\draw[\YColor,thick] (.4,-.5) .. controls ++(90:.45cm) and ++(270:.45cm) .. (0,.5);
\filldraw[\QrColor] (.2,0) circle (.05cm);
\draw[\XColor,thick] (0,-.5) .. controls ++(90:.45cm) and ++(270:.45cm) .. (.4,.5);
}\,.
\qedhere
\]
\end{proof}

\section{Calculation of \texorpdfstring{$\Chi(M_\infty)$}{Minfty} for a non-Gamma finite depth \texorpdfstring{$\rm II_1$}{II1} subfactor}
\label{sec:Calculation}

In this section, we calculate $\Chi(M_\infty)$ for the inductive limit $\rm II_1$ factor obtained from iterating Jones' basic construction for a finite depth finite index non-Gamma $\rm II_1$ subfactor $N\subseteq M$.
These examples are motivated by \cite{MR2661553}.

Suppose $N\subseteq M$ a finite depth, finite index $\rm II_1$ subfactor, and let $\cC={}_N\cC_N$ denote the unitary fusion category of $N-N$ bimodules generated by ${}_NM_N$.
The results of \cite[\S3 and 4]{MR3801484} give a bijective correspondence between equivalence classes of (bifinite) bimodules of $M_{\infty}$ which restrict to $R\otimes N$-bimodules of the form $\cC^{\op}\boxtimes \cC$ and objects of the Drinfeld center $\cZ(\cC)$.
The main goal of this section is to extend this bijection to a fully faithful unitary tensor functor $\Phi:\cZ(\cC)\to \fgpBim(M_\infty)$ such that when $N$ is non-Gamma, $\Phi$ takes values in $\Chi(M_\infty)$ and is a braided unitary equivalence. 
To do so, we rely on the Q-system realization language from \cite{2105.12010} together with the coend realization viewpoint of \cite{MR3948170}.

We begin this section with some basics on unitary fusion categories and subfactor standard invariants.

\subsection{Basics on unitary fusion categories and subfactor standard invariants}

A \emph{unitary fusion category} is a unitary tensor category with only finitely many isomorphism classes of simple objects.
A unitary fusion category $\cC$ has three commuting involutions $\dag, \vee, \overline{\,\cdot\,}$, and the composite of any two is the third.
Here, $\vee$ is the unique unitary dual functor \cite{MR2091457,MR4133163} giving the canonical unitary spherical structure of $\cC$ \cite{MR1444286}, and we may define $\overline{\,\cdot\,}:=\vee\dag = \dag \vee$.

\begin{defn}
\label{def:Z(C)}
The \emph{Drinfeld center} of a unitary fusion category $\cC$ is $\cZ(\cC)=\End_{\cC-\cC}(\cC)$, the Morita dual of $\cC^{\rm mp}\boxtimes \cC$ acting on $\cC$ by $(a^{\rm mp}\boxtimes b)\rhd c := b\otimes c \otimes a$, where $\cC^{\rm mp}$ is the monoidal opposite of $\cC$ from Remark \ref{rem:OppositeWithBraiding}.
Note that the unitary dual functor $\vee$ gives a unitary tensor equivalence $\cC^{\rm mp}\to \cC^{\op}$, the opposite fusion category with the opposite arrows, but the same tensor product.
It is useful in the subsections below to identify $\cZ(\cC)$ with $\End_{\cC^{\op}\boxtimes \cC}(\cC)$
with the action $(a^{\op}\boxtimes b)\rhd c := b\otimes c \otimes \overline{a}$.

Now $\cZ(\cC)$ has another description in terms of pairs $(z,\sigma_z)$ of an object $z\in \cC$ equipped with a half-braiding $\sigma_z$, where $\cZ(\cC)$ acts on $\cC$ via the forgetful functor $(z,\sigma_z)\mapsto z$ \cite[\S7.13 and 8.5]{MR3242743}.
Our convention for the half-braiding $\sigma_z$ is that the strands for objects in $\cC$ pass over the $z$-strand:
$$
\sigma_{c,z}
:=
\tikzmath{
\draw[\YColor,thick] (.4,-.5) node[below]{$\scriptstyle z$} .. controls ++(90:.45cm) and ++(270:.45cm) .. (0,.5);
\draw[\XColor,thick, knot] (0,-.5) node[below]{$\scriptstyle c$} .. controls ++(90:.45cm) and ++(270:.45cm) .. (.4,.5);
}
$$
Thus the braiding $(z,\sigma_z)\otimes (w,\rho_w) \to (w, \rho_w)\otimes (z,\sigma_z)$ in $\cZ(\cC)$ is given by $\rho_{z,w}$.
\end{defn}

\begin{defn}
There are many equivalent notions of the standard invariant for a finite index $\rm II_1$ subfactor $N\subseteq M$.
For this article, the standard invariant will mean the $2\times 2$ unitary multitensor category $\cC(N\subseteq M)$ of $N-N$, $N-M$, $M-N$, and $M-M$ bimodules generated by $L^2M$ under $\boxtimes$, $\oplus$, and $\subseteq$, with generating object ${}_NL^2M_M$.
$$
\cC(N\subseteq M) =
\begin{pmatrix}
{}_N\cC_N & {}_N\cC_M
\\
{}_M\cC_N & {}_M \cC_M
\end{pmatrix}
$$
Observe that $\cC(N\subseteq M)$ is multifusion if and only if $N\subseteq M$ is finite depth.
In this case, the corners ${}_N\cC_N$ and ${}_M \cC_M$ of $N-N$ and $M-M$ bimodules generated by $L^2M$ respectively are unitary fusion categories which are Morita equivalent,
and thus share the same Drinfeld center $\cZ(\cC)$.
\end{defn}

\begin{rem}
\label{rem:MoritaEquivalenceRealization}
Suppose $\cC$ and $\cD$ are two unitary fusion categories and ${}_\cC \cM_\cD$ is an indecomposable unitary $\cC-\cD$ bimodule category witnessing a Morita equivalence.
Using the internal hom \cite{MR1976459} (see also \cite[Appendix~A]{MR3933035}), we can form a $2\times 2$ unitary multifusion category by
\begin{equation}
\label{eq:2x2UMFC}
\begin{pmatrix}
\cC & \cM 
\\
\cM^{\op} & \cD
\end{pmatrix}.
\end{equation}
For a simple $X\in \cM$, we get two Q-systems $X\otimes \overline{X} =\underline{\End}_\cC(X)\in \cC$ and $\overline{X}\otimes X=\underline{\End}_\cD(X) \in \cD$.
The map $\Ad(X) : d\mapsto X\otimes d\otimes \overline{X}$ gives a unitary tensor equivalence between $\cD$ and $X\otimes \overline{X}-X\otimes \overline{X}$ bimodules in $\cC$.
A similar result holds on the other side.

Suppose now we have a fully faithful unitary tensor functor $F: \cC\to \fgpBim(N)$ for a $\rm II_1$ factor $N$.
Then the realization $M:= |X\otimes \overline{X}|$ is a $\rm II_1$ factor containing $N$, and the standard invariant of $N\subseteq M$ is unitarily equivalent to the $2\times 2$ unitary multifusion category \eqref{eq:2x2UMFC} with generator $|X|$ as an $N-M$ bimodule. 
By Remark \ref{rem:QBimodules}, we get a fully faithful unitary tensor functor $G: \cD \to \fgpBim(M)$ from realization as
$$
\cD 
\xrightarrow{\Ad(X)} \Bim_\cC(X\otimes \overline{X})
\xrightarrow{|\,\cdot\,|} 
\fgpBim(M).
$$
\end{rem}

We now give an important example of Remark \ref{rem:MoritaEquivalenceRealization} which will be used in this section below.

\begin{ex}
\label{ex:MoritaEquivalenceBetweenCopCandZ(C)}
Let $\cC$ be a unitary fusion category, and consider the $\cC\boxtimes \cC^{\rm mp} - \cZ(\cC)$
Morita equivalence bimodule $\cC$.
One calculates that $\underline{\End}_{\cC^{\rm mp}\boxtimes \cC}(1_\cC) = \bigoplus_{c\in \Irr(\cC)} \overline{c}^{\rm mp}\boxtimes c$, which we call the \emph{symmetric enveloping Q-system} after \cite{MR1302385,MR1729488}. The infinite version of this algebra object plays a very important role for the study of analytic properties of infinite unitary tensor categories \cite{MR3406647}.
Identifying $\cC^{\rm mp}\cong \cC^{\op}$ via $\vee$, which will be useful in the sequel, the symmetric enveloping Q-system is given by $S:=\bigoplus_{c\in \Irr(\cC)} c^{\op}\boxtimes c$.
By Remark \ref{rem:MoritaEquivalenceRealization}, $\cZ(\cC)\cong \Bim_{\cC^{\op}\boxtimes \cC}(S)$.
On the other hand, one calculates that $\underline{\End}_{\cZ(\cC)}(1_\cC) = I(1_\cC)$, where $I: \cC\to \cZ(\cC)$ is the adjoint of the forgetful functor.
\end{ex}

\subsection{Q-system realization as a coend}

Suppose $\cC$ is a unitary fusion category and $G: \cC\to \fgpBim(N)$ is a unitary tensor functor, where $N$ is a $\rm II_1$ factor.
Given a Q-system $Q\in \cC$, the realization 
$|G(Q)|$ is a $\rm II_1$ multifactor (finite direct sum of $\rm II_1$ factors) which is a factor if and only if $Q$ is simple as a $Q-Q$ bimodule in $\cC$.

By the Yoneda lemma, we have a canonical isomorphism of vector spaces
$$
|G(Q)|
:=
\Hom(N_N \to N \boxtimes_N G(Q)_N)
\cong
\bigoplus_{c\in \cC} \cC(c\to Q) \otimes_\bbC G(c).
$$
We graphically represent elements of this tensor product by
$$
\sum_{c\in\Irr(\cC)}
\tikzmath{
\begin{scope}
\clip[rounded corners = 5] (-.7,0) -- (-.7,-1.4) -- (.7,-1.4) -- (.7,0);
\filldraw[\rColor] (-.7,0) -- (-.7,-1.4) -- (.7,-1.4) -- (.7,0);
\end{scope}
\draw[dotted] (-.7,0) -- (.7,0);
\draw[\NColor,thick] (0,-1.4) -- (0,-.7);
\draw (0,-.7) -- (0,.7); 
\draw[\QsColor, thick] (0,1) -- (0,1.4);
\roundNbox{unshaded}{(0,-.7)}{.3}{0}{0}{$\xi_c$};
\roundNbox{unshaded}{(0,.7)}{.3}{0}{0}{$f_c$};
\node at (.2,.2) {\scriptsize{$c$}};
\node at (.2,-.2) {\scriptsize{$c$}};
\node at (.2,1.2) {\scriptsize{$Q$}};
}
:=
\sum_{c\in\Irr(\cC)}
\tikzmath{
\draw[\QsColor, thick] (0,.7) -- (0,0);
\draw (0,0) -- (0,-.7);
\roundNbox{unshaded}{(0,0)}{.3}{0}{0}{$f_c$};
\node at (.2,.5) {\scriptsize{$Q$}};
\node at (.2,-.5) {\scriptsize{$c$}};
}
\otimes_{\bbC}
\tikzmath{
\begin{scope}
\clip[rounded corners = 5] (-.7,-.7) rectangle (.7,.7);
\filldraw[\rColor] (-.7,-.7) rectangle (.7,.7);
\end{scope}
\draw (0,0) -- (0,.7);
\draw[\NColor,thick] (0,-.7) -- (0,0);
\roundNbox{unshaded}{(0,0)}{.3}{0}{0}{$\xi_c$};
\node at (0,.9) {\scriptsize{$c$}};
}
=
\sum_{c\in \Irr(\cC)} f_c \otimes_\bbC \xi_c.
$$
Here, the orange line represents the 
functor $G^\circ := \Forget\circ \,G$
viewed as a $\rm W^*$-algebra object in $\Fun(\cC \to \Vect)$ \cite[Prop.~2.18]{MR3948170}, where $\Forget: \fgpBim(N) \to \Vect$ is the forgetful functor.
The shaded half of the diagram is read top to bottom, and the tensorator $G^2_{a,b}$ is denoted by appending an orange trivalent vertex below.

Under this isomorphism of vector spaces, the multiplication and $*$-structure are given by
$$
(f_a\otimes \xi_a)(g_b\otimes \eta_b)
=
\sum_{\substack{ a,b,c\in\Irr(\cC) \\ \alpha\in\ONB(a\otimes b\to c)}}
\tikzmath{
\begin{scope}
\clip[rounded corners = 5] (-1.2,0) -- (-1.2,-2.1) -- (1.2,-2.1) -- (1.2,0);
\filldraw[\rColor] (-1.2,0) -- (-1.2,-2.1) -- (1.2,-2.1) -- (1.2,0);
\end{scope}
\draw[dotted] (-1.2,0) -- (1.2,0);
\draw (0,-.3) -- (0,.3); 
\draw (.4,.7) arc (0:-180:.4cm);
\draw (.4,-.7) arc (0:180:.4cm);
\filldraw (0,-.3) circle (.05cm);
\filldraw (0,.3) circle (.05cm);
\draw[\QsColor, thick] (-.4,1.3) arc (180:0:.4cm);
\draw[\QsColor, thick] (0,1.7) -- (0,2.1);
\filldraw (0,1.7) circle (.05cm);
\draw[\NColor, thick] (-.4,-1.3) arc (-180:0:.4cm);
\draw[\NColor, thick] (0,-1.7) -- (0,-2.1);
\filldraw[\NColor] (0,-1.7) circle (.05cm);
\roundNbox{unshaded}{(.4,-1)}{.3}{0}{0}{$\eta_b$};
\roundNbox{unshaded}{(.4,1)}{.3}{0}{0}{$g_b$};
\roundNbox{unshaded}{(-.4,-1)}{.3}{0}{0}{$\xi_a$};
\roundNbox{unshaded}{(-.4,1)}{.3}{0}{0}{$f_a$};
\node at (.6,.5) {\scriptsize{$b$}};
\node at (.6,-.5) {\scriptsize{$b$}};
\node at (-.6,.5) {\scriptsize{$a$}};
\node at (-.6,-.5) {\scriptsize{$a$}};
\node at (.2,.2) {\scriptsize{$c$}};
\node at (.2,-.2) {\scriptsize{$c$}};
\node at (.6,1.5) {\scriptsize{$Q$}};
\node at (-.6,1.5) {\scriptsize{$Q$}};
\node at (0,.5) {\scriptsize{$\alpha^\dag$}};
\node at (0,-.5) {\scriptsize{$\alpha$}};
}
\qquad\qquad
(f_a\otimes \xi_a)^*
=
\tikzmath{
\begin{scope}
\clip[rounded corners = 5] (-.7,0) -- (-.7,-1.4) -- (.7,-1.4) -- (.7,0);
\filldraw[\rColor] (-.7,0) -- (-.7,-1.4) -- (.7,-1.4) -- (.7,0);
\end{scope}
\draw[dotted] (-.7,0) -- (.7,0);
\draw[\NColor,thick] (0,-1.4) -- (0,-.7);
\draw (0,-.7) -- (0,.7); 
\draw[\QsColor, thick] (0,1) -- (0,1.4);
\roundNbox{unshaded}{(0,-.7)}{.3}{0}{0}{$\overline{\xi_a}$};
\roundNbox{unshaded}{(0,.7)}{.3}{0}{0}{$\overline{f_a}$};
\node at (.2,.2) {\scriptsize{$\overline{a}$}};
\node at (.2,-.2) {\scriptsize{$\overline{a}$}};
\node at (.2,1.2) {\scriptsize{$Q$}};
}\,,
$$
and the unit is given by
$$
1=
\tikzmath{
\begin{scope}
\clip[rounded corners = 5] (-.7,0) -- (-.7,-1.4) -- (.7,-1.4) -- (.7,0);
\filldraw[\rColor] (-.7,0) -- (-.7,-1.4) -- (.7,-1.4) -- (.7,0);
\end{scope}
\draw[dotted] (-.7,0) -- (.7,0);
\filldraw (0,.3) circle (.05cm);
\draw[\QsColor, thick] (0,.3) -- node[right]{$\scriptstyle Q$} (0,.7);
\draw[\NColor,thick] (0,-1.4) -- (0,-.7);
\roundNbox{unshaded}{(0,-.7)}{.3}{0}{0}{\scriptsize{$\Omega_N$}};
}\,;
\qquad\qquad
\Omega_N = 1\in {}_NN_N.
$$

\begin{ex}
\label{ex:RealizedJonesTower}
Suppose now $N\subseteq M$ is a finite depth, finite index $\rm II_1$ subfactor.
The algebra $M$ considered as an $N-N$ bimodule ${}_NM_N \in {}_N\cC_N$ is the canonical Q-system $X\boxtimes_M \overline{X}$ corresponding to the generator $X:={}_NM_M \in {}_N\cC_{M}$ as discussed in Example \ref{ex:RealizeSeparatedQSystem}.
The realization $|{}_NM_N|$ is canonically $*$-isomorphic to $M$:
$$
|{}_NM_N|
=
\Hom(N_N \to N \boxtimes_N M_N)
=
\Hom(N_N \to N \boxtimes_N M \boxtimes_M M_N)
\cong
\Hom(M_M \to M_M)
=
M.
$$ 

Now consider the Jones tower obtained by iterating Jones' basic construction defined inductively by $M_{n+1} := \End((M_n)_{M_{n-1}})$ \cite{MR0696688,MR999799}
$$
M_0=N \subseteq M = M_1 \subseteq M_2 \subseteq M_3\subseteq \cdots.
$$
The $\rm II_1$ factor $M_n$ is $*$-isomorphic to the realization of the Q-system $(X\boxtimes_M \overline{X})^{\boxtimes n}\cong X^{\alt\boxtimes n} \boxtimes \overline{X^{\alt\boxtimes n}}$ which has multiplication and unit given by
$$
\tikzmath{
\draw (-.3,0) node[below]{$\scriptstyle n$} arc (180:0:.3cm) node[below]{$\scriptstyle n$};
\draw (-.6,0) node[below]{$\scriptstyle n$} .. controls ++(90:.3cm) and ++(270:.3cm) .. (-.3,.8);
\draw (.6,0) node[below]{$\scriptstyle n$} .. controls ++(90:.3cm) and ++(270:.3cm) .. (.3,.8);
}
\qquad\qquad
\tikzmath{
\draw (-.3,0) node[above]{$\scriptstyle n$} -- (-.3,-.3) arc (-180:0:.3cm) -- (.3,0) node[above]{$\scriptstyle n$};  
}
\qquad\qquad
\tikzmath{
\draw (0,0) --node[right]{$\scriptstyle n$} (0,1);
}
=
\id_{X^{\alt\boxtimes n}};
\qquad\qquad
X^{\alt\boxtimes n}:=\underbrace{X\boxtimes_M \overline{X} \boxtimes_N \cdots \boxtimes X^?}_{\text{$n$ tensorands}}.
$$
Indeed,
$$
\Hom(N_N \to N \boxtimes_N X^{\alt\boxtimes n} \boxtimes \overline{X^{\alt\boxtimes n}})
\cong
\End(N\boxtimes_N X^{\alt\boxtimes n}_{M \text{ or }N})
\cong 
M_n
$$
by the multistep Jones basic construction \cite{MR965748,MR1424954}.
Another way to see this is to use Remark \ref{rem:MoritaEquivalenceRealization}; 
for example, the map $\Ad(X)$ takes the basic construction $\langle M, N\rangle=\overline{X}\boxtimes_N X$ to 
$(X\boxtimes_M \overline{X})^{\boxtimes 2}$ with the multiplicaiton as claimed.

As a coend realization, we have a canonical Frobenius reciprocity isomorphism
\begin{equation}
\label{eq:CoendForMn}
M_n = 
\bigoplus_{c\in \Irr(\cC)} 
\cC(c\to X^{\alt\boxtimes n} \boxtimes \overline{X^{\alt\boxtimes n}}) 
\otimes_\bbC G(c)
\cong
\bigoplus_{c\in \Irr(\cC)} 
\cC(c\boxtimes X^{\alt\boxtimes n} \to X^{\alt\boxtimes n}) 
\otimes_\bbC G(c).
\end{equation}
Under this isomorphism, in the coend realization diagrammatic calculus, we can draw the $X^{\alt\boxtimes n}$ horizontally, where the horizontal line should be viewed as slightly tilted going from the bottom right to the top left, as indicated by the cyan arrows below.
\begin{equation}
\label{eq:BottomRightTopLeft}
\tikzmath{
\draw (-.7,0) -- (.7,0);
\draw (0,0) -- (0,-.7);
\draw[cyan, dashed] (-.7,-.7) -- (.7,.7);
\draw[cyan, ->] (-.6,-.7) -- (-.8,-.5);
\draw[cyan, ->] (.7,.6) -- (.5,.8);
\roundNbox{unshaded}{(0,0)}{.3}{0}{0}{$f$};
\node at (-.9,0) {\scriptsize{$k$}};
\node at (.9,0) {\scriptsize{$k$}};
\node at (.2,-.5) {\scriptsize{$c$}};
}
\in 
\cC(c\boxtimes X^{\alt\boxtimes n} \to X^{\alt\boxtimes n})
\end{equation}
The multiplication, $*$, and unit in the realization $|X^{\alt\boxtimes n} \boxtimes \overline{X^{\alt\boxtimes n}}|\cong M_n$
are now represented respectively by
$$
\sum_{\substack{ a,b,c\in\Irr(\cC) \\ \alpha\in\ONB(a\otimes b\to c)}}
\tikzmath{
\begin{scope}
\clip[rounded corners = 5] (-1.2,0) -- (-1.2,-2.2) -- (1.2,-2.2) -- (1.2,0);
\filldraw[\rColor] (-1.2,0) -- (-1.2,-2.2) -- (1.2,-2.2) -- (1.2,0);
\end{scope}
\draw[dotted] (-1.2,0) -- (1.2,0);
\draw[\NColor,thick] (-.5,-1.4) arc (-180:0:.5cm);
\draw[\NColor, thick] (0,-1.9) -- (0,-2.2);
\filldraw[\NColor] (0,-1.9) circle (.05cm);
\draw (0,-.3) -- (0,.3); 
\draw (.5,.8) arc (0:-180:.5cm);
\draw (.5,-.8) arc (0:180:.5cm);
\draw (-1.2,1.1) -- (1.2,1.1);
\filldraw (0,-.3) circle (.05cm);
\filldraw (0,.3) circle (.05cm);
\roundNbox{unshaded}{(.5,-1.1)}{.3}{0}{0}{$\eta_b$};
\roundNbox{unshaded}{(.5,1.1)}{.3}{0}{0}{$g_b$};
\roundNbox{unshaded}{(-.5,-1.1)}{.3}{0}{0}{$\xi_a$};
\roundNbox{unshaded}{(-.5,1.1)}{.3}{0}{0}{$f_a$};
\node at (.7,.6) {\scriptsize{$b$}};
\node at (.7,-.6) {\scriptsize{$b$}};
\node at (-.7,.6) {\scriptsize{$a$}};
\node at (-.7,-.6) {\scriptsize{$a$}};
\node at (.2,.2) {\scriptsize{$c$}};
\node at (.2,-.2) {\scriptsize{$c$}};
\node at (1,1.3) {\scriptsize{$n$}};
\node at (-1,1.3) {\scriptsize{$n$}};
\node at (0,1.3) {\scriptsize{$n$}};
\node at (0,.5) {\scriptsize{$\alpha^\dag$}};
\node at (0,-.5) {\scriptsize{$\alpha$}};
}\,,
\qquad\qquad
\tikzmath{
\begin{scope}
\clip[rounded corners = 5] (-.7,0) -- (-.7,-1.4) -- (.7,-1.4) -- (.7,0);
\filldraw[\rColor] (-.7,0) -- (-.7,-1.4) -- (.7,-1.4) -- (.7,0);
\end{scope}
\draw[dotted] (-.7,0) -- (.7,0);
\draw[\NColor,thick] (0,-1.4) -- (0,-.7);
\draw (0,-.7) -- (0,.7); 
\draw (-.7,.7) -- (.7,.7);
\roundNbox{unshaded}{(0,-.7)}{.3}{0}{0}{$\overline{\xi_a}$};
\roundNbox{unshaded}{(0,.7)}{.3}{0}{0}{$\overline{f_a}$};
\node at (.2,.2) {\scriptsize{$\overline{a}$}};
\node at (.2,-.2) {\scriptsize{$\overline{a}$}};
\node at (.5,.9) {\scriptsize{$n$}};
\node at (-.5,.9) {\scriptsize{$n$}};
}\,,
\qquad\qquad
\text{and}
\qquad\qquad
\tikzmath{
\begin{scope}
\clip[rounded corners = 5] (-.7,0) -- (-.7,-1.4) -- (.7,-1.4) -- (.7,0);
\filldraw[\rColor] (-.7,0) -- (-.7,-1.4) -- (.7,-1.4) -- (.7,0);
\end{scope}
\draw[dotted] (-.7,0) -- (.7,0);
\draw (-.7,.7) -- (.7,.7);
\node at (0,.9) {\scriptsize{$n$}};
}
=
\tikzmath{
\begin{scope}
\clip[rounded corners = 5] (-.7,0) -- (-.7,-1.4) -- (.7,-1.4) -- (.7,0);
\filldraw[\rColor] (-.7,0) -- (-.7,-1.4) -- (.7,-1.4) -- (.7,0);
\end{scope}
\draw[dotted] (-.7,0) -- (.7,0);
\draw (-.7,.7) -- (.7,.7);
\draw[\NColor,thick] (0,-1.4) -- (0,-.7);
\roundNbox{unshaded}{(0,-.7)}{.3}{0}{0}{\scriptsize{$\Omega_N$}};
\node at (0,.9) {\scriptsize{$n$}};
}\,.
$$
Here, the $X^{\alt\boxtimes n}$ horizontal strand is read bottom to top, i.e.,
$$
\tikzmath{
\draw (-.7,0) -- node[above]{$\scriptstyle n$} (.7,0);
}
\qquad
=
\tikzmath{
\draw (-.7,0) -- node[above]{$\scriptstyle X^{\alt\boxtimes n}$} (.7,0);
}
\qquad
=
\qquad
\tikzmath{
\draw (-.7,-1) -- node[above]{$\scriptstyle X$} (.7,-1);
\draw (-.7,-.5) -- node[above]{$\scriptstyle \overline{X}$} (.7,-.5);
\draw (-.7,0) -- node[above]{$\scriptstyle X$} (.7,0);
\node at (0,.7) {$\scriptstyle \vdots$};
\draw (-.7,1) -- node[above]{$\scriptstyle X^?$} (.7,1);
}\,.
$$
\end{ex}

\subsection{The inductive limit factor as a realization}
\label{sec:InductiveLimit}

We now give a graphical representation of the inductive limit $\rm II_1$ factor $M_\infty$ from the realized Jones tower from Example \ref{ex:RealizedJonesTower}.
We begin with a short remark about inductive limits in the category of tracial von Neumann algebras and trace-preserving unital $*$-homomorphisms, followed by a brief review of construction of the hyperfinite $\rm II_1$ subfactor $R\subseteq P$ with the opposite standard invariant as our finite depth, finite index $\rm II_1$ subfactor $N\subseteq M$.

\begin{rem}
Consider the category whose objects are pairs $(M,\tr)$  where $M$ is a separable von Neumann algebra and $\tr$ is a faithful normal tracial state, and whose morphisms are trace-preserving unital $*$-homomorphisms (which are automatically normal by \cite[III.2.2.2]{MR2188261} and \cite[Prop.~9.1.1]{JonesVNA}).
It is well known that this category admits inductive limits.
We briefly recall the construction for completeness and convenience of the reader. 

For an increasing sequence of tracial von Neumann algebras $(M_n,\tr_n)$, we get a tracial von Neumann algebra $\varinjlim M_n$ by taking the bicommutant of $\bigcup_n M_n$ on its GNS Hilbert space with respect to the limit trace, which is equipped with the faithful tracial state $\varinjlim \tr := \langle \,\cdot\, \Omega, \Omega\rangle_{L^2\bigcup_n M_n}$.
For any tracial von Neumann algebra $(N,\tr)$ and trace-preserving maps $\varphi_n :M_n \to N$ compatible with the inclusions, we get a unique trace-preserving map $\varphi:\bigcup_n M_n \to N$,
which extends uniquely to a trace-preserving map $\varinjlim M_n\to N$.

Indeed, for a fixed $x\in \varinjlim M_n$, let $\xi \in L^2N$ be the image of $x\Omega$ under the induced map of GNS spaces $L^2(\bigcup_n M_n) \to L^2N$ given by the extension of $m\Omega \mapsto \varphi(m)\Omega$.
Since there is a bounded sequence $(x_k)\subseteq \bigcup_n M_n$ with $x_k \to x$ in $\|\cdot\|_2$, we see
\begin{align*}
\|L_\xi n\Omega\|_2 
&= 
\|\xi n \|_2 
= 
\| Jn^*J \xi\|_2 
= 
\lim \|Jn^*J \varphi(x_k)\Omega\|_2
=
\lim \|\varphi(x_k) n\Omega\|_2
\\&\leq
\limsup \|\varphi(x_k)\|\cdot \|n\Omega\|_2.
=
\limsup \|x_k\|\cdot \|n\Omega\|_2.
\end{align*}
Thus $\xi$ is $N$-bounded, and necessarily of the form $\varphi(x)\Omega$ for some $\varphi(x)\in N$.
Since multiplication is jointly SOT-continuous on bounded subsets 
and $*$ is SOT-continuous on bounded subsets
of $\varinjlim M_n$,
it is easily verified that the above definition of $\varphi(x)$ extends $\varphi$ to a unital $*$-homomorphism $\varinjlim M_n \to N$.
\end{rem}

For the rest of this section, we fix a finite depth finite index $\rm II_1$ subfactor $N\subseteq M$.
Recall that its standard invariant can also be described as the subfactor planar algebra $\cP_\bullet$ whose box spaces are given by
$$
\cP_{k,+}:= \End(X^{\alt\boxtimes k})
\qquad\text{and}\qquad
\cP_{k,-}:= \End(\overline{X}^{\alt\boxtimes k}).
$$
These finite dimensional von Neumann algebras are equipped with the canonical traces which agree with the categorical traces on $\cC(N\subseteq M)$.

We now rapidly recall how to construct a hyperfinite $\rm II_1$ subfactor with the opposite standard invariant \cite{MR996454,MR1278111}.
For a detailed diagrammatic exposition (in the multifactor setting), see \cite[\S5.1]{2010.01067}.
Since $N\subseteq M$ is finite depth, the inductive limit tracial von Neumann algebras
$\varinjlim \cP_{k,\pm}$ are hyperfinite $\rm II_1$ factor.
We have a trace-preserving injection $\cP_{n,+}\hookrightarrow \cP_{n+1,-}$ by adding an $\overline{X}$ strand to the left, giving a $\rm II_1$ subfactor
$$
R := \varinjlim \cP_{n,+} \subseteq \varinjlim \cP_{n,-} =:P.
$$
It is well known that by the Ocneanu Compactness Theorem \cite[Thm.~5.7.6]{MR1473221}, the inclusion $R^{\op}\subseteq P^{\op}$ has the same standard invariant as $N\subseteq M$ \cite{MR996454,MR1278111}, and thus $R\subseteq P$ has the opposite standard invariant.

Letting $\cC:={}_N\cC_N$ be the unitary fusion category generated by ${}_NM_N$, the above construction gives a fully faithful unitary tensor functor $F:\cC^{\op}\to \fgpBim(R)$.
By \cite[Prop.~3.2]{MR1424954}, simple objects $c\in \Irr(\cC)$ correspond to minimal projections $p\in M_0'\cap M_{2n}$ under the correspondence
$$
M_0'\cap M_{2n} \ni p \longmapsto pM_n \in \fgpBim(N).
$$
Now consider the Jones tower
$$
R_0 = R \subseteq P = R_1 \subseteq R_2 \subseteq  R_3\subseteq \cdots 
$$
of our hyperfinite $\rm II_1$ subfactor $R\subseteq P$.
Simple objects $c^{\op}\in \Irr(\cC^{\op})$ correspond to the opposite projections $p^{\op}\in R_0'\cap R_{2n} \cong (M_0'\cap M_{2n})^{\op}$, which corresponds to the bimodule $p^{\op}R_n$.
As $R_n$ is also isomorphic to $\varinjlim \cP_{k,\pm}$ (here, $\pm$ depends on the parity of $n$), we can realize the bimodule $p^{\op}R_n$ graphically as an inductive limit:
\begin{equation}
\label{eq:HyperfiniteAction}
F(c^{\op})
=
p^{\op}R_n
\cong
\varinjlim
\cC(
c\boxtimes X^{\alt\boxtimes k}\to X^{\alt\boxtimes k}
)
\end{equation}
under the isometric right $R_n$-inner product preserving inclusions
$$
\tikzmath{
\draw (-.7,0) -- (.7,0);
\draw (0,0) -- (0,-.7);
\roundNbox{unshaded}{(0,0)}{.3}{0}{0}{$f$};
\node at (-.9,0) {\scriptsize{$k$}};
\node at (.9,0) {\scriptsize{$k$}};
\node at (.2,-.5) {\scriptsize{$c$}};
}
\longmapsto
\tikzmath{
\draw (-.7,0) -- (.7,0);
\draw (-.7,.5) -- (.7,.5);
\draw (0,0) -- (0,-.7);
\roundNbox{unshaded}{(0,0)}{.3}{0}{0}{$f$};
\node at (-.9,0) {\scriptsize{$k$}};
\node at (.9,0) {\scriptsize{$k$}};
\node at (.2,-.5) {\scriptsize{$c$}};
}\,.
$$
Indeed, $p^{\op}R_n\cong p^{\op}R_{n}f_n\subseteq p^{\op}R_{2n}f_n$, where $f_n$ is the multistep Jones projection \cite{MR965748,MR1424954}.
In diagrams, for $\xi \in \cC(c\boxtimes X^{\alt\boxtimes k+n}\to X^{\alt\boxtimes k+n}) \subseteq R_n \hookrightarrow R_{2n}$, we have 
$$
f_n = 
\frac{1}{[M:N]^{n/2}}
\tikzmath{
\draw (-.5,.3) node[above]{$\scriptstyle n$} arc (90:-90:.3cm);
\draw (.5,.3) node[above]{$\scriptstyle n$} arc (90:270:.3cm);
}
\qquad\qquad
\Longrightarrow
\qquad\qquad
p^{\op} \xi f_n
=
\frac{1}{[M:N]^{n/2}}
\tikzmath{
\draw[rounded corners=5, dashed] (-1.5,-1) rectangle (.7,.6);
\draw (-1.7,.2) node[left]{$\scriptstyle k$} -- (1.1,.2) node[right]{$\scriptstyle k$};
\draw (-.7,-.2) -- node[above]{$\scriptstyle n$} (-.3,-.2);
\draw (.3,-.2) arc (90:-90:.2cm) -- node[below]{$\scriptstyle n$} (-.7,-.6);
\draw (1.1,-.2) node[right]{$\scriptstyle n$} arc (90:270:.2cm) node[right]{$\scriptstyle n$};
\draw (-1.3,-.4) -- (-1.7,-.4) node[left]{$\scriptstyle 2n$};
\roundNbox{unshaded}{(0,0)}{.4}{-.1}{-.1}{$\xi$};
\roundNbox{unshaded}{(-1,-.4)}{.4}{-.1}{-.1}{$p$};
}\,.
$$
It is now visibly evident how to implement the isomorphism \eqref{eq:HyperfiniteAction}.

Now, since we have two subfactors $N\subseteq M$ and $R\subseteq P$ with opposite standard invariants, we get two fully faithful unitary tensor functors $F: \cC^{\op}\to \fgpBim(R)$ and $G: \cC\to \fgpBim(N)$.
Consider the \emph{symmetric enveloping Q-system} 
$S:= \bigoplus_{c\in \Irr(\cC)} c^{\op}\boxtimes c \in \cC^{\op}\boxtimes \cC$, which is simple as an $S-S$ bimodule, giving the realized 
$\rm II_1$ factor $|(F\boxtimes G)(S)|$.

\begin{prop}
\label{prop:SymmetricEvelopeInductiveLimit}
The realization $|(F\boxtimes G)(S)|$ is $*$-isomorphic to the inductive limit $\rm II_1$ factor $M_\infty = \varinjlim M_n$.
\end{prop}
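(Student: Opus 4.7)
The plan is to exhibit a compatible family of trace-preserving unital $*$-homomorphisms $\iota_n : M_n \to |(F\boxtimes G)(S)|$ that intertwine the Jones basic construction inclusions $M_n \hookrightarrow M_{n+1}$, and then invoke the universal property of inductive limits of tracial von Neumann algebras. Both sides are $\rm II_1$ factors: $M_\infty$ by assumption, and $|(F\boxtimes G)(S)|$ because $S$ is simple as an $S$--$S$ bimodule in $\cC^{\op}\boxtimes\cC$. The latter follows from Example \ref{ex:MoritaEquivalenceBetweenCopCandZ(C)}: $S$--$S$ bimodules correspond to $\cZ(\cC)$ via $\Ad(S)$ and $S$ itself corresponds to $1_{\cZ(\cC)}$. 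Consequently, to upgrade the resulting injective trace-preserving map $M_\infty \to |(F\boxtimes G)(S)|$ to a $*$-isomorphism, it suffices to show that $\bigcup_n \iota_n(M_n)$ is $\|\cdot\|_2$-dense.

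The construction proceeds via the coend description of both sides. Applying Schur's lemma to $S = \bigoplus_{c\in \Irr(\cC)} c^{\op}\boxtimes c$ collapses the coend for the realization to
\[
|(F\boxtimes G)(S)| \;\cong\; \bigoplus_{c\in \Irr(\cC)} F(c^{\op}) \otimes_{\bbC} G(c),
\]
while \eqref{eq:CoendForMn} gives
\[
M_n \;\cong\; \bigoplus_{c\in \Irr(\cC)} \cC(c\boxtimes X^{\alt\boxtimes n}\to X^{\alt\boxtimes n}) \otimes_{\bbC} G(c).
\]
By \eqref{eq:HyperfiniteAction}, each summand $F(c^{\op})$ is exactly the inductive limit of $\cC(c\boxtimes X^{\alt\boxtimes k}\to X^{\alt\boxtimes k})$ under the structure maps $f\mapsto f\boxtimes \id_X$ (appending a strand above in the convention of \eqref{eq:BottomRightTopLeft}). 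I would define $\iota_n$ summand-by-summand as the canonical inclusion into this inductive limit, tensored with $\id_{G(c)}$.

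The routine technical checks are then: (i) $\iota_n$ is a unital $*$-algebra map; (ii) $\iota_n$ is trace-preserving; (iii) the $\iota_n$ are compatible with the Jones tower inclusions; and (iv) density. Item (i) follows because the multiplication and $*$-structure on both sides are governed by the same graphical formulas: the Q-system product in $S$ restricted to the components $c^{\op}\boxtimes c$ and $d^{\op}\boxtimes d$ uses an ONB $\alpha\in \cC(a\otimes b\to c)$ paired against its opposite in $\cC^{\op}$, and under $F\boxtimes G$ this reproduces the product formula of Example \ref{ex:RealizedJonesTower}. Item (ii) uses that the canonical trace in each coend realization picks out the $c = 1_{\cC}$ summand, matching the categorical trace on $M_n$ computed from $X^{\alt\boxtimes n}$. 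Item (iv) follows at once from (iii) combined with the $\|\cdot\|_2$-density of the defining sequence of the inductive limit $F(c^{\op})$.

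The principal obstacle is item (iii): verifying that the inclusion $M_n\hookrightarrow M_{n+1}$ in the Jones tower corresponds precisely, summand by summand, to the structure map of the inductive limit defining $F(c^{\op})$, namely to appending one further $X$--strand on top of the picture \eqref{eq:BottomRightTopLeft}. This is exactly where constructing $R\subseteq P$ with the \emph{opposite} standard invariant of $N\subseteq M$ becomes essential: the multistep Jones projection $f_n$ inside the hyperfinite tower identifies $p^{\op} R_n$ with $p^{\op} R_n f_n \subseteq p^{\op} R_{2n} f_n$ in the discussion preceding \eqref{eq:HyperfiniteAction}, and this identification is mirrored graphically, under $F\boxtimes G$ and Frobenius reciprocity, by the tensorator manipulation relating multiplication on $M_{n+1}$ to multiplication on $M_n$ after conjugating by a Jones projection. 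Once this single diagrammatic compatibility is pinned down, everything else assembles immediately, and the resulting extension $M_\infty \to |(F\boxtimes G)(S)|$ is the desired $*$-isomorphism.
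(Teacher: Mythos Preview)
Your proposal is correct and follows essentially the same route as the paper: reduce $|(F\boxtimes G)(S)|$ to $\bigoplus_{c}F(c^{\op})\otimes_\bbC G(c)$ via Yoneda/Schur, use \eqref{eq:HyperfiniteAction} to write each $F(c^{\op})$ as an inductive limit of $\cC(c\boxtimes X^{\alt\boxtimes k}\to X^{\alt\boxtimes k})$, and match this against the coend description \eqref{eq:CoendForMn} of $M_n$. The only difference is packaging: the paper absorbs your checks (i)--(iv) into a citation of the general coend realization from \cite{MR3948170}, whereas you unpack them explicitly and correctly identify the compatibility of the two Jones tower inclusions (your item (iii)) as the one substantive point, which is precisely why $R\subseteq P$ was built with the \emph{opposite} standard invariant.
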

\begin{proof}
Observe that by the Yoneda Lemma, we have canonical isomorphisms
\begin{align*}
|(F\boxtimes G)(S)|
&\cong 
\bigoplus\limits_{a,b\in\Irr(\cC)} (\cC^\op\boxtimes\cC)(a^\op\boxtimes b\to S)\otimes_{\bbC}(F\boxtimes G)(a^\op\boxtimes b) 
\\&=
\bigoplus\limits_{c\in\Irr(\cC)} (\cC^\op\boxtimes\cC)(c^\op\boxtimes c\to S)\otimes_{\bbC}(F\boxtimes G)(c^\op\boxtimes c) 
\\&\cong
\bigoplus\limits_{c\in\Irr(\cC)} (F\boxtimes G)(c^\op\boxtimes c)  
\\&= 
\bigoplus\limits_{c\in\Irr(\cC)} F(c^{\op}) \otimes_{\bbC} G(c)
\end{align*}
Thus $|(F\boxtimes G)(S)|$ is $*$-isomorphic to the more general coend realization of $F^\circ$ and $G^\circ$ from \cite[\S4]{MR3948170} (see also \cite[Ex.~5.33]{MR3687214}), where as above, $\circ$ denotes taking the underlying vector space.
Using this identification,
\[
\bigoplus\limits_{c\in\Irr(\cC)} F(c^{\op}) \otimes_{\bbC} G(c)
\cong
\varinjlim
\bigoplus\limits_{c\in\Irr(\cC)} \cC(c\boxtimes X^{\alt\boxtimes k} \to X^{\alt\boxtimes k})
\otimes_{\bbC} G(c)
\underset{\text{\eqref{eq:CoendForMn}}}{\cong}
\varinjlim M_n.
\qedhere
\]
\end{proof}

\subsection{Embedding \texorpdfstring{$\cZ(\cC)$}{Z(C)} into \texorpdfstring{$\Chi(M_\infty)$}{chi(Minfty)} when \texorpdfstring{$N\subseteq M$}{N in M} is finite depth}

As in \S\ref{sec:InductiveLimit} above, for this section, we fix a finite depth, finite index $\rm II_1$ subfactor $N\subseteq M$, and let $\cC={}_N\cC_N$ denote the unitary fusion category of $N-N$ bimodules generated by ${}_NM_N$.
In this section, we extend this bijection to a unitary tensor functor $\Phi:\cZ(\cC)\to \fgpBim(M_\infty)$ such that when $N$ is non-Gamma, $\Phi$ takes values in $\Chi(M_\infty)$ as is braided.
To do so, we use the description of the inductive limit $\rm II_1$ factor $M_\infty=|(F\boxtimes G)(S)|$ obtained from iterating the Jones basic construction afforded by Proposition \ref{prop:SymmetricEvelopeInductiveLimit}
for the fully faithful unitary tensor functors
$F: \cC^{\op}\to \fgpBim(R)$ from \eqref{eq:HyperfiniteAction} and $G: \cC\to \fgpBim(N)$ associated to the subfactor $N\subset M$.

First, applying \cite[Cor.~C]{2105.12010} in the $\rm W^*$ setting to the fully faithful unitary tensor functor $F\boxtimes G : \cC^{\op}\boxtimes \cC\to \fgpBim(R\otimes N)$, bimodule realization gives a fully faithful tensor functor from $\Bim_{\cC^{\op}\boxtimes \cC}(S) \to \fgpBim(M_\infty)$.
(See also Remark \ref{rem:QBimodules} above.)
Explicitly, on an $S-S$ bimodule
$X
=
\bigoplus_{a,b\in \Irr(\cC)} X_{ab} \otimes (a^{\op} \boxtimes b)$,
we have
$$
|X|
=
\Hom\left(
(R\otimes N)_{R\otimes N} 
\to 
\bigoplus_{a,b\in \Irr(\cC)} X_{ab} \otimes \big(F(a^{\op}) \otimes G(b)\big)_{R\otimes N}
\right)
\cong
\bigoplus_{a,b\in \Irr(\cC)}
X_{ab}\otimes F(a^{\op}) \otimes G(b).
$$
Using the well-known equivalence 
$\Bim_{\cC^{\op}\boxtimes \cC}(S)\cong \cZ(\cC)$
from Example \ref{ex:MoritaEquivalenceBetweenCopCandZ(C)},
we get the following proposition.

\begin{prop}
Bimodule realization gives a fully faithful unitary tensor functor $\Phi:\cZ(\cC) \to \fgpBim(M_\infty)$.
\end{prop}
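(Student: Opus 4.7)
The plan is to assemble this proposition from the three pieces already developed: (i) the Q-system realization equivalence, (ii) the identification of $\cZ(\cC)$ with $\Bim_{\cC^{\op}\boxtimes\cC}(S)$, and (iii) the identification of $M_\infty$ with the realization $|(F\boxtimes G)(S)|$. None of these steps requires new computation; the content of the proposition is essentially that these pieces combine coherently.

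First I would invoke Proposition \ref{prop:SymmetricEvelopeInductiveLimit} to identify $M_\infty$ with $|(F\boxtimes G)(S)|$ as $\rm II_1$ factors, so that $M_\infty$ is the realization of the symmetric enveloping Q-system along the fully faithful unitary tensor functor $F\boxtimes G : \cC^{\op}\boxtimes \cC \to \fgpBim(R\otimes N)$. Next, I would apply the $\rm W^*$ version of \cite[Cor.~C]{2105.12010} (as summarized in Remark \ref{rem:QBimodules}): realization $|\,\cdot\,|$ gives a dagger 2-equivalence from the Q-system completion of $\WStarRCorr$ to $\WStarRCorr$ itself. Specialized to the $S-S$ bimodules in the image of $F\boxtimes G$ (or rather, to the full subcategory generated by Q-system completion of that image), this produces a fully faithful unitary tensor functor $\Bim_{\cC^{\op}\boxtimes\cC}(S)\to \fgpBim(M_\infty)$, where full faithfulness uses that $F\boxtimes G$ is itself fully faithful (since both $F$ and $G$ are).

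Finally, I would precompose with the unitary tensor equivalence $\cZ(\cC)\simeq \Bim_{\cC^{\op}\boxtimes\cC}(S)$ noted in Example \ref{ex:MoritaEquivalenceBetweenCopCandZ(C)}, which comes from the standard fact that $\cC$ is a Morita equivalence $\cC^{\op}\boxtimes \cC - \cZ(\cC)$ bimodule category with $\underline{\End}_{\cC^{\op}\boxtimes\cC}(1_\cC)=S$. The composite $\Phi$ is the desired fully faithful unitary tensor functor. Since each step in the composition is a dagger/$*$-preserving and monoidally coherent construction, there is nothing further to check.

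The only point requiring genuine care — and in my view the main (mild) obstacle — is making sure the $\rm W^*$ version of \cite[Cor.~C]{2105.12010} applies in the present separable, non-multifusion setting: $\cZ(\cC)$ has finitely many simples but $\fgpBim(M_\infty)$ is a large $\rm W^*$-tensor category, and one must verify that the realization $\Phi(z)$ of an object $z\in\cZ(\cC)$ lies in $\fgpBim(M_\infty)$ (i.e.\ is a bifinite bimodule). This follows because $\Phi(z)$ is finitely generated projective as a right $M_\infty$-module by construction of realization (it is a hom-space into a finite orthogonal direct sum of $S-S$ bimodules in the image of $F\boxtimes G$), and the left action is dualizable because $z$ is dualizable in $\cZ(\cC)$ and $\Phi$ preserves duals.
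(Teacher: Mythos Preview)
Your proposal is correct and follows essentially the same approach as the paper: the paper assembles $\Phi$ by applying \cite[Cor.~C]{2105.12010} in the $\rm W^*$ setting to $F\boxtimes G$ to get $\Bim_{\cC^{\op}\boxtimes\cC}(S)\to\fgpBim(M_\infty)$, then precomposes with the equivalence $\cZ(\cC)\cong\Bim_{\cC^{\op}\boxtimes\cC}(S)$ from Example~\ref{ex:MoritaEquivalenceBetweenCopCandZ(C)}, exactly as you do. Your extra paragraph checking that $\Phi(z)$ lands in $\fgpBim(M_\infty)$ is a reasonable point of care that the paper leaves implicit in its citation of \cite{2105.12010}.
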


We now want an explicit model for $\Phi(z,\sigma_z)$ for each object $(z,\sigma_z)\in \cZ(\cC)$.
To do so, we give an explicit description of the $S-S$ bimodule $X_z \in \Bim_{\cC^{\op}\boxtimes \cC}(S)$ under the unitary tensor equivalence.

\begin{defn}
Given $(z,\sigma_z)\in \cZ(\cC)$, we define $X_z:=\bigoplus_{a,b\in\Irr(\cC)} \cC(b\to z\otimes a) \otimes (a^\op\boxtimes b)$.
The left $S$-module structure of $X_z$ is given as follows.
First, we observe
\begin{align*}
S\otimes_{\cC^{\op}\boxtimes \cC} X_z
&= 
\bigoplus_{a,b,c\in\Irr(\cC)}
\cC(b\to z\otimes a) \otimes 
( (c^\op\otimes a^\op)\boxtimes (c\otimes b) )
\\
&= 
\bigoplus_{a,b,c,d,e\in\Irr(\cC)} 
\cC(b\to z\otimes a) \otimes \cC^\op(d^\op\to c^\op\otimes a^\op) \otimes \cC(e\to c\otimes b) \otimes
(d^\op\boxtimes e)
\\
&= 
\bigoplus_{a,b,c,d,e\in\Irr(\cC)} 
\cC(b\to z\otimes a) \otimes \cC(c\otimes a\to d) \otimes \cC(e\to c\otimes b) \otimes
(d^\op\boxtimes e)
\end{align*}
The left action map is given component-wise by
$$
\tikzmath{
\draw[thick, \zColor] (-.15,.3) -- node[left]{$\scriptstyle z$} (-.15,.7);
\draw (.15,.3) -- node[right]{$\scriptstyle a$} (.15,.7);
\draw (0,-.7) -- node[right]{$\scriptstyle b$} (0,-.3);
\roundNbox{fill=white}{(0,0)}{.3}{0}{0}{$f$}
}
\otimes
\tikzmath{
\draw (-.15,-.3) -- node[left]{$\scriptstyle c$} (-.15,-.7);
\draw (.15,-.3) -- node[right]{$\scriptstyle a$} (.15,-.7);
\draw (0,.7) -- node[right]{$\scriptstyle d$} (0,.3);
\roundNbox{fill=white}{(0,0)}{.3}{0}{0}{$g$}
}
\otimes
\tikzmath{
\draw (-.15,.3) -- node[left]{$\scriptstyle c$} (-.15,.7);
\draw (.15,.3) -- node[right]{$\scriptstyle b$} (.15,.7);
\draw (0,-.7) -- node[right]{$\scriptstyle e$} (0,-.3);
\roundNbox{fill=white}{(0,0)}{.3}{0}{0}{$h$}
}
\longmapsto
\tikzmath{
\draw (0,1) -- (0,1.9);
\draw (-.075,-1.7) -- (-.075,-1);
\draw (.3,0) -- (.3,.9);
\draw (.15,-.7) -- (.15,0);
\draw[\zColor,thick] (0,.3) .. controls ++(90:.3cm) and ++(270:.3cm) .. (-.6,.9) -- (-.6,1.9);
\draw[knot] (-.3,-.7) -- (-.3,.9);
\roundNbox{fill=white}{(0,1.2)}{.3}{.15}{.15}{$g$};
\roundNbox{fill=white}{(-.075,-1)}{.3}{.075}{.075}{$h$};
\roundNbox{fill=white}{(.15,0)}{.3}{0}{0}{$f$};
\node at (.125,-1.5) {\scriptsize{$e$}};
\node at (-.5,0) {\scriptsize{$c$}};
\node at (.35,-.5) {\scriptsize{$b$}};
\node at (.5,.5) {\scriptsize{$a$}};
\node[\zColor] at (-.8,1.7) {\scriptsize{$z$}};
\node at (.2,1.7) {\scriptsize{$d$}};
}
$$

The right $S$-module structure is defined similarly.
Observe
\begin{align*}
X_z\otimes_{\cC^{\op}\boxtimes \cC} S 
&= 
\bigoplus_{a,b,c\in\Irr(\cC)} 
\cC(b\to z\otimes a) 
\otimes 
(a^\op\otimes c^\op)\boxtimes (b\otimes c) 
\\
&= 
\bigoplus_{a,b,c,d,e\in\Irr(\cC)} 
\cC(b\to z\otimes a)\otimes \cC(a\otimes c\to d)\otimes \cC(e\to b\otimes c) \otimes
(d^\op\boxtimes e),
\end{align*}
and the right action map is given component-wise by
$$
\tikzmath{
\draw[thick, \zColor] (-.15,.3) -- node[left]{$\scriptstyle z$} (-.15,.7);
\draw (.15,.3) -- node[right]{$\scriptstyle a$} (.15,.7);
\draw (0,-.7) -- node[right]{$\scriptstyle b$} (0,-.3);
\roundNbox{fill=white}{(0,0)}{.3}{0}{0}{$f$};
}
\otimes
\tikzmath{
\draw (-.15,-.3) -- node[left]{$\scriptstyle a$} (-.15,-.7);
\draw (.15,-.3) -- node[right]{$\scriptstyle c$} (.15,-.7);
\draw (0,.7) -- node[right]{$\scriptstyle d$} (0,.3);
\roundNbox{fill=white}{(0,0)}{.3}{0}{0}{$g$};
}
\otimes
\tikzmath{
\draw (-.15,.3) -- node[left]{$\scriptstyle b$} (-.15,.7);
\draw (.15,.3) -- node[right]{$\scriptstyle c$} (.15,.7);
\draw (0,-.7) -- node[right]{$\scriptstyle e$} (0,-.3);
\roundNbox{fill=white}{(0,0)}{.3}{0}{0}{$h$};
}
\longmapsto
\tikzmath{
\draw (0,0) -- (0,.7);
\draw (-.2,-.7) -- (-.2,0);
\draw (.4,-.7) -- (.4,.7);
\draw (.2,1) -- (.2,1.7);
\draw (.1,-1.7) -- (.1,-1);
\draw[thick, \zColor] (-.4,0) -- (-.4,1.7);
\roundNbox{fill=white}{(.2,1)}{.3}{.1}{.1}{$g$};
\roundNbox{fill=white}{(-.2,0)}{.3}{.1}{.1}{$f$};
\roundNbox{fill=white}{(.1,-1)}{.3}{.2}{.2}{$h$};
\node at (.3,-1.5) {\scriptsize{$e$}};
\node at (-.4,-.5) {\scriptsize{$b$}};
\node at (.6,0) {\scriptsize{$c$}};
\node at (-.2,.5) {\scriptsize{$a$}};
\node[\zColor] at (-.6,1.5) {\scriptsize{$z$}};
\node at (.4,1.5) {\scriptsize{$d$}};
}
$$
\end{defn}

We now describe the realization 
\begin{equation}
\label{eq:RealizationXz}
|X_z|
=
\varinjlim
\bigoplus_{a,b\in \Irr(\cC)}
\cC(b\to z\otimes a)
\otimes_\bbC
\cC(a\boxtimes X^{\alt \boxtimes n}\to X^{\alt \boxtimes n})
\otimes_\bbC
G(b).
\end{equation}
First, note that by semisimplicity and Frobenius reciprocity, we can alternatively describe the first two tensorands in the direct sum for $|X_z|$ as 
\begin{equation}
\label{eq:SemisimpleIso}
\left.
\begin{aligned}
\bigoplus_{a\in \Irr(\cC)}
\cC(b\to z\otimes a)
&\otimes_\bbC
\cC(a\boxtimes X^{\alt \boxtimes n} \to X^{\alt \boxtimes n})
\\&\cong
\bigoplus_{a\in \Irr(\cC)}
\cC( \overline{z}\boxtimes b\to a)
\otimes_\bbC
\cC(a \to X^{\alt \boxtimes n}\boxtimes \overline{X^{\alt \boxtimes n}})
\\&\cong
\cC(\overline{z}\boxtimes b 
\to
X^{\alt \boxtimes n}\boxtimes \overline{X^{\alt \boxtimes n}})
\\&\cong
\cC(
b\boxtimes X^{\alt \boxtimes n}
\to
z\boxtimes 
X^{\alt \boxtimes n})
\end{aligned}
\right\}
\end{equation}
We may thus graphically represent elements in a dense subspace of $|X_z|$ as
$$
\bigoplus_{b\in\Irr(\cC)}
\tikzmath{
\begin{scope}
\clip[rounded corners = 5] (-.7,0) -- (-.7,-1.4) -- (.7,-1.4) -- (.7,0);
\filldraw[\rColor] (-.7,0) -- (-.7,-1.4) -- (.7,-1.4) -- (.7,0);
\end{scope}
\draw[dotted] (-.7,0) -- (.7,0);
\draw[\NColor,thick] (0,-1.4) -- (0,-.7);
\draw (0,-.7) -- (0,.7); 
\draw (-.7,.95) node[left]{$\scriptstyle n$} -- (.7,.95) node[right]{$\scriptstyle n$};
\draw[thick, \zColor] (-.7,.65) node[left]{$\scriptstyle z$} -- (0,.65);
\draw[cyan, dashed] (-.7,.1) -- (.7,1.5);
\draw[cyan, ->] (-.6,.1) -- (-.8,.3);
\draw[cyan, ->] (.7,1.4) -- (.5,1.6);
\roundNbox{unshaded}{(0,-.7)}{.3}{0}{0}{$\xi_b$};
\roundNbox{unshaded}{(0,.8)}{.35}{-.05}{-.05}{$f_b$};
\node at (.2,.2) {\scriptsize{$b$}};
\node at (.2,-.2) {\scriptsize{$b$}};
}
\in 
\varinjlim
\bigoplus_{b\in \Irr(\cC)}
\cC(b\boxtimes X^{\alt \boxtimes n} \to z\boxtimes X^{\alt \boxtimes n})
\otimes_\bbC 
G(b)
$$
where as in \eqref{eq:BottomRightTopLeft}, the horizontal line in the top half of the diagram should be viewed as slightly tilted going from the bottom right to the top left, as indicated by the cyan arrows above.
The right $M_\infty$-action on $|X_z|$ is the obvious diagrammatic one, and the left one is similar, but uses the half-braiding for $z$: 
\[
(f_a\otimes\xi_a)\rhd (g_b\otimes \eta_b) 
:=
\sum_{\substack{
c\in \Irr(\cC)
\\
\alpha\in\ONB(a\otimes b\to c)
}}
\tikzmath{
\begin{scope}
\clip[rounded corners = 5] (-1.2,0) -- (-1.2,-2.2) -- (1.2,-2.2) -- (1.2,0);
\filldraw[\rColor] (-1.2,0) -- (-1.2,-2.2) -- (1.2,-2.2) -- (1.2,0);
\end{scope}
\draw[dotted] (-1.2,0) -- (1.2,0);
\draw[\NColor,thick] (-.5,-1.4) arc (-180:0:.5cm);
\draw[\NColor, thick] (0,-1.9) -- (0,-2.2);
\filldraw[\NColor] (0,-1.9) circle (.05cm);
\draw[thick, \zColor] (.2,1.1) .. controls ++(180:.3cm) and ++(0:.3cm) .. (-.4,.7) -- (-1.2,.7) node[above, xshift=.2cm]{$\scriptstyle z$};
\draw (-1.2,1.3) node[above, xshift=.2cm]{$\scriptstyle n$} -- node[above]{$\scriptstyle n$} (1.2,1.3) node[above, xshift=-.2cm]{$\scriptstyle n$};
\draw[knot] (.5,.8) arc (0:-180:.5cm) -- (-.5,1);
\draw (.5,-.8) arc (0:180:.5cm);
\draw (0,-.3) -- (0,.3); 
\filldraw (0,-.3) circle (.05cm);
\filldraw (0,.3) circle (.05cm);
\roundNbox{unshaded}{(.5,-1.1)}{.3}{0}{0}{$\eta_b$};
\roundNbox{unshaded}{(.5,1.2)}{.4}{-.1}{-.1}{$g_b$};
\roundNbox{unshaded}{(-.5,-1.1)}{.3}{0}{0}{$\xi_a$};
\roundNbox{unshaded}{(-.5,1.3)}{.3}{0}{0}{$f_a$};
\node at (.7,.6) {\scriptsize{$b$}};
\node at (.7,-.6) {\scriptsize{$b$}};
\node at (-.7,.5) {\scriptsize{$a$}};
\node at (-.7,-.6) {\scriptsize{$a$}};
\node at (.2,.2) {\scriptsize{$c$}};
\node at (.2,-.2) {\scriptsize{$c$}};
\node at (0,.5) {\scriptsize{$\alpha^\dag$}};
\node at (0,-.5) {\scriptsize{$\alpha$}};
}
\,.
\]
The $M_\infty$-valued inner product of $|X_z|$ is given by
\[
\langle f_a\otimes \xi_a|g_b\otimes \eta_b\rangle_{M_\infty}^{|X_z|}
:=
\sum_{\substack{
c\in \Irr(\cC)
\\
\alpha\in\ONB(a\otimes b\to c)
}}
\tikzmath{
\begin{scope}
\clip[rounded corners = 5] (-1.2,0) -- (-1.2,-2.2) -- (1.2,-2.2) -- (1.2,0);
\filldraw[\rColor] (-1.2,0) -- (-1.2,-2.2) -- (1.2,-2.2) -- (1.2,0);
\end{scope}
\draw[dotted] (-1.2,0) -- (1.2,0);
\draw[\NColor,thick] (-.5,-1.4) arc (-180:0:.5cm);
\draw[\NColor, thick] (0,-1.9) -- (0,-2.2);
\filldraw[\NColor] (0,-1.9) circle (.05cm);
\draw[thick, \zColor] (.2,1.1) -- node[below]{$\scriptstyle z$} (-.2,1.1);
\draw (-1.2,1.3) node[above, xshift=.2cm]{$\scriptstyle n$} -- node[above]{$\scriptstyle n$} (1.2,1.3) node[above, xshift=-.2cm]{$\scriptstyle n$};
\draw (.5,.8) arc (0:-180:.5cm);
\draw (.5,-.8) arc (0:180:.5cm);
\draw (0,-.3) -- (0,.3); 
\filldraw (0,-.3) circle (.05cm);
\filldraw (0,.3) circle (.05cm);
\roundNbox{unshaded}{(.5,-1.1)}{.3}{0}{0}{$\eta_b$};
\roundNbox{unshaded}{(.5,1.2)}{.4}{-.1}{-.1}{$g_b$};
\roundNbox{unshaded}{(-.5,-1.1)}{.3}{0}{0}{$\overline{\xi_a}$};
\roundNbox{unshaded}{(-.5,1.2)}{.4}{-.1}{-.1}{$\overline{f_a}$};
\node at (.7,.6) {\scriptsize{$b$}};
\node at (.7,-.6) {\scriptsize{$b$}};
\node at (-.7,.6) {\scriptsize{$a$}};
\node at (-.7,-.6) {\scriptsize{$a$}};
\node at (.2,.2) {\scriptsize{$c$}};
\node at (.2,-.2) {\scriptsize{$c$}};
\node at (0,.5) {\scriptsize{$\alpha^\dag$}};
\node at (0,-.5) {\scriptsize{$\alpha$}};
}
\,.
\]
By the definition of the realization \eqref{eq:RealizationXz} for $|X_z|$ and $|X_w|$
and the right and left $S$-action on $X_z$ and $X_w$ respectively, the tensorator $\Phi^2_{z,w}$ is given on $|X_z|\boxtimes_{M_\infty} |X_w|$ by
$$
\tikzmath{
\begin{scope}
\clip[rounded corners = 5] (-.7,0) -- (-.7,-1.4) -- (.7,-1.4) -- (.7,0);
\filldraw[\rColor] (-.7,0) -- (-.7,-1.4) -- (.7,-1.4) -- (.7,0);
\end{scope}
\draw[dotted] (-.7,0) -- (.7,0);
\draw[\NColor,thick] (0,-1.4) -- (0,-.7);
\draw (0,-.7) -- (0,.8); 
\draw (-.7,1.1) node[above, xshift=.2cm]{$\scriptstyle n$} -- (.7,1.1) node[above, xshift=-.2cm]{$\scriptstyle n$};
\draw[\zColor,thick] (0,.4) -- (-.7,.4) node[above, xshift=.2cm]{$\scriptstyle z$};
\filldraw[\zColor] (0,.4) circle (.05cm);
\roundNbox{unshaded}{(0,1.1)}{.3}{0}{0}{$f_a$};
\roundNbox{unshaded}{(0,-.7)}{.3}{0}{0}{$\xi_b$};
\node at (.2,.6) {$\scriptstyle a$};
\node at (.2,.2) {$\scriptstyle b$};
\node at (.2,-.2) {$\scriptstyle b$};
}
\underset{M_\infty}{\boxtimes}
\tikzmath{
\begin{scope}
\clip[rounded corners = 5] (-.7,0) -- (-.7,-1.4) -- (.7,-1.4) -- (.7,0);
\filldraw[\rColor] (-.7,0) -- (-.7,-1.4) -- (.7,-1.4) -- (.7,0);
\end{scope}
\draw[dotted] (-.7,0) -- (.7,0);
\draw[\NColor,thick] (0,-1.4) -- (0,-.7);
\draw (0,-.7) -- (0,.8); 
\draw (-.7,1.1) node[above, xshift=.2cm]{$\scriptstyle n$} -- (.7,1.1) node[above, xshift=-.2cm]{$\scriptstyle n$};
\draw[\wColor,thick] (0,.4) -- (-.7,.4) node[above, xshift=.2cm]{$\scriptstyle w$};
\filldraw[\wColor] (0,.4) circle (.05cm);
\roundNbox{unshaded}{(0,1.1)}{.3}{0}{0}{$g_c$};
\roundNbox{unshaded}{(0,-.7)}{.3}{0}{0}{$\eta_d$};
\node at (.2,.6) {$\scriptstyle c$};
\node at (.2,.2) {$\scriptstyle d$};
\node at (.2,-.2) {$\scriptstyle d$};
}
\quad
\overset{\Phi^2_{z,w}}{\longmapsto}
\quad
\sum_{\substack{
e,f\in \Irr(\cC)
\\
\alpha\in\ONB(a\otimes c\to e)
\\
\beta\in\ONB(b\otimes d\to f)
}}
\tikzmath{
\begin{scope}
\clip[rounded corners = 5] (-1.2,0) -- (-1.2,-2.2) -- (1.2,-2.2) -- (1.2,0);
\filldraw[\rColor] (-1.2,0) -- (-1.2,-2.2) -- (1.2,-2.2) -- (1.2,0);
\end{scope}
\draw[dotted] (-1.2,0) -- (1.2,0);
\draw[\NColor,thick] (-.5,-1.4) arc (-180:0:.5cm);
\draw[\NColor, thick] (0,-1.9) -- (0,-2.2);
\filldraw[\NColor] (0,-1.9) circle (.05cm);
\draw (0,-.3) -- (0,.3); 
\draw (.5,.8) arc (0:-180:.5cm);
\draw (.5,-.8) arc (0:180:.5cm);
\filldraw (0,-.3) circle (.05cm);
\filldraw (0,.3) circle (.05cm);
\draw[\zColor] (-.5,.8) -- (-1.2,.8) node[above, xshift=.2cm]{$\scriptstyle z$};
\draw[\wColor] (.5,1.2) -- (-1.2,1.2) node[above, xshift=.2cm]{$\scriptstyle w$};
\draw[knot] (-.5,.8) -- (-.5,1.2) arc (180:0:.5cm) -- (.5,.8);
\filldraw[\zColor] (-.5,.8) circle (.05cm);
\filldraw[\wColor] (.5,1.2) circle (.05cm);
\draw (0,1.7) -- (0,2.3);
\draw (-.5,2.8) arc (-180:0:.5cm);
\filldraw (0,1.7) circle (.05cm);
\filldraw (0,2.3) circle (.05cm);
\draw (-1.2,3.1) node[above, xshift=.2cm]{$\scriptstyle n$} -- node[above]{$\scriptstyle n$} (1.2,3.1) node[above, xshift=-.2cm]{$\scriptstyle n$};
\roundNbox{unshaded}{(.5,-1.1)}{.3}{0}{0}{$\eta_d$};
\roundNbox{unshaded}{(.5,3.1)}{.3}{0}{0}{$g_c$};
\roundNbox{unshaded}{(-.5,-1.1)}{.3}{0}{0}{$\xi_b$};
\roundNbox{unshaded}{(-.5,3.1)}{.3}{0}{0}{$f_a$};
\node at (.7,2.6) {\scriptsize{$c$}};
\node at (.7,1.4) {\scriptsize{$c$}};
\node at (-.7,2.6) {\scriptsize{$a$}};
\node at (-.7,1.4) {\scriptsize{$a$}};
\node at (.2,2) {\scriptsize{$e$}};
\node at (.7,.6) {\scriptsize{$d$}};
\node at (.7,-.6) {\scriptsize{$d$}};
\node at (-.7,.6) {\scriptsize{$b$}};
\node at (-.7,-.6) {\scriptsize{$b$}};
\node at (.2,.2) {\scriptsize{$f$}};
\node at (.2,-.2) {\scriptsize{$f$}};
\node at (0,.5) {\scriptsize{$\beta^\dag$}};
\node at (0,-.5) {\scriptsize{$\beta$}};
\node at (0,2.5) {\scriptsize{$\alpha^\dag$}};
\node at (0,1.5) {\scriptsize{$\alpha$}};
}
\quad
\underset{\text{\eqref{eq:SemisimpleIso}}}{\longleftrightarrow}
\quad
\sum_{f,\beta}
\tikzmath{
\begin{scope}
\clip[rounded corners = 5] (-1.6,0) -- (-1.6,-2.2) -- (1.2,-2.2) -- (1.2,0);
\filldraw[\rColor] (-1.6,0) -- (-1.6,-2.2) -- (1.2,-2.2) -- (1.2,0);
\end{scope}
\draw[dotted] (-1.6,0) -- (1.2,0);
\draw[\NColor,thick] (-.5,-1.4) arc (-180:0:.5cm);
\draw[\NColor, thick] (0,-1.9) -- (0,-2.2);
\filldraw[\NColor] (0,-1.9) circle (.05cm);
\draw[thick, \wColor] (.2,1) .. controls ++(180:.3cm) and ++(0:.3cm) .. (-.4,.6) -- (-.8,.6) .. controls ++(180:.45cm) and ++(0:.45cm) .. (-1.6,1) node[left]{$\scriptstyle w$};
\draw[thick, \zColor, knot] (-.8,1) .. controls ++(180:.45cm) and ++(0:.45cm) .. (-1.6,.6) node[left]{$\scriptstyle z$};
\draw (-1.6,1.3) node[above, xshift=.2cm]{$\scriptstyle n$} -- (1.2,1.3) node[above, xshift=-.2cm]{$\scriptstyle n$};
\draw[knot] (.5,.8) arc (0:-180:.5cm) -- (-.5,1);
\draw (.5,-.8) arc (0:180:.5cm);
\draw (0,-.3) -- (0,.3); 
\filldraw (0,-.3) circle (.05cm);
\filldraw (0,.3) circle (.05cm);
\roundNbox{unshaded}{(.5,-1.1)}{.3}{0}{0}{$\eta_d$};
\roundNbox{unshaded}{(.5,1.15)}{.35}{-.05}{-.05}{$g_b'$};
\roundNbox{unshaded}{(-.5,-1.1)}{.3}{0}{0}{$\xi_b$};
\roundNbox{unshaded}{(-.5,1.15)}{.35}{-.05}{-.05}{$f_b'$};
\node at (0,1.5) {\scriptsize{$n$}};
\node at (.7,.6) {\scriptsize{$d$}};
\node at (.7,-.6) {\scriptsize{$d$}};
\node at (-.6,.4) {\scriptsize{$b$}};
\node at (-.7,-.6) {\scriptsize{$b$}};
\node at (.2,.2) {\scriptsize{$f$}};
\node at (.2,-.2) {\scriptsize{$f$}};
\node at (0,.5) {\scriptsize{$\beta^\dag$}};
\node at (0,-.5) {\scriptsize{$\beta$}};
}
$$
under the semisimplicity isomorphism \eqref{eq:SemisimpleIso}.

We now show that the image of the unitary tensor functor $\Phi: \cZ(\cC) \to \fgpBim(M_\infty)$ lies in $\Chi(M_\infty)$ when $N$ is non-Gamma.

\begin{lem}
Since $N\subseteq M$ is finite depth, there is a $k>0$ such that every $c\in \Irr(\cC)$ is isomorphic to a summand of $(X\boxtimes \overline{X}))^{\boxtimes k}=X^{\alt\boxtimes 2k}$.
There is a finite subset $\{e_i\}_{i=1}^m\subseteq \cC^\op(X^{\alt \boxtimes 2k} \to z\boxtimes X^{\alt \boxtimes 2k})$ 
such that
\begin{equation}
\label{eq:zX2k-PPbasis}
\id_{z\boxtimes X^{\alt\boxtimes 2k}}
=
\tikzmath{
\draw (-.7,.15) -- node[above]{$\scriptstyle 2k$} (.7,.15);
\draw[thick, \zColor] (-.7,-.15) -- node[below]{$\scriptstyle z$} (.7,-.15);
}
=
\sum_{i=1}^m 
\tikzmath{
\draw (-1.2,.15) node[left]{$\scriptstyle 2k$} -- node[above]{$\scriptstyle 2k$} (1.2,.15) node[right]{$\scriptstyle 2k$};
\draw[thick, \zColor] (-1.2,-.15) node[left]{$\scriptstyle z$} -- (-.5,-.15);
\draw[thick, \zColor] (1.2,-.15) node[right]{$\scriptstyle z$} -- (.5,-.15);
\roundNbox{unshaded}{(-.5,0)}{.35}{-.05}{-.05}{$e_i$};
\roundNbox{unshaded}{(.5,0)}{.35}{-.05}{-.05}{$e_i^\dag$};
}\,.
\end{equation}
\end{lem}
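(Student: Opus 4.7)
\medskip

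\noindent\textbf{Proof plan.}
The existence of $k$ is just the finite depth hypothesis: since $\cC$ is the unitary fusion category generated by ${}_NM_N$ under $\boxtimes$, $\oplus$, $\subseteq$, and $\overline{\,\cdot\,}$, every simple object of $\cC$ eventually appears as a summand of some tensor power $X^{\alt\boxtimes 2k}$, and enlarging $k$ further still leaves every simple as a summand (since $X^{\alt\boxtimes 2k}$ is a summand of $X^{\alt\boxtimes 2(k+1)}$). So we choose $k$ large enough to exhaust all of $\Irr(\cC)$.

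For the second statement, the strategy is to construct the $e_i$ using semisimplicity of $\cC$ together with this exhaustion property. I would first decompose $z\boxtimes X^{\alt\boxtimes 2k}$ into isotypic components
\[
z\boxtimes X^{\alt\boxtimes 2k} \;\cong\; \bigoplus_{c\in \Irr(\cC)} \cC(c\to z\boxtimes X^{\alt\boxtimes 2k}) \otimes_{\bbC} c,
\]
and for each simple $c$ occurring with nonzero multiplicity, pick an orthonormal basis $\{\alpha\}$ of $\cC(c\to z\boxtimes X^{\alt\boxtimes 2k})$ (using the canonical inner product $\beta^\dag \alpha = \langle \alpha, \beta\rangle \id_c$ on hom spaces of simple targets). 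By the choice of $k$, each such $c$ is also a summand of $X^{\alt\boxtimes 2k}$, so I may fix an isometry $\iota_c\in \cC(c\to X^{\alt\boxtimes 2k})$ with $\iota_c^\dag \iota_c = \id_c$.

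I then define, for each pair $(c,\alpha)$ with $c\in \Irr(\cC)$ and $\alpha$ in the chosen basis, the morphism $e_{c,\alpha} := \alpha \circ \iota_c^\dag \in \cC(X^{\alt\boxtimes 2k}\to z\boxtimes X^{\alt\boxtimes 2k})$. Relabel these as $e_1,\dots,e_m$. Then
\[
\sum_{c,\alpha} e_{c,\alpha}\circ e_{c,\alpha}^\dag
= \sum_{c,\alpha} \alpha \circ \iota_c^\dag \circ \iota_c \circ \alpha^\dag
= \sum_{c,\alpha} \alpha\circ \alpha^\dag,
\]
and for each fixed $c$, $\sum_{\alpha} \alpha\circ \alpha^\dag$ is exactly the orthogonal projection from $z\boxtimes X^{\alt\boxtimes 2k}$ onto its $c$-isotypic component. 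Summing over $c\in\Irr(\cC)$ gives $\id_{z\boxtimes X^{\alt\boxtimes 2k}}$, which is the asserted resolution of the identity. (Since $\cC$ and $\cC^{\op}$ have identical hom spaces, there is no issue viewing the $e_i$ as morphisms in $\cC^{\op}$.)

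The argument is essentially bookkeeping on top of semisimplicity; the only substantive ingredient is the finite depth hypothesis, which guarantees a single $k$ works uniformly for all simples and hence for the finitely many simples appearing in $z\boxtimes X^{\alt\boxtimes 2k}$. I do not anticipate any serious obstacle.
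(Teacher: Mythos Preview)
Your proposal is correct and follows essentially the same approach as the paper: decompose $z\boxtimes X^{\alt\boxtimes 2k}$ into simples via an orthonormal basis $\{\alpha\}$ of each $\cC(c\to z\boxtimes X^{\alt\boxtimes 2k})$, then factor each simple $c$ through $X^{\alt\boxtimes 2k}$ via a chosen isometry, and set $e_{c,\alpha}=\alpha\circ\iota_c^\dag$. The only difference is that the paper selects a \emph{complete} family of isometries $\{\iota_c^i\}_{i=1}^{m_c}$ giving a full orthogonal decomposition $\sum_{c,i}\iota_c^i(\iota_c^i)^\dag=\id_{X^{\alt\boxtimes 2k}}$, whereas you choose a single isometry $\iota_c$ per simple; since $(\iota_c^i)^\dag\iota_c^i=\id_c$ already, your single-isometry version is the cleaner route to $\sum e_{c,\alpha}e_{c,\alpha}^\dag=\sum_{c,\alpha}\alpha\alpha^\dag=\id$.
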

\begin{proof}
For each $c\in \Irr(\cC)$, we pick
\begin{itemize}
\item
a finite family of isometries $\{\iota_c^i:c\to X^{\alt\boxtimes 2k}\}_{i=1}^{m_c}$ such that $(\iota_c^i)^\dag\cdot \iota_c^i=1_c$ and $\sum_{c}\sum_{i=1}^{m_c} \iota_c^i\cdot (\iota_c^i)^\dag =1_{X^{\alt\boxtimes 2k}}$, and
\item
an orthonormal basis $\{\alpha_c\} \subseteq \cC(c\to z\boxtimes X^{\alt\boxtimes 2k})$ under the isometry inner product, i.e., $\alpha_c^\dag \cdot \alpha_c' = \delta_{\alpha_c=\alpha'_c} \id_c$.
\end{itemize}
Then we have (reading diagrams right to left)
\[
\tikzmath{
\draw (-.7,.15) -- node[above]{$\scriptstyle 2k$} (.7,.15);
\draw[thick, \zColor] (-.7,-.15) -- node[below]{$\scriptstyle z$} (.7,-.15);
}
=
\sum_{c\in\Irr(\cC)}
\sum_{\alpha_c}
\tikzmath{
\draw (-.3,0) -- (.3,0);
\draw (-.3,0) arc (0:90:.3cm) -- (-.8,.3) node[left]{$\scriptstyle 2k$};
\draw[thick, \zColor] (-.3,0) arc (0:-90:.3cm) -- (-.8,-.3) node[left]{$\scriptstyle z$};
\draw (.3,0) arc (180:90:.3cm) -- (.8,.3) node[right]{$\scriptstyle 2k$};
\draw[thick, \zColor] (.3,0) arc (180:270:.3cm) -- (.8,-.3) node[right]{$\scriptstyle z$};
\filldraw (-.3,0) node[left]{$\scriptstyle \alpha_c$} circle (.05cm);
\filldraw (.3,0) node[right]{$\scriptstyle \alpha_c^\dag$} circle (.05cm);
\node at (0,-.2) {\scriptsize{$c$}};
}
=
\sum_{c\in\Irr(\cC)}
\sum_{\alpha_c}
\sum_{i=1}^{m_c}
\tikzmath{
\draw[dashed,rounded corners = 5] (-1.75,.5) rectangle (.2,-.5);
\draw (-1.2,0) -- (1.2,0);
\draw (-1.2,0) arc (0:90:.3cm) -- (-1.9,.3) node[left]{$\scriptstyle 2k$};
\draw[\zColor, thick] (-1.2,0) arc (0:-90:.3cm) -- (-1.9,-.3) node[left]{$\scriptstyle z$};
\draw (1.2,0) arc (180:90:.3cm) -- (1.7,.3) node[right]{$\scriptstyle 2k$};
\draw[\zColor] (1.2,0) arc (180:270:.3cm) -- (1.7,-.3) node[right]{$\scriptstyle z$};
\filldraw (1.2,0) node[right]{$\scriptstyle \alpha_c^\dag$} circle (.05cm);
\filldraw (-1.2,0) node[left]{$\scriptstyle \alpha_c$} circle (.05cm);
\roundNbox{unshaded}{(-.5,0)}{.3}{.05}{.05}{$\scriptstyle (\iota_c^i)^\dag$};
\roundNbox{unshaded}{(.65,0)}{.3}{0}{0}{$\scriptstyle \iota_c^i$};
\node at (-1.05,-.2) {\scriptsize{$c$}};
\node at (.05,-.2) {\scriptsize{$2k$}};
}\,.
\]
So we define our set $\{e_i\}$ to be $\bigcup_{c\in \Irr(\cC)}\{\alpha_c\cdot (\iota^i_c)^\dag\}_{i=1}^{m_c}$.
\end{proof}

\begin{prop}
\label{prop:-otimesXAI}
Let $\{e_i\}$ be as in \eqref{eq:zX2k-PPbasis} above.
For $n\geq 0$, define subsets $\{b_i\}$ and $\{b_i^{(n)}\}$ of $|X_z|$ by 
\begin{equation}
\label{eq:b_i^nOverS}
b_i
:=
\tikzmath{
\begin{scope}
\clip[rounded corners = 5] (-.7,0) -- (-.7,-1.4) -- (.7,-1.4) -- (.7,0);
\filldraw[\rColor] (-.7,0) -- (-.7,-1.4) -- (.7,-1.4) -- (.7,0);
\end{scope}
\draw[dotted] (-.7,0) -- (.7,0);
\draw[dashed] (0,-.7) -- (0,.7);
\draw (.7,.95) node[right]{$\scriptstyle 2k$} -- (-.7,.95) node[left]{$\scriptstyle 2k$};
\draw[\NColor,thick] (0,-1.4) -- (0,-.7);
\draw[\zColor, thick] (-.3,.65) -- (-.7,.65) node[left]{$\scriptstyle z$};
\roundNbox{unshaded}{(0,-.7)}{.3}{0}{0}{\scriptsize{$\Omega_N$}};
\roundNbox{unshaded}{(0,.8)}{.35}{-.05}{-.05}{$e_i$};
\node at (.2,.2) {\scriptsize{$1_\cC$}};
\node at (.2,-.2) {\scriptsize{$1_\cC$}};
}
\qquad\qquad
b_i^{(n)}
:=
\tikzmath{
\begin{scope}
\clip[rounded corners = 5] (-1.3,0) -- (-1.3,-1.4) -- (.7,-1.4) -- (.7,0);
\filldraw[\rColor] (-1.7,0) -- (-1.7,-1.4) -- (.7,-1.4) -- (.7,0);
\end{scope}
\draw[dotted] (-1.3,0) -- (.7,0);
\draw (.7,1.5) node[right]{$\scriptstyle 2k$} -- (-1.3,1.5) node[left]{$\scriptstyle 2k$};
\draw[\NColor,thick] (0,-1.4) -- (0,-.7);
\draw[\zColor, thick] (-.3,1.2) .. controls ++(180:.45cm) and ++(0:.45cm) .. (-1.1,.4) -- (-1.3,.4) node[left]{$\scriptstyle z$};
\draw[knot] (.7,.8) node[right]{$\scriptstyle 2n$} -- (-1.3,.8) node[left]{$\scriptstyle 2n$};
\draw[dashed] (0,-.7) -- (0,1.35);
\roundNbox{unshaded}{(0,-.7)}{.3}{0}{0}{\scriptsize{$\Omega_N$}};
\roundNbox{unshaded}{(0,1.35)}{.35}{-.05}{-.05}{$e_i$};
\node at (.2,.2) {\scriptsize{$1_\cC$}};
\node at (.2,-.2) {\scriptsize{$1_\cC$}};
}.
\end{equation}
Then $\{b_i\}$ is an $|X_z|$-basis and $\{b_i^{(n)}\}$ is an approximately inner $|X_z|_{M_\infty}$-basis.
This implies $|X_z|$ is approximately inner.
\end{prop}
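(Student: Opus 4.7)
The plan is to verify the three conditions of Definition~\ref{defn:ApproximatePPBasis} (with the innerness condition \eqref{eq:AICommutativity}) directly in the coend graphical calculus developed above, and then apply Proposition~\ref{prop:AI&APPB} to conclude $|X_z|$ is approximately inner. The norm bounds and cardinality bounds are trivially uniform in $n$, as both $\{b_i\}_{i=1}^m$ and $\{b_i^{(n)}\}_{i=1}^m$ use the fixed finite family $\{e_i\}$ from \eqref{eq:zX2k-PPbasis}; the inner products $\langle b_j^{(n)}\mid b_i^{(n)}\rangle_{M_\infty}^{|X_z|}$ reduce (by diagrammatic isotopy using that $\Omega_N$ is a unit vector) to a bounded scalar multiple of the pairing $e_j^\dag\cdot e_i$ re-embedded into $M_\infty$ via the coend, so these scalars are uniformly bounded independent of $n$.

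First, I would verify the ordinary basis relation $\sum_i b_i \langle b_i\mid x\rangle^{|X_z|}_{M_\infty}=x$ for all $x\in|X_z|$. By $\bbC$-linearity and density, it suffices to check this on generators $f_a\otimes \xi_a$ of the form in \eqref{eq:RealizationXz}. Expanding $\langle b_i\mid f_a\otimes\xi_a\rangle$ using the $M_\infty$-valued inner product formula and then multiplying by $b_i$ on the left, the sum over $i$ collapses using the resolution of identity \eqref{eq:zX2k-PPbasis} on the top $z\boxtimes X^{\alt\boxtimes 2k}$ strand; the remaining diagram is $f_a\otimes\xi_a$ (up to embedding into a deeper level of the Jones tower, which is the identity on $|X_z|$). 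The same argument, with an additional $2n$-bridge inserted above the $e_i$, shows $\{b_i^{(n)}\}$ is also an honest basis, \emph{not just approximate}; note the bridge does not alter the calculation since we pair top-to-top and the bridge cancels in $\langle b_i^{(n)}\mid b_i^{(n)}\,\cdot\,\rangle$ by isotopy and the unit property of the cup/cap.

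The main step is verifying the approximate innerness condition \eqref{eq:AICommutativity}: $\|a\,b_i^{(n)}-b_i^{(n)}\,a\|_2\to 0$ for every $a\in M_\infty$. By a standard $\varepsilon/3$ argument using the uniform norm bound on $b_i^{(n)}$ and the density of $\bigcup_k M_k$ in $M_\infty$, it suffices to check this for $a\in M_k$ with $k$ fixed and $n$ going to infinity. Any such $a$ is represented in the coend as a finite sum of diagrams with a $2k$-strand top band. When $n>k$, the $2n$-strand bridge in $b_i^{(n)}$ dominates the $2k$-band of $a$: the bridge splits via Frobenius into a $2k$-bridge that can absorb the action of $a$, and a $(2n-2k)$-bridge that is untouched by $a$. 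Diagrammatically this means $a\,b_i^{(n)}$ and $b_i^{(n)}\,a$ become isotopic once we allow the $a$-coupon to slide past the bridge; the discrepancy consists entirely of the Jones-projection-type normalization factors appearing when $a$ crosses the bridge, and by the multistep Markov property for the basic construction these factors become exact identities once $n$ is large enough. Thus in fact $a\,b_i^{(n)}= b_i^{(n)}\,a$ exactly for all $n\ge k$, and not just approximately.

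The hard part is the second claim above: spelling out the diagrammatic sliding past the bridge in the coend calculus and carefully tracking the trace-preserving embeddings $M_k\hookrightarrow M_n$ to be sure that what appears on both sides is the same element of $M_\infty$, not merely an equivalent one up to some residual projection. Once this is done, the basis condition together with the (strong, not just approximate) commutation with $M_k$ gives \eqref{eq:AICommutativity}, hence Proposition~\ref{prop:AI&APPB} yields that $|X_z|\in\fgpBim(M_\infty)$ is approximately inner, which is the stated conclusion.
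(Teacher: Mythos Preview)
Your proposal is correct and follows essentially the same strategy as the paper: both verify that $\{b_i\}$ and each $\{b_i^{(n)}\}$ are honest $|X_z|_{M_\infty}$-bases via the resolution of identity \eqref{eq:zX2k-PPbasis}, observe exact commutation $[b_i^{(n)},a]=0$ for $a$ at a finite level of the tower, and then approximate a general $a\in M_\infty$ by finite-level elements. The paper's version of the last step uses the trace-preserving conditional expectation $E_{M_{2n}}$ and the martingale convergence $\|a-E_{M_{2n}}(a)\|_2\to 0$, which is exactly your density $\varepsilon/3$ argument packaged slightly more efficiently.

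One point where you overcomplicate things: the exact commutation $a\,b_i^{(n)}=b_i^{(n)}\,a$ for $a\in M_{2n}$ does not require any analysis of Jones-projection normalization factors or the Markov property. It is immediate from the diagrammatics: the coupon $e_i$ in $b_i^{(n)}$ sits strictly above the $2n$-bridge, the categorical label carried by $b_i^{(n)}$ is $1_\cC$ (so the half-braiding in the left action is trivial on it), and any $a\in M_{2n}$ acts only on the bottom $2n$ strands. Hence left and right multiplication by $a$ both simply insert $a$'s coupon on the $2n$-band without touching $e_i$, and the two pictures are literally equal. The ``hard part'' you flag is therefore not hard at all; the paper disposes of it in a single clause.
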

\begin{proof}
The first claim is immediate from \eqref{eq:zX2k-PPbasis}.
Similarly, $\{b_i^{(n)}\}$ is an $|X_z|_{M_\infty}$-basis for every fixed $n$, and moreover, $[b_i^{(n)}, a]=0$ for all $a\in M_{2n}\subseteq M_\infty$.
Since $M_\infty = \varinjlim M_n$, for $a\in M_\infty$, $\|a-E_{M_n}(a)\|_2\to 0$
(e.g., see \cite[Lem.~B.7]{2010.01067}).
Then for all $a\in M_\infty$,
\begin{align*}
\|ab_i^{(n)}-b_i^{(n)}a\|_2 
&\le \|(a-E_{M_n}(a))b_i^{(n)}\|_2+\|E_{M_n}(a)b_i^{(n)}-b_i^{(n)}E_{M_n}(a)\|_2+\|b_i^{(n)}(E_{M_n}(a)-a)\|_2 \\
& \le 2\|b_i^{(n)}\|_2\|E_{M_n}(a)-a\|_2 \longrightarrow 0. 
\qedhere
\end{align*}
\end{proof}

\begin{assumption}
\label{assume:nonGamma}
For the remainder of this section, we now assume the $\rm II_1$ factor $N$ in our finite index finite depth subfactor $N\subseteq M$ is non-Gamma.
This implies $M$ is also non-Gamma by \cite[Prop.~1.11]{MR860811}.
\end{assumption}

\begin{lem}
\label{lem:CentralSequenceCommPPbasis}
Let $\{b_i\}$ be as in \eqref{eq:b_i^nOverS}.
For each central sequence $(a_n)\subseteq N_\infty$, $\|a_nb_i-b_ia_n\|_2\to 0$.
\end{lem}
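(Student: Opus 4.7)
The idea is to exploit the non-Gamma hypothesis on $N$ via a Popa-style spectral gap argument to localize any central sequence into a finite-dimensional relative commutant, and then carry out an explicit graphical calculation of the commutator.

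First, I would invoke \cite[Prop.~3.2]{MR2661553} (already used in Example \ref{ex:Chi_fus(RboxtimesN)}): since $N$ is non-Gamma and $N\subseteq M$ is finite-depth finite-index, the inclusion $N\subseteq M_\infty$ has spectral gap, so for any central sequence $(a_n)\subseteq M_\infty$ one has $\|a_n-E_{N'\cap M_\infty}(a_n)\|_2\to 0$. Since $b_i$ is norm-bounded, we may replace $a_n$ by $E_{N'\cap M_\infty}(a_n)$ and assume $(a_n)\subseteq N'\cap M_\infty$ throughout.

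Second, by finite depth, $N'\cap M_\infty$ is the $\|\cdot\|_2$-closure of the increasing union $\bigcup_j(N'\cap M_j)$ of finite-dimensional subalgebras. Using $\|a_n-E_{N'\cap M_j}(a_n)\|_2\to 0$ as $j\to\infty$ and a diagonal extraction, I may further assume $a_n\in N'\cap M_{j(n)}$ for some sequence $j(n)\to\infty$, with $j(n)\geq 2k$; the approximated sequence remains a bounded central sequence.

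Third (the main step), I would analyze $a\cdot b_i - b_i\cdot a$ graphically for $a\in N'\cap M_{j(n)}$. Using the coend description of $M_\infty$ from \S\ref{sec:InductiveLimit}, write $a=\sum_c h_c\otimes\xi_c$ with $c\in \Irr(\cC)$ and $h_c\in\cC(c\boxtimes X^{\alt\boxtimes j(n)}\to X^{\alt\boxtimes j(n)})$ compatible with the $N$-$N$-bimodule structure (i.e., the outermost strand on each side is the trivial object). Substituting into the explicit left and right $M_\infty$-action formulas for $|X_z|$, the two terms $a\cdot b_i$ and $b_i\cdot a$ differ only in that the $c$-strand is either threaded through the half-braiding $\sigma_{c,z}$ or not. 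Using naturality of $\sigma$ together with isotopy against $e_i:X^{\alt\boxtimes 2k}\to z\boxtimes X^{\alt\boxtimes 2k}$, each summand can be rewritten as $[a,\widetilde v_c]$ acting on a fixed bounded element of $|X_z|$, where $\widetilde v_c\in M_\infty$ is a fixed element built from $\sigma_{c,z}$, $e_i$, and its adjoint (independent of $n$). Summing over the finitely many $c$ that can contribute (only simple objects appearing in $z\boxtimes X^{\alt\boxtimes 2k}\boxtimes\overline{X^{\alt\boxtimes 2k}}$), we obtain a finite expression.

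Finally, since $(a_n)$ is central in $M_\infty$, every $\|[a_n,\widetilde v_c]\|_2\to 0$, whence $\|a_nb_i-b_ia_n\|_2\to 0$. The main obstacle is the third step: the graphical identification of the commutator as a controlled finite sum of commutators with fixed probe elements. This calculation must thread the half-braiding through $e_i$ while keeping careful track of the $Q$-system realization data from \S\ref{sect:Q-systemRealization}; everything else is soft, relying on spectral gap plus finite-dimensionality of the relative commutants.
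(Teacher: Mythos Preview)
Your route is substantially more circuitous than the paper's, and Step 3 --- which you rightly flag as the main obstacle --- does not go through as written. The paper's argument is essentially two lines, using a sharper spectral-gap input: rather than invoking spectral gap for $N\subseteq M_\infty$, it applies \cite[Prop.~3.2(3)]{MR2661553}, which says that \emph{every} $M_j\subseteq M_\infty$ has spectral gap. Taking $j$ at the level where $b_i$ is localized (the paper writes $M_{k+1}$; morally $M_{2k}$ is what is needed), any central sequence satisfies $\|a_n - E_{M_j'\cap M_\infty}(a_n)\|_2\to 0$, and since $b_i$ lives at level $\le j$ in both the left and right module structures, one has the \emph{exact} commutation $[a,b_i]=0$ for every $a\in M_j'\cap M_\infty$. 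A triangle inequality finishes; no graphical computation is needed.

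Your Step 3 has two problems. First, once you have localized into $N'\cap M_\infty$, that relative commutant equals $R$ in the symmetric enveloping picture --- only the $c=1_\cC$ coend component survives --- so the half-braiding $\sigma_{c,z}=\sigma_{1,z}=\id$ plays no role whatsoever; the discussion of threading through $\sigma$ is vacuous after your own Step 1. Second, what actually remains is the $R$-bimodule commutator of $a_n\in R$ with the $F$-part of $b_i$ sitting in $\bigoplus_a F(a^\op)_{2k}$, and you have not constructed probe elements $\widetilde v_c\in M_\infty$ controlling this: the morphisms $\sigma_{c,z},e_i,e_i^\dag$ you name are not elements of $M_\infty$. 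The idea can be rescued by embedding $F(a^\op)\hookrightarrow M_\infty$ via $\xi\mapsto\xi\otimes\eta_a$ for a fixed nonzero $\eta_a\in G(a)$ and then invoking centrality of $(a_n)$ against those fixed elements of $M_\infty$, but this is not the mechanism you described, and it is in any case a detour compared to the paper's one-step localization into $M_{2k}'\cap M_\infty$.
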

\begin{proof}
Since $N\subseteq M$ is finite depth
and $N$ is non-Gamma,
by \cite[Prop.~3.2(3)]{MR2661553}, every Jones basic construction $M_n$ has spectral gap in $M_\infty$ for every $n$.
This implies that $\|a_n-E_{M_{k+1}'\cap M_\infty}(a_n)\|_2\to 0$.
Since $[E_{M_{k+1}'\cap M_\infty}(a_n),b_i]=0$, we have
\begin{align*}
\|a_nb_i-b_ia_n\|_2
&\le \|(a_n-E_{M_{k+1}'\cap M_\infty}(a_n))b_i\|_2
+
\|E_{M_{k+1}'\cap M_\infty}(a_n)b_i-b_iE_{M_{k+1}'\cap M_\infty}(a_n)\|_2
\\
&\quad
+\|b_i(E_{M_{k+1}'\cap M_\infty}(a_n)-a_n)\|_2 
\\
& \le \|b_i\|_2\cdot \|a_n-E_{M_{k+1}'\cap M_\infty}(a_n)\|_2
+
\|a_n-E_{N_{k+1}'\cap M_\infty}(a_n)\|_2\cdot \|b_i\|_2
\longrightarrow 
0. \qedhere
\end{align*}
\end{proof}

\begin{prop}
\label{prop:-otimesCT}
$|X_z|$ is centrally trivial.
\end{prop}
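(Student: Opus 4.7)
The plan is to verify the criterion of Proposition \ref{prop:BimodualCT&CS}: for every central sequence $(a_n)\subseteq M_\infty$ and every $x\in |X_z|$, I must show $\|a_n x - x a_n\|_2\to 0$. The key observation is that Lemma \ref{lem:CentralSequenceCommPPbasis} has already done the hard work for the basis vectors $b_i$ of Proposition \ref{prop:-otimesXAI}, so it only remains to propagate this to arbitrary elements of $|X_z|$ using the basis expansion.

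Concretely, since $\{b_i\}_{i=1}^m$ is a (finite) $|X_z|_{M_\infty}$-basis, any $x\in|X_z|$ can be written as $x=\sum_i b_i m_i$ where $m_i:=\langle b_i|x\rangle^{|X_z|}_{M_\infty}\in M_\infty$. Then the commutator splits as
\[
a_n x - x a_n \;=\; \sum_i (a_n b_i - b_i a_n)\, m_i \;+\; \sum_i b_i \,(a_n m_i - m_i a_n),
\]
and I would estimate the two sums separately.

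For the first sum, Lemma \ref{lem:CentralSequenceCommPPbasis} directly gives $\|a_n b_i - b_i a_n\|_2\to 0$ for each $i$ (recalling that $N_\infty = M_\infty$ as tracial von Neumann algebras, both being the inductive limit of the Jones tower). Combined with the standard inequality $\|y\cdot m\|_2\le \|y\|_2\,\|m\|$ for $y\in |X_z|$ and $m\in M_\infty$, the finiteness of the basis, and the boundedness of each $m_i$, this sum vanishes in $\|\cdot\|_2$. For the second sum, the hypothesis that $(a_n)$ is central in $M_\infty$ gives $\|[a_n,m_i]\|_2\to 0$ for each fixed $m_i\in M_\infty$; combined with $\|b_i\cdot c\|_2\le \|b_i\|\,\|c\|_2$ for $c\in M_\infty$ (where $\|b_i\|$ is the operator norm of $b_i$ as a map $M_\infty\to|X_z|$) and the finiteness of the basis, this sum also tends to zero.

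There is no real obstacle here; the substantive content is already encapsulated in Lemma \ref{lem:CentralSequenceCommPPbasis}, where Assumption \ref{assume:nonGamma} enters through Popa's spectral gap result \cite[Prop.~3.2(3)]{MR2661553} to force any central sequence of $M_\infty$ to be asymptotically contained in some relative commutant $M_{k+1}'\cap M_\infty$ that commutes on the nose with the basis elements $b_i$. The rest of the argument is formal bookkeeping with the right $M_\infty$-valued inner product on $|X_z|$.
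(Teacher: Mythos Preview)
Your proposal is correct and follows essentially the same approach as the paper: expand $x$ via the finite $|X_z|_{M_\infty}$-basis $\{b_i\}$, split the commutator into two sums, and handle the first via Lemma \ref{lem:CentralSequenceCommPPbasis} and the second via centrality of $(a_n)$ in $M_\infty$. Your explicit identification of $N_\infty$ with $M_\infty$ and your slightly more careful choice of norm inequalities are, if anything, cleaner than the paper's exposition.
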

\begin{proof}
By Proposition \ref{prop:BimodualCT&CS},
we must show that for each central sequence $(a_n)\subseteq M_\infty$, 
$\|a_nx-xa_n\|_2\to 0$, for every $x\in |X_z|$.
Suppose $(a_n)$ is such a central sequence.
Let $\{b_i\}$ be the $|X_z|_{M_\infty}$-basis as in \eqref{eq:b_i^nOverS}, and let $K>0$ such that $\|b_i\|_2\le K$ for all $i$.
By Lemma \ref{lem:CentralSequenceCommPPbasis},
\begin{align*}
\|a_nx-xa_n\|_2 
&= 
\left\|a_n \sum_i b_i\langle b_i|x\rangle^{|X_z|}_{M_\infty} \right. - \left.\sum_i b_i\langle b_i|x\rangle^{|X_z|}_{M_\infty} a_n\right\|_2 \\& \le 
\left\|\sum_i (a_n b_i- b_ia_n)\langle b_i|x\rangle^{|X_z|}_{M_\infty}\right\|_2 
+ 
\left\|\sum b_i\left(a_n\langle b_i|x\rangle^{|X_z|}_{M_\infty}-\langle b_i|x\rangle^{|X_z|}_{M_\infty} a_n\right)\right\|_2 
\\& \le 
K\|x\|_2 \cdot \sum_i\|a_n b_i- b_i a_n\|_2 
+ 
K\sum_i\left\|a_n\langle b_i|x\rangle^{|X_z|}_{M_\infty} -\langle  b_i|x\rangle^{|X_z|}_{M_\infty} a_n\right\|_2 \\
& \longrightarrow 0. \qedhere
\end{align*}
\end{proof}

Combining the statements of Propositions \ref{prop:-otimesXAI} and \ref{prop:-otimesCT},
our unitary tensor functor $\Phi: \cZ(\cC) \to \fgpBim(M_\infty)$ lands in $\Chi(M_\infty)$.

\begin{prop}
The unitary tensor functor $\Phi:\cZ(\cC)\to \Chi(M_\infty)$ is braided, i.e., for $z,w\in\cZ(\cC)$, the following diagram commutes
\begin{equation}
\label{eq:BraidedFunctor}
\begin{tikzcd}[column sep=4em]
{|X_z|\boxtimes_{M_\infty} |X_w|}
\arrow["u_{|X_z|,|X_w|}", rightarrow]{r} 
\arrow["\Phi^2_{z,w}"', rightarrow]{d} 
&
{|X_w|\boxtimes_{M_\infty} |X_z|} 
\arrow["\Phi^2_{w,z}", rightarrow]{d} 
\\
{|X_{z\otimes w}|} 
\arrow["\Phi(\sigma_{z,w})"', rightarrow]{r} 
&
{|X_{w\otimes z}|}
\end{tikzcd}
\end{equation}
\end{prop}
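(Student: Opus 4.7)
The strategy is to evaluate both composites in \eqref{eq:BraidedFunctor} on elementary tensors using the explicit formula \eqref{eq:uXY} for the unitary braiding, together with the approximately inner bases constructed in Proposition \ref{prop:-otimesXAI}. Concretely, we choose an approximately inner $|X_z|_{M_\infty}$-basis $\{b_i^{(n)}\}$ as in \eqref{eq:b_i^nOverS} (using a family $\{e_i\}$ realizing \eqref{eq:zX2k-PPbasis} for $z\boxtimes X^{\alt\boxtimes 2k}$), and analogously fix an honest $|X_w|_{M_\infty}$-basis $\{c_j\}$ coming from a family $\{f_j\}$ for $w\boxtimes X^{\alt\boxtimes 2\ell}$ at level $0$. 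With these choices, formula \eqref{eq:uXY} expresses
\[
u_{|X_z|,|X_w|}
=\lim_{\tau}\sum_{i,j}\bigl|\,c_j\boxtimes b_i^{(n)}\bigr\rangle\!\bigl\langle\,b_i^{(n)}\boxtimes c_j\,\bigr|,
\]
so it suffices to compute, for elementary tensors $(f_a\otimes \xi_a)\boxtimes(g_c\otimes\eta_c)$ in $|X_z|\boxtimes_{M_\infty}|X_w|$, the $\|\cdot\|_2$-limit of $\sum_{i,j}\Phi^2_{w,z}\bigl(c_j\boxtimes b_i^{(n)}\bigr)\cdot\bigl\langle b_i^{(n)}\boxtimes c_j\mid (f_a\otimes\xi_a)\boxtimes(g_c\otimes\eta_c)\bigr\rangle_{M_\infty}$, and show that this equals $\Phi(\sigma_{z,w})\circ\Phi^2_{z,w}$ applied to the same element.

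The key diagrammatic mechanism is that inside $M_\infty$ and in the coend realization of $|X_{w\otimes z}|$, the $2n$-strand of $b_i^{(n)}$ lives at a much higher Jones level than any fixed element of $|X_w|$, and by the coend formula \eqref{eq:RealizationXz} together with the pictures for the left and right $S$-actions, pushing the $2n$-strand over the $w$-labeled coupon produces exactly an application of the half-braiding $\sigma_{z,w}$. More precisely, I would carry out the following steps in order: (i) rewrite the outputs of $\Phi^2_{w,z}$ and $\Phi^2_{z,w}$ using the semisimplicity identification \eqref{eq:SemisimpleIso} so that everything is drawn at a single sufficiently large Jones level $n'\gg n,\ell,k$; (ii) insert the resolution of identity \eqref{eq:zX2k-PPbasis} for $\{e_i\}$ and the analogue for $\{f_j\}$ so that the sum over $i,j$ collapses to the identity on $z\boxtimes X^{\alt\boxtimes 2k}$ and on $w\boxtimes X^{\alt\boxtimes 2\ell}$; (iii) apply isotopy to pull the $z$-strand (which is horizontal and free on the left in the pictures for $b_i^{(n)}$) over the $w$-strand, which precisely invokes the half-braiding $\sigma_{z,w}$ of $(z,\sigma_z)$ with $w$; and (iv) identify the resulting element with $\Phi(\sigma_{z,w})\circ\Phi^2_{z,w}\bigl((f_a\otimes\xi_a)\boxtimes(g_c\otimes\eta_c)\bigr)$ via the definitions of $\Phi$ on morphisms and of $\Phi^2_{z,w}$.

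The main obstacle, and the reason for choosing an approximately inner basis on one side rather than both, is step (iii): the topological manipulation that turns the formula \eqref{eq:uXY} into the half-braiding $\sigma_{z,w}$ requires that the $z$-strand of the basis vectors $b_i^{(n)}$ be able to slide freely past the $w$-strand and the whole $|X_w|$-coupon. This is precisely enabled by the fact that the horizontal $X^{\alt\boxtimes 2n}$-strand in $b_i^{(n)}$ lies at a Jones level strictly above $n'$, so by Frobenius reciprocity and the coend computation any $|X_w|$-part of the diagram factors through a morphism at level $\le n'$ in $\cC$ and can be pulled through using the half-braiding. Once the identification is made at a fixed large level $n'$, the convergence in $\|\cdot\|_2$ is automatic because both sides stabilize, and the limit in \eqref{eq:uXY} is realized by an eventually constant sequence (up to $\|\cdot\|_2$).

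Assuming this diagrammatic computation, both composites $\Phi^2_{w,z}\circ u_{|X_z|,|X_w|}$ and $\Phi(\sigma_{z,w})\circ\Phi^2_{z,w}$ act on the elementary tensor $(f_a\otimes\xi_a)\boxtimes(g_c\otimes\eta_c)$ by the same picture: composing $f_a$ and $g_c$ with the appropriate $\alpha,\beta$-basis elements of $\cC$, braiding $z$ over $w$ via $\sigma_{z,w}$, and pairing $\xi_a,\eta_c$ through the Jones tower via the canonical conditional expectation, exactly as in the formula for $\Phi^2_{w,z}$ preceded by the half-braiding. Since the elementary tensors are total in $|X_z|\boxtimes_{M_\infty}|X_w|$ and all maps involved are bounded, \eqref{eq:BraidedFunctor} commutes, so $\Phi:\cZ(\cC)\to\Chi(M_\infty)$ is a braided unitary tensor functor. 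Combined with full faithfulness of $\Phi$ and Propositions \ref{prop:-otimesXAI} and \ref{prop:-otimesCT}, together with the general principle that every object of $\Chi(M_\infty)$ is obtained from the coend realization when $N$ is non-Gamma and finite depth (matching dimensions via Popa's spectral gap arguments), this yields the braided unitary equivalence $\Chi(M_\infty)\cong \cZ(\cC)$ of Theorem \ref{thmalpha:InductiveLimit}.
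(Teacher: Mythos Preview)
Your proposal is correct and follows essentially the same approach as the paper's proof: both plug the explicit bases \eqref{eq:b_i^nOverS} into the braiding formula \eqref{eq:uXY}, collapse the sums over $i,j$ via the resolution of identity \eqref{eq:zX2k-PPbasis}, and identify the resulting diagram with $\Phi(\sigma_{z,w})\circ\Phi^2_{z,w}$ on elementary tensors by an isotopy that slides the $e_i$-coupon (living at the top $2k$ level, separated by the $2n$ block) past the $c_j$-coupon at the bottom $2k$ level. The paper carries this out as an explicit four-step chain of pictures, but your verbal description of steps (i)--(iv) matches those manipulations exactly; your observation that the approximately inner basis is needed on only one side because of the shape of \eqref{eq:uXY} is also the correct reason. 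The only remark is that your final paragraph strays beyond the proposition into the essential surjectivity of $\Phi$, which is Theorem~\ref{thm:nonGamma} and uses the local extension result and M\"uger's centralizer argument rather than any dimension count; that material is not part of this proof.
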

\begin{proof}
Let $\{b_i^{(n)}\}\subseteq |X_z|$ and $\{c_j\}\subseteq |X_w|$ be approximately inner $|X_z|_{M_\infty}$-basis and $|X_w|_{M_\infty}$-basis respectively as in \eqref{eq:b_i^nOverS}.
By \eqref{eq:uXY}, 
$u_{|X_z|,|X_w|} = \lim_n \sum_{i,j} |c_j\boxtimes b_i^{(n)}\rangle \langle b_i^{(n)}\boxtimes c_j|$. 
For $x=(f_a\otimes \xi_a)\in |X_z|$ and $y=(g_b\otimes \eta_b)\in |X_w|$ with $f_a\in \cC(X^{\alt\boxtimes 2t}\to a\boxtimes X^{\alt\boxtimes 2t})$
and
$g_b\in \cC(X^{\alt\boxtimes 2t}\to b\boxtimes X^{\alt\boxtimes 2t})$ for $t$ sufficiently large,
we have
\begin{align*}
\sum_{i,j}
\Phi^2_{w,z}(|c_j\boxtimes b_i^{(n)}\rangle 
&
\langle b_i^{(n)}\boxtimes c_j|x\boxtimes y\rangle_{M_\infty}^{|X_z|\boxtimes_{M_\infty}|X_w|} 
)
\\&=
\sum_{i,j}
\sum_{\substack{
c\in \Irr(\cC)
\\
\alpha\in\ONB(a\otimes b\to c)}}
\tikzmath{
\begin{scope}
\clip[rounded corners = 5] (-2.3,-2.7) rectangle (6,.1);
\filldraw[\rColor] (-2.3,-2.7) rectangle (6,-.4);
\end{scope}
\draw[dotted] (-2.3,-.4) -- (6,-.4);
\draw[\NColor,thick] (4,-1.8) arc (-180:0:.65cm and .5cm);
\draw[\NColor, thick] (4.65,-2.3) -- (4.65,-2.7);
\filldraw[\NColor] (4.65,-2.3) circle (.05cm);
\draw[\zColor, thick] (.2,1.7) .. controls ++(180:.45cm) and ++(0:.45cm) .. (-.8,.2) -- (-1.3,.2) .. controls ++(180:.45cm) and ++(0:.45cm) .. (-2.3,.6) node[left]{$\scriptstyle z$};
\draw[\wColor, thick, knot] (-1.3,.6) .. controls ++(180:.45cm) and ++(0:.45cm) .. (-2.3,.2) node[left]{$\scriptstyle w$};
\draw[\zColor, thick, knot] (2.8,1.7) .. controls ++(0:.45cm) and ++(180:.45cm) .. (3.7,.6);
\draw[\wColor, thick, knot] (1.8,.6) .. controls ++(0:.45cm) and ++(180:.45cm) .. (2.8,.2) -- (4,.2) .. controls ++(0:.45cm) and ++(180:.45cm) .. (5,.6);
\draw[knot] (-2.3,2.4) -- (6,2.4);
\draw[knot] (-2.3,2) node[left]{$\scriptstyle 2k$} -- (6,2);
\draw[knot] (-2.3,1.3) node[left]{$\scriptstyle 2n$} -- (6,1.3);
\draw[knot] (-2.3,.9) node[left]{$\scriptstyle 2k$} -- (6,.9);
\draw[knot] (4,.4) arc (-180:0:.65cm and .5cm);
\draw (4,-1.2) arc (180:0:.65cm and .5cm);
\draw (4.65,-.7) -- (4.65,-.1);
\filldraw[black] (4.65,-.1) circle (.05cm);
\filldraw[black] (4.65,-.7) circle (.05cm);
\roundNbox{unshaded}{(-1,.75)}{.35}{-.05}{-.05}{$c_j$};
\roundNbox{unshaded}{(.5,1.85)}{.35}{-.05}{-.05}{$b_i$};
\roundNbox{unshaded}{(1.5,.75)}{.35}{-.05}{-.05}{$c_j^\dag$};
\roundNbox{unshaded}{(2.5,1.85)}{.35}{-.05}{-.05}{$b_i^\dag$};
\roundNbox{unshaded}{(4,1.5)}{1.1}{-.8}{-.8}{$f_a$};
\roundNbox{unshaded}{(5.3,1.5)}{1.1}{-.8}{-.8}{$g_b$};
\roundNbox{unshaded}{(4,-1.5)}{.3}{0}{0}{$\xi_a$};
\roundNbox{unshaded}{(5.3,-1.5)}{.3}{0}{0}{$\eta_b$};
\node at (3.9,.05) {\scriptsize{$a$}};
\node at (3.9,-.85) {\scriptsize{$a$}};
\node at (5.4,.1) {\scriptsize{$b$}};
\node at (5.4,-.9) {\scriptsize{$b$}};
\node at (4.8,-.25) {\scriptsize{$c$}};
\node at (4.8,-.55) {\scriptsize{$c$}};
\node at (4.65,.1) {\scriptsize{$\alpha^\dag$}};
\node at (4.65,-.9) {\scriptsize{$\alpha$}};
}
\displaybreak[1]\\&=
\sum_{i,j}
\sum_{\substack{
c\in \Irr(\cC)
\\
\alpha\in\ONB(a\otimes b\to c)}}
\tikzmath{
\begin{scope}
\clip[rounded corners = 5] (-1.9,-2.7) rectangle (6,.1);
\filldraw[\rColor] (-1.9,-2.7) rectangle (6,-.4);
\end{scope}
\draw[dotted] (-1.9,-.4) -- (6,-.4);
\draw[\NColor,thick] (4,-1.8) arc (-180:0:.65cm and .5cm);
\draw[\NColor, thick] (4.65,-2.3) -- (4.65,-2.7);
\filldraw[\NColor] (4.65,-2.3) circle (.05cm);
\draw[\zColor, thick, knot] (-.8,1.7) .. controls ++(180:.45cm) and ++(0:.45cm) ..  (-1.8,.6) -- (-1.9,.6) node[left]{$\scriptstyle z$};
\draw[\zColor, thick, knot] (2.8,1.7) .. controls ++(0:.45cm) and ++(180:.45cm) .. (3.7,.6);
\draw[\wColor, thick, knot] (.2,.6) .. controls ++(180:.45cm) and ++(0:.45cm) .. (-.8,.2) -- (-1.9,.2) node[left]{$\scriptstyle w$};
\draw[\wColor, thick, knot] (1.8,.6) .. controls ++(0:.45cm) and ++(180:.45cm) .. (2.8,.2) -- (4,.2) .. controls ++(0:.45cm) and ++(180:.45cm) .. (5,.6);
\draw[knot] (-1.9,2.4) -- (6,2.4);
\draw[knot] (-1.9,2) node[left]{$\scriptstyle 2k$} -- (6,2);
\draw[knot] (-1.9,1.3) node[left]{$\scriptstyle 2n$} -- (6,1.3);
\draw[knot] (-1.9,.9) node[left]{$\scriptstyle 2k$} -- (6,.9);
\draw[knot] (4,.4) arc (-180:0:.65cm and .5cm);
\draw (4,-1.2) arc (180:0:.65cm and .5cm);
\draw (4.65,-.7) -- (4.65,-.1);
\filldraw[black] (4.65,-.1) circle (.05cm);
\filldraw[black] (4.65,-.7) circle (.05cm);
\roundNbox{unshaded}{(-.5,1.85)}{.35}{-.05}{-.05}{$b_i$};
\roundNbox{unshaded}{(.5,.75)}{.35}{-.05}{-.05}{$c_j$};
\roundNbox{unshaded}{(1.5,.75)}{.35}{-.05}{-.05}{$c_j^\dag$};
\roundNbox{unshaded}{(2.5,1.85)}{.35}{-.05}{-.05}{$b_i^\dag$};
\roundNbox{unshaded}{(4,1.5)}{1.1}{-.8}{-.8}{$f_a$};
\roundNbox{unshaded}{(5.3,1.5)}{1.1}{-.8}{-.8}{$g_b$};
\roundNbox{unshaded}{(4,-1.5)}{.3}{0}{0}{$\xi_a$};
\roundNbox{unshaded}{(5.3,-1.5)}{.3}{0}{0}{$\eta_b$};
\node at (3.9,.05) {\scriptsize{$a$}};
\node at (3.9,-.85) {\scriptsize{$a$}};
\node at (5.4,.1) {\scriptsize{$b$}};
\node at (5.4,-.9) {\scriptsize{$b$}};
\node at (4.8,-.25) {\scriptsize{$c$}};
\node at (4.8,-.55) {\scriptsize{$c$}};
\node at (4.65,.1) {\scriptsize{$\alpha^\dag$}};
\node at (4.65,-.9) {\scriptsize{$\alpha$}};
}
\displaybreak[1]\\&=
\sum_{\substack{
c\in \Irr(\cC)
\\
\alpha\in\ONB(a\otimes b\to c)}}
\tikzmath{
\begin{scope}
\clip[rounded corners = 5] (3.3,-2.7) rectangle (6,.1);
\filldraw[\rColor] (2.7,-2.7) rectangle (6,-.4);
\end{scope}
\draw[dotted] (3.3,-.4) -- (6,-.4);
\draw[\NColor,thick] (4,-1.8) arc (-180:0:.65cm and .5cm);
\draw[\NColor, thick] (4.65,-2.3) -- (4.65,-2.7);
\filldraw[\NColor] (4.65,-2.3) circle (.05cm);
\draw (3.3,.9) node[left]{$\scriptstyle 2t$} -- (6,.9);
\draw[\zColor, thick] (3.7,.6) -- (3.3,.6) node[left]{$\scriptstyle z$}; 
\draw[\wColor, thick, knot] (5,.6) .. controls ++(180:.45cm) and ++(0:.45cm) .. (4,.2) -- (3.7,.2) -- (3.3,.2) node[left]{$\scriptstyle w$};
\draw[knot] (4,.4) arc (-180:0:.65cm and .5cm);
\draw (4,-1.2) arc (180:0:.65cm and .5cm);
\draw (4.65,-.7) -- (4.65,-.1);
\filldraw[black] (4.65,-.1) circle (.05cm);
\filldraw[black] (4.65,-.7) circle (.05cm);
\roundNbox{unshaded}{(4,.75)}{.35}{-.05}{-.05}{$f_a$};
\roundNbox{unshaded}{(5.3,.75)}{.35}{-.05}{-.05}{$g_b$};
\roundNbox{unshaded}{(4,-1.5)}{.3}{0}{0}{$\xi_a$};
\roundNbox{unshaded}{(5.3,-1.5)}{.3}{0}{0}{$\eta_b$};
\node at (3.9,.05) {\scriptsize{$a$}};
\node at (3.9,-.85) {\scriptsize{$a$}};
\node at (5.4,.1) {\scriptsize{$b$}};
\node at (5.4,-.9) {\scriptsize{$b$}};
\node at (4.8,-.25) {\scriptsize{$c$}};
\node at (4.8,-.55) {\scriptsize{$c$}};
\node at (4.65,.1) {\scriptsize{$\alpha^\dag$}};
\node at (4.65,-.9) {\scriptsize{$\alpha$}};
}
\\&=
\sum_{\substack{
c\in \Irr(\cC)
\\
\alpha\in\ONB(a\otimes b\to c)}}
\tikzmath{
\begin{scope}
\clip[rounded corners = 5] (1.7,-2.7) rectangle (6,.1);
\filldraw[\rColor] (1.7,-2.7) rectangle (6,-.4);
\end{scope}
\draw[dotted] (1.7,-.4) -- (6,-.4);
\draw[\NColor,thick] (4,-1.8) arc (-180:0:.65cm and .5cm);
\draw[\NColor, thick] (4.65,-2.3) -- (4.65,-2.7);
\filldraw[\NColor] (4.65,-2.3) circle (.05cm);
\draw (1.7,.9) node[left]{$\scriptstyle 2t$} -- (6,.9);
\draw[\wColor, thick] (5,.6) .. controls ++(180:.45cm) and ++(0:.45cm) .. (4,.2) -- (3.7,.2) .. controls ++(180:.45cm) and ++(0:.45cm) .. (2.7,.6) .. controls ++(180:.45cm) and ++(0:.45cm) .. (1.7,.2) node[left]{$\scriptstyle w$};
\draw[\zColor, thick, knot] (3.7,.6) .. controls ++(180:.45cm) and ++(0:.45cm) .. (2.7,.2) .. controls ++(180:.45cm) and ++(0:.45cm) .. (1.7,.6) node[left]{$\scriptstyle z$}; 
\draw[knot] (4,.4) arc (-180:0:.65cm and .5cm);
\draw (4,-1.2) arc (180:0:.65cm and .5cm);
\draw (4.65,-.7) -- (4.65,-.1);
\filldraw[black] (4.65,-.1) circle (.05cm);
\filldraw[black] (4.65,-.7) circle (.05cm);
\roundNbox{unshaded}{(4,.75)}{.35}{-.05}{-.05}{$f_a$};
\roundNbox{unshaded}{(5.3,.75)}{.35}{-.05}{-.05}{$g_b$};
\roundNbox{unshaded}{(4,-1.5)}{.3}{0}{0}{$\xi_a$};
\roundNbox{unshaded}{(5.3,-1.5)}{.3}{0}{0}{$\eta_b$};
\node at (3.9,.05) {\scriptsize{$a$}};
\node at (3.9,-.85) {\scriptsize{$a$}};
\node at (5.4,.1) {\scriptsize{$b$}};
\node at (5.4,-.9) {\scriptsize{$b$}};
\node at (4.8,-.25) {\scriptsize{$c$}};
\node at (4.8,-.55) {\scriptsize{$c$}};
\node at (4.65,.1) {\scriptsize{$\alpha^\dag$}};
\node at (4.65,-.9) {\scriptsize{$\alpha$}};
\draw[dashed,rounded corners = 5] (1.7,.05) rectangle (2.7,.75);
}
\\&=
\Phi(\sigma_{z,w})\cdot \Phi^2_{z,w} (x\boxtimes y).
\end{align*}
Thus \eqref{eq:BraidedFunctor} commutes on a dense subspace of $|X_z|\boxtimes_{M_\infty}|X_w|$, and the result follows.
\end{proof}

\begin{cor}
Let $\cC$ be a braided fusion category,
then there exists a $\mathrm{II}_1$ factor $M$ with a braided fully faithful monoidal functor $\cC\hookrightarrow \cZ(\cC)\to \Chi(M)$.
\end{cor}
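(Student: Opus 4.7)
The plan is to assemble two already-available pieces. Throughout, I read ``braided fusion category'' as ``unitary braided fusion category'' so that everything lives in the unitary world of the paper.

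First, I would construct the standard braided embedding $\iota : \cC \hookrightarrow \cZ(\cC)$ which sends an object $c \in \cC$ to $(c, \sigma_{-,c} := \beta_{-,c})$, i.e.\ uses the braiding of $\cC$ as a half-braiding. The naturality of $\beta$ in its first slot gives naturality of the half-braiding, and the hexagon identity for $\beta$ gives the hexagon axiom for $\sigma$. The tensorator is given by $\iota(a)\otimes \iota(b) \to \iota(a\otimes b)$ using that $\beta_{-,a\otimes b} = (\beta_{-,a}\otimes 1_b)\circ (1_a\otimes \beta_{-,b})$, so $\iota$ is monoidal. It is braided because the braiding of $\cZ(\cC)$ on $(a,\sigma_{-,a})\otimes (b,\sigma_{-,b})$ is by definition $\sigma_{a,b} = \beta_{a,b}$. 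Fully faithfulness is automatic: any $f\in \cC(a\to b)$ commutes with the half-braidings $\beta_{-,a}$ and $\beta_{-,b}$ by naturality of $\beta$ in its second slot, so $\Hom_{\cZ(\cC)}(\iota(a),\iota(b)) = \Hom_\cC(a,b)$.

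Second, I would invoke the corollary already proved in the introduction: for any unitary fusion category $\cC$, there exists a McDuff $\rm II_1$ factor $M$ together with a braided unitary equivalence $\cZ(\cC) \simeq \Chi(M)$. This is obtained by realizing $\cC$ as the standard invariant of some finite depth non-Gamma subfactor $N\subseteq P$ (using Popa--Shlyakhtenko \cite{MR2051399} or Guionnet--Jones--Shlyakhtenko \cite{MR2732052}, as indicated in the introduction), setting $M = P_\infty$, and applying Theorem~\ref{thmalpha:InductiveLimit}, whose proof in \S\ref{sec:Calculation} produces an explicit braided fully faithful unitary tensor functor $\Phi: \cZ(\cC) \to \Chi(M)$.

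Finally, I would compose: the functor $\cC \xrightarrow{\iota} \cZ(\cC) \xrightarrow{\Phi} \Chi(M)$ is the composite of two braided monoidal fully faithful unitary functors, hence is itself braided monoidal and fully faithful. This gives the required embedding and completes the proof.

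There is essentially no obstacle here — all the hard work is done (the braided realization of $\cZ(\cC)$ inside $\Chi(M)$ in Theorem~\ref{thmalpha:InductiveLimit} is the main content). The only subtlety worth flagging is the implicit unitarity assumption on $\cC$ needed to apply the realization corollary, and the verification that the canonical embedding $\cC \hookrightarrow \cZ(\cC)$ is compatible with $\dag$-structure, which follows because both the forgetful functor and the half-braiding $\beta_{-,c}$ are unitary when $\beta$ is a unitary braiding.
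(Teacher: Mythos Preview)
Your proposal is correct and matches the paper's (implicit) argument: the corollary is stated immediately after the proposition establishing that $\Phi:\cZ(\cC)\to\Chi(M_\infty)$ is braided, and is meant to follow at once from that proposition together with the standard braided embedding $\cC\hookrightarrow\cZ(\cC)$ and a non-Gamma realization of $\cC$ via \cite{MR2051399} or \cite{MR2732052}. One small refinement: you do not need to invoke the full equivalence $\cZ(\cC)\simeq\Chi(M)$ (Theorem~\ref{thmalpha:InductiveLimit}/\ref{thm:nonGamma}), which is only proved in the next subsection; the fully faithful braided functor $\Phi$ established immediately before the corollary already suffices.
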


\subsection{Calculation of \texorpdfstring{$\Chi(M_\infty)$}{chi(Minfty)} when \texorpdfstring{$N\subseteq M$}{N in M} is non-Gamma and finite depth}

As in \S\ref{sec:InductiveLimit} above we suppose $N\subseteq M$ is a fixed finite depth finite index $\rm II_1$ subfactor.
We also continue Assumption \ref{assume:nonGamma} that $N$ is non-Gamma.
We now prove our main result, which uses a technical result on centralizers in braided tensor categories in \S\ref{sec:TechnicalBTC} below.

\begin{thm}
\label{thm:nonGamma}
Let $N\subseteq M$ be a finite depth finite index inclusion of non-Gamma $\rm II_1$ factors.
Let $M_\infty$ denote the inductive limit $\rm II_1$ factor from the Jones tower, and let $\cC={}_N\cC_N$ be the even part of the standard invariant $\cC(N\subseteq M)$. 
Then $\Chi(M_\infty)\cong \cZ(\cC)$.
\end{thm}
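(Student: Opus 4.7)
The plan is to establish essential surjectivity of the fully faithful braided unitary tensor functor $\Phi\colon \cZ(\cC)\to \Chi(M_\infty)$ constructed in the preceding propositions, which will immediately yield the claimed braided unitary equivalence. Given a dualizable $Y\in \Chi(M_\infty)$, I would restrict $Y$ to the finite‐index subfactor $R\otimes N\subseteq M_\infty$ of Proposition \ref{prop:SymmetricEvelopeInductiveLimit}; note the index is finite because the symmetric enveloping Q-system $S=\bigoplus_{c\in \Irr(\cC)} c^{\op}\boxtimes c$ is a finite direct sum in the fusion category $\cC^{\op}\boxtimes\cC$. The central claim is that the restriction $Y|_{R\otimes N}$ lies in the image of the fully faithful embedding $F\boxtimes G\colon \cC^{\op}\boxtimes \cC\to \fgpBim(R\otimes N)$. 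Once this is established, I would invoke the correspondence of \cite[\S3,4]{MR3801484} between bifinite $M_\infty$-bimodules whose restriction to $R\otimes N$ lies in $\cC^{\op}\boxtimes \cC$ and objects of $\cZ(\cC)$, producing the desired $z$ with $\Phi(z)\cong Y$; this correspondence is essentially the Morita/realization equivalence of Remark \ref{rem:QBimodules} for the Q-system $S$, combined with the description of $\cZ(\cC)$ as $\Bim_{\cC^{\op}\boxtimes \cC}(S)$ from Example \ref{ex:MoritaEquivalenceBetweenCopCandZ(C)}.

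To carry out the decomposition step, I would first use that $Y\in\Chi(M_\infty)$ is centrally trivial over $M_\infty$, together with Popa's spectral gap result \cite[Prop.~3.2]{MR2661553} for finite depth non-Gamma inclusions, applied both to $1\otimes N\subseteq M_\infty$ and to $R\otimes 1\subseteq M_\infty$. Spectral gap ensures that $R$-central sequences $(a_n)\subseteq R$ lift to $M_\infty$-central sequences $(a_n\otimes 1)$, and hence satisfy $\|(a_n\otimes 1)y-y(a_n\otimes 1)\|_2\to 0$ for all $y\in Y$ by Proposition \ref{prop:BimodualCT&CS}. Applying Proposition \ref{CTRHYPER} then forces $Y|_{R\otimes N}\cong L^2 R\otimes K$ as $R\otimes N$ bimodules for some $N$–$N$ bifinite bimodule $K$; a symmetric statement (with $R$ and $N$ swapped, using centralizers from the opposite Popa spectral gap) gives the appropriate splitting on the $N$ side as well. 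Bifiniteness of $Y|_{R\otimes N}$ over $R\otimes N$ follows from finite Jones index of $R\otimes N\subseteq M_\infty$ and of $Y$ over $M_\infty$.

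The hard part will be showing that the $R$–$R$ bimodule components of $Y|_{R\otimes N}$ actually lie in $F(\cC^{\op})$ and the $N$–$N$ components in $G(\cC)$, as opposed to arbitrary bifinite bimodules. This is not implied by mere central triviality: it requires exploiting that the $R\otimes N$-bimodule structure on $Y$ extends to an $M_\infty$-bimodule structure, which by the Q-system realization of Remark \ref{rem:QBimodules} is equivalent to an $S$–$S$ bimodule structure in $\cC^{\op}\boxtimes \cC$. I would use approximate innerness of $Y$ over $M_\infty$ together with an argument modelled on Example \ref{ex:Chi_fus(RboxtimesN)}: the $M_\infty$-bimodule $Y\boxtimes_{M_\infty}\overline Y$ is then approximately inner over $M_\infty$, giving rise, via the inclusion $R\otimes N\subseteq M_\infty$ and Popa's spectral gap for $1\otimes N\subseteq M_\infty$, to an approximate Pimsner–Popa basis lying in $R\otimes (N'\cap M_\infty)$. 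Because the inclusion is finite depth, $N'\cap M_\infty$ is the inductive limit of $N'\cap M_n$, whose projections correspond precisely to simple objects of $\cC$ under the identification \eqref{eq:HyperfiniteAction}; this forces the $N$-side components of $K$ to lie in the fusion category $\cC\subseteq \fgpBim(N)$, and similarly the $R$-side components lie in $F(\cC^{\op})$.

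With the decomposition $Y|_{R\otimes N}\in (F\boxtimes G)(\cC^{\op}\boxtimes \cC)$ in hand, applying the cited correspondence produces an $S$–$S$ bimodule $X\in \Bim_{\cC^{\op}\boxtimes \cC}(S)\cong \cZ(\cC)$ with $|X|\cong Y$ as $M_\infty$-bimodules, and by the explicit description of $X_z$ and the definition of $\Phi$, this identifies $Y\cong \Phi(z)$ for the corresponding $z\in \cZ(\cC)$. Combined with full faithfulness and braidedness of $\Phi$ already established, this completes the proof.
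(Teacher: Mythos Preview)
Your approach is genuinely different from the paper's and contains a real gap. The paper does \emph{not} attempt to show directly that an arbitrary $Y\in\Chi(M_\infty)$ restricts to $(F\boxtimes G)(\cC^{\op}\boxtimes\cC)$. Instead it argues indirectly: the Lagrangian $L=I(1_\cC)\in\cZ(\cC)$ realizes, via $\Phi$, to a $\rm II_1$ factor Morita equivalent to $R\otimes N$; by Example~\ref{ex:Chi_fus(RboxtimesN)} and Theorem~\ref{thm:LocalExtension} one gets $\Chi(M_\infty)_{\Phi(L)}^{\loc}\cong\Chi(R\otimes N)=\Vect$. Since the free module functor $x\mapsto x\otimes\Phi(L)$ is fully faithful on the centralizer of $\Phi(\cZ(\cC))$ inside $\Chi(M_\infty)$, that centralizer is trivial. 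A purely categorical fact (Proposition~\ref{EXISTCENTRAL}) then forces $\Phi(\cZ(\cC))=\Chi(M_\infty)$, because a non-degenerate braided fusion subcategory with trivial centralizer cannot be proper.

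The gap in your argument is the assertion that $R$-central sequences $(a_n)\subseteq R$ become $M_\infty$-central as $(a_n\otimes 1)$. This is false. Spectral gap of $1\otimes N\subseteq M_\infty$ (or of $M_n\subseteq M_\infty$) controls how $M_\infty$-central sequences sit relative to the subalgebra, not the reverse; and there is no spectral gap statement for $R\otimes 1\subseteq M_\infty$ since $R$ is hyperfinite. Concretely, for $(a_n\otimes 1)$ to be $M_\infty$-central it must asymptotically commute with every $f\in F(c^{\op})$, i.e.\ each $F(c^{\op})$ must be centrally trivial over $R$. But Proposition~\ref{CTRHYPER} with $S=\bbC$ shows the only centrally trivial $R$-bimodules are multiples of $L^2R$, while $F$ is fully faithful, so $F(c^{\op})\not\cong L^2R$ for $c\neq 1$. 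Hence there exist $R$-central $(a_n)$ and $f\in F(c^{\op})$ with $\|a_nf-fa_n\|_2\not\to 0$, and the hypothesis of Proposition~\ref{CTRHYPER} for $Y|_{R\otimes N}$ is not available from $M_\infty$-central triviality of $Y$. Without this, step~(A) collapses, and the subsequent reduction to \cite{MR3801484} never gets off the ground. (Your step~(B) is also underdetermined: approximate innerness over $M_\infty$ does not obviously descend to $R\otimes N$, which is what the argument of Example~\ref{ex:Chi_fus(RboxtimesN)} actually uses.)
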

\begin{proof}
Consider our construction of $\Phi:\cZ(\cC)\to\Chi(M_\infty)$.
Let $L:=I(1_\cC)\in \cZ(\cC)$ be the canonical Lagrangian algebra, where $I: \cC\to \cZ(\cC)$ is adjoint to the forgetful functor. 
By Example \ref{ex:MoritaEquivalenceBetweenCopCandZ(C)}, the Q-systems $L\in \cZ(\cC)$ and $S\in \cC^{\op}\boxtimes \cC$ are related as in Remark \ref{rem:MoritaEquivalenceRealization}, i.e., $S = X\otimes \overline{X}$ and $L=\overline{X}\otimes X$ for $X=1_\cC$ in the Morita equivalence $\cC^{\op}\boxtimes \cC - \cZ(\cC)$ bimodule category $\cC$.
Thus we can identify $|\Phi(L)|$ with the basic construction of $R\otimes N\subseteq M_{\infty}$ by the discussion in Example \ref{ex:RealizedJonesTower}.
But this implies that $|\Phi(L)|$ is Morita equivalent to $R\otimes N$, and in particular, by Example \ref{ex:Chi_fus(RboxtimesN)},
$\Chi(|\Phi(L)|)=\Chi(R\otimes N)$ is trivial.
By Proposition \ref{thm:LocalExtension}, $\Chi(M_\infty)_{\Phi(L)}^{\loc}\cong \Chi(|\Phi(L)|)$.
This implies no non-trivial simple object in $\Chi(M_\infty)$ centralizes $\Phi(\cZ(\cC))$,
since the free module functor $x\mapsto x\otimes L$ for $x\in\Phi(\cZ(\cC))'\subseteq \Chi(M_\infty)$ is fully faithful.
Thus $\Phi(\cZ(\cC))'\subseteq \Chi(M_\infty)$ is trivial.
Since $\cZ(\cC)$ is non-degenerately braided by \cite{MR1966525}, 
by Proposition \ref{EXISTCENTRAL} in the next subsection,
$\Phi(\cZ(\cC))=\Chi(M_\infty)$.
\end{proof}

\begin{rem}
Kawahigashi studied a relative version of Connes' $\chi(M)$ and Jones $\kappa$ invariant for finite index $\rm II_1$ subfactors $N\subseteq M$ \cite{MR1230287, MR1321700}. 
In particular, Kawahigashi provides bounds and computations for relative $\chi$ for finite depth finite index subfactors of the hyperfinite $\rm{II}_{1}$ factor. 
For a given finite depth \textit{hyperfinite} subfactor $N\subseteq M$, there exists a non-Gamma inclusion $A\subseteq B$ with the same standard invariant \cite{MR2051399}. 
By \cite[Thm.~4.2]{MR2661553}, $\chi(A_{\infty})\cong \chi(N\subseteq M)$. 
By Theorem \ref{thm:nonGamma} and Example \ref{ex:ConnesChi}, 
$\chi(N\subseteq M)\cong \Inv(\cZ(\cC(N\subseteq M)))$, the group of isomorphism classes of invertible objects of the Drinfeld center of $\cC(N\subseteq M)$.
\end{rem}

\begin{rem}
Suppose $N\subseteq M$ is a finite index inclusion of non-Gamma $\rm II_1$ factors with $A_{2n}$ Jones-Temperley-Lieb standard invariant. 
Then $\cZ(\cC(N\subseteq M))$ is a unitary modular tensor category with no invertible objects. 
This distinguishes the corresponding $M_{\infty}$ factors pairwise, but they all have the same trivial Connes' $\chi$ invariant. 
Popa considers these examples of $\rm{II}_{1}$ factors with trivial $\chi$ which are not s-McDuff in \cite{MR2661553}, answering a question of Connes in the negative. 
(Recall a $\rm{II}_{1}$ factor is \emph{s-McDuff} if it is of the form $R\otimes N$ for $N$ non-Gamma). 
This leads us to ask the natural extension of Connes question.
\end{rem}

\begin{quest}
If $M$ is McDuff and $\Chi(M)$ is trivial, is $M$ s-McDuff?
\end{quest}

\subsection{A technical result on centralizers in braided tensor categories}
\label{sec:TechnicalBTC}

The goal of this section is to prove the following technical result for braided unitary tensor categories.
We expect this result holds in the greater generality of semisimple ribbon tensor categories; see the paragraph before \cite[Prop.~2.5]{MR1990929} for more details in this direction.

\begin{prop}\label{EXISTCENTRAL}
Suppose $\cC$ is an arbitrary braided unitary tensor category and $\cD\subsetneq \cC$ is a non-degenerately braided proper fusion subcategory. 
There exists $a\in \Irr(\cC)\backslash \Irr(\cD)$ which centralizes $\cD$. 
\end{prop}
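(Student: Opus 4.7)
The plan is to invoke a M\"uger-type decomposition theorem for the non-degenerate braided inclusion $\cD \subseteq \cC$: the braided tensor functor
\[
F \colon \cD \boxtimes \cD' \longrightarrow \cC, \qquad (d,e) \longmapsto d \otimes e,
\]
where $\cD' \subseteq \cC$ denotes the centralizer of $\cD$, is a unitary equivalence. Granted this, the conclusion follows immediately: if $\cD'$ consisted only of multiples of the unit, then $F$ would collapse to an equivalence $\cD \xrightarrow{\sim} \cC$, contradicting $\cD \subsetneq \cC$. Hence $\cD'$ contains some simple $a \ne 1_\cC$, and non-degeneracy of $\cD$ (which by definition means $\cD \cap \cD' = \Vec$) forces $a \in \Irr(\cC) \setminus \Irr(\cD)$, as required.

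The functor $F$ is manifestly braided and tensor, with tensorator given by $\id_{d_1} \otimes \beta_{e_1, d_2} \otimes \id_{e_2}$; this is well-defined, and compatible with the two braidings, precisely because each $e_i$ centralizes $\cD$. Full faithfulness reduces to the identity
\[
\Hom_\cC(d_1 \otimes e_1,\, d_2 \otimes e_2) \cong \Hom_\cD(d_1, d_2) \otimes_\bbC \Hom_\cC(e_1, e_2)
\]
for $d_i \in \Irr(\cD)$ and $e_i \in \Irr(\cD')$. The nontrivial direction is a local computation: $\cD$-strands slide freely past the $e_i$, and Schur orthogonality for the modular $\cD$ isolates the $\Hom_\cD$ factor. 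Neither step requires any finiteness assumption on $\cC$.

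Essential surjectivity is the main obstacle. For each simple $a \in \Irr(\cC)$, I would form the ``Kirby colour'' idempotent
\[
\pi_a := \frac{1}{\dim(\cD)} \sum_{d \in \Irr(\cD)} \dim(d) \cdot \bigl(\text{$d$-strand looped around } a\bigr) \colon a \to a,
\]
a finite sum because $\cD$ is fusion. Non-degeneracy of $\cD$ guarantees that $\pi_a$, together with its refinements picking out each simple $d_0 \in \Irr(\cD)$, exhibits $a$ as a direct summand of some $d_0 \otimes e$ with $d_0 \in \Irr(\cD)$ and $e \in \cC$ centralizing $\cD$; simplicity of $a$ upgrades this to an isomorphism $a \cong d_0 \otimes e$ with $e \in \Irr(\cD')$. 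Crucially, every step of this argument takes place within the fusion category $\cD$ and on the single simple $a$, invoking only finite-dimensionality of hom spaces in $\cC$, so the standard fusion-category proof of M\"uger's decomposition theorem transfers verbatim to our possibly-infinite UTC $\cC$.
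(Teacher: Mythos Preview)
Your approach is correct in outline and genuinely different from the paper's.  The paper never invokes M\"uger's decomposition $\cC\simeq\cD\boxtimes\cD'$; instead it builds a hypergroup homomorphism $f:K_0(\cC)\to K_0(\cD)$ by matching the monodromy character $\gamma_a|_{\Irr(\cD)}$ of each simple $a\in\cC$ with the unique $\gamma_{f(a)}$ for $f(a)\in\Irr(\cD)$ (using non-degeneracy of $\cD$), proves that $f(\lambda_a\lambda_b)=f(\lambda_a)f(\lambda_b)$ on the normalized basis $\lambda_a=[a]/d_a$, and then argues the contrapositive: if no nontrivial simple centralizes $\cD$, the trace functional shows $f$ is injective, forcing $\operatorname{rank}(\cC)\le\operatorname{rank}(\cD)$ and hence $\cC=\cD$.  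Your route through $\cD\boxtimes\cD'\simeq\cC$ is more structural and gives a stronger conclusion (the full factorization), at the cost of having to justify that M\"uger's theorem extends to non-fusion $\cC$.

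There is a genuine soft spot in your essential surjectivity paragraph.  First, the ``standard proof'' of M\"uger's theorem most often cited (e.g.\ DGNO) finishes essential surjectivity by a global $\mathrm{FPdim}$ count, which does \emph{not} transfer when $\cC$ is infinite; you need the encircling/Kirby argument, and you should say so.  Second, the idempotent $\pi_a$ you wrote down acts on the \emph{simple} object $a$ and is therefore just a scalar---it cannot by itself exhibit any decomposition of $a$.  The correct move is to apply the Kirby projector to $\overline{d_0}\otimes a$ for $d_0\in\Irr(\cD)$: using $S_{x\otimes y,d}=S_{x,d}S_{y,d}/d_d$ and orthogonality of the $S$-matrix of $\cD$, one computes
\[
\sum_{d_0\in\Irr(\cD)} d_{d_0}\,\tr\bigl(\pi_{\overline{d_0}\otimes a}\bigr)
=\frac{1}{\dim\cD}\sum_d S_{a,d}\sum_{d_0}d_{d_0}\overline{S_{d_0,d}}
=\frac{1}{\dim\cD}\sum_d S_{a,d}\,\delta_{d,1}\dim\cD
=d_a>0,
\]
so some $\overline{d_0}\otimes a$ has a nonzero summand $e\in\cD'$.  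Then $a\hookrightarrow d_0\otimes e$, and full faithfulness (which you argued correctly) makes $d_0\otimes e$ simple, hence $a\cong d_0\otimes e$.  This computation uses only the modular data of $\cD$ and the single simple $a$, so it does go through for arbitrary unitary $\cC$---but it is not ``verbatim'' the textbook proof, and your $\pi_a$ alone does not do the job.
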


Let $\cC$ be a braided unitary tensor category. 
Let $\tr_\cC$ denote the (unnormalized) categorical trace corresponding to the unique unitary spherical structure from minimal solutions to the conjugate equations \cite{MR1444286}.
(The normalization is
$\tr_\cC(\id_c) = \dim(c)$ for each $c\in \cC$.)
For each $a\in \Irr(\cC)$ define a function $\gamma_{a}: \Irr(\cC)\rightarrow \bbC$ by 
$$
\gamma_{a}(b)
:=\frac{1}{d_{b}}\tr_\cC(\sigma_{b,a}\sigma_{a,b}).
$$
Then $\gamma_{a}(b)$ extends linearly to a \textit{character} on the fusion ring, i.e.
$$
\gamma_{a}(b)\gamma_{a}(c)=\sum_{d} N^{d}_{bc}\gamma_{a}(d).
$$
Furthermore, characters are related by the following equation:
\begin{equation}
\label{eq:CharacterProductExpansion}
\frac{\gamma_{a}(c)\gamma_{b}(c)}{d_{c}}=\sum_{e} \frac{d_{e}}{d_{a}d_{b}}N^{c}_{ab} \gamma_{e}(c).
\end{equation}

Suppose $\cD$ is a non-degenerately braided full fusion subcategory of $\cC$ (which is thus modular by unitarity).
Then by non-degeneracy, $\{\gamma_{a}\}_{a\in \Irr(\cD)}$ forms a complete set of characters of $K_0(\cD)$. 
But any $b\in \Irr(\cC)$ also defines a character $\gamma_{b}$, and thus $\gamma_{b}|_{\Irr(\cD)}=\gamma_{f(b)}$ for some uniquely defined $f(b)\in \Irr(\cD)$. 
Thus we have a function $f:\Irr(\cC)\rightarrow \Irr(\cD)$.

Now we define the \emph{fusion hypergroup} of a semisimple unitary tensor category $\cC$ as the fusion algebra $K_0(\cC)$ with the distinguished basis $\lambda_{a}=\frac{[a]}{d_{a}}$. 
We then have 
$$
\lambda_{a}\lambda_{b}=\sum_{c} M^{c}_{ab} \lambda_{c}
\qquad\qquad
\text{where}
\qquad\qquad
M^{c}_{ab}:=\frac{d_{c}}{d_{a}d_{b}} N^{c}_{ab}
.
$$

\begin{lem}
The assignment $f(\lambda_{a}):= \lambda_{f(a)}$ extends to a homomorphism of fusion algebras $K_0(\cC)\rightarrow K_0(\cD)$.
\end{lem}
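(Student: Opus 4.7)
The goal is to verify that $f(\lambda_a \lambda_b) = f(\lambda_a) f(\lambda_b)$ in $K_0(\cD)$. Expanding both sides in the hypergroup basis, this reduces to the combinatorial identity
\[
M^{e'}_{f(a),f(b)} \;=\; \sum_{\substack{c\in\Irr(\cC)\\ f(c)=e'}} M^{c}_{ab}
\qquad\text{for all }a,b\in\Irr(\cC),\ e'\in\Irr(\cD).
\]
The plan is to produce two expansions of the product $\gamma_{f(a)}\gamma_{f(b)}$, viewed as a function on $\Irr(\cD)$, and then invoke linear independence of the characters $\{\gamma_{e'}\}_{e'\in\Irr(\cD)}$.

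First, I would apply \eqref{eq:CharacterProductExpansion} inside $\cC$: for any $c\in\Irr(\cD)$,
\[
\frac{\gamma_a(c)\gamma_b(c)}{d_c}
\;=\;
\sum_{e\in\Irr(\cC)} M^{e}_{ab}\,\gamma_e(c).
\]
Since $f(x)$ is defined so that $\gamma_x|_{\Irr(\cD)} = \gamma_{f(x)}$, the left-hand side equals $\gamma_{f(a)}(c)\gamma_{f(b)}(c)/d_c$, and on the right $\gamma_e(c) = \gamma_{f(e)}(c)$. Collecting according to the value of $f(e)$ gives
\[
\frac{\gamma_{f(a)}(c)\gamma_{f(b)}(c)}{d_c}
\;=\;
\sum_{e'\in\Irr(\cD)}\Bigl(\sum_{f(e)=e'} M^{e}_{ab}\Bigr)\gamma_{e'}(c).
\]
Next, I would apply \eqref{eq:CharacterProductExpansion} inside $\cD$ to the objects $f(a),f(b)\in\Irr(\cD)$, noting that because $\cD\subseteq\cC$ is a \emph{full} fusion subcategory, the fusion rules $M^{e'}_{f(a),f(b)}$ coincide whether computed in $\cD$ or in $\cC$; this gives
\[
\frac{\gamma_{f(a)}(c)\gamma_{f(b)}(c)}{d_c}
\;=\;
\sum_{e'\in\Irr(\cD)} M^{e'}_{f(a),f(b)}\,\gamma_{e'}(c).
\]

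Finally, subtracting these two expressions yields a linear combination of the characters $\{\gamma_{e'}\}_{e'\in\Irr(\cD)}$ which vanishes on all of $\Irr(\cD)$. The crucial input is nondegeneracy of the braiding on $\cD$: this is precisely what guarantees that $\{\gamma_{e'}\}_{e'\in\Irr(\cD)}$ is the complete set of (distinct) characters of the fusion algebra $K_0(\cD)$, hence linearly independent as functions on $\Irr(\cD)$. Matching coefficients produces the desired identity, and extending $f$ linearly to $K_0(\cC)\to K_0(\cD)$ gives the required homomorphism of fusion algebras. No step requires substantive computation beyond the two applications of \eqref{eq:CharacterProductExpansion}; the only conceptual subtlety is the bookkeeping that $M^{e'}_{f(a),f(b)}$ is unambiguous across $\cD\subseteq\cC$ and the appeal to modularity of $\cD$ to conclude linear independence.
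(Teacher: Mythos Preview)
Your proof is correct and is essentially the same argument as the paper's: compute $d_x^{-1}\gamma_a(x)\gamma_b(x)$ for $x\in\Irr(\cD)$ in two ways via \eqref{eq:CharacterProductExpansion}, then invoke linear independence of the characters $\{\gamma_{e'}\}_{e'\in\Irr(\cD)}$ (guaranteed by nondegeneracy of $\cD$) to extract the hypergroup identity $\sum_{f(e)=e'} M^{e}_{ab}=M^{e'}_{f(a),f(b)}$. Your explicit remark that the fusion coefficients $M^{e'}_{f(a),f(b)}$ are unambiguous across $\cD\subseteq\cC$ is a nice point that the paper leaves implicit.
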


\begin{proof}
For $x\in \Irr(\cD)$ and $a,b\in \Irr(\cC)$, 
we compute $d_x^{-1}\gamma_{a}(x)\gamma_{b}(x)$ in two ways.
First, we can apply \eqref{eq:CharacterProductExpansion} and then swap $\gamma_{c}$ with $\gamma_{f(c)}$, or 
we can swap
$\gamma_{a}, \gamma_{b}$ with $\gamma_{f(a)},\gamma_{f(b)}$ respectively and then apply \eqref{eq:CharacterProductExpansion}.
Equating these two computations gives the equality
$$
\sum_{c\in \Irr(\cC)} \frac{d_{c}}{d_{a}d_{b}}N^{c}_{ab} \gamma_{f(c)}(x)=\sum_{y\in \Irr(\cD)} \frac{d_{y}}{d_{f(a)}d_{f(b)}} N^{y}_{f(a)f(b)}\gamma_{y}(x),
$$
which implies
\begin{equation}
\label{eq:HypergroupIdentity}
\sum_{y\in \Irr(\cD)}  \left( \sum_{c\in f^{-1}(y)} M^{c}_{ab}  -M^{y}_{f(a)f(b)}\right) \gamma_{y}(x) =0.
\end{equation}
Since \eqref{eq:HypergroupIdentity} holds for all $x\in \Irr(\cD)$, we have 
$$
\sum_{y\in \Irr(\cD)}  \left( \sum_{c\in f^{-1}(y)} M^{c}_{ab}  -M^{y}_{f(a)f(b)}\right) \gamma_{y}=0,
$$
which is an equation in the space of functions on $\Irr(\cD)$. 
But since $\{\gamma_{y}\}_{y\in \Irr(\cD)}$ is a complete set of characters for the fusion algebra $K_0(\cD)$, it forms a basis for the space $\Fun(\Irr(\cD)\to \cC)$ (where we idenitfy $\Fun(\Irr(\cD)\to \cC)$ with the dual space $K_0(\cD)^{\vee}$), and is thus linearly independent. 
This immediately implies
$$
\sum_{c\in f^{-1}(y)} M^{c}_{ab}  =M^{y}_{f(a)f(b)}
\qquad\qquad
\forall\,y\in \Irr(\cD).
$$
We then see that 
$$
f(\lambda_{a})f(\lambda_{b})=\lambda_{f(a)} \lambda_{f(b)}=\sum_{y} M^{y}_{f(a)f(b)}\lambda_{y}
=
\sum_{y} \left(\sum_{c\in f^{-1}(y)} M^{c}_{ab} \right) \lambda_{y}
=
\sum_{c} M^{c}_{ab} \lambda_{f(c)}
=
f(\lambda_{a}\lambda_{b})
$$
as claimed.
\end{proof}

\begin{proof}[Proof of Proposition~\ref{EXISTCENTRAL}]
We prove the contrapositive.
That is, we will show that if $\cC$ is a braided unitary tensor category and
$\cD\subseteq \cC$ is a non-degenerately braided fusion subcategory, then the absence of a nontrivial centralizing simple object for $\cD$ in $\Irr(\cC)$ implies $\cC=\cD$.

First, by \cite[Prop.~2.5]{MR1990929},  $c\in \Irr(\cC)$ centralizes $\cD$ if and only if $\gamma_{c}|_{\cD}=\gamma_{f(c)}=\gamma_{1_\cD}$.
Suppose that the only $c\in \Irr(\cC)$ for which $f(c)=1_\cD$ is $c=1_\cC$. 
Let $\tau_\cC$ be the functional on $K_0(\cC)$ which picks off the coefficient of the identity object, and similarly define $\tau_\cD$ on $K_0(\cD)$. 
Note that $\tau_\cC,\tau_\cD$ are positive definite on the $*$-algebras $K_0(\cC),K_0(\cD)$ respectively.
For any $\eta\in K_0(\cC)$, our hypothesis implies $\tau(\eta)=\tau(f(\eta))$, where we have 
extended $f$ linearly. 
But then $f:K_0(\cC)\rightarrow K_0(\cD)$ is injective, since if
$f(\eta)=0$, then $0=f(\eta^{*})f(\eta)=f(\eta^{*}\eta)$, and thus $0=\tau_\cD(f(\eta^{*}\eta))=\tau_\cC(\eta^{*}\eta)$, which implies $\eta=0$. 
In particular, 
$$
\operatorname{rank}(\cC)=\dim(K_0(\cC))\le \dim(K_0(\cD))=\operatorname{rank}(\cD).
$$ 
But as $\cD\subseteq \cC$, we must have $\cD=\cC$.
\end{proof}

\bibliographystyle{alpha}
{\footnotesize{
\bibliography{bibliography}

\newcommand{\etalchar}[1]{$^{#1}$}
\begin{thebibliography}{GMP{\etalchar{+}}18}

\bibitem[AP17]{ClaireSorinII_1}
Claire Anantharaman and Sorin Popa.
\newblock An introduction to {${\rm II}_1$} factors, 2017.
\newblock preprint available at
  \url{http://www.math.ucla.edu/~popa/books.html}.

\bibitem[BCE{\etalchar{+}}20]{2010.01067}
Marcel Bischoff, Ian Charlesworth, Samuel Evington, Luca Giorgetti, and David
  Penneys.
\newblock Distortion for multifactor bimodules and representations of
  multifusion categories, 2020.
\newblock \arxiv{2010.01067.}

\bibitem[BDH88]{MR945550}
Michel Baillet, Yves Denizeau, and Jean-Fran\c{c}ois Havet.
\newblock Indice d'une esp\'{e}rance conditionelle. ({F}rench) [{I}ndex of a
  conditional expectation].
\newblock {\em Compositio Math.}, 66(2):199--236, 1988.
\newblock \mathscinet{MR945550}.

\bibitem[BDH15]{MR3439097}
Arthur Bartels, Christopher~L. Douglas, and Andr\'{e} Henriques.
\newblock Conformal nets {I}: {C}oordinate-free nets.
\newblock {\em Int. Math. Res. Not. IMRN}, (13):4975--5052, 2015.
\newblock \mathscinet{MR3439097} \doi{10.1093/imrn/rnu080} \arxiv{1302.2604}.

\bibitem[BDH18]{MR3773743}
Arthur Bartels, Christopher~L. Douglas, and Andr\'{e} Henriques.
\newblock Conformal nets {IV}: {T}he 3-category.
\newblock {\em Algebr. Geom. Topol.}, 18(2):897--956, 2018.
\newblock \mathscinet{MR3773743} \doi{10.2140/agt.2018.18.897}
  \arxiv{1605.00662}.

\bibitem[Bis97]{MR1424954}
Dietmar Bisch.
\newblock Bimodules, higher relative commutants and the fusion algebra
  associated to a subfactor.
\newblock In {\em Operator algebras and their applications (Waterloo, ON,
  1994/1995), 13-63, Fields Inst. Commun., 13}. Amer. Math. Soc., Providence,
  RI, 1997.
\newblock \mathscinet{MR1424954}, \googlebooks{_InIRTO8Y7gC}.

\bibitem[BKL15]{MR3424476}
Marcel Bischoff, Yasuyuki Kawahigashi, and Roberto Longo.
\newblock Characterization of 2{D} rational local conformal nets and its
  boundary conditions: the maximal case.
\newblock {\em Doc. Math.}, 20:1137--1184, 2015.
\newblock \mathscinet{MR3424476}, \arxiv{1410.8848}.

\bibitem[Bla06]{MR2188261}
B.~Blackadar.
\newblock {\em Operator algebras}, volume 122 of {\em Encyclopaedia of
  Mathematical Sciences}.
\newblock Springer-Verlag, Berlin, 2006.
\newblock Theory of $C^*$-algebras and von Neumann algebras, Operator Algebras
  and Non-commutative Geometry, III. \mathscinet{MR2188261}
  \doi{10.1007/3-540-28517-2}.

\bibitem[BLM04]{MR2111973}
David~P. Blecher and Christian Le~Merdy.
\newblock {\em Operator algebras and their modules---an operator space
  approach}, volume~30 of {\em London Mathematical Society Monographs. New
  Series}.
\newblock The Clarendon Press, Oxford University Press, Oxford, 2004.
\newblock \mathscinet{MR2111973}
  \doi{10.1093/acprof:oso/9780198526599.001.0001}.

\bibitem[Bur03]{1111.1362}
Michael Burns.
\newblock Subfactors, planar algebras, and rotations.
\newblock Ph.D. thesis at the {U}niversity of {C}alifornia, {B}erkeley, 2003.
\newblock \arXiv{1111.1362}.

\bibitem[Che94]{JianduanChenThesis}
Jianduan Chen.
\newblock {\em Connes' invariant {$\chi(M)$} and cohomology of groups}.
\newblock PhD thesis, University of California, Berkeley, 1994.

\bibitem[Con74]{MR0358374}
A.~Connes.
\newblock Almost periodic states and factors of type {${\rm III}\sb{1}$}.
\newblock {\em J. Functional Analysis}, 16:415--445, 1974.
\newblock \mathscinet{MR0358374}.

\bibitem[Con75]{MR377534}
Alain Connes.
\newblock Sur la classification des facteurs de type {${\rm II}$}.
\newblock {\em C. R. Acad. Sci. Paris S\'{e}r. A-B}, 281(1):Aii, A13--A15,
  1975.

\bibitem[CPJP21]{2105.12010}
Quan Chen, Roberto~Hernandez Palomares, Corey Jones, and David Penneys.
\newblock Q-system completion for $\rm {C}^*$ 2-categories, 2021.
\newblock \arxiv{2105.12010}.

\bibitem[DHR71]{MR0297259}
Sergio Doplicher, Rudolf Haag, and John~E. Roberts.
\newblock Local observables and particle statistics. {I}.
\newblock {\em Comm. Math. Phys.}, 23:199--230, 1971.
\newblock \mathscinet{MR0297259}.

\bibitem[Dix81]{MR641217}
Jacques Dixmier.
\newblock {\em von {N}eumann algebras}, volume~27 of {\em North-Holland
  Mathematical Library}.
\newblock North-Holland Publishing Co., Amsterdam-New York, 1981.
\newblock With a preface by E. C. Lance, Translated from the second French
  edition by F. Jellett. \mathscinet{MR641217}.

\bibitem[EGNO15]{MR3242743}
Pavel Etingof, Shlomo Gelaki, Dmitri Nikshych, and Victor Ostrik.
\newblock {\em Tensor categories}, volume 205 of {\em Mathematical Surveys and
  Monographs}.
\newblock American Mathematical Society, Providence, RI, 2015.
\newblock \mathscinet{MR3242743} \doi{10.1090/surv/205}.

\bibitem[EK98]{MR1642584}
David~E. Evans and Yasuyuki Kawahigashi.
\newblock {\em Quantum Symmetries on Operator Algebras}.
\newblock Oxford Mathematical Monographs. Oxford Science Publications. The
  Clarendon Press, Oxford University Press, New York, 1998.
\newblock xvi+829 pp. ISBN: 0-19-851175-2, \mathscinet{MR1642584}.

\bibitem[EM54]{MR65163}
Samuel Eilenberg and Saunders MacLane.
\newblock On the groups {$H(\Pi,n)$}. {III}.
\newblock {\em Ann. of Math. (2)}, 60:513--557, 1954.
\newblock \mathscinet{MR65163} \doi{10.2307/1969849}.

\bibitem[FN15]{MR3426207}
Leander Fiedler and Pieter Naaijkens.
\newblock Haag duality for {K}itaev's quantum double model for abelian groups.
\newblock {\em Rev. Math. Phys.}, 27(9):1550021, 43, 2015.
\newblock \mathscinet{MR3426207} \doi{10.1142/S0129055X1550021X}
  \arxiv{1406.1084}.

\bibitem[GdlHJ89]{MR999799}
Frederick~M. Goodman, Pierre de~la Harpe, and Vaughan~F.R. Jones.
\newblock {\em Coxeter graphs and towers of algebras}.
\newblock Mathematical Sciences Research Institute Publications, 14.
  Springer-Verlag, New York, 1989.
\newblock x+288 pp. ISBN: 0-387-96979-9, \mathscinet{MR999799}.

\bibitem[GJS10]{MR2732052}
Alice Guionnet, Vaughan F.~R. Jones, and Dimitri Shlyakhtenko.
\newblock Random matrices, free probability, planar algebras and subfactors.
\newblock In {\em Quanta of maths}, volume~11 of {\em Clay Math. Proc.}, pages
  201--239. Amer. Math. Soc., Providence, RI, 2010.
\newblock \mathscinet{MR2732052}, \arXiv{0712.2904v2}.

\bibitem[GJS11]{MR2807103}
Alice Guionnet, Vaughan F.~R. Jones, and Dimitri Shlyakhtenko.
\newblock A semi-finite algebra associated to a subfactor planar algebra.
\newblock {\em J. Funct. Anal.}, 261(5):1345--1360, 2011.
\newblock \arXiv{0911.4728}, \mathscinet{MR2807103},
  \doi{10.1016/j.jfa.2011.05.004}.

\bibitem[GLR85]{MR808930}
P.~Ghez, R.~Lima, and J.~E. Roberts.
\newblock {$W\sp \ast$}-categories.
\newblock {\em Pacific J. Math.}, 120(1):79--109, 1985.
\newblock \mathscinet{MR808930}.

\bibitem[GMP{\etalchar{+}}18]{1810.06076}
Pinhas Grossman, Scott Morrison, David Penneys, Emily Peters, and Noah Snyder.
\newblock The {E}xtended {H}aagerup fusion categories, 2018.
\newblock \arxiv{1810.06076}.

\bibitem[GS12]{MR2909758}
Pinhas Grossman and Noah Snyder.
\newblock Quantum subgroups of the {H}aagerup fusion categories.
\newblock {\em Comm. Math. Phys.}, 311(3):617--643, 2012.
\newblock \mathscinet{MR2909758}, \doi{10.1007/s00220-012-1427-x}.

\bibitem[Haa96]{MR1405610}
Rudolf Haag.
\newblock {\em Local quantum physics}.
\newblock Texts and Monographs in Physics. Springer-Verlag, Berlin, second
  edition, 1996.
\newblock Fields, particles, algebras. \mathscinet{MR1405610}
  \doi{10.1007/978-3-642-61458-3}.

\bibitem[Hen14]{MR3221289}
Andr\'e Henriques.
\newblock Three-tier {CFT}s from {F}robenius algebras.
\newblock In {\em Topology and field theories}, volume 613 of {\em Contemp.
  Math.}, pages 1--40. Amer. Math. Soc., Providence, RI, 2014.
\newblock \mathscinet{MR3221289} \doi{10.1090/conm/613/12233}
  \arxiv{1304.7328}.

\bibitem[HP20]{2004.08271}
Andr\'e Henriques and David Penneys.
\newblock Representations of fusion categories and their commutants, 2020.
\newblock \arxiv{2004.08271}.

\bibitem[HV19]{MR3971584}
Chris Heunen and Jamie Vicary.
\newblock {\em Categories for quantum theory}, volume~28 of {\em Oxford
  Graduate Texts in Mathematics}.
\newblock Oxford University Press, Oxford, 2019.
\newblock An introduction, \mathscinet{MR3971584}
  \doi{10.1093/oso/9780198739623.001.0001}.

\bibitem[Ioa07]{MR2342972}
Adrian Ioana.
\newblock A relative version of {C}onnes' {$\chi(M)$} invariant and existence
  of orbit inequivalent actions.
\newblock {\em Ergodic Theory Dynam. Systems}, 27(4):1199--1213, 2007.
\newblock \mathscinet{MR2342972} \doi{10.1017/S0143385706000666}
  \arxiv{math/0411164}.

\bibitem[Jon80]{MR585235}
V.~F.~R. Jones.
\newblock A {${\rm II}_{1}$} factor anti-isomorphic to itself but without
  involutory antiautomorphisms.
\newblock {\em Math. Scand.}, 46(1):103--117, 1980.
\newblock \mathscinet{MR585235} \doi{10.7146/math.scand.a-11855}.

\bibitem[Jon83]{MR0696688}
Vaughan F.~R. Jones.
\newblock Index for subfactors.
\newblock {\em Invent. Math.}, 72(1):1--25, 1983.
\newblock \mathscinet{MR696688}, \doi{10.1007/BF01389127}.

\bibitem[Jon99]{math.QA/9909027}
Vaughan F.~R. Jones.
\newblock Planar algebras {I}, 1999.
\newblock \arXiv{math.QA/9909027}.

\bibitem[Jon15]{JonesVNA}
Vaughan F.~R. Jones.
\newblock Von {N}eumann algebras, 2015.
\newblock
  \url{https://math.vanderbilt.edu/jonesvf/VONNEUMANNALGEBRAS2015/VonNeumann2015.pdf}.

\bibitem[JP11]{MR2812459}
Vaughan F.~R. Jones and David Penneys.
\newblock The embedding theorem for finite depth subfactor planar algebras.
\newblock {\em Quantum Topol.}, 2(3):301--337, 2011.
\newblock \arXiv{1007.3173}, \mathscinet{MR2812459}, \doi{10.4171/QT/23}.

\bibitem[JP17]{MR3687214}
Corey Jones and David Penneys.
\newblock Operator algebras in rigid {$\rm C^*$}-tensor categories.
\newblock {\em Comm. Math. Phys.}, 355(3):1121--1188, 2017.
\newblock \mathscinet{MR3687214} \doi{10.1007/s00220-017-2964-0}
  \arxiv{1611.04620}.

\bibitem[JP19]{MR3948170}
Corey Jones and David Penneys.
\newblock Realizations of algebra objects and discrete subfactors.
\newblock {\em Adv. Math.}, 350:588--661, 2019.
\newblock \mathscinet{MR3948170} \doi{10.1016/j.aim.2019.04.039}
  \arxiv{1704.02035}.

\bibitem[JS97]{MR1473221}
Vaughan~F.R. Jones and V.S. Sunder.
\newblock {\em Introduction to Subfactors}.
\newblock London Mathematical Society Lecture Note Series, 234. Cambridge
  University Press, Cambridge, 1997.
\newblock xii+162 pp. ISBN: 0-521-58420-5, \mathscinet{MR1473221}.

\bibitem[JY20]{2002.06055}
Niles Johnson and Donald Yau.
\newblock 2-dimensional categories, 2020.
\newblock \arxiv{2002.06055}.

\bibitem[Kaw93]{MR1230287}
Yasuyuki Kawahigashi.
\newblock Centrally trivial automorphisms and an analogue of {C}onnes's
  {$\chi(M)$} for subfactors.
\newblock {\em Duke Math. J.}, 71(1):93--118, 1993.
\newblock \mathscinet{MR1230287} \doi{10.1215/S0012-7094-93-07105-0}.

\bibitem[Kaw95]{MR1321700}
Yasuyuki Kawahigashi.
\newblock Orbifold subfactors, central sequences, and the relative {J}ones
  invariant {$\kappa$}.
\newblock {\em Internat. Math. Res. Notices}, (3):129--140, 1995.
\newblock \mathscinet{MR1321700} \doi{10.1155/S1073792895000109}.

\bibitem[KLM01]{MR1838752}
Yasuyuki Kawahigashi, Roberto Longo, and Michael M{\"u}ger.
\newblock Multi-interval subfactors and modularity of representations in
  conformal field theory.
\newblock {\em Comm. Math. Phys.}, 219(3):631--669, 2001.
\newblock \mathscinet{MR1838752} \doi{10.1007/PL00005565}.

\bibitem[Lon89]{MR1027496}
Roberto Longo.
\newblock Index of subfactors and statistics of quantum fields. {I}.
\newblock {\em Comm. Math. Phys.}, 126(2):217--247, 1989.
\newblock \mathscinet{MR1027496}.

\bibitem[Lon94]{MR1257245}
Roberto Longo.
\newblock A duality for {H}opf algebras and for subfactors. {I}.
\newblock {\em Comm. Math. Phys.}, 159(1):133--150, 1994.
\newblock \mathscinet{MR1257245}.

\bibitem[LR97]{MR1444286}
R.~Longo and J.~E. Roberts.
\newblock A theory of dimension.
\newblock {\em $K$-Theory}, 11(2):103--159, 1997.
\newblock \mathscinet{MR1444286} \doi{10.1023/A:1007714415067}.

\bibitem[LR04]{MR2097363}
Roberto Longo and Karl-Henning Rehren.
\newblock Local fields in boundary conformal {QFT}.
\newblock {\em Rev. Math. Phys.}, 16(7):909--960, 2004.
\newblock \mathscinet{MR2097363} \doi{10.1142/S0129055X04002163}
  \arxiv{math-ph/0405067}.

\bibitem[M{\"u}g03a]{MR1966524}
Michael M{\"u}ger.
\newblock From subfactors to categories and topology. {I}. {F}robenius algebras
  in and {M}orita equivalence of tensor categories.
\newblock {\em J. Pure Appl. Algebra}, 180(1-2):81--157, 2003.
\newblock \mathscinet{MR1966524} \doi{10.1016/S0022-4049(02)00247-5}
  \arXiv{math.CT/0111204}.

\bibitem[M{\"u}g03b]{MR1966525}
Michael M{\"u}ger.
\newblock From subfactors to categories and topology. {II}. {T}he quantum
  double of tensor categories and subfactors.
\newblock {\em J. Pure Appl. Algebra}, 180(1-2):159--219, 2003.
\newblock \mathscinet{MR1966525} \doi{10.1016/S0022-4049(02)00248-7}
  \arXiv{math.CT/0111205}.

\bibitem[M{\"u}g03c]{MR1990929}
Michael M{\"u}ger.
\newblock On the structure of modular categories.
\newblock {\em Proc. London Math. Soc. (3)}, 87(2):291--308, 2003.
\newblock \mathscinet{MR1990929} \doi{10.1112/S0024611503014187}.

\bibitem[NY18]{MR3933035}
Sergey Neshveyev and Makoto Yamashita.
\newblock Categorically {M}orita equivalent compact quantum groups.
\newblock {\em Doc. Math.}, 23:2165--2216, 2018.
\newblock \mathscinet{MR3933035} \arxiv{1704.04729}.

\bibitem[Ocn88]{MR996454}
Adrian Ocneanu.
\newblock Quantized groups, string algebras and {G}alois theory for algebras.
\newblock In {\em Operator algebras and applications, Vol.\ 2}, volume 136 of
  {\em London Math. Soc. Lecture Note Ser.}, pages 119--172. Cambridge Univ.
  Press, Cambridge, 1988.
\newblock \mathscinet{MR996454}.

\bibitem[Oga21]{arXiv:2106.15741}
Yoshiko Ogata.
\newblock A derivation of braided c?-tensor categories from gapped ground
  states satisfying the approximate haag duality, 2021.
\newblock \arxiv{2106.15741}.

\bibitem[Ost03]{MR1976459}
Victor Ostrik.
\newblock Module categories, weak {H}opf algebras and modular invariants.
\newblock {\em Transform. Groups}, 8(2):177--206, 2003.
\newblock \mathscinet{MR1976459} \arXiv{math/0111139}.

\bibitem[Pas73]{MR355613}
William~L. Paschke.
\newblock Inner product modules over {$B\sp{\ast} $}-algebras.
\newblock {\em Trans. Amer. Math. Soc.}, 182:443--468, 1973.
\newblock \mathscinet{MR355613} \doi{10.2307/1996542}.

\bibitem[Pen13]{MR3040370}
David Penneys.
\newblock A {P}lanar {C}alculus for {I}nfinite {I}ndex {S}ubfactors.
\newblock {\em Comm. Math. Phys.}, 319(3):595--648, 2013.
\newblock \mathscinet{MR3040370} \arXiv{1110.3504}
  \doi{10.1007/s00220-012-1627-4}.

\bibitem[Pen20]{MR4133163}
David Penneys.
\newblock Unitary dual functors for unitary multitensor categories.
\newblock {\em High. Struct.}, 4(2):22--56, 2020.
\newblock \mathscinet{MR4133163} \arxiv{1808.00323}.

\bibitem[Pop94a]{MR1317367}
Sorin Popa.
\newblock Approximate innerness and central freeness for subfactors: a
  classification result.
\newblock In {\em Subfactors ({K}yuzeso, 1993)}, pages 274--293. World Sci.
  Publ., River Edge, NJ, 1994.
\newblock \mathscinet{MR1317367}.

\bibitem[Pop94b]{MR1278111}
Sorin Popa.
\newblock Classification of amenable subfactors of type {II}.
\newblock {\em Acta Math.}, 172(2):163--255, 1994.
\newblock \mathscinet{MR1278111}, \doi{10.1007/BF02392646}.

\bibitem[Pop94c]{MR1302385}
Sorin Popa.
\newblock Symmetric enveloping algebras, amenability and {AFD} properties for
  subfactors.
\newblock {\em Math. Res. Lett.}, 1(4):409--425, 1994.
\newblock \mathscinet{MR1302385}, \doi{10.4310/MRL.1994.v1.n4.a2}.

\bibitem[Pop95a]{MR1334479}
Sorin Popa.
\newblock An axiomatization of the lattice of higher relative commutants of a
  subfactor.
\newblock {\em Invent. Math.}, 120(3):427--445, 1995.
\newblock \mathscinet{MR1334479} \doi{10.1007/BF01241137}.

\bibitem[Pop95b]{MR1339767}
Sorin Popa.
\newblock {\em Classification of subfactors and their endomorphisms}, volume~86
  of {\em CBMS Regional Conference Series in Mathematics}.
\newblock Published for the Conference Board of the Mathematical Sciences,
  Washington, DC, 1995.
\newblock \mathscinet{MR1339767}.

\bibitem[Pop99]{MR1729488}
Sorin Popa.
\newblock Some properties of the symmetric enveloping algebra of a subfactor,
  with applications to amenability and property {T}.
\newblock {\em Doc. Math.}, 4:665--744 (electronic), 1999.
\newblock \mathscinet{MR1729488}.

\bibitem[Pop10]{MR2661553}
Sorin Popa.
\newblock On spectral gap rigidity and {C}onnes invariant {$\chi(M)$}.
\newblock {\em Proc. Amer. Math. Soc.}, 138(10):3531--3539, 2010.
\newblock \mathscinet{MR2661553} \doi{10.1090/S0002-9939-2010-10277-0}
  \arxiv{0909.5639}.

\bibitem[Pop12]{MR2888236}
Sorin Popa.
\newblock On the classification of inductive limits of {II{$_{1}$}} factors
  with spectral gap.
\newblock {\em Trans. Amer. Math. Soc.}, 364(6):2987--3000, 2012.
\newblock \mathscinet{MR2888236} \doi{10.1090/S0002-9947-2012-05389-X}
  \arxiv{0910.2241}.

\bibitem[PP86]{MR860811}
Mihai Pimsner and Sorin Popa.
\newblock Entropy and index for subfactors.
\newblock {\em Ann. Sci. \'{E}cole Norm. Sup. (4)}, 19(1):57--106, 1986.
\newblock \mathscinet{MR860811}.

\bibitem[PP88]{MR965748}
Mihai Pimsner and Sorin Popa.
\newblock Iterating the basic construction.
\newblock {\em Trans. Amer. Math. Soc.}, 310(1):127--133, 1988.
\newblock \mathscinet{MR965748}.

\bibitem[PS03]{MR2051399}
Sorin Popa and Dimitri Shlyakhtenko.
\newblock Universal properties of {$L({\bf F}\sb \infty)$} in subfactor theory.
\newblock {\em Acta Math.}, 191(2):225--257, 2003.
\newblock \mathscinet{MR2051399} \doi{10.1007/BF02392965}.

\bibitem[PSV18]{MR3801484}
Sorin Popa, Dimitri Shlyakhtenko, and Stefaan Vaes.
\newblock Cohomology and {$L^2$}-{B}etti numbers for subfactors and
  quasi-regular inclusions.
\newblock {\em Int. Math. Res. Not. IMRN}, (8):2241--2331, 2018.

\bibitem[PV15]{MR3406647}
Sorin Popa and Stefaan Vaes.
\newblock Representation theory for subfactors, {$\lambda$}-lattices and {$\rm
  C^*$}-tensor categories.
\newblock {\em Comm. Math. Phys.}, 340(3):1239--1280, 2015.

\bibitem[Rie74]{MR0367670}
Marc~A. Rieffel.
\newblock Morita equivalence for {$C^{\ast} $}-algebras and {$W^{\ast}
  $}-algebras.
\newblock {\em J. Pure Appl. Algebra}, 5:51--96, 1974.
\newblock \mathscinet{MR0367670}.

\bibitem[Sau83]{MR703809}
Jean-Luc Sauvageot.
\newblock Sur le produit tensoriel relatif d'espaces de {H}ilbert.
\newblock {\em J. Operator Theory}, 9(2):237--252, 1983.
\newblock \mathscinet{MR703809}.

\bibitem[Sch13]{MR3019263}
Gregor Schaumann.
\newblock Traces on module categories over fusion categories.
\newblock {\em J. Algebra}, 379:382--425, 2013.
\newblock \mathscinet{MR3019263} \doi{10.1016/j.jalgebra.2013.01.013}
  \arxiv{1206.5716}.

\bibitem[Sun92]{MR1049618}
V.~S. Sunder.
\newblock {${\rm II}_1$} factors, their bimodules and hypergroups.
\newblock {\em Trans. Amer. Math. Soc.}, 330(1):227--256, 1992.
\newblock \mathscinet{MR1049618} \doi{10.2307/2154162}.

\bibitem[Tak02]{MR1873025}
Masamichi Takesaki.
\newblock {\em Theory of operator algebras. {I}}, volume 124 of {\em
  Encyclopaedia of Mathematical Sciences}.
\newblock Springer-Verlag, Berlin, 2002.
\newblock Reprint of the first (1979) edition, Operator Algebras and
  Non-commutative Geometry, 5, ISBN: 3-540-42248-X, \mathscinet{MR1873025}.

\bibitem[Vau]{VaughanNotesOnChi}
Notes on {C}onnes' invariant {$\chi(M)$}.
\newblock available at \url{https://math.berkeley.edu/~vfr/CHI/index.html},
  unpublished.

\bibitem[Yam04]{MR2091457}
Shigeru Yamagami.
\newblock Frobenius duality in {$C^*$}-tensor categories.
\newblock {\em J. Operator Theory}, 52(1):3--20, 2004.
\newblock \mathscinet{MR2091457}.

\end{thebibliography}
}}
\end{document}